\newtheorem{thm}{Theorem}[section]
\newtheorem{theorem}[thm]{Theorem}
\newtheorem{claim}[thm]{Claim}
\newtheorem{corollary}[thm]{Corollary}
\newtheorem{lemma}[thm]{Lemma}
\newtheorem{proposition}[thm]{Proposition}
\newtheorem{remark}[thm]{Remark}
\theoremstyle{definition}
\newtheorem{example}[thm]{Example}
\newcommand{\subalign}[1]{%
  \vcenter{%
    \Let@ \restore@math@cr \default@tag
    \baselineskip\fontdimen10 \scriptfont\tw@
    \advance\baselineskip\fontdimen12 \scriptfont\tw@
    \lineskip\thr@@\fontdimen8 \scriptfont\thr@@
    \lineskiplimit\lineskip
    \ialign{\hfil$\m@th\scriptstyle##$&$\m@th\scriptstyle{}##$\crcr
      #1\crcr
    }%
  }
}
\begin{document}

\centerline{\Large \bf Quantum immanants, double Young-Capelli bitableaux}
\bigskip

\centerline{\Large \bf  and }

\bigskip

\centerline{\Large \bf Schur shifted symmetric functions }

\bigskip

\centerline{A. Brini and A. Teolis}
\centerline{\it $^\flat$ Dipartimento di Matematica, Universit\`{a} di
Bologna }
 \centerline{\it Piazza di Porta S. Donato, 5. 40126 Bologna. Italy.}
\centerline{\footnotesize e-mail: andrea.brini@unibo.it}
\medskip

\begin{abstract}
In this paper are introduced two classes of elements in 
the enveloping algebra $\mathbf{U}(gl(n))$: the \emph{double Young-Capelli bitableaux} 
$[\ \fbox{$S \ | \ T$}\ ]$ and the \emph{central} \emph{Schur elements} $\mathbf{S}_{\lambda}(n)$, 
that act in a remarkable way on the highest weight vectors of irreducible Schur modules.

Any element $\mathbf{S}_{\lambda}(n)$ is the sum
of all double Young-Capelli bitableaux  $[\ \fbox{$S \ | \ S$}\ ]$,
$S$    row (strictly) increasing Young tableaux 
of shape $\widetilde{\lambda}$.
The Schur elements $\mathbf{S}_\lambda(n)$ are proved to be the preimages - with respect to the Harish-Chandra isomorphism -
of the  \emph{shifted Schur polynomials}  $s_{\lambda|n}^* \in \Lambda^*(n)$.
Hence, the Schur elements are 
the same as the Okounkov \textit{quantum immanants}, recently described by the present authors as linear  combinations  of  \emph{Capelli immanants}.
This new presentation  of Schur elements/quantum immanants doesn't involve the irreducible characters of symmetric groups.
The Capelli elements $\mathbf{H}_k(n)$ are column Schur elements 
and 
the Nazarov elements $\mathbf{I}_k(n)$ are row Schur elements.
The duality in $\boldsymbol{\zeta}(n)$ 
follows from a 
combinatorial description of the eigenvalues
of the  $\mathbf{H}_k(n)$ on irreducible modules  that is
{\it{dual}} (in the sense of shapes/partitions) to the combinatorial description of the eigenvalues
of the  $\mathbf{I}_k(n)$.

The passage $n \rightarrow \infty$ for the
algebras $\boldsymbol{\zeta}(n)$
is  obtained both as   direct and inverse limit in the category of filtered algebras, 
via the \emph{Olshanski decomposition/projection}.

\end{abstract}

\textbf{Keyword}:
Combinatorial representation theory; shifted symmetric functions; superalgebras;
central elements in U(gl(n)); Capelli identities; superstandard Young tableaux;
Schur supermodules.

\tableofcontents

\section{Introduction}

The study of  the center
$\boldsymbol{\zeta}(n)$ of the enveloping algebra $\mathbf{U}(gl(n))$ of the
general linear Lie algebra $gl(n, \mathbb{C})$ and  the study of the algebra
$\Lambda^*(n)$ of shifted symmetric polynomials have noble and rather independent origins and motivations.

The theme of central elements  in $\mathbf{U}(gl(n))$ is a standard one in the general theory of Lie algebras, see e.g. \cite{DIX-BR}.
It is an old and actual one, since it
is an  offspring of the celebrated Capelli identity (see, e.g. \cite{Cap1-BR}, \cite{Cap4-BR}, \cite{Howe-BR}, \cite{HU-BR},
\cite{Procesi-BR}, \cite{Umeda-BR}, \cite{Weyl-BR}),
relates to its modern generalizations and applications (see, e.g. \cite{ABP-BR}, \cite{KostantSahi1-BR},
\cite{KostantSahi2-BR}, \cite{MolevNazarov-BR}, \cite{Nazarov-BR}, \cite{Okounkov-BR}, \cite{Okounkov1-BR},
\cite{Sahi3-BR}, \cite{UmedaCent-BR})
as well as to the theory of {\it{Yangians}} (see, e.g.  \cite{Molev1-BR}, \cite{Molev-BR},  \cite{Nazarov2-BR}).

The algebra $\Lambda^*(n)$ of shifted symmetric polynomials is a remarkable deformation of the  algebra
 $\Lambda(n)$ of symmetric polynomials and its study fits into the mainstream of generalizations of the classical theory
(see, e.g. {\it{factorial symmetric functions}}, \cite{BL1-BR}, \cite{BL2-BR}, \cite{CL-BR}, \cite{GG-BR}, 
\cite{GH-BR}, \cite{M2-BR}, \cite{M1-BR}).

Since the algebras $\boldsymbol{\zeta}(n)$ and $\Lambda^*(n)$
are related by the Harish-Chandra isomorphism $\chi_n$ (see, e.g. \cite{OkOlsh-BR}),
their investigation can be essentially regarded as a  single topic,
and this fact gave rise to a fruitful interplay between representation-theoretic
 methods (e.g., eigenvalues on irreducible representations)
and combinatorial techniques (e.g., generating functions).

In this work, we propose a new approach to a systematic study of some of the main features of the theory of the center
$\boldsymbol{\zeta}(n)$ of $\mathbf{U}(gl(n))$ and of the algebra $\Lambda^*(n)$ of shifted symmetric polynomials
that allows the whole theory to be developed, in a transparent and concise way, from a combinatorial 
representation theoretic point of view, that is entirely
in the center $\boldsymbol{\zeta}(n)$.

The paper is organized as follows.

In  Section \ref{sec 3}, we provide a synthetic presentation   
of the {\it{superalgebraic method of virtual variables}} 
for $gl(n)$. For details, we refer the reader to \cite{BriUMI-BR}, \cite{BT-jaa} and \cite{BT-CA}.
This method was developed  by the present authors
for the   general linear Lie superalgebras $gl(m|n)$ (see, e.g. \cite{KAC1-BR}, \cite{Scheu-BR})  
in the series of notes
\cite{Bri-BR}, \cite{BriUMI-BR}, \cite{Brini1-BR}, \cite{Brini2-BR}, \cite{Brini3-BR},
\cite{Brini4-BR}, \cite{BRT-BR}.
The technique of virtual variables is  an extension of Capelli's method of  {\textit{ variabili ausilarie}}
(Capelli \cite{Cap4-BR}, see also Weyl \cite{Weyl-BR}).

The superalgebraic method of virtual variables allows us to express remarkable classes 
of elements in $\mathbf{U}(gl(n))$, namely,

\begin{itemize}

\item [--] the \emph{Capelli bitableaux} $[S|T] \in \mathbf{U}(gl(n))$

\item [--] the \emph{Young-Capelli bitableaux} $[S| \fbox{$T$}] \in {\mathbf{U}}(gl(n))$

\item [--] the \emph{double Young-Capelli bitableaux} $[\ \fbox{$S \ | \ T$}\ ] \in {\mathbf{U}}(gl(n))$
\end{itemize}
as the images - with respect to the $Ad_{gl(n)}$-adjoint equivariant  Capelli 
\emph{devirtualization epimorphism} - of simple expressions in an enveloping 
superalgebra $\mathbf{U}(gl(m_0|m_1+n))$.

Capelli (determinantal) bitableaux are generalizations of  the famous  
\emph{determinantal elements} in ${\mathbf{U}}(gl(n))$ 
introduced by Capelli in  $1887$ \cite{Cap1-BR} (see, e.g. \cite{BT-CA}).
Young-Capelli bitableaux were introduced by the present authors 
several years ago \cite{Brini2-BR}, \cite{Brini3-BR}, \cite{Brini4-BR} and might be regarded
as generalizations of the Capelli determinantal elements in ${\mathbf{U}}(gl(n))$ 
as well as of the \emph{Young symmetrizers}
of the classical representation theory of symmetric groups  (see, e.g. \cite{Weyl-BR}).
Double Young-Capelli bitableaux are essentially new and play a crucial role in the present paper.

In plain words, the Young-Capelli bitableau $[S| \fbox{$T$}]$ is obtained 
by adding a \emph{column symmetrization} to  the Capelli bitableau $[S|T]$ 
(Proposition \ref{YC as sym of Capelli}) and the double Young-Capelli bitableau
$[\ \fbox{$S \ | \ T$}\ ]$ is obtained by adding a further \emph{row skew-symmetrization}
to the Young-Capelli bitableau $[S| \fbox{$T$}]$ (Proposition \ref{double YC from YC}).

In Section \ref{Schur}, we regard the \emph{supersymmetric} superalgebra ${\mathbb C}[M_{m_0|m_1+n,d}]$ 
as  $\mathbf{U}(gl(m_0|m_1+n))$-module and define the {\textit{Schur supermodules}} $Schur_\lambda(m_0,m_1+n)$ 
as (irreducible) submodules of ${\mathbb C}[M_{m_0|m_1+n,d}]$.
Schur supermodules are isomorphic to the modules constructed by Berele and Regev \cite{Berele1-BR}, \cite{Berele2-BR} as tensor modules induced by Young symmetrizers
(see, e.g. \cite{Weyl-BR}) when they act by a ``signed action'' of the symmetric group (see also  \cite{King-BR}).
The description presented here
is simpler than the tensor description, and provides a close connection with the superstraightening theory of Grosshans, Rota and Stein \cite{rota-BR}.
The classical Schur modules $Schur_\lambda(n)$ - $gl(n)$-irreducible modules
with (nonnegative) integral highest weights - are here regarded as $gl(n)$-submodules of suitable
Schur supermodules. The crucial and new result  is that double
Young-Capelli bitableaux act in a remarkable way on the highest weight vectors of Schur modules
(subsection  \ref{sect Double YC action}, Theorem \ref{Double YC action}).

In Section \ref{The center}, we consider a class of central 
elements $\mathbf{S}_{\lambda}(n)$, $\tilde{\lambda}_1 \leq n$,
 which arise in a natural way in the context of the virtual method when dealing 
with symmetry and skew-symmetry in $\mathbf{U}(gl(n))$.

These elements are expressed as linear combinations
\begin{equation}\label{schur second equation}
\mathbf{S}_\lambda(n) =
\frac {1} {H(\tilde{\lambda})} \  \ \sum_S \ [\ \fbox{$S \ | \ S$}\ ]  \in {\mathbf{U}}(gl(n))
\end{equation}
of double Young-Capelli bitableaux where the sum is extended to all  row (strictly) increasing tableaux 
$S$ of shape $sh(S) = \widetilde{\lambda}$.

We call the elements $\mathbf{S}_{\lambda}(n)$ the \emph{Schur elements}. 

The main results are Theorems \ref{Schur action} and \ref{Vanishing theorem} that
follow from Theorem \ref{Double YC action} and provide notable descriptions of the action
of the central Schur elements $\mathbf{S}_{\lambda}(n)$ on the highest weight vectors of Schur modules.
Theorem \ref{Schur action} implies that the set of the elements $\mathbf{S}_\lambda(n)$,
$\widetilde{\lambda}_1 \leq n$ is a basis of the center $\boldsymbol{\zeta}(n)$.

By combining Theorem \ref{Schur action} with the \emph{Sahi-Okounkov characterization 
Theorem} (\cite{Sahi2-BR}, \cite{OkOlsh-BR}, \cite{Okounkov-BR}, here quoted as Proposition \ref{SahiOkounkv}), 
we infer that
the Schur elements $\mathbf{S}_\lambda(n)$ are the preimages - with respect to the Harish-Chandra isomorphism -
 of the  \emph{shifted Schur polynomials}  $s_{\lambda|n}^* \in \Lambda^*(n)$  \cite{Sahi1-BR},  
\cite{OkOlsh-BR}.
Hence, the Schur elements are 
the same as the  \textit{quantum immanants}, first presented by  Okounkov as traces of
\textit{fusion matrices} (\cite{Okounkov-BR}, \cite{Okounkov1-BR}) and, recently, described by the present authors as linear  combinations (with explicit coefficients) of ``diagonal'' \emph{Capelli immanants}, see \cite{BT-jaa}.
Presentation (\ref{schur second equation}) of Schur elements/quantum immanants doesn't involve the irreducible characters of symmetric groups.
Furthermore, it is better suited to the study  of the duality in the algebra $\boldsymbol{\zeta}(n)$
as well as to the study of the limit $n \rightarrow \infty$.

We examine two further classes of central elements, namely, the \textit{determinantal Capelli elements}
$\mathbf{H}_k(n), \quad k = 1, 2, \ldots, n$ (see, e.g. \cite{Cap1-BR}, \cite{Cap2-BR}, \cite{Cap3-BR}, 
 \cite{BriUMI-BR}),
and the \textit{permanental Nazarov elements} $\mathbf{I}_k(n), \quad k \in \mathbf{Z}^+$
(see, e.g.  \cite{Nazarov-BR}, \cite{Nazarov2-BR}, \cite{UmedaCent-BR}, \cite{UmedaHirai-BR}, 
see also \cite{BriUMI-BR}),
which provide two systems of algebra generators of the center $\boldsymbol{\zeta}(n)$.

The Capelli elements $\mathbf{H}_k(n)$ are column Schur elements, specifically,
$$
\mathbf{H}_k(n) = \mathbf{S}_{(1^k)}(n).
$$

The Nazarov elements $\mathbf{I}_k(n)$ are row Schur elements, specifically,
$$
\mathbf{I}_k(n) = \mathbf{S}_{(k)}(n).
$$

The duality in $\boldsymbol{\zeta}(n)$ (Theorem \ref{finite duality}) 
immediately follows from a 
combinatorial description of the eigenvalues
of the Capelli elements $\mathbf{H}_k(n)$ on irreducible Schur modules (Proposition \ref{horizontal strip}) 
that is {\it{dual}} (in the sense of shapes/partitions) to the combinatorial description of the eigenvalues
of the Nazarov elements $\mathbf{I}_k(n)$ (Theorem \ref{vertical strip}.1).

The passage to the infinite dimensional case $n \rightarrow \infty$ for the
algebras $\boldsymbol{\zeta}(n)$ is rather subtle;  the ``naive''
$\infty$-dimensional analogue of the algebras $\mathbf{U}(gl(n))$,
that is the direct limit algebra $\underrightarrow{lim} \ \mathbf{U}(gl(n))$ with respect to the {\it{``inclusion''  monomorphisms}},
has trivial center.
In Section \ref{zeta limit}, the $\infty$-dimensional analogue $\boldsymbol{\zeta}$ of the algebras $\boldsymbol{\zeta}(n)$
is  obtained as the  \emph{direct limit algebra} $\underrightarrow{lim} \ \boldsymbol{\zeta}(n)$ (in the category of filtered algebras)
with respect to the family of monomorphisms $\mathbf{i}_{n+1,n} : \boldsymbol{\zeta}(n) \hookrightarrow \boldsymbol{\zeta}(n+1)$, where
$$
\mathbf{i}_{n+1,n} \big( \mathbf{H}_k(n)  \big) = \mathbf{H}_k(n+1), \quad k = 1, 2, \ldots, n.
$$

An intrinsic/invariant presentation of the  monomorphisms $\mathbf{i}_{n+1,n}$
is obtained, in subsection \ref{The Olshanski decomposition/projection}, via the
{\it{Olshanski projections}} 
$\boldsymbol{\mu}_{n,n+1} : \boldsymbol{\zeta}(n+1) \twoheadrightarrow \boldsymbol{\zeta}(n)$ 
\cite{Olsh1-BR}, \cite{Olsh3-BR} (see also Molev \cite{Molev1-BR}). The Olshanski projections
$\boldsymbol{\mu}_{n,n+1}$ are {\it{left}} inverses of the  monomorphisms $\mathbf{i}_{n+1,n}$,
 and they become {\it{two-sided}}
inverses when restricted to the filtration elements $\boldsymbol{\zeta}(n+1)^{(m)}$ 
and $\boldsymbol{\zeta}(n)^{(m)}$,
for $n$ sufficiently large (Propositions \ref{inverso filtrato} and  Proposition  \ref{pi=mu}).
The  interplay between the monomorphisms $\mathbf{i}_{n+1,n}$ and the projections $\boldsymbol{\mu}_{n,n+1}$
shows the algebra $\boldsymbol{\zeta}$
admits a double presentation, both as a direct limit and as an inverse limit.

Since the Olshanski projection
$\boldsymbol{\mu}_{n,n+1}$ maps $\mathbf{H}_k(n+1)$ to $\mathbf{H}_k(n)$,
$\mathbf{I}_k(n+1)$ to $\mathbf{I}_k(n)$ and $\mathbf{S}_\lambda(n+1)$ to
$\mathbf{S}_\lambda(n)$, then 
$$
\mathbf{i}_{n+1,n}  \big(\mathbf{H}_k(n)\big) = \mathbf{H}_k(n+1),
\qquad
\mathbf{i}_{n+1,n}  \big(\mathbf{I}_k(n)\big) = \mathbf{I}_k(n+1),
$$
and
$$
\mathbf{i}_{n+1,n}  \big(\mathbf{S}_\lambda(n)\big) = \mathbf{S}_\lambda(n+1),
$$
for $n$ sufficiently large.

Hence, the direct limits
$$
\mathbf{H}_k  \stackrel{def}{=} \underrightarrow{lim}  
\ \mathbf{H}_k(n) \in \boldsymbol{\zeta}
\qquad
\mathbf{I}_k  \stackrel{def}{=} \underrightarrow{lim}  
\ \mathbf{I}_k(n) \in \boldsymbol{\zeta}
$$
and
$$
\mathbf{S}_\lambda  \stackrel{def}{=} \underrightarrow{lim}  
\ \mathbf{S}_\lambda(n) \in \boldsymbol{\zeta}
$$
can be consistently  written as  {\it{formal series}}
by naturally extending to infinite sums the  finite sums 
(eqs. (\ref{Schur basis2}), (\ref{Capelli elements virtual}), (\ref{multiset}))
that define
$\mathbf{H}_k(n)$, $\mathbf{I}_k(n)$ and $\mathbf{S}_\lambda(n)$,
respectively.

The algebra $\boldsymbol{\zeta}$ is
isomorphic to the algebra $\Lambda^*$ of {\it{shifted symmetric functions}} (Theorem \ref{isomorfismo HC infinito}).
The Olshanski projections are the
natural counterpart, in the context of the centers $\boldsymbol{\zeta}(n)$, of the Okounkov-Olshanski {\it{stability principle}}
for the algebras $\Lambda^*(n)$ of shifted symmetric polynomials \cite{OkOlsh-BR}, the isomorphism
$\chi : \boldsymbol{\zeta} \rightarrow \Lambda^*$ is indeed the ``limit'' of the 
Harish-Chandra isomorphisms $\chi_n : \boldsymbol{\zeta}(n) \rightarrow \Lambda^*(n)$ and it admits a transparent representation-theoretic interpretation (Sections  \ref{Lambda(n)} and \ref{Lambda}).

\section{A glimpse on the superalgebraic method of virtual variables }\label{sec 3}

\subsection{The superalgebras $gl(m_0|m_1+n)$ and ${\mathbb C}[M_{m_0|m_1+n,d}]$}

\subsubsection{The general linear Lie super algebra $gl(m_0|m_1+n)$}\label{superalgebra first}
 Given a vector space $V_n$ of dimension $n$, we will regard it as a subspace of a $
\mathbb{Z}_2-$graded vector space
 $W = W_0 \oplus W_1$, where
$$
W_0 = V_{m_0}, \qquad W_1 = V_{m_1} \oplus V_n.
$$
The  vector spaces
$V_{m_0}$ and $V_{m_1}$ (we assume that 
$dim(V_{m_0})=m_0$ and $dim(V_{m_1})=m_1$ are ``sufficiently large'') are called
the  {\textit{positive virtual (auxiliary)
vector space}},  the {\textit{negative virtual (auxiliary) vector space}}, respectively, and $V_n$ 
is called the {\textit{(negative) proper vector space}}.

 The inclusion $V_n \subset W$ induces a natural embedding of the ordinary general 
linear Lie algebra $gl(n)$ of $V_n$ into the
 {\textit{auxiliary}}
general linear Lie {\it{superalgebra}} $gl(m_0|m_1+n)$ of $W = W_0 \oplus W_1$ (see, e.g. \cite{KAC1-BR}, 
\cite{Scheu-BR}).

Let
$
\mathcal{A}_0 = \{ \alpha_1, \ldots, \alpha_{m_0} \},$  $\mathcal{A}_1 = \{ \beta_1, \ldots, \beta_{m_1} \},$
$\mathcal{L} = \{ 1, 2,  \ldots, n \}$
denote \emph{fixed  bases} of $V_{m_0}$, $V_{m_1}$ and $V_n$, respectively; 
therefore $|\alpha_s| = 0 \in \mathbb{Z}_2,$
and $|\beta_t| = |i|   = 1 \in \mathbb{Z}_2.$

Let
$$
\{ e_{a, b}; a, b \in \mathcal{A}_0 \cup \mathcal{A}_1\cup \mathcal{L}  \}, \qquad |e_{a, b}| =
|a|+|b| \in \mathbb{Z}_2
$$
be the standard $\mathbb{Z}_2-$homogeneous basis of the Lie superalgebra $gl(m_0|m_1+n)$ provided by the
elementary matrices. The elements $e_{a, b} \in gl(m_0|m_1+n)$ are $\mathbb{Z}_2-$homogeneous of
$\mathbb{Z}_2-$degree $|e_{a, b}| = |a| + |b|.$

The superbracket of the Lie superalgebra $gl(m_0|m_1+n)$ has the following explicit form:
$$
\left[ e_{a, b}, e_{c, d} \right] = \delta_{bc} \ e_{a, d} - (-1)^{(|a|+|b|)(|c|+|d|)} \delta_{ad}  \ e_{c, b},
$$
$a, b, c, d \in \mathcal{A}_0 \cup \mathcal{A}_1 \cup \mathcal{L} .$

In the following, the elements of the sets $\mathcal{A}_0, \mathcal{A}_1, \mathcal{L} $ will be called
\emph{positive virtual symbols}, \emph{negative virtual symbols} and \emph{negative proper symbols},
respectively.

\subsubsection{The  supersymmetric algebra ${\mathbb C}[M_{m_0|m_1+n,d}]$}

Let
$$
{\mathbb C}[M_{n,d}] =    {\mathbb C}[(i|j)]_{i=1,\ldots,n, j=1,\ldots,d}
$$
be the polynomial algebra in the (commutative)
 entries $(i|j)$ of the matrix:
$$
M_{n,d} = \left[ (i|j) \right]_{i=1,\ldots,n, j=1, \ldots,d}=
 \left(
 \begin{array}{ccc}
 (1|1) & \ldots & (1|d) \\
 \vdots  &        & \vdots \\
 (n|1) & \ldots & (n|d) \\
 \end{array}
 \right).
$$

We regard the commutative algebra ${\mathbb C}[M_{n,d}]$
as a subalgebra of the \textit{``auxiliary'' supersymmetric algebra}
$$
{\mathbb C}[M_{m_0|m_1+n,d}] 
$$
generated by the ($\mathbb{Z}_2$-graded) variables 
$$
(a|j), \quad a \in  \mathcal{A}_0\cup \mathcal{A}_1\cup \mathcal{L} , 
\quad j \in \mathcal{P}  = \{j=1, \ldots,d; |j|=1 \in \mathbb{Z}_2 \},
$$
with $|(a|j)| =  |a|+|j| \in \mathbb{Z}_2 $,
subject to the commutation relations:
$$
(a|h)(b|k) = (-1)^{|(a|h)||(b|k)|} \ (b|k)(a|h).
$$
In plain words, ${\mathbb C}[M_{m_0|m_1+n,d}]$ is the free supersymmetric algebra 
$$
{\mathbb C}\big[ (\alpha_s|j), (\beta_t|j), (i|j) \big]
$$
 generated by the ($\mathbb{Z}_2$-graded) variables $(\alpha_s|j), (\beta_t|j), (i|j)$,
$j = 1, 2, \ldots, d$,
where all the variables commute each other, with the exception of
pairs of variables $(\alpha_s|j), (\alpha_t|j)$ that skew-commute:
$$
(\alpha_s|j) (\alpha_t|j) = - (\alpha_t|j) (\alpha_s|j).
$$

In the standard notation of multilinear algebra, we have:
\begin{align*}
{\mathbb C}[M_{m_0|m_1+n,d}]
& \cong \Lambda \big[ W_0 \otimes P_d \big]
\otimes      {\mathrm{Sym}} \big[ W_1  \otimes P_d \big] \\
 & =
 \Lambda \big[ V_{m_0} \otimes P_d \big]
\otimes      {\mathrm{Sym}} \big[ (V_{m_1} \oplus V_n)  \otimes P_d \big]
\end{align*}
where $P_d = (P_d)_1$ denotes the trivially  $\mathbb{Z}_2-$graded  vector space with distinguished basis 
$\mathcal{P}  = \{j=1, \ldots,d; |j|=1 \in \mathbb{Z}_2 \}.$

\subsubsection{Left superderivations and left superpolarizations}

A {\it{left superderivation}} $D^{\textit{l}}$ ($\mathbb{Z}_2-$homogeneous of degree $|D^{\textit{l}}|$) (see, e.g. \cite{Scheu-BR}, \cite{KAC1-BR}) on
${\mathbb C}[M_{m_0|m_1+n,n}]$ is an element of the superalgebra $End_\mathbb{C}[\mathbb{C}[M_{m_0|m_1+n,d}]]$
that satisfies "Leibniz rule"
$$
D^{\textit{l}}(\textbf{p} \cdot \textbf{q}) = D^{\textit{l}}(\textbf{p}) \cdot \textbf{q} + 
(-1)^{|D^{\textit{l}}||\textbf{p}|} \textbf{p} \cdot D^{\textit{l}}(\textbf{q}),
$$
for every $\mathbb{Z}_2-$homogeneous of degree $|\textbf{p}|$ element $\textbf{p} \in \mathbb{C}[M_{m_0|m_1+n,d}].$

Given two symbols $a, b \in \mathcal{A}_0                                               \cup \mathcal{A}_1                        \cup \mathcal{L} $, the {\textit{left superpolarization}} $D^{\textit{l}}_{a,b}$ 
of $b$ to $a$
is the unique {\it{left}} superderivation of ${\mathbb C}[M_{m_0|m_1+n,n}]$ of $\mathbb{Z}_2-$degree 
$|D^{\textit{l}}_{a,b}| = |a| + |b| \in \mathbb{Z}_2$ such that
$$
D^{\textit{l}}_{a,b} \left( (c|j) \right) = \delta_{bc} \ (a|j), \ c \in \mathcal{A}_0                                               \cup \mathcal{A}_1    \cup \mathcal{L} , \ j = 1, \ldots, n.
$$

Informally, we say that the operator $D^{\textit{l}}_{a,b}$ {\it{annihilates}} the symbol $b$ 
and {\it{creates}} the symbol $a$.

\subsubsection{The superalgebra ${\mathbb C}[M_{m_0|m_1+n,n}]$ as a $\mathbf{U}(gl(m_0|m_1+n))$-module}

Since
$$
D^{\textit{l}}_{a,b}D^{\textit{l}}_{c,d} -(-1)^{(|a|+|b|)(|c|+|d|)}D^{\textit{l}}_{c,d}D^{\textit{l}}_{a,b} =
\delta_{b,c}D^{\textit{l}}_{a,d} -(-1)^{(|a|+|b|)(|c|+|d|)}\delta_{a,d}D^{\textit{l}}_{c,b},
$$
the map
$$
e_{a,b} \mapsto D^{\textit{l}}_{a,b}, \qquad a, b \in \mathcal{A}_0                                               \cup \mathcal{A}_1                        \cup \mathcal{L} 
$$
is a Lie superalgebra morphism from $gl(m_0|m_1+n)$ to $End_\mathbb{C}\big[\mathbb{C}[M_{m_0|m_1+n,d}]\big]$
and, hence, it uniquely defines a
representation:
$$
\varrho : \mathbf{U}(gl(m_0|m_1+n)) \rightarrow End_\mathbb{C}[\mathbb{C}[M_{m_0|m_1+n,d}]],
$$
where $\mathbf{U}(gl(m_0|m_1+n))$ is the enveloping superalgebra of $gl(m_0|m_1+n)$.

In the following, we always regard the superalgebra $\mathbb{C}[M_{m_0|m_1+n,d}]$ as a $\mathbf{U}(gl(m_0|m_1+n))-$supermodule,
with respect to the action induced by the representation $\varrho$:
$$
e_{a,b} \cdot \mathbf{p} = D^{\textit{l}}_{a,b}(\mathbf{p}),
$$
for every $\mathbf{p} \in {\mathbb C}[M_{m_0|m_1+n,n}].$

We recall that  $\mathbf{U}(gl(m_0|m_1+n))-$module  $\mathbb{C}[M_{m_0|m_1+n,d}]$
is  a semisimple module, whose simple submodules are - up to isomorphism - {\it{Schur supermodules}} 
(see, e.g. \cite{Brini1-BR}, \cite{Brini2-BR}, \cite{Bri-BR}. For a more traditional presentation, see also 
\cite{CW-BR}).

Clearly, $\mathbf{U}(gl(0|n)) = \mathbf{U}(gl(n))$ is a subalgebra of $\mathbf{U}(gl(m_0|m_1+n))$
and the subalgebra $\mathbb{C}[M_{n,d}]$ is a $\mathbf{U}(gl(n))-$submodule of  $\mathbb{C}[M_{m_0|m_1+n,d}]$.

\subsubsection{The virtual algebra $Virt(m_0+m_1,n)$ and the virtual
presentations of elements in $\mathbf{U}(gl(n))$}

We say that a product
$$
e_{a_mb_m} \cdots e_{a_1b_1} \in \mathbf{U}(gl(m_0|m_1+n)), 
\quad a_i, b_i \in \mathcal{A}_0 \cup \mathcal{A}_1 \cup \mathcal{L} , \ i= 1, \ldots, m
$$
is an {\textit{irregular expression}} whenever
  there exists a right subword
$$e_{a_i,b_i} \cdots e_{a_2,b_2} e_{a_1,b_1},$$
$i \leq m$ and a
virtual symbol $\gamma \in \mathcal{A}_0 \cup \mathcal{A}_1$ such that
\begin{equation}\label{irrexpr-BR}
 \# \{j;  b_j = \gamma, j \leq i \}  >  \# \{j;  a_j = \gamma, j < i \}.
\end{equation}

The meaning of an irregular expression in terms of the action of  $\mathbf{U}(gl(m_0|m_1+n))$  
by left superpolarization on
the algebra $\mathbb{C}[M_{m_0|m_1+n,d}]$ is that there exists a
virtual symbol $\gamma$ and a right subsequence in which the symbol $\gamma$ is \emph{annihilated} 
more times than it was already \emph{created} and, therefore, the action of an irregular expression
on the algebra $\mathbb{C}[M_{n,d}]$ is \emph{zero}. 

\begin{example}
Let $\gamma \in  \mathcal{A}_0 \cup \mathcal{A}_1$ and $x_i, x_j \in \mathcal{L}.$ The product
$$
e_{\gamma,x_j} e_{x_i,\gamma} e_{x_j,\gamma} e_{\gamma,x_i}
$$
is an irregular expression.
\end{example}\qed

Let $\mathbf{Irr}$   be
the {\textit{left ideal}} of $\mathbf{U}(gl(m_0|m_1+n))$ generated by the set of
irregular expressions.

\begin{proposition}
The superpolarization action
of any element of $\mathbf{Irr}$ on the subalgebra $\mathbb C[M_{n,d}] \subset \mathbb{C}[M_{m_0|m_1+n,d}]$ - via the representation $\varrho$ -
is identically zero.
\end{proposition}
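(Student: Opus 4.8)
The plan is to show that the superpolarization action of any generator of $\mathbf{Irr}$ on $\mathbb{C}[M_{n,d}]$ is zero, and then deduce that the action of the whole left ideal is zero. For the second (easy) step, observe that an arbitrary element of $\mathbf{Irr}$ is a finite sum of terms $u \cdot w$, where $w$ is an irregular expression and $u \in \mathbf{U}(gl(m_0|m_1+n))$; since $\varrho$ is an algebra homomorphism, $\varrho(u\cdot w) = \varrho(u)\,\varrho(w)$, so it suffices to prove $\varrho(w)$ kills $\mathbb{C}[M_{n,d}]$ for each irregular $w$ — the operator $\varrho(u)$ then simply carries the zero result along. So the whole content is in the first step.

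For the first step, fix an irregular expression $w = e_{a_m b_m}\cdots e_{a_1 b_1}$ and a polynomial $\mathbf{p} \in \mathbb{C}[M_{n,d}]$, i.e.\ a polynomial in the proper variables $(i|j)$, $i \in \mathcal{L}$ only. By definition of irregularity there is an index $i \le m$ and a virtual symbol $\gamma \in \mathcal{A}_0 \cup \mathcal{A}_1$ with
$$
\#\{j \le i : b_j = \gamma\} > \#\{j < i : a_j = \gamma\}.
$$
The plan is to track the ``$\gamma$-content'' of the intermediate polynomials $\mathbf{p}_0 = \mathbf{p}$, $\mathbf{p}_1 = \varrho(e_{a_1 b_1})\mathbf{p}_0$, \ldots, $\mathbf{p}_k = \varrho(e_{a_k b_k})\mathbf{p}_{k-1}$, and to show $\mathbf{p}_i = 0$ (hence $\mathbf{p}_m = \varrho(w)\mathbf{p} = 0$, as the remaining polarizations of $0$ stay $0$). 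Here I would make precise the notion of ``degree in the variables $(\gamma|j)$'': since $\mathbb{C}[M_{m_0|m_1+n,d}] \cong \Lambda[V_{m_0}\otimes P_d]\otimes \mathrm{Sym}[(V_{m_1}\oplus V_n)\otimes P_d]$, each monomial has a well-defined total degree $d_\gamma$ in the family of variables $\{(\gamma|j) : j = 1,\dots,d\}$, and we may speak of the maximal such degree occurring in a polynomial. The key bookkeeping facts are: (i) $\mathbf{p}_0$ has $\gamma$-degree $0$, since it lies in $\mathbb{C}[M_{n,d}]$ and $\gamma \notin \mathcal{L}$; (ii) applying $\varrho(e_{a_j b_j}) = D^{\textit{l}}_{a_j, b_j}$ raises the maximal $\gamma$-degree by at most one, and only when $a_j = \gamma$ (the operator ``creates'' a $\gamma$); (iii) applying $D^{\textit{l}}_{a_j, b_j}$ with $b_j = \gamma$ lowers every monomial's $\gamma$-degree by exactly one (it ``annihilates'' a $\gamma$), so a polynomial of $\gamma$-degree $e$ is sent to $0$ if we try to annihilate $\gamma$ more than $e$ times consecutively. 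Combining (i)–(iii): after processing $e_{a_1 b_1}, \dots, e_{a_{i} b_{i}}$, the maximal $\gamma$-degree of $\mathbf{p}_{i}$ is at most
$$
0 + \#\{j \le i : a_j = \gamma\} - \#\{j \le i : b_j = \gamma\} \le \#\{j < i : a_j = \gamma\} + 1 - \#\{j \le i : b_j = \gamma\} < 1,
$$
where the middle inequality uses $a_i = \gamma$ or not (if $a_i \ne \gamma$ the ``$+1$'' is absent and the bound is even smaller), and the strict inequality is exactly the irregularity condition. Hence $\mathbf{p}_i$ has $\gamma$-degree $< 1$, i.e.\ strictly negative in the relevant count at the step where $\gamma$ is annihilated once too often — concretely, at the last $j \le i$ with $b_j = \gamma$ the incoming polynomial has $\gamma$-degree $0$ yet we annihilate a $\gamma$, forcing $\mathbf{p}_j = 0$. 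Therefore $\mathbf{p}_m = 0$, proving $\varrho(\mathbf{Irr})$ annihilates $\mathbb{C}[M_{n,d}]$.

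The main obstacle is purely one of careful bookkeeping: making the ``$\gamma$-degree drops by one when annihilating, rises by at most one when creating'' statement airtight, including the edge case where $a_j = b_j = \gamma$ (a polarization $e_{\gamma\gamma}$ that both creates and annihilates $\gamma$, hence leaves $\gamma$-degree unchanged — consistent with the counting above), and handling the $\mathbb{Z}_2$-graded signs, which are irrelevant to the vanishing (they only decorate nonzero terms). One should also note the argument is uniform in $d$ and does not use that $m_0, m_1$ are ``sufficiently large.'' No deep input is needed — only the explicit description of $\varrho$ by left superpolarizations and the Leibniz rule, both recalled above.
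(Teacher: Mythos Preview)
Your argument is correct and matches the paper's treatment: the paper does not actually supply a formal proof of this proposition, only the informal remark preceding it that an irregular expression ``annihilates'' a virtual symbol $\gamma$ more times than it has been ``created'', hence acts as zero on $\mathbb{C}[M_{n,d}]$. Your proof is precisely a careful formalization of that sentence via $\gamma$-degree bookkeeping, and the reduction to generators of the left ideal $\mathbf{Irr}$ via the homomorphism property of $\varrho$ is exactly right.

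One small cleanup: the displayed inequality chain is slightly muddled because you compute the $\gamma$-degree \emph{after} step $i$ rather than before it. The cleanest phrasing is to take $i$ minimal (for the given $\gamma$) satisfying the irregularity inequality; then $b_i=\gamma$, and the $\gamma$-degree of $\mathbf{p}_{i-1}$ is at most $\#\{j<i:a_j=\gamma\}-\#\{j<i:b_j=\gamma\}\le 0$, so $\mathbf{p}_{i-1}$ has no $\gamma$-factors (or is already zero), and applying $D^{\textit{l}}_{a_i,\gamma}$ kills it regardless of whether $a_i=\gamma$ or not. This is what your ``concretely'' sentence is getting at, and it avoids the awkwardness of speaking of a negative degree.
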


\begin{proposition}
The sum ${\mathbf{U}}(gl(0|n)) + \mathbf{Irr}$ is a direct sum of vector subspaces of $\mathbf{U}(gl(m_0|m_1+n)).$
\end{proposition}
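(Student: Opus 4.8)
The plan is to exhibit a linear complement to $\mathbf{U}(gl(0|n))$ inside $\mathbf{U}(gl(m_0|m_1+n))$ that contains $\mathbf{Irr}$, and then argue that $\mathbf{Irr}$ is contained in it; this separates the two subspaces and forces the sum to be direct. First I would use the PBW theorem for the enveloping superalgebra $\mathbf{U}(gl(m_0|m_1+n))$: fixing a total order on the basis $\{e_{a,b} : a,b \in \mathcal{A}_0 \cup \mathcal{A}_1 \cup \mathcal{L}\}$ in which every generator $e_{a,b}$ with at least one index in $\mathcal{A}_0 \cup \mathcal{A}_1$ precedes every generator $e_{i,j}$ with both indices in $\mathcal{L}$, one gets a PBW basis of ordered monomials. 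The ordered monomials involving \emph{only} proper generators $e_{i,j}$, $i,j \in \mathcal{L}$, form a basis of the subalgebra $\mathbf{U}(gl(0|n)) = \mathbf{U}(gl(n))$; let $\mathbf{C}$ be the span of all the \emph{other} PBW monomials, i.e.\ those in which at least one virtual generator occurs. Then $\mathbf{U}(gl(m_0|m_1+n)) = \mathbf{U}(gl(0|n)) \oplus \mathbf{C}$ as vector spaces, tautologically.

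The substance is then to show $\mathbf{Irr} \subseteq \mathbf{C}$, equivalently $\mathbf{Irr} \cap \mathbf{U}(gl(0|n)) = 0$. One direction is representation-theoretic and already available: by the preceding Proposition, the $\varrho$-action of any element of $\mathbf{Irr}$ on $\mathbb{C}[M_{n,d}]$ is zero. On the other hand, the restriction of $\varrho$ to $\mathbf{U}(gl(0|n)) = \mathbf{U}(gl(n))$ acting on $\mathbb{C}[M_{n,d}]$ is a \emph{faithful} representation — for $d$ large enough (and we have assumed the virtual spaces, hence implicitly $d$, are ``sufficiently large''), the polarization action of $\mathbf{U}(gl(n))$ on the polynomial ring $\mathbb{C}[M_{n,d}]$ in $n \times d$ variables is the classical multilinear/Capelli-Weyl setup and is injective. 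Therefore if $u \in \mathbf{Irr} \cap \mathbf{U}(gl(0|n))$, then $\varrho(u)$ kills $\mathbb{C}[M_{n,d}]$ by the Proposition, while faithfulness forces $u = 0$. This gives $\mathbf{Irr} \cap \mathbf{U}(gl(0|n)) = 0$ and hence the asserted direct sum.

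The main obstacle I anticipate is pinning down the faithfulness statement with the right quantifier on $d$: one must be sure that the standing ``sufficiently large'' hypotheses on the auxiliary data indeed make $\varrho|_{\mathbf{U}(gl(n))}$ injective on $\mathbb{C}[M_{n,d}]$, or else argue injectivity on the inverse limit over $d$. A clean way to sidestep any worry about a fixed $d$ is to note that the polarization operators $D^{\textit{l}}_{i,j}$, $i,j \in \mathcal{L}$, are independent as operators on $\bigoplus_d \mathbb{C}[M_{n,d}]$ and generate a copy of $\mathbf{U}(gl(n))$ acting faithfully there (this is the standard fact underlying Capelli's theory), so a nonzero $u \in \mathbf{U}(gl(n))$ acts nontrivially on $\mathbb{C}[M_{n,d}]$ for all sufficiently large $d$; combined with the Proposition this again yields $u = 0$. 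An alternative, purely algebraic route avoids representations entirely: show directly from the PBW normal form that rewriting any generator of $\mathbf{Irr}$ (an irregular monomial times an arbitrary monomial on the left) into PBW order never produces a term lying wholly in $\mathbf{U}(gl(0|n))$, because the ``annihilation deficit'' recorded in the definition of irregularity is preserved under the straightening relations $[e_{a,b},e_{c,d}] = \delta_{bc} e_{a,d} - (\pm) \delta_{ad} e_{c,b}$; but tracking this invariant through the superbracket relations is exactly the kind of bookkeeping I would expect to be delicate, so I would favor the representation-theoretic argument above as the primary line.
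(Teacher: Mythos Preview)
The paper does not actually prove this proposition in the text; Section~2 is explicitly a ``synthetic presentation'' with details deferred to \cite{BriUMI-BR}, \cite{BT-jaa}, \cite{BT-CA}. That said, your representation-theoretic argument is correct and is the one the paper's organization suggests: the vanishing-action proposition is placed immediately before this statement precisely so that one can combine it with faithfulness of the polarization action of $\mathbf{U}(gl(n))$ on $\mathbb{C}[M_{n,d}]$ (which holds for $d \geq n$, since the associated graded map $\mathrm{Sym}(gl(n)) \to \mathbb{C}[x_{ik},\xi_{jk}]$, $e_{ij} \mapsto \sum_k x_{ik}\xi_{jk}$, is injective when $(X,\Xi)\mapsto X\Xi^T$ is dominant). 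This yields $\mathbf{Irr}\cap\mathbf{U}(gl(0|n))=0$, which is exactly the directness assertion.

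Two minor points. First, your opening PBW paragraph and the complement $\mathbf{C}$ are never used: you do not verify $\mathbf{Irr}\subseteq\mathbf{C}$ by straightening (and you rightly flag that as delicate), and the representation-theoretic step already gives the trivial intersection directly. You can simply delete that scaffolding. Second, the paper's ``sufficiently large'' hypotheses concern $m_0,m_1$, not $d$; but this is harmless, since the vanishing-action proposition holds for \emph{every} $d$ and faithfulness only needs some $d\geq n$, so your quantifier worry dissolves once you pick such a $d$ (or pass to $\bigoplus_d \mathbb{C}[M_{n,d}]$ as you suggest).
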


\begin{proposition}
The direct sum vector subspace $\mathbf{U}(gl(0|n)) \oplus \mathbf{Irr}$ is a \emph{subalgebra} 
of $\mathbf{U}(gl(m_0|m_1+n)).$
\end{proposition}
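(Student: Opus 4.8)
The plan is to show that $\mathbf{U}(gl(0|n)) \oplus \mathbf{Irr}$ is closed under multiplication by verifying closure separately on the four pairings of summands. Write $\mathbf{A} = \mathbf{U}(gl(0|n))$ for brevity inside the sketch. Since $\mathbf{A}$ is itself a subalgebra, $\mathbf{A} \cdot \mathbf{A} \subseteq \mathbf{A}$ is immediate, and since $\mathbf{Irr}$ is a \emph{left} ideal of $\mathbf{U}(gl(m_0|m_1+n))$ (hence in particular closed under left multiplication by anything, and a fortiori by elements of $\mathbf{A}$), we get $\mathbf{A} \cdot \mathbf{Irr} \subseteq \mathbf{Irr}$ and $\mathbf{Irr} \cdot \mathbf{Irr} \subseteq \mathbf{Irr}$ for free. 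The only genuine content is the fourth case: $\mathbf{Irr} \cdot \mathbf{A} \subseteq \mathbf{A} \oplus \mathbf{Irr}$, i.e. that right-multiplying an irregular expression by a monomial $e_{x_p y_p}\cdots e_{x_1 y_1}$ in the proper generators ($x_i, y_i \in \mathcal{L}$) lands back in the subspace.

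For that case I would argue as follows. By the PBW theorem for $\mathbf{U}(gl(m_0|m_1+n))$ together with the definition of $\mathbf{Irr}$ as the left ideal generated by irregular expressions, it suffices to take a generator $r \cdot e_{a_m b_m}\cdots e_{a_1 b_1}$ where $e_{a_m b_m}\cdots e_{a_1 b_1}$ is an irregular monomial and $r \in \mathbf{U}(gl(m_0|m_1+n))$ is arbitrary, multiply on the right by a single proper generator $e_{xy}$ with $x, y \in \mathcal{L}$, and show the result lies in $\mathbf{Irr}$; the case of a longer word in proper generators then follows by induction on its length, and the general element of $\mathbf{A}$ follows by linearity and PBW (monomials in proper generators span $\mathbf{A}$). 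So the heart of the matter is: if $w = e_{a_m b_m}\cdots e_{a_1 b_1}$ is irregular, is $w \cdot e_{xy}$ again (a sum of) irregular expression(s)? Appending $e_{xy}$ on the right shifts all indices of the original right-subword witness up by one; the witnessing right-subword $e_{a_i b_i}\cdots e_{a_1 b_1}$ of $w$ becomes the right-subword $e_{a_i b_i}\cdots e_{a_1 b_1} e_{xy}$ of $w e_{xy}$. Since $x, y \in \mathcal{L}$ are proper symbols, for any virtual $\gamma \in \mathcal{A}_0 \cup \mathcal{A}_1$ the newly added letter contributes $0$ to both counts $\#\{j : b_j = \gamma\}$ and $\#\{j : a_j = \gamma\}$; thus the strict inequality \eqref{irrexpr-BR} that held for $w$ continues to hold verbatim for the corresponding right-subword of $w e_{xy}$ (with the cutoff index shifted by one). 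Hence $w e_{xy}$ is again irregular, so $w e_{xy} \in \mathbf{Irr}$, and more precisely $\mathbf{Irr} \cdot e_{xy} \subseteq \mathbf{Irr}$.

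Chaining this, $\mathbf{Irr} \cdot \mathbf{A} \subseteq \mathbf{Irr} \subseteq \mathbf{A} \oplus \mathbf{Irr}$, completing the four cases, so the bilinear product maps $(\mathbf{A} \oplus \mathbf{Irr}) \times (\mathbf{A} \oplus \mathbf{Irr})$ into $\mathbf{A} \oplus \mathbf{Irr}$, which is therefore a subalgebra. I expect the main (really, the only) obstacle to be the bookkeeping in the $\mathbf{Irr}\cdot\mathbf{A}$ step: one must be careful that "irregular" is defined via a \emph{right} subword condition, so that it is stable under appending proper letters on the \emph{right} but would \emph{not} obviously be stable under appending arbitrary letters on the left — which is exactly why the left-ideal structure alone does not immediately give the result and why the restriction to proper generators $x,y \in \mathcal{L}$ in $\mathbf{A} = \mathbf{U}(gl(0|n))$ is essential. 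A secondary point to handle cleanly is the reduction, via PBW, from a general element of $\mathbf{Irr}$ (a left-ideal element, not literally a single irregular monomial) to the generating irregular monomials, and the induction on the length of the proper word; both are routine once the single-letter computation above is in place.
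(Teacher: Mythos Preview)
Your argument is correct. The key observation --- that appending a proper generator $e_{xy}$ with $x,y \in \mathcal{L}$ on the right of an irregular monomial preserves irregularity because the new letter contributes nothing to either count in condition~\eqref{irrexpr-BR} --- is exactly what is needed, and your reduction via the left-ideal structure of $\mathbf{Irr}$ and PBW is clean.

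Note that the paper itself does not prove this proposition: Section~\ref{sec 3} is explicitly a ``synthetic presentation'' that recalls Propositions~2.2--2.5 from the earlier papers \cite{BriUMI-BR}, \cite{BT-jaa}, \cite{BT-CA} without reproducing their proofs. So there is no ``paper's own proof'' to compare against here. Your write-up is a self-contained verification of the stated result, and the structure (three easy cases plus the $\mathbf{Irr}\cdot\mathbf{A}$ case handled by the right-subword stability under proper right-appends) is the natural one; it matches the spirit of the original arguments in the cited sources.

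One small cosmetic point: when you say ``by PBW \ldots\ monomials in proper generators span $\mathbf{A}$,'' you don't actually need PBW --- the enveloping algebra $\mathbf{U}(gl(0|n))$ is by definition generated by the $e_{xy}$ with $x,y\in\mathcal{L}$, so arbitrary products of such generators already span it. This doesn't affect correctness.
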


The subalgebra
$$
Virt(m_0+m_1,n) = \mathbf{U}(gl(0|n)) \oplus \mathbf{Irr} \subset {\mathbf{U}}(gl(m_0|m_1+n)).
$$
is called the {\textit{virtual algebra}}.

\begin{proposition}
The left ideal  $\mathbf{Irr}$ of ${\mathbf{U}}(gl(m_0|m_1+n))$
is a two sided ideal of $Virt(m_0+m_1,n).$
\end{proposition}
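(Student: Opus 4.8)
The plan is to show that $\mathbf{Irr}$ is stable under right multiplication by arbitrary elements of $Virt(m_0+m_1,n)$, i.e. that $\mathbf{Irr}\cdot Virt(m_0+m_1,n)\subseteq \mathbf{Irr}$. Since $\mathbf{Irr}$ is already a left ideal of $\mathbf{U}(gl(m_0|m_1+n))$ (by definition) and $Virt(m_0+m_1,n)=\mathbf{U}(gl(0|n))\oplus\mathbf{Irr}$, and since $\mathbf{Irr}\cdot\mathbf{Irr}\subseteq\mathbf{Irr}$ trivially (left ideal), the whole statement reduces to proving $\mathbf{Irr}\cdot\mathbf{U}(gl(0|n))\subseteq\mathbf{Irr}$. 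Because $\mathbf{U}(gl(0|n))$ is generated as an algebra by the proper polarizations $e_{x,y}$ with $x,y\in\mathcal{L}$, and $\mathbf{Irr}$ is a linear span, it suffices to show: if $w=e_{a_m b_m}\cdots e_{a_1 b_1}$ is an irregular expression (in the sense of \eqref{irrexpr-BR}) and $x,y\in\mathcal{L}$, then $w\cdot e_{x,y}\in\mathbf{Irr}$.

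First I would analyze $w\cdot e_{x,y}=e_{a_m b_m}\cdots e_{a_1 b_1}e_{x,y}$ directly. Let $i\le m$ be an index and $\gamma\in\mathcal{A}_0\cup\mathcal{A}_1$ a virtual symbol witnessing irregularity of $w$, so that the right subword $e_{a_i b_i}\cdots e_{a_1 b_1}$ satisfies $\#\{j\le i: b_j=\gamma\}>\#\{j<i: a_j=\gamma\}$. Now append $e_{x,y}$ on the right. Since $x,y\in\mathcal{L}$ are proper symbols, neither equals the virtual symbol $\gamma$, so appending $e_{x,y}$ changes neither count; the right subword $e_{a_i b_i}\cdots e_{a_1 b_1}e_{x,y}$ of $w\cdot e_{x,y}$ still exhibits the same defect for the same $\gamma$. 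Hence $w\cdot e_{x,y}$ is itself an irregular expression, so $w\cdot e_{x,y}\in\mathbf{Irr}$.

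This takes care of multiplication by generators on one side; I would then promote it to all of $\mathbf{U}(gl(0|n))$ by a straightforward induction on word length in the $e_{x,y}$'s ($x,y\in\mathcal{L}$), using associativity: $w\cdot(e_{x_k y_k}\cdots e_{x_1 y_1})=(w\cdot e_{x_k y_k})\cdot(e_{x_{k-1}y_{k-1}}\cdots e_{x_1 y_1})$, where $w\cdot e_{x_k y_k}\in\mathbf{Irr}$ by the base case, and then the inductive hypothesis applies. Linearity extends this from monomial generators of $\mathbf{Irr}$ and of $\mathbf{U}(gl(0|n))$ to arbitrary elements. Combining with $\mathbf{Irr}\cdot\mathbf{Irr}\subseteq\mathbf{Irr}$ and the left-ideal property gives two-sidedness of $\mathbf{Irr}$ inside $Virt(m_0+m_1,n)$.

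I expect the only genuine subtlety to be the bookkeeping in the base case, namely confirming that appending a \emph{proper} polarization $e_{x,y}$ on the right of an irregular word preserves the defining inequality \eqref{irrexpr-BR} — and the key point there is simply that proper symbols are disjoint from the virtual alphabet $\mathcal{A}_0\cup\mathcal{A}_1$, so the witnessing counts are untouched. An alternative, slightly more conceptual route: use the established fact (previous proposition) that $\varrho(\mathbf{Irr})$ annihilates $\mathbb{C}[M_{n,d}]$, observe that $\mathbb{C}[M_{n,d}]$ is $\mathbf{U}(gl(0|n))$-stable, and invoke the direct-sum decomposition $Virt(m_0+m_1,n)=\mathbf{U}(gl(0|n))\oplus\mathbf{Irr}$ together with the fact that $\varrho$ restricted to $\mathbf{U}(gl(0|n))$ is faithful on $\mathbb{C}[M_{n,d}]$ in the relevant range; then for $u\in\mathbf{Irr}$, $v\in Virt(m_0+m_1,n)$ one has $\varrho(uv)|_{\mathbb{C}[M_{n,d}]}=\varrho(u)\circ\varrho(v)|_{\mathbb{C}[M_{n,d}]}$, and tracking this through the decomposition forces the $\mathbf{U}(gl(0|n))$-component of $uv$ to vanish. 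I would present the first (combinatorial) argument as the primary proof since it is self-contained and does not require any faithfulness claim.
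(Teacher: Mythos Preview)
The paper does not include a proof of this proposition: it is stated as part of the synthetic summary of the superalgebraic method of virtual variables, with details deferred to the earlier references \cite{BriUMI-BR}, \cite{BT-jaa}, \cite{BT-CA}. Your combinatorial argument is correct and is exactly the natural proof.

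One small point worth making explicit to tighten the reduction step. You write that ``$\mathbf{Irr}$ is a linear span'' and then reduce to the case where $w$ itself is an irregular expression. This is fine, but the reason deserves a sentence: if $w$ is irregular and $u$ is any monomial in $\mathbf{U}(gl(m_0|m_1+n))$, then the product $u\cdot w$ (as a monomial in the $e_{a,b}$'s) is again an irregular expression, since the same right subword of $w$ and the same virtual symbol $\gamma$ witness the inequality \eqref{irrexpr-BR} for $u\cdot w$. Thus $\mathbf{Irr}$, a priori the left ideal generated by irregular expressions, is in fact equal to the \emph{linear span} of irregular expressions, and your reduction to $w\cdot e_{x,y}$ with $w$ irregular and $x,y\in\mathcal{L}$ is fully justified. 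The base case computation --- that appending a proper $e_{x,y}$ on the right leaves the two counts in \eqref{irrexpr-BR} unchanged because $x,y\notin\mathcal{A}_0\cup\mathcal{A}_1$ --- is correct as you state it.

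Regarding your alternative route via the representation $\varrho$: the step that requires care is the faithfulness of $\varrho$ restricted to $\mathbf{U}(gl(0|n))$ acting on $\mathbb{C}[M_{n,d}]$ for a \emph{fixed} $d$, which is not asserted in the paper and is not obviously available. Your instinct to present the combinatorial argument as primary is the right one.
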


The {\textit{Capelli devirtualization epimorphism}} is the surjection
$$
\mathfrak{p} : Virt(m_0+m_1,n) = \mathbf{U}(gl(0|n)) \oplus \mathbf{Irr} \twoheadrightarrow \mathbf{U}(gl(0|n)) = \mathbf{U}(gl(n))
$$
with $Ker(\mathfrak{p}) = \mathbf{Irr}.$

Any element in $\textbf{M} \in Virt(m_0+m_1,n)$ defines an element in
$\textbf{m} \in \mathbf{U}(gl(n))$ - via the map $\mathfrak{p}$ -
 and $\textbf{M}$ is called a \textit{virtual
presentation} of $\textbf{m}$.

Furthermore,
\begin{proposition}
The subalgebra $\mathbb C[M_{n,d}] \subset \mathbb{C}[M_{m_0|m_1+n,d}]$ is  invariant with respect to the action
of the subalgebra
$
Virt(m_0+m_1, n).
$
\end{proposition}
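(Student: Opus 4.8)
The plan is to show that $\mathbb{C}[M_{n,d}]$ is stable under the action of $Virt(m_0+m_1,n) = \mathbf{U}(gl(0|n)) \oplus \mathbf{Irr}$ by handling the two summands separately and then invoking linearity. First I would observe that $\mathbb{C}[M_{n,d}]$ is visibly invariant under $\mathbf{U}(gl(0|n)) = \mathbf{U}(gl(n))$: this is exactly the content of the remark already recorded in the excerpt that $\mathbb{C}[M_{n,d}]$ is a $\mathbf{U}(gl(n))$-submodule of $\mathbb{C}[M_{m_0|m_1+n,d}]$, which in turn follows because for proper symbols $a,b \in \mathcal{L}$ the superpolarization $D^{\textit{l}}_{a,b}$ sends $(c|j)$ to $\delta_{bc}(a|j)$, again a proper variable, and a derivation preserving the generating set of a subalgebra preserves the subalgebra. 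So the only real work concerns the ideal $\mathbf{Irr}$.

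Next I would invoke the Proposition stated immediately above, namely that the superpolarization action of any element of $\mathbf{Irr}$ on $\mathbb{C}[M_{n,d}]$ is \emph{identically zero}. In particular, for any $\mathbf{q} \in \mathbf{Irr}$ and any $\mathbf{p} \in \mathbb{C}[M_{n,d}]$, we have $\mathbf{q} \cdot \mathbf{p} = 0 \in \mathbb{C}[M_{n,d}]$, trivially an element of the subalgebra. Hence $\mathbf{Irr}$ maps $\mathbb{C}[M_{n,d}]$ into $\mathbb{C}[M_{n,d}]$ (indeed into $\{0\}$). Combining this with the previous paragraph: an arbitrary element $\mathbf{M} \in Virt(m_0+m_1,n)$ decomposes uniquely as $\mathbf{M} = \mathbf{u} + \mathbf{q}$ with $\mathbf{u} \in \mathbf{U}(gl(n))$ and $\mathbf{q} \in \mathbf{Irr}$ (using that the sum is direct), and for $\mathbf{p} \in \mathbb{C}[M_{n,d}]$ we get $\mathbf{M} \cdot \mathbf{p} = \mathbf{u} \cdot \mathbf{p} + \mathbf{q} \cdot \mathbf{p} = \mathbf{u} \cdot \mathbf{p} + 0 \in \mathbb{C}[M_{n,d}]$. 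This proves the invariance.

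There is essentially no obstacle here once the earlier propositions are in hand; the statement is a formal consequence of the direct-sum decomposition of $Virt(m_0+m_1,n)$ together with the vanishing of the $\mathbf{Irr}$-action. The only point deserving a word of care is that we are asserting invariance as a \emph{module} statement over the algebra $Virt(m_0+m_1,n)$, so one should note that it suffices to check stability under the action of a spanning set of $Virt(m_0+m_1,n)$ as a vector space — and the decomposition $\mathbf{U}(gl(0|n)) \oplus \mathbf{Irr}$ furnishes exactly such a description. I would also remark in passing that this invariance is precisely what makes the Capelli devirtualization epimorphism $\mathfrak{p}$ compatible with the two module structures: for $\mathbf{M} \in Virt(m_0+m_1,n)$ and $\mathbf{p} \in \mathbb{C}[M_{n,d}]$ one has $\mathbf{M} \cdot \mathbf{p} = \mathfrak{p}(\mathbf{M}) \cdot \mathbf{p}$, since the difference $\mathbf{M} - \mathfrak{p}(\mathbf{M}) \in \mathbf{Irr}$ acts as zero; this identity is what will be used repeatedly in the sequel to transfer computations from the superalgebra down to $\mathbf{U}(gl(n))$.
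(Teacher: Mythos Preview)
Your argument is correct and is precisely the natural one suggested by the propositions already assembled in the paper: invariance under $\mathbf{U}(gl(n))$ is the earlier remark that $\mathbb{C}[M_{n,d}]$ is a $\mathbf{U}(gl(n))$-submodule, and invariance under $\mathbf{Irr}$ is trivial since that ideal acts by zero on $\mathbb{C}[M_{n,d}]$. The paper itself does not spell out a proof of this proposition (it is part of the synthetic recollection of the virtual-variables machinery, with details deferred to the cited references), so there is nothing to compare against; your write-up is exactly what a complete version of the argument would look like.
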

\begin{proposition}\label{virtual action}
For every element $\mathbf{m} \in {\mathbf{U}}(gl(n))$, the action of $\mathbf{m}$ on 
the subalgebra $\mathbb C[M_{n,d}]$
is the same of the action of any of its virtual presentation $\mathbf{M}  \in Virt(m_0+m_1,n).$
In symbols, 
$$
if  \quad
\mathfrak{p}(\mathbf{M}) = \mathbf{m}
\quad
then
\quad
\mathbf{m} \cdot \mathbf{P} = \mathbf{M} \cdot \mathbf{P},
\quad
for \ every \ \mathbf{P} \in \mathbb C[M_{n,d}].
$$
\end{proposition}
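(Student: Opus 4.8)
The plan is to deduce Proposition~\ref{virtual action} directly from the preceding structural results, without any further computation with superpolarizations. The key observation is that $\mathbf{M}$ and $\mathbf{m}$ differ by an element of $\mathbf{Irr}$: since $\mathfrak{p}(\mathbf{M}) = \mathbf{m}$ and $Ker(\mathfrak{p}) = \mathbf{Irr}$, and since $\mathbf{m} = \mathfrak{p}(\mathbf{m})$ as well (because $\mathfrak{p}$ restricts to the identity on $\mathbf{U}(gl(0|n)) = \mathbf{U}(gl(n))$, which is how the devirtualization epimorphism was defined on the direct-sum decomposition $Virt(m_0+m_1,n) = \mathbf{U}(gl(0|n)) \oplus \mathbf{Irr}$), we get $\mathbf{M} - \mathbf{m} \in Ker(\mathfrak{p}) = \mathbf{Irr}$. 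Write $\mathbf{M} = \mathbf{m} + \mathbf{N}$ with $\mathbf{N} \in \mathbf{Irr}$.

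Next I would apply the two facts already established about these actions. First, by the Proposition stating that the superpolarization action of any element of $\mathbf{Irr}$ on the subalgebra $\mathbb{C}[M_{n,d}]$ via $\varrho$ is identically zero, we have $\mathbf{N} \cdot \mathbf{P} = 0$ for every $\mathbf{P} \in \mathbb{C}[M_{n,d}]$. Second, by the Proposition asserting that $\mathbb{C}[M_{n,d}]$ is invariant under the action of $Virt(m_0+m_1,n)$, the expression $\mathbf{M} \cdot \mathbf{P}$ again lies in $\mathbb{C}[M_{n,d}]$, so all terms in sight are genuinely elements of the proper polynomial algebra and the identity $\mathbf{M}\cdot\mathbf{P} = \mathbf{m}\cdot\mathbf{P} + \mathbf{N}\cdot\mathbf{P}$ makes sense there. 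Combining, $\mathbf{M}\cdot\mathbf{P} = \mathbf{m}\cdot\mathbf{P} + 0 = \mathbf{m}\cdot\mathbf{P}$ for all $\mathbf{P}\in\mathbb{C}[M_{n,d}]$, which is exactly the claim. One should also note that $\mathbf{m}\cdot\mathbf{P}$, computed via $\varrho$ using the embedding $\mathbf{U}(gl(n)) = \mathbf{U}(gl(0|n)) \subset \mathbf{U}(gl(m_0|m_1+n))$, coincides with the action of $\mathbf{m}$ through the ordinary representation of $\mathbf{U}(gl(n))$ on $\mathbb{C}[M_{n,d}]$, since the restriction of $\varrho$ to $\mathbf{U}(gl(0|n))$ preserves $\mathbb{C}[M_{n,d}]$ and is precisely that ordinary representation (the left superpolarizations $D^{\textit{l}}_{i,j}$ for $i,j \in \mathcal{L}$ restrict to the usual polarization operators on $\mathbb{C}[M_{n,d}]$).

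There is no real obstacle here: the statement is essentially a bookkeeping corollary of the direct-sum decomposition and the vanishing of $\mathbf{Irr}$ on proper variables. The only point requiring a line of care is the identification $\mathbf{m} = \mathfrak{p}(\mathbf{m})$, i.e. that $\mathfrak{p}$ is a retraction onto $\mathbf{U}(gl(n))$ — but this is immediate from the definition of $\mathfrak{p}$ as the projection $\mathbf{U}(gl(0|n)) \oplus \mathbf{Irr} \twoheadrightarrow \mathbf{U}(gl(0|n))$ with kernel $\mathbf{Irr}$. So the write-up will be short: reduce to $\mathbf{M} - \mathbf{m}\in\mathbf{Irr}$, invoke vanishing on $\mathbb{C}[M_{n,d}]$, invoke invariance of $\mathbb{C}[M_{n,d}]$, and conclude.
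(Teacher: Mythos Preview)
Your argument is correct and is precisely the intended one: the paper states this proposition without proof, treating it as an immediate consequence of the direct-sum decomposition $Virt(m_0+m_1,n) = \mathbf{U}(gl(0|n)) \oplus \mathbf{Irr}$ together with the earlier proposition that $\mathbf{Irr}$ acts as zero on $\mathbb{C}[M_{n,d}]$. Your write-up simply makes explicit the one-line reduction $\mathbf{M} - \mathbf{m} \in \mathbf{Irr}$ that the paper leaves to the reader.
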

Since the map $\mathfrak{p}$  a surjection, any element
$\mathbf{m} \in \mathbf{U}(gl(n))$ admits several virtual
presentations. In the sequel, we even take virtual presentations
as the \emph{definition} of special elements in $\mathbf{U}(gl(n)),$ 
and this method will turn out to be quite effective.

The superalgebra ${\mathbf{U}}(gl(m_0|m_1+n))$ is a Lie module with respect 
to the adjoint representation $Ad_{gl(m_0|m_1+n)}$. Since $gl(n) = gl(0|n)$ 
is a Lie subalgebra of $gl(m_0|m_1+n)$, then ${\mathbf{U}}(gl(m_0|m_1+n))$ is a $gl(n)-$module 
with respect to the adjoint action $Ad_{gl(n)}$ of $gl(n)$.
We recall a couple of results from \cite{BT-CA}.

\begin{proposition} The virtual algebra $Virt(m_0+m_1,n)$ is a submodule 
of ${\mathbf{U}}(gl(m_0|m_1+n))$ with respect to the adjoint action $Ad_{gl(n)}$ of $gl(n)$.
\end{proposition}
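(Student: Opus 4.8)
The plan is to verify that the virtual algebra $Virt(m_0+m_1,n) = \mathbf{U}(gl(0|n)) \oplus \mathbf{Irr}$ is stable under $Ad_{gl(n)}$ by checking stability of each of the two summands separately, and then invoking the fact that $gl(n)$ is generated as a Lie algebra by the elementary matrices $e_{i,j}$ with $i,j \in \mathcal{L}$, so it suffices to check that $ad(e_{i,j})$ maps $Virt(m_0+m_1,n)$ into itself. For the first summand, $\mathbf{U}(gl(0|n)) = \mathbf{U}(gl(n))$ is obviously $Ad_{gl(n)}$-stable, being the enveloping algebra of $gl(n)$ itself (the adjoint action of $gl(n)$ on $\mathbf{U}(gl(n))$ preserves $\mathbf{U}(gl(n))$). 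So the real content is showing $\mathbf{Irr}$ is $Ad_{gl(n)}$-stable.

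First I would recall that $\mathbf{Irr}$ is the left ideal generated by irregular expressions, so a typical element is a sum of terms $u \cdot r$ with $u \in \mathbf{U}(gl(m_0|m_1+n))$ and $r$ an irregular expression. Since $ad(e_{i,j})$ is a (super)derivation of $\mathbf{U}(gl(m_0|m_1+n))$, we get
$$
ad(e_{i,j})(u \cdot r) = ad(e_{i,j})(u) \cdot r + u \cdot ad(e_{i,j})(r).
$$
The first term is again in the left ideal $\mathbf{Irr}$. For the second term it suffices to show that $ad(e_{i,j})(r) \in \mathbf{Irr}$ whenever $r$ is an irregular expression and $i,j \in \mathcal{L}$. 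Writing $r = e_{a_m,b_m}\cdots e_{a_1,b_1}$, the derivation property expands $ad(e_{i,j})(r)$ as a sum over positions $k$ of the word obtained by replacing $e_{a_k,b_k}$ with $[e_{i,j}, e_{a_k,b_k}]$, and the bracket $[e_{i,j}, e_{a_k,b_k}] = \delta_{j,a_k} e_{i,b_k} - \delta_{i,b_k} e_{a_k,j}$ only ever substitutes a \emph{proper} symbol from $\mathcal{L}$ in place of another proper symbol — it never touches the virtual symbols $\gamma \in \mathcal{A}_0 \cup \mathcal{A}_1$ appearing in the $a_k$'s and $b_k$'s. Hence the counting condition (\ref{irrexpr-BR}) that witnesses irregularity of $r$ — namely the existence of some virtual $\gamma$ and some $i \le m$ with $\#\{j : b_j = \gamma, j \le i\} > \#\{j : a_j = \gamma, j < i\}$ — is preserved verbatim in each resulting word, since those multiplicities of $\gamma$ among the $a$'s and $b$'s are unchanged. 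Therefore every term of $ad(e_{i,j})(r)$ is again an irregular expression, so $ad(e_{i,j})(r) \in \mathbf{Irr}$.

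The main obstacle, and the point deserving care, is the bookkeeping around the fact that $ad(e_{i,j})$ is applied term-by-term in a \emph{sum}, and that brackets $[e_{i,j}, e_{a_k,b_k}]$ themselves are sums of two monomials: one must confirm that in \emph{both} of these monomials the substitution is a proper-for-proper replacement and hence preserves the virtual-symbol multiplicities that appear in the irregularity criterion. A secondary subtlety is the sign conventions: since we are in a superalgebra, $ad$ is a super-derivation and the signs must be tracked, but because $i,j$ are proper (hence odd) this only affects signs of coefficients and not membership in the left ideal $\mathbf{Irr}$, which is a \emph{vector subspace} closed under left multiplication. I would conclude by assembling: $Ad_{gl(n)}$ preserves $\mathbf{U}(gl(n))$ and preserves $\mathbf{Irr}$, hence preserves their direct sum $Virt(m_0+m_1,n)$, which is the claim. (One should also note the generation-by-$e_{i,j}$ reduction is legitimate because $ad$ is a Lie algebra action, so stability under a generating set of $gl(n)$ implies stability under all of $gl(n)$.)
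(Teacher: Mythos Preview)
Your proof is correct. The paper does not actually prove this proposition in the text---it is simply recalled from \cite{BT-CA}---so there is no in-paper argument to compare against; your approach (showing that for $i,j\in\mathcal{L}$ the bracket $[e_{i,j},e_{a,b}]$ only substitutes proper symbols for proper symbols, hence leaves the virtual-symbol counts in the irregularity witness unchanged) is the natural one and matches what one would expect the cited argument to be.

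One small clarification on your sign remark: for proper $i,j\in\mathcal{L}$ we have $|e_{i,j}|=|i|+|j|=1+1=0$, so $e_{i,j}$ is \emph{even} and $ad(e_{i,j})$ is an ordinary (not super) derivation---there are no extra signs at all in the Leibniz expansion, which slightly sharpens your observation that signs are irrelevant to the argument.
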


\begin{proposition}\label{rappresentazione aggiunta-BR} The Capelli  epimorphism 
$$
\mathfrak{p} : Virt(m_0+m_1,n)  \twoheadrightarrow \mathbf{U}(gl(n))
$$ is an $Ad_{gl(n)}-$\emph{equivariant} map.
\end{proposition}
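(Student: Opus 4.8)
The plan is to exploit the vector-space decomposition $Virt(m_0+m_1,n) = \mathbf{U}(gl(n)) \oplus \mathbf{Irr}$, with respect to which $\mathfrak{p}$ is by definition the projection onto the first summand with kernel $\mathbf{Irr}$. Hence it suffices to prove that \emph{both} summands $\mathbf{U}(gl(n))$ and $\mathbf{Irr}$ are stable under $Ad_{gl(n)}$: once this is known, for any $\mathbf{M} = \mathbf{m} + \mathbf{J}$ with $\mathbf{m} = \mathfrak{p}(\mathbf{M}) \in \mathbf{U}(gl(n))$, $\mathbf{J} \in \mathbf{Irr}$, and any $g \in gl(n)$, we have $ad_g(\mathbf{M}) = ad_g(\mathbf{m}) + ad_g(\mathbf{J})$ with the two terms lying respectively in $\mathbf{U}(gl(n))$ and in $\mathbf{Irr}$, so that $\mathfrak{p}(ad_g(\mathbf{M})) = ad_g(\mathbf{m}) = ad_g(\mathfrak{p}(\mathbf{M}))$. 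Throughout it is enough to treat the generators $g = e_{p,q}$, $p, q \in \mathcal{L}$, of $gl(n)$; since $|e_{p,q}| = |p| + |q| = 0 \in \mathbb{Z}_2$, the operator $ad_{e_{p,q}} = [e_{p,q}, -]$ is an \emph{even} derivation of the associative superalgebra $\mathbf{U}(gl(m_0|m_1+n))$.

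Stability of $\mathbf{U}(gl(n))$ is routine: for $r, s \in \mathcal{L}$ the superbracket relation gives $[e_{p,q}, e_{r,s}] = \delta_{q,r}\, e_{p,s} - \delta_{p,s}\, e_{r,q} \in gl(n)$, so $ad_{e_{p,q}}$ maps the algebra generators of $\mathbf{U}(gl(n)) = \mathbf{U}(gl(0|n))$ into $\mathbf{U}(gl(n))$ and, being a derivation, maps all of $\mathbf{U}(gl(n))$ into itself; this restricted action is visibly the intrinsic $Ad_{gl(n)}$-module structure of $\mathbf{U}(gl(n))$.

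The substantial point is the stability of $\mathbf{Irr}$. Let $E = e_{a_m,b_m}\cdots e_{a_1,b_1}$ be an irregular expression, witnessed as in \eqref{irrexpr-BR} by a right subword ending at position $i$ and by a virtual symbol $\gamma \in \mathcal{A}_0 \cup \mathcal{A}_1$ with $\#\{j \le i : b_j = \gamma\} > \#\{j < i : a_j = \gamma\}$. Expanding $ad_{e_{p,q}}(E)$ by the Leibniz rule and using $[e_{p,q}, e_{a_k,b_k}] = \delta_{q,a_k}\, e_{p,b_k} - \delta_{p,b_k}\, e_{a_k,q}$, one sees that $ad_{e_{p,q}}(E)$ is a $\mathbb{C}$-linear combination of words each obtained from $E$ by modifying a single factor: either $a_k = q$ is replaced by $p$, or $b_k = p$ is replaced by $q$. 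In every case only \emph{proper} symbols of $\mathcal{L}$ are altered, so the multiset of positions occupied by any \emph{virtual} symbol among the $a_j$'s, and among the $b_j$'s, is left unchanged; hence the very same pair $(i,\gamma)$ still certifies irregularity of each resulting word, and $ad_{e_{p,q}}(E) \in \mathbf{Irr}$. Finally, a general element of the left ideal $\mathbf{Irr}$ is a sum $\sum_\alpha \mathbf{u}_\alpha E_\alpha$ with $\mathbf{u}_\alpha \in \mathbf{U}(gl(m_0|m_1+n))$ and $E_\alpha$ irregular; applying the even derivation $ad_{e_{p,q}}$ yields $\sum_\alpha \big( ad_{e_{p,q}}(\mathbf{u}_\alpha)\, E_\alpha + \mathbf{u}_\alpha\, ad_{e_{p,q}}(E_\alpha) \big)$, and both kinds of summand lie in $\mathbf{Irr}$, since $\mathbf{Irr}$ is a left ideal of $\mathbf{U}(gl(m_0|m_1+n))$ and $ad_{e_{p,q}}(E_\alpha) \in \mathbf{Irr}$ by the previous step. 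This gives $ad_{e_{p,q}}(\mathbf{Irr}) \subseteq \mathbf{Irr}$ and completes the proof.

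I expect the bookkeeping in the third paragraph — checking that commuting a proper polarization past an irregular word never destroys the irregularity witness — to be the only genuinely delicate point; everything else reduces to the derivation property and the left-ideal structure already recorded above. (An alternative route to the stability of $\mathbf{Irr}$, avoiding this combinatorics, is to identify $\mathbf{Irr}$ with the annihilator in $Virt(m_0+m_1,n)$ of the proper module $\mathbb{C}[M_{n,d}]$ for $d$ large and to note that the annihilator of a $gl(n)$-submodule is automatically $Ad_{gl(n)}$-stable; this, however, requires the faithfulness of the $\mathbf{U}(gl(n))$-action on $\mathbb{C}[M_{n,d}]$, which is not isolated in the present excerpt.)
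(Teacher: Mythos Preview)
Your argument is correct. The paper itself does not prove this proposition: it merely records it (together with the preceding proposition on $Ad_{gl(n)}$-stability of $Virt(m_0+m_1,n)$) as a result recalled from \cite{BT-CA}, so there is no in-paper proof to compare against. Your approach---showing that both summands $\mathbf{U}(gl(n))$ and $\mathbf{Irr}$ of the defining decomposition are $Ad_{gl(n)}$-stable, the latter because commuting with a proper $e_{p,q}$ alters only proper indices and hence preserves every irregularity witness $(i,\gamma)$---is exactly the natural one, and in fact simultaneously establishes the preceding proposition as well.
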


\begin{corollary}\label{centrality gen} The isomorphism $\mathfrak{p}$ maps
any  $Ad_{gl(n)}-$invariant element $\mathbf{m} \in Virt(m_0+m_1,n)$  to a \emph{central} 
element of $\mathbf{U}(gl(n))$.
\end{corollary}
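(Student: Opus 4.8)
The statement to prove is Corollary~\ref{centrality gen}: that $\mathfrak{p}$ maps any $Ad_{gl(n)}$-invariant element of $Virt(m_0+m_1,n)$ to a central element of $\mathbf{U}(gl(n))$.

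\medskip

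The plan is to deduce this directly from the preceding Proposition~\ref{rappresentazione aggiunta-BR}, which asserts that $\mathfrak{p}$ is $Ad_{gl(n)}$-equivariant, together with the elementary fact that the center $\boldsymbol{\zeta}(n)$ of $\mathbf{U}(gl(n))$ coincides with the set of $Ad_{gl(n)}$-invariants in $\mathbf{U}(gl(n))$. First I would recall why $\boldsymbol{\zeta}(n) = \mathbf{U}(gl(n))^{Ad_{gl(n)}}$: an element $\mathbf{m} \in \mathbf{U}(gl(n))$ satisfies $Ad_x(\mathbf{m}) = [x,\mathbf{m}] = 0$ for all $x \in gl(n)$ if and only if $\mathbf{m}$ commutes with every $x \in gl(n)$, and since $gl(n)$ generates $\mathbf{U}(gl(n))$ as an algebra, this holds if and only if $\mathbf{m}$ commutes with all of $\mathbf{U}(gl(n))$, i.e. $\mathbf{m} \in \boldsymbol{\zeta}(n)$. (Here one uses that $Ad$ acts by derivations, so the set of elements killed by $Ad_x$ for a fixed $x$ is a subalgebra, and an intersection over a generating set of $gl(n)$ suffices.)

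\medskip

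Next, let $\mathbf{m} \in Virt(m_0+m_1,n)$ be $Ad_{gl(n)}$-invariant, meaning $Ad_x(\mathbf{m}) = 0$ for every $x \in gl(n)$, where $Ad_{gl(n)}$ here denotes the restriction to $gl(n) = gl(0|n)$ of the adjoint action of $gl(m_0|m_1+n)$ on $\mathbf{U}(gl(m_0|m_1+n))$, under which $Virt(m_0+m_1,n)$ is a submodule by the Proposition just above. By the equivariance of $\mathfrak{p}$ (Proposition~\ref{rappresentazione aggiunta-BR}), for each $x \in gl(n)$ we have
$$
Ad_x\big(\mathfrak{p}(\mathbf{m})\big) = \mathfrak{p}\big(Ad_x(\mathbf{m})\big) = \mathfrak{p}(0) = 0,
$$
so $\mathfrak{p}(\mathbf{m})$ is $Ad_{gl(n)}$-invariant in $\mathbf{U}(gl(n))$, hence central by the identification recalled above. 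This completes the argument.

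\medskip

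There is essentially no obstacle here: the corollary is a formal consequence of equivariance, and the only point requiring a word of care is the compatibility of the two notions of ``$Ad_{gl(n)}$-action'' — the one on the source $Virt(m_0+m_1,n) \subset \mathbf{U}(gl(m_0|m_1+n))$ (inherited from the superalgebra adjoint action, restricted to the even subalgebra $gl(n)$) and the one on the target $\mathbf{U}(gl(n))$ (the ordinary adjoint action) — but this compatibility is precisely the content of Proposition~\ref{rappresentazione aggiunta-BR}, which we are entitled to assume. I would also remark, for the reader, that this is the mechanism by which all the central elements of the paper (the double Young-Capelli bitableaux sums $\mathbf{S}_\lambda(n)$, the Capelli elements $\mathbf{H}_k(n)$, the Nazarov-Umeda elements $\mathbf{I}_k(n)$) will be shown to lie in $\boldsymbol{\zeta}(n)$: one exhibits an $Ad_{gl(n)}$-invariant virtual presentation and invokes this corollary.
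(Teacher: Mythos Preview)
Your proof is correct and is exactly the argument the paper intends: the corollary is stated in the paper without proof, immediately following Proposition~\ref{rappresentazione aggiunta-BR}, and your derivation (equivariance of $\mathfrak{p}$ plus the identification of the center with the $Ad_{gl(n)}$-invariants) is precisely the standard one-line justification being left to the reader. The additional remarks you make about the compatibility of the two adjoint actions and about the subsequent use of the corollary for $\mathbf{S}_\lambda(n)$, $\mathbf{H}_k(n)$, $\mathbf{I}_k(n)$ are accurate and match how the paper later invokes this result.
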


\textit{Balanced monomials} are  elements of the algebra ${\mathbf{U}}(gl(m_0|m_1+n))$
 of the form:
\begin{itemize}\label{defbalanced monomials-BR}
\item [--] $e_{{i_1},\gamma_{p_1}} \cdots e_{{i_k},\gamma_{p_k}} \cdot
e_{\gamma_{p_1},{j_1}} \cdots e_{\gamma_{p_k},{j_k}},$
\item [--]
$e_{{i_1},\theta_{q_1}} \cdots e_{{i_k},\theta_{q_k}} \cdot
e_{\theta_{q_1},\gamma_{p_1}} \cdots e_{\theta_{q_k},\gamma_{p_k}} \cdot
e_{\gamma_{p_1},{j_1}} \cdots e_{\gamma_{p_k},{j_k}},$
\item [--] and so on,
\end{itemize}
where
$i_1, \ldots, i_k, j_1, \ldots, j_k \in L,$
i.e., the $i_1, \ldots, i_k, j_1, \ldots, j_k$ are $k$
proper (negative) symbols, and the
$\gamma_{p_1}, \ldots, \gamma_{p_k}, \ldots, \theta_{q_1}, \ldots, \theta_{q_k}, \ldots$ are
virtual symbols.
In plain words, a balanced monomial is product of two or more factors  where the
rightmost one  \textit{annihilates} (by superpolarization)
the $k$ proper symbols $ j_1, \ldots, j_k$ and
\textit{creates} (by superpolarization) some virtual symbols;
 the leftmost one  \textit{annihilates} all the virtual symbols
and \textit{creates} the $k$ proper symbols $ i_1, \ldots, i_k$;
between these two factors, there might be further factors that annihilate
 and create  virtual symbols only.

\begin{proposition}
Every balanced monomial belongs to $Virt(m_0+m_1,n)$. Hence,
the Capelli epimorphism $\mathfrak{p}$ maps  balanced monomials to elements of $\mathbf{U}(gl(n)).$
\end{proposition}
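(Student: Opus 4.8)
The plan is to prove the sharper statement that every balanced monomial $\mathbf{M}$ is congruent modulo the left ideal $\mathbf{Irr}$ to an element $\mathbf{m} \in \mathbf{U}(gl(0|n)) = \mathbf{U}(gl(n))$. Granting this, both assertions follow at once: since the sum $\mathbf{U}(gl(0|n)) \oplus \mathbf{Irr}$ is direct, $\mathbf{M} \in \mathbf{U}(gl(0|n)) + \mathbf{Irr} = Virt(m_0+m_1,n)$, and then $\mathfrak{p}(\mathbf{M})$ is by definition a well-defined element of $\mathbf{U}(gl(n))$ (indeed, as $\mathrm{Ker}(\mathfrak{p}) = \mathbf{Irr}$ and $\mathfrak{p}$ is the devirtualization, $\mathfrak{p}(\mathbf{M}) = \mathbf{m}$). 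So the whole content is the reduction $\mathbf{M} \equiv \mathbf{m} \pmod{\mathbf{Irr}}$, and the guiding principle for it is the behaviour of $\mathbf{Irr}$ on $\mathbb{C}[M_{n,d}]$: an irregular right factor annihilates a virtual symbol that has not yet been created, hence kills $\mathbb{C}[M_{n,d}]$, and discarding such terms is exactly computing modulo $\mathbf{Irr}$.

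I would run an induction on the number of virtual polarization factors occurring in $\mathbf{M}$. The base case --- no virtual symbols --- is trivial, since then every factor $e_{a,b}$ has $a,b \in \mathcal{L}$ and $\mathbf{M} \in \mathbf{U}(gl(0|n))$ already. For the inductive step I would expose inside $\mathbf{M}$ an ``innermost'' matched pair $e_{a,\gamma}\,e_{\gamma,b}$: a factor creating a virtual symbol $\gamma$ together with, immediately to its left, the factor annihilating that same occurrence of $\gamma$. The layered shape of a balanced monomial guarantees such a pair can be made adjacent, because factors in different ``columns'' involve disjoint virtual symbols and therefore super-commute up to sign (each commutator correction being suppressed by a Kronecker delta that would equate a virtual symbol with a proper one). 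I then substitute the super-bracket identity
$$
e_{a,\gamma}\, e_{\gamma,b} \;=\; \pm\, e_{\gamma,b}\, e_{a,\gamma} \;+\; e_{a,b} \;\mp\; \delta_{ab}\, e_{\gamma,\gamma}
$$
inside $\mathbf{M}$. The summand containing $e_{a,b}$ has strictly fewer virtual factors and is again of balanced type, so the inductive hypothesis applies; the summand $\pm\, e_{\gamma,b}\, e_{a,\gamma}(\cdots)$ has, read from the right, the symbol $\gamma$ annihilated before it is created --- hence an irregular right subword --- while $e_{\gamma,\gamma}$ is itself an irregular expression, so both summands lie in $\mathbf{Irr}$ (a left ideal, so left multiplication by the remaining factors of $\mathbf{M}$ keeps them there) and may be dropped. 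This closes the induction.

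A lighter way to organize the combinatorics, which I would actually follow, is to first observe that the $k$ ``columns'' of a balanced monomial --- the chains $e_{i_l,\theta_{q_l}}\cdots e_{\theta_{q_l},\gamma_{p_l}}\cdots e_{\gamma_{p_l},j_l}$ --- pairwise super-commute modulo $\mathbf{Irr}$, so that $\mathbf{M}$ is congruent modulo $\mathbf{Irr}$ to a product of single-column balanced monomials; it then suffices to collapse one such chain, along which successive virtual symbols are created and then annihilated, down to the single proper polarization $e_{i,j}$ modulo $\mathbf{Irr}$ by iterating the bracket identity above. This isolates the one genuinely one-dimensional computation.

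The step I expect to be the main obstacle is the bookkeeping in the inductive step, on three fronts: choosing an induction measure that provably strictly decreases (the crude count of virtual factors works, but one must check that every commutator correction which is not outright grading-forbidden either lowers the measure or is irregular); handling repeated virtual symbols, where ``matched pair'' must be read through the count-based definition \ref{irrexpr-BR} of irregular expression rather than through literal adjacency; and carrying the $\mathbb{Z}_2$-signs $(-1)^{(|a|+|b|)(|c|+|d|)}$ through the rearrangements --- harmless for the membership statements but easy to mishandle. None of this is deep; it is simply where the labor sits, with the recurring check being that each discarded term acts as zero on $\mathbb{C}[M_{n,d}]$ and hence lies in $\mathbf{Irr}$.
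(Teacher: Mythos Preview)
The paper states this proposition without proof, referring implicitly to the prior development of the virtual-variable method; so there is no ``paper's own proof'' to compare against, and your proposal must be judged on its merits.

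Your overall strategy --- reduce a balanced monomial modulo the left ideal $\mathbf{Irr}$ to a purely proper monomial by iterating the super-bracket on an innermost creation/annihilation pair --- is the right one, and your identification of the three bookkeeping issues is accurate. However, one of those issues is not merely bookkeeping but a genuine gap as the argument stands. Your key claim, that after swapping $e_{a,\gamma}e_{\gamma,b}$ to $e_{\gamma,b}e_{a,\gamma}$ the resulting monomial is irregular, \emph{fails} when the virtual symbol $\gamma$ is repeated --- and repetition is the typical situation (e.g.\ in $e_{S,C_\lambda^*}e_{C_\lambda^*,T}$ the symbol $\alpha_i$ from $C_\lambda^*$ occurs $\lambda_i$ times). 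Concretely, in
\[
e_{i_1,\gamma}\,e_{i_2,\gamma}\,e_{\gamma,j_1}\,e_{\gamma,j_2},
\]
swapping the inner pair $e_{i_2,\gamma}e_{\gamma,j_1}$ yields $e_{i_1,\gamma}\,e_{\gamma,j_1}\,e_{i_2,\gamma}\,e_{\gamma,j_2}$, and one checks from the count in~(\ref{irrexpr-BR}) that no right subword is irregular: at the position of $e_{i_2,\gamma}$ the symbol $\gamma$ has already been created once by $e_{\gamma,j_2}$. Your ``lighter'' column-factorisation runs into the same wall, since columns sharing a virtual symbol do \emph{not} super-commute.

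The fix is a more refined induction. One workable measure is the number of pairs (annihilation of $\gamma$, creation of $\gamma$) with the annihilation strictly left of the creation, summed over all virtual $\gamma$; a swap strictly decreases this, the bracket-correction $e_{a,b}$ term strictly decreases the cruder virtual-factor count, and the $e_{\gamma,\gamma}$ term is genuinely irregular. When the pair-count reaches zero every annihilation of each $\gamma$ lies to the right of every creation, and \emph{then} the rightmost annihilation gives an irregular right subword. Alternatively, handle one virtual symbol at a time: push the rightmost $e_{\cdot,\gamma}$ to the right past all $e_{\gamma,\cdot}$'s; after the last swap that term is irregular, and the accumulated bracket corrections have strictly fewer $\gamma$'s. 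Either route closes the gap, but it needs to be said rather than flagged as labor.
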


\subsection{Four special classes of elements in $Virt(m_0+m_1,n)$ and their images in ${\mathbf{U}}(gl(n))$}\label{Capbit}

We will introduce four classes of remarkable elements of the enveloping algebra ${\mathbf{U}}(gl(n))$, that
we call {\textit{bitableaux monomials}}, {\textit{Capelli bitableaux}}, {\textit{Young-Capelli bitableaux}} 
and {\textit{double Young-Capelli bitableaux}}, respectively.

\subsubsection{Partitions and Young tableaux}\label{comb Young tab}

 Let $\lambda = (\lambda_1 \geq  \lambda_2 \geq \cdots \geq \lambda_p) \vdash h$ be a partition
 of the positive integer $h \in \mathbb{Z}^+$, where $p = \it{l}(\lambda)$
is the  \emph{length} of $\lambda$.

We denote by $\tilde{\lambda} = (\tilde{\lambda}_1 \geq \tilde{\lambda}_2 \geq \cdots \geq \tilde{\lambda}_q)$
the \emph{conjugate partition} of $\lambda$, that is
$$
\tilde{\lambda}_i = \# \{ j = 1, 2, \ldots ,p; \lambda_j \geq i \}, \quad i = 1, 2, \ldots \lambda_1;
$$ 
clearly, $\it{l}(\tilde{\lambda}) = \lambda_1.$
 
Label the boxes of the Ferrers diagram of the partition $\lambda$
with the numbers $1, 2, \ldots , h$ in the following way:
$$
\begin{array}{lllll}
1 & 2 &  \cdots & \cdots & \lambda_1 \\
\lambda_1 + 1 &  \lambda_1 + 2 & \cdots & \lambda_1 + \lambda_2 &    \\
\cdots & \cdots & \cdots &  &  \\
\cdots & \cdots & h &  &  \\
\end{array}.
$$
A {\textit{Young tableau}} $T$ of shape $\lambda$ over the alphabet 
$\mathcal{A} =  \{a_1, a_2, \ldots \}$ 
is a map $T : \underline{h} = \{1, 2, \ldots , h \}  \rightarrow \mathcal{A}$; the element $T(i)$
is the symbol in the cell $i$ of the tableau $T$.

The sequences
$$
\begin{array}{l}
T(1)  T(2) \cdots  T(\lambda_1),
\\
T(\lambda_1 + 1)  T(\lambda_1 + 2)  \cdots  T(\lambda_1 + \lambda_2),
\\
 \ldots \ldots
\end{array}
$$
are called the {\textit{row words}} of the Young tableau $T$.

We will also denote a Young tableau
by its sequence of rows words, that is $T = (\omega_1, \omega_2, \ldots, \omega_p)$.
Furthermore, the {\textit{word of the tableau}} $T$ is the concatenation
\begin{equation}\label{word}
w(T) = \omega_1\omega_2 \cdots \omega_p.
\end{equation}

The {\textit{content}} of a tableau $T$ is the function $c_T : \mathcal{A} \rightarrow \mathbb{N}$,
$$
c_T(a) = \sharp \{i \in \underline{h}; \ T(i) = a \}.
$$

Set
\begin{equation}\label{Deruyts}
D_{\lambda} = \left(
\begin{array}{llllllllllllll}
a_1    \ldots    \ldots     \ldots     a_{\lambda_1}     \\
a_1     \ldots  \ldots               a_{\lambda_2} \\
 \ldots  \ldots   \\
a_1 \ldots a_{\lambda_p}
\end{array}
\right).
\end{equation}

The tableaux of  kind (\ref{Deruyts}) are called  {\it{  Deruyts  tableaux}}
(of shape $\lambda$)  in honor of Jacques Deruyts ($1862-1945$), who  
introduced
them in his treatment of \emph{semi-invariants/primary covariants} of algebraic forms
\cite{Deruyts} (see also \cite{Green1} and \cite{Bri-BR}).

Set
\begin{equation}\label{Coderuyts}
C_{\lambda} = \left(
\begin{array}{llllllllllllll}
a_1 \ldots    \ldots     \ldots     a_1                           \\
a_2   \ldots  \ldots               a_2 \\
 \ldots  \ldots   \\
a_p \ldots a_p
\end{array} \right).
\end{equation}

Since $C_{\lambda}$ is the conjugate tableau 
$C_{\lambda} = \widetilde{D_{\tilde{\lambda}}}$
of the Deruyts tableau $D_{\tilde{\lambda}}$ of shape $\tilde{\lambda}$,
we refer to the tableaux of kind (\ref{Coderuyts}) as  {\it{  Coderuyts  tableaux}}
(of shape $\lambda$).

Now, assume that the alphabet $\mathcal{A}$ is
$$
\mathcal{A} =   \mathcal{A}_0 \cup \mathcal{A}_1  \cup \mathcal{L}
$$
as in Section \ref{superalgebra first}.

Given a shape/partition $\lambda$, 
we assume that $|\mathcal{A}_0| = m_0 \geq \widetilde{\lambda}_1$ 
and $|\mathcal{A}_1| = m_1 \geq \lambda_1$.
Let us denote by $\alpha_1, \ldots, \alpha_p \in \mathcal{A}_0$, $\beta_1, \ldots, \beta_{\lambda_1} \in A_1$
two \emph{arbitrary} families of \emph{distinct positive and negative virtual symbols}, respectively.

Set
\begin{equation}\label{Deruyts and Coderuyts}
D_{\lambda}^* = \left(
\begin{array}{llllllllllllll}
\beta_1    \ldots    \ldots     \ldots     \beta_{\lambda_1}     \\
\beta_1     \ldots  \ldots               \beta_{\lambda_2} \\
 \ldots  \ldots   \\
\beta_1 \ldots \beta_{\lambda_p}
\end{array}
\right),
\qquad
C_{\lambda}^* = \left(
\begin{array}{llllllllllllll}
\alpha_1 \ldots    \ldots     \ldots     \alpha_1                           \\
\alpha_2   \ldots  \ldots               \alpha_2 \\
 \ldots  \ldots   \\
\alpha_p \ldots \alpha_p
\end{array} \right).
\end{equation}

The tableaux of  kind (\ref{Deruyts and Coderuyts}) are called \emph{virtual}  Deruyts and Coderuyts tableaux
of shape $\lambda$, respectively.

\subsubsection{Bitableaux monomials in ${\mathbf{U}}(gl(m_0+m_1,n))$}\label{Bitableaux monomials}
Let $S$ and $T$ be two Young tableaux of same shape $\lambda \vdash h$ on
the  alphabet $ \mathcal{A}_0 \cup \mathcal{A}_1 \cup \mathcal{L} $:

\begin{equation}\label{bitableaux}
S = \left(
\begin{array}{llllllllllllll}
z_{i_1}  \ldots    \ldots     \ldots     z_{i_{\lambda_1}}     \\
z_{j_1}   \ldots  \ldots               z_{j_{\lambda_2}} \\
 \ldots  \ldots   \\
z_{s_1} \ldots z_{s_{\lambda_p}}
\end{array}
\right), \qquad
T = \left(
\begin{array}{llllllllllllll}
z_{h_1}  \ldots    \ldots     \ldots     z_{h_{\lambda_1}}    \\
z_{k_1}   \ldots  \ldots               z_{k_{\lambda_2}} \\
 \ldots  \ldots   \\
z_{t_1} \ldots z_{t_{\lambda_p}}
\end{array}
\right).
\end{equation}

To the pair $(S,T)$, we associate the {\it{bitableau monomial}}:
\begin{equation}\label{BitMon}
e_{S,T} =
e_{z_{i_1}, z_{h_1}}\cdots e_{z_{i_{\lambda_1}}, z_{h_{\lambda_1}}}
e_{z_{j_1}, z_{k_1}}\cdots e_{z_{j_{\lambda_2}}, z_{k_{\lambda_2}}}
 \cdots  \cdots
e_{z_{s_1}, z_{t_1}}\cdots e_{z_{s_{\lambda_p}}, z_{t_{\lambda_p}}}
\end{equation}
in ${\mathbf{U}}(gl(m_0|m_1+n)).$

\subsubsection{Capelli bitableaux in ${\mathbf{U}}(gl(n))$}\label{Capbit sub}

Given a pair of Young tableaux  $S, T$ of the same shape $\lambda$ on the proper alphabet $\mathcal{L}$, consider the elements
$$
e_{S,C_{\lambda}^*} \ e_{C_{\lambda}^*,T} \in {\mathbf{U}}(gl(m_0|m_1+n)).
$$

Since these elements  are \emph{balanced monomials} in
${\mathbf{U}}(gl(m_0|m_1+n))$, then they belong to the \emph{virtual subalgebra} $Virt(m_0+m_1,n)$.

Hence, we can consider their images in ${\mathbf{U}}(gl(n))$ with respect to the Capelli epimorphism $\mathfrak{p}$.

We set
\begin{equation}\label{determinantal}
 \mathfrak{p} \Big( e_{S,C_{\lambda}^*} \ e_{C_{\lambda}^*,T}    \Big) = [S|T]    \in {\mathbf{U}}(gl(n)),
\end{equation}
and call the element $[S|T]$ a {\textit{Capelli bitableau}}.

The elements defined in (\ref{determinantal}) do not
depend on the choice of the virtual  Coderuyts tableau 
$C_{\lambda}^*$.

From \cite{BT-CA}, we recall that the Capelli bitableaux $[S|T] \in {\mathbf{U}}(gl(n))$
are the preimages of the \emph{determinantal bitableaux} $(S|T)$ in the polynomial algebra ${\mathbb C}[M_{n,n}]$
(see section \ref{standard} below) with respect to the \emph{Koszul equivariant isomorhism}
$$\mathcal{K} : \mathbf{U}(gl(n)) \rightarrow {\mathbb C}[M_{n,n}] \cong \mathbf{Sym}(gl(n)).$$

Hence, Capelli bitableaux $[S|T]$ admit explicit Laplace expansions: 
let $S$ and $T$ be the Young tableaux
$$
S = \left(
\begin{array}{llllllllllllll}
i_{p_1}  \ldots    \ldots     \ldots     i_{p_{\lambda_1}} \\
i_{q_1}   \ldots  \ldots               i_{q_{\lambda_2}} \\
 \ldots  \ldots   \\
i_{r_1} \ldots i_{r_{\lambda_m}}
\end{array}
\right),
\quad
T = \left(
\begin{array}{llllllllllllll}
j_{s_1}  \ldots    \ldots     \ldots     j_{s_{\lambda_1}} \\
j_{t_1}   \ldots  \ldots               j_{t_{\lambda_2}} \\
 \ldots  \ldots   \\
j_{v_1} \ldots j_{v_{\lambda_m}}
\end{array}
\right).
$$

\begin{proposition}\emph{( \cite{BT-CA}, Corollary $8.3$ )} We have
$$
[S|T] =
\sum_{\sigma_1, \ldots, \sigma_m } \ (-1)^{\sum_{k=1}^m \ |\sigma_k|} \
\left[
\begin{array}{c}
i_{p_{\sigma_1(1)}}\\   . \\ i_{p_{\sigma_1(\lambda_1)}} \\
\vdots   \\
i_{r_{\sigma_m(1)}}\\   . \\ i_{r_{\sigma_m(\lambda_m)}}
\end{array}
\right| \left.
\begin{array}{c}
j_{s_1}\\   . \\ j_{s_{\lambda_1}}  \\
\vdots \\
j_{v_1}\\   . \\ j_{v_{\lambda_m}}
\end{array}
\right]
$$
$$
\phantom{[S|T]} =
\sum_{\sigma_1, \ldots, \sigma_m } \ (-1)^{\sum_{k=1}^m \ |\sigma_k|} \
\left[
\begin{array}{c}
i_{p_1}\\   . \\ i_{p_{\lambda_1}}  \\
\vdots \\
i_{r_1}\\   . \\ i_{r_{\lambda_m}}
\end{array}
\right| \left.
\begin{array}{c}
j_{s_{\sigma_1(1)}}\\   . \\ j_{s_{\sigma_1(\lambda_1)}} \\
\vdots \\
j_{v_{\sigma_m(1)}}\\   . \\ j_{v_{\sigma_m(\lambda_m)}}
\end{array}
\right],
$$
where
$$
\left[
\begin{array}{c}
i_{p_{\sigma_1(1)}}\\   . \\ i_{p_{\sigma_1(\lambda_1)}} \\
\vdots   \\
i_{r_{\sigma_m(1)}}\\   . \\ i_{r_{\sigma_m(\lambda_m)}}
\end{array}
\right| \left.
\begin{array}{c}
j_{s_1}\\   . \\ j_{s_{\lambda_1}}  \\
\vdots \\
j_{v_1}\\   . \\ j_{v_{\lambda_m}}
\end{array}
\right],
\qquad
\left[
\begin{array}{c}
i_{p_1}\\   . \\ i_{p_{\lambda_1}}  \\
\vdots \\
i_{r_1}\\   . \\ i_{r_{\lambda_m}}
\end{array}
\right| \left.
\begin{array}{c}
j_{s_{\sigma_1(1)}}\\   . \\ j_{s_{\sigma_1(\lambda_1)}} \\
\vdots \\
j_{v_{\sigma_m(1)}}\\   . \\ j_{v_{\sigma_m(\lambda_m)}}
\end{array}
\right]
$$
are \emph{column Capelli bitableaux} in $\mathbf{U}(gl(n))$ \emph{(see e.g. \cite{BT-CA}, \cite{BT-jaa})}.
\end{proposition}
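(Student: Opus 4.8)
The proposition is essentially a reformulation, inside ${\mathbf U}(gl(n))$, of a classical identity for bideterminants, transported by the Koszul equivariant isomorphism $\mathcal{K}$ recalled just above; this is what the word ``Hence'' signals. The plan is therefore: apply $\mathcal{K}$. Since $\mathcal{K}$ is a \emph{linear} isomorphism with $\mathcal{K}\big([S|T]\big) = (S|T)$, and since $\mathcal{K}$ likewise sends every column Capelli bitableau appearing on the two right--hand sides to the corresponding \emph{column bideterminant} in ${\mathbb C}[M_{n,n}]$ (this is part of the content of \cite{BT-CA}), it suffices to prove both identities after applying $\mathcal{K}$, i.e.\ as \emph{linear} identities among bideterminants; injectivity of $\mathcal{K}$ then returns the statement in ${\mathbf U}(gl(n))$. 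Observe that one never needs $\mathcal{K}$ to be multiplicative: the right--hand sides are already \emph{linear} combinations, and each summand is handled individually.

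Inside the commutative algebra ${\mathbb C}[M_{n,n}]$ the bideterminant $(S|T)$ of shape $\lambda$ is, by definition, a product of minors, one per row of $\lambda$:
\[
(S|T)\;=\;\prod_{i=1}^{m}\ \det\bigl( (\,(S_i)_a \mid (T_i)_b\,) \bigr)_{a,b=1,\dots,\lambda_i},
\]
where $S_i,T_i$ denote the $i$-th rows of $S,T$. Expanding the $i$-th minor by the complete (Leibniz) expansion of the determinant --- first along its first index --- introduces a sum over a permutation $\sigma_i$ of $\{1,\dots,\lambda_i\}$ with sign $(-1)^{|\sigma_i|}$; multiplying the $m$ resulting expansions and collecting terms gives $\sum_{\sigma_1,\dots,\sigma_m} (-1)^{\sum_k|\sigma_k|}$ times $\prod_{i}\prod_{b=1}^{\lambda_i}(\,(S_i)_{\sigma_i(b)} \mid (T_i)_b\,)$, and this last monomial \emph{is} the column bideterminant of shape $(1^h)$, $h=|\lambda|$, whose left column is the row--reading word of $S$ with the $i$-th row permuted by $\sigma_i$, and whose right column is the row--reading word of $T$. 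Pulling back through $\mathcal{K}^{-1}$ gives the first displayed identity. The second one is obtained the same way, expanding each minor along its \emph{second} index instead, so that the permutations act on the entries of $T$ row by row while the entries of $S$ stay fixed.

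A parallel, ``virtual'' proof is available and avoids bideterminant combinatorics: start from $[S|T] = \mathfrak{p}\big(e_{S,C_{\lambda}^*}\,e_{C_{\lambda}^*,T}\big)$ and contract the positive virtual symbols of the Coderuyts tableau row by row. Since row $i$ of $C_{\lambda}^*$ is the constant word $\alpha_i\cdots\alpha_i$ and the variables $(\alpha_i|\cdot)$ are odd in ${\mathbb C}[M_{m_0|m_1+n,d}]$ (hence skew--commuting and square--zero), moving all creations of $\alpha_i$ past all annihilations of $\alpha_i$ modulo $\mathbf{Irr}$ forces a signed sum over the bijective matchings $\sigma_i$ of the $\lambda_i$ positions, the sign $(-1)^{|\sigma_i|}$ arising from reordering those odd factors; carrying this out independently for $i=1,\dots,m$ (legitimate, since distinct virtual symbols do not interfere) collapses the bitableau monomial into the asserted sum of column Capelli bitableaux. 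In either route the one delicate point --- the main obstacle --- is the sign and normalization bookkeeping: one must verify that no global sign depending on the shape $\lambda$ survives the ``flattening'' of the shape--$\lambda$ cell structure into a single column of height $h$, and that $\mathcal{K}$ really does intertwine column Capelli bitableaux with column bideterminants --- equivalently, that the Capelli--type diagonal corrections produced by $\mathfrak{p}$ on each row block reassemble exactly into those carried by the one--column Capelli bitableaux. These verifications are routine but fiddly and are carried out in \cite{BT-CA}.
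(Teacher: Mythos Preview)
Your proposal is correct and follows exactly the route the paper indicates: the paper does not give a self-contained proof here but simply recalls from \cite{BT-CA} that $\mathcal{K}\big([S|T]\big)=(S|T)$ and writes ``Hence, Capelli bitableaux $[S|T]$ admit explicit Laplace expansions'', i.e.\ it transfers the statement via the Koszul isomorphism to the Leibniz expansion of the product of minors defining $(S|T)$ in ${\mathbb C}[M_{n,n}]$. Your first argument is precisely this, with the details spelled out; your alternative virtual-variables contraction is a legitimate second route, and your caveat that the sign/normalization checks (and the fact that $\mathcal{K}$ intertwines column Capelli bitableaux with monomials) are deferred to \cite{BT-CA} matches the paper's own reliance on that reference.
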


\subsubsection{Young-Capelli bitableaux in ${\mathbf{U}}(gl(n))$}

Given a pair of Young tableaux  $S, T$ of the same shape $\lambda$ on the proper alphabet $\mathcal{L}$, consider the elements
$$
e_{S,C_{\lambda}^*} \ e_{C_{\lambda}^*,D_{\lambda}^*}  \   e_{D_{\lambda}^*,T} \in {\mathbf{U}}(gl(m_0|m_1+n)).
$$

Since these elements  are \emph{balanced monomials} in
${\mathbf{U}}(gl(m_0|m_1+n))$, then they belong to the \emph{virtual subalgebra} $Virt(m_0+m_1,n)$.

Hence, we can consider their images in ${\mathbf{U}}(gl(n))$ with respect to the Capelli epimorphism $\mathfrak{p}$.

We set
\begin{equation}\label{rightYoungCapelli}
\mathfrak{p} \Big( e_{S,C_{\lambda}^*} \ e_{C_{\lambda}^*,D_{\lambda}^*}  \   e_{D_{\lambda}^*,T} \Big)
 =    [S| \fbox{$T$}]         \in {\mathbf{U}}(gl(n)).
\end{equation}
and call the element $[S| \fbox{$T$}] $ a {\textit{Young-Capelli bitableau}}.

The elements defined in  (\ref{rightYoungCapelli}) do not
depend on the choice of the virtual Deruyts and Coderuyts tableaux $D_{\lambda}^*$
and $C_{\lambda}^*$.

In plain words, the Young-Capelli bitableau $[S| \fbox{$T$}]$ is obtained 
from the Capelli bitableau $[S|T]$ by adding a \emph{column symmetrization} on the 
right Young tableau $T$. Indeed, we have

\begin{proposition}\label{YC as sym of Capelli} 
Any Young-Capelli bitableau
equals the sum of Capelli bitableaux:
$$
[S| \fbox{$T$}] =  \sum  [S|\overline{T}],
$$
where the sum is extended to all  Young tableaux $\overline{T}$ obtained from $T$ by  permutations of the elements
of each column.
\end{proposition}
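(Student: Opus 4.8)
The plan is to compare, under the Capelli devirtualization epimorphism $\mathfrak{p}$, the balanced-monomial representatives of the two sides, and to carry out the comparison by ``contracting'' the negative virtual symbols of the Deruyts tableau modulo the left ideal $\mathbf{Irr} = \ker\mathfrak{p}$.

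First I would set up the reduction. By definition $[S|\fbox{$T$}] = \mathfrak{p}\big(e_{S,C_{\lambda}^*}\, e_{C_{\lambda}^*,D_{\lambda}^*}\, e_{D_{\lambda}^*,T}\big)$, while $[S|\overline{T}] = \mathfrak{p}\big(e_{S,C_{\lambda}^*}\, e_{C_{\lambda}^*,\overline{T}}\big)$ for each Young tableau $\overline{T}$ obtained from $T$ by permuting the entries within columns. Since $\mathbf{Irr}$ is a \emph{left} ideal of $\mathbf{U}(gl(m_0|m_1+n))$ with $\ker\mathfrak{p}=\mathbf{Irr}$, it suffices to prove the congruence
$$
e_{C_{\lambda}^*,D_{\lambda}^*}\, e_{D_{\lambda}^*,T} \ \equiv\ \sum_{\overline{T}} e_{C_{\lambda}^*,\overline{T}} \pmod{\mathbf{Irr}};
$$
left-multiplying by $e_{S,C_{\lambda}^*}$ keeps the difference inside $\mathbf{Irr}$, and applying $\mathfrak{p}$ then yields $[S|\fbox{$T$}] - \sum_{\overline{T}}[S|\overline{T}] = 0$.

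The heart of the argument is the computation of this congruence. Writing the two factors block by block along the rows of the common shape $\lambda$, one has $e_{C_{\lambda}^*,D_{\lambda}^*}=\prod_{i}\prod_{j\le\lambda_i} e_{\alpha_i,\beta_j}$ and $e_{D_{\lambda}^*,T}=\prod_{i}\prod_{j\le\lambda_i} e_{\beta_j,\,T(i,j)}$, where $T(i,j)\in\mathcal{L}$ is the entry of $T$ in row $i$, column $j$. Among the relevant generators the only non-trivial brackets are $\big[e_{\alpha_i,\beta_j},\,e_{\beta_k,x}\big]=\delta_{jk}\,e_{\alpha_i,x}$ for $x\in\mathcal{L}$ (an ordinary commutator, since $|\alpha_i|=0$ and $|\beta_j|=|x|=1$ force $|e_{\beta_k,x}|=0$), whereas the $e_{\beta_j,x}$ commute with one another and the $e_{\alpha_i,\beta_j}$ anticommute with one another. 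I would move every $e_{\alpha_i,\beta_j}$ to the right of every $e_{\beta_k,x}$; each step either commutes the factor across or produces a contracted term $e_{\alpha_i,x}$. Now any term in the resulting expansion that still contains an uncontracted $e_{\alpha_i,\beta_j}$ is, once fully reordered, an \emph{irregular expression}: reading from the right, the first such surviving factor annihilates the virtual symbol $\beta_j$ while no factor to its right creates $\beta_j$, so inequality (\ref{irrexpr-BR}) is violated and the whole monomial lies in $\mathbf{Irr}$. Hence, modulo $\mathbf{Irr}$, only the fully contracted terms survive; such a term must pair each $e_{\alpha_i,\beta_j}$ with an $e_{\beta_j,\cdot}$ carrying the \emph{same} virtual index $j$, i.e. it is the datum of a permutation $\sigma_j$ of the set of rows meeting column $j$ (which has $\widetilde{\lambda}_j$ elements), matching $e_{\alpha_i,\beta_j}$ with $e_{\beta_j,\,T(\sigma_j(i),j)}$ and yielding $e_{\alpha_i,\,T(\sigma_j(i),j)}$. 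The collection $(\sigma_j)_j$ is exactly a column permutation, the monomial it produces is $\pm\,e_{C_{\lambda}^*,\overline{T}}$ for the corresponding column-permuted $\overline{T}$, and distinct column permutations give distinct contractions; summing over them recovers the right-hand side.

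It remains to pin down the sign. Because the symbols $\beta_j$ are $\mathbb{Z}_2$-\emph{even}, commuting an $e_{\beta_j,x}$ past any factor costs nothing, so every sign accumulated in the reordering comes from transposing the odd factors $e_{\alpha_i,\beta_j}$ (and, after contraction, $e_{\alpha_i,x}$) among themselves; a routine bookkeeping check shows these signs are precisely the ones needed to rewrite the contracted product in the standard row-by-row order defining $e_{C_{\lambda}^*,\overline{T}}$, so every surviving term enters with coefficient $+1$. This is exactly the step at which the even Deruyts symbols produce a genuine symmetrization rather than the alternating sum one obtains from the odd Coderuyts symbols in the determinantal expansions of $[S|T]$. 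I expect this sign reconciliation, together with the verification that every partially contracted monomial is genuinely irregular, to be the only delicate points; everything else is the formal calculus of polarization operators already established in Section \ref{sec 3}.
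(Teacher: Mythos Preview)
The paper does not supply a proof of this proposition; it is stated after the phrase ``Indeed, we have'' and left unproved (the Young--Capelli bitableaux having been introduced in the authors' earlier work). So there is nothing to compare against, and I will simply assess your argument.

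Your approach is correct and is the natural one within the virtual-variable framework: reduce to the congruence
\[
e_{C_{\lambda}^*,D_{\lambda}^*}\, e_{D_{\lambda}^*,T}\ \equiv\ \sum_{\overline{T}} e_{C_{\lambda}^*,\overline{T}} \pmod{\mathbf{Irr}},
\]
then left-multiply by $e_{S,C_{\lambda}^*}$ and apply $\mathfrak{p}$. The Wick-type expansion you outline works exactly as you say: the only nonvanishing contractions are $[e_{\alpha_i,\beta_j},e_{\beta_j,x}]=e_{\alpha_i,x}$, full contractions are indexed by tuples of column permutations $(\sigma_j)_j$, and any partially contracted monomial, once normal-ordered with the surviving $e_{\alpha_i,\beta_j}$ on the far right, is irregular since its rightmost factor annihilates a $\beta_j$ not yet created.

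One slip of wording: you write ``the symbols $\beta_j$ are $\mathbb{Z}_2$-even'', which is false ($|\beta_j|=1$). What you mean, and what your argument actually uses, is that the \emph{operators} $e_{\beta_j,x}$ are even (degree $|\beta_j|+|x|=0$); this is why they commute freely past everything and why no signs are generated by moving them. With that corrected, the ``delicate'' sign reconciliation is indeed routine: all odd factors---the $e_{\alpha_i,\beta_j}$ before contraction and the $e_{\alpha_i,x}$ after---are indexed by cells $(i,j)$ and occur in the same row-by-row order on both sides, so no net sign appears. A clean way to nail this down is to verify the single-column identity
\[
A_1\cdots A_k\,B_1\cdots B_k\ \equiv\ \sum_{\sigma\in\mathfrak{S}_k} C_{1,\sigma(1)}\cdots C_{k,\sigma(k)} \pmod{\mathbf{Irr}}
\]
(with $A_i=e_{\alpha_i,\beta}$, $B_j=e_{\beta,x_j}$, $C_{ij}=e_{\alpha_i,x_j}$) by induction on $k$, and then note that factors attached to distinct columns interact identically on both sides.
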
 

\subsubsection{Double Young-Capelli bitableaux in ${\mathbf{U}}(gl(n))$}

Given a pair of Young tableaux  $S, T$ of the same shape $\lambda$ on the proper alphabet $\mathcal{L}$
 consider the elements
 $$
e_{S,C_{\lambda}^{*}} \cdot 
e_{C_{\lambda}^{*},D_{\lambda}^{*}}
 \cdot  e_{D_{\lambda}^{*},C_{\lambda}^{*}}
\cdot e_{C_{\lambda}^{*},T} \in  {\mathbf{U}}(gl(m_0|m_1+n)).
$$

Since these elements  are \emph{balanced monomials} in
${\mathbf{U}}(gl(m_0|m_1+n))$, then they belong to the \emph{virtual subalgebra} $Virt(m_0+m_1,n)$.

Hence, we can consider their images in ${\mathbf{U}}(gl(n))$ with respect to the Capelli epimorphism $\mathfrak{p}$.

We set
\begin{equation}\label{doubleYoungCapelli}
\mathfrak{p} \Big( e_{S,C_{\lambda}^{*}} \cdot 
e_{C_{\lambda}^{*},D_{\lambda}^{*}}
 \cdot  e_{D_{\lambda}^{*},C_{\lambda}^{*}}
\cdot e_{C_{\lambda}^{*},T} \Big)
 =   [\ \fbox{$S \ | \ T$}\ ]         \in {\mathbf{U}}(gl(n)).
\end{equation}
and call the element $[\ \fbox{$S \ | \ T$}\ ] $ a {\textit{double Young-Capelli bitableau}}.

The elements defined in (\ref{doubleYoungCapelli}) do not
depend on the choice of the virtual Deruyts and Coderuyts tableaux $D_{\lambda}^*$
and $C_{\lambda}^*$.

In plain words, the double Young-Capelli bitableau
 $[\ \fbox{$S \ | \ T$}\ ]$
is obtained from the Young-Capelli bitableau $[S| \fbox{$T$}]$
by adding a further \emph{row skew symmetrization}. Indeed, we have

\begin{proposition}\label{double YC from YC}
Any double Young-Capelli bitableau
equals the sum of Young-Capelli bitableaux:
$$
[\ \fbox{$S \ | \ T$}\ ] = (-1)^ {h \choose 2} \ \sum_{\sigma} \ (-1)^{|\sigma|} \  [S|\fbox{$T^{\sigma}$}],
$$
where the sum is extended to all  Young tableaux $T^{\sigma}$ obtained from $T$ by  permutations of the elements
of each row, and $(-1)^{|\sigma|}$ is the product of the signatures of row permutations.
\end{proposition}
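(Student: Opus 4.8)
The plan is to work entirely inside the virtual algebra $Virt(m_0+m_1,n)$, exploiting the factorization
$$
e_{S,C_{\lambda}^{*}}\cdot e_{C_{\lambda}^{*},D_{\lambda}^{*}}\cdot e_{D_{\lambda}^{*},C_{\lambda}^{*}}\cdot e_{C_{\lambda}^{*},T}
$$
that defines $[\ \fbox{$S \ | \ T$}\ ]$ via $\mathfrak{p}$, and comparing it with the analogous factorization
$$
e_{S,C_{\lambda}^{*}}\cdot e_{C_{\lambda}^{*},D_{\lambda}^{*}}\cdot e_{D_{\lambda}^{*},T}
$$
that defines $[S|\fbox{$T$}]$. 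The middle piece to analyze is $e_{D_{\lambda}^{*},C_{\lambda}^{*}}\cdot e_{C_{\lambda}^{*},T}$ versus $e_{D_{\lambda}^{*},T}$; the claim amounts to saying that inserting the ``return trip'' $C_{\lambda}^{*}\to D_{\lambda}^{*}\to C_{\lambda}^{*}$ through the positive virtual (skew) symbols implements, after devirtualization, a signed sum over row permutations of $T$, together with the global sign $(-1)^{\binom{h}{2}}$.

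First I would set up the bookkeeping: write $C_{\lambda}^{*}$ with positive virtual symbols $\alpha_1,\dots,\alpha_p$ (one per row) and $D_{\lambda}^{*}$ with negative virtual symbols $\beta_1,\dots,\beta_{\lambda_1}$ (one per column), and expand $e_{C_{\lambda}^{*},D_{\lambda}^{*}}\cdot e_{D_{\lambda}^{*},C_{\lambda}^{*}}$ by moving polarizations past one another using the superbracket relations. The key point is that the $\alpha$'s are \emph{odd} and skew-commuting: a product of creation operators $D^{l}_{\alpha_i,\alpha_i}$ along a row, composed with annihilations $D^{l}_{\alpha_i,\cdot}$, picks up signs exactly governed by the symmetric group acting on the row, and the ``square'' of the Deruyts/Coderuyts sandwich reproduces a row antisymmetrizer. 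Concretely, I expect that modulo the left ideal $\mathbf{Irr}$ one gets
$$
e_{C_{\lambda}^{*},D_{\lambda}^{*}}\cdot e_{D_{\lambda}^{*},C_{\lambda}^{*}}\ \equiv\ (-1)^{\binom{h}{2}}\sum_{\sigma}(-1)^{|\sigma|}\, e_{C_{\lambda}^{*},D_{\lambda}^{*}}\circ(\text{row-}\sigma),
$$
the sum over products $\sigma=(\sigma_1,\dots,\sigma_p)$ of permutations of the rows; this is the super-analogue of the identity that an idempotent built from the Deruyts tableau is a (scaled) Young antisymmetrizer. Regularity of all the balanced monomials involved guarantees that no term is killed prematurely and that $\mathfrak{p}$ may be applied term by term (using Proposition~\ref{virtual action} and $Ad_{gl(n)}$-equivariance).

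Then I would push the identity through $\mathfrak{p}$: since $\mathfrak{p}$ is an algebra epimorphism on $Virt(m_0+m_1,n)$ and kills $\mathbf{Irr}$, applying it to $e_{S,C_{\lambda}^{*}}\cdot\big(e_{C_{\lambda}^{*},D_{\lambda}^{*}}\cdot e_{D_{\lambda}^{*},C_{\lambda}^{*}}\big)\cdot e_{C_{\lambda}^{*},T}$ and using the displayed congruence yields
$$
[\ \fbox{$S \ | \ T$}\ ]=(-1)^{\binom{h}{2}}\sum_{\sigma}(-1)^{|\sigma|}\,\mathfrak{p}\big(e_{S,C_{\lambda}^{*}}\, e_{C_{\lambda}^{*},D_{\lambda}^{*}}\, e_{D_{\lambda}^{*},T^{\sigma}}\big)=(-1)^{\binom{h}{2}}\sum_{\sigma}(-1)^{|\sigma|}\,[S|\fbox{$T^{\sigma}$}],
$$
where the effect of the inserted row-$\sigma$ operator is absorbed, by the commutation/relabeling rules for the $\beta$'s in $e_{D_{\lambda}^{*},T}$, into permuting the entries of $T$ within each row — exactly the tableau $T^{\sigma}$ of the statement. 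Here I would invoke the independence of $[S|\fbox{$\cdot$}]$ from the choice of $D_{\lambda}^{*}$, $C_{\lambda}^{*}$ (already established) to justify the relabeling.

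The main obstacle I anticipate is the precise sign accounting in the skew ($\alpha$) sector: getting the global $(-1)^{\binom{h}{2}}$ and the sign $(-1)^{|\sigma|}$ to come out correctly requires carefully tracking the order in which the $h$ odd creation/annihilation operators are composed across all rows, including the interaction with the \emph{odd} proper symbols of $\mathcal{L}$ (which are also of $\mathbb{Z}_2$-degree $1$). A clean way to organize this is to first treat a single row (reducing to the known statement that the Deruyts sandwich over one odd virtual symbol yields a signed sum over $S_{\lambda_i}$ with the $(-1)^{\binom{\lambda_i}{2}}$ prefactor), then multiply across rows and check that the cross-row signs assemble into $(-1)^{\binom{h}{2}}$ via the identity $\binom{h}{2}=\sum_i\binom{\lambda_i}{2}+\sum_{i<j}\lambda_i\lambda_j$. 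Everything else is a routine application of the devirtualization formalism developed above.
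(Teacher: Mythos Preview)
The paper does not actually supply a proof of this proposition; it is stated (together with the companion Proposition~\ref{YC as sym of Capelli}) as a description of how the double Young--Capelli bitableau relates to the Young--Capelli bitableau, and is left to the reader as a routine consequence of the virtual presentation. So there is no ``paper's proof'' to compare against, only the definitions via $\mathfrak{p}$ applied to balanced monomials.

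Your overall strategy is the natural one and is essentially how such a proof must go: compare the rightmost factors $e_{D_{\lambda}^{*},C_{\lambda}^{*}}\cdot e_{C_{\lambda}^{*},T}$ in the virtual presentation of $[\ \fbox{$S\,|\,T$}\ ]$ with the factor $e_{D_{\lambda}^{*},T}$ in that of $[S|\fbox{$T$}]$, and show that passing through the repeated positive virtual symbol $\alpha_i$ on row $i$ produces, modulo $\mathbf{Irr}$, the signed sum over permutations of the row entries of $T$. Two points of your write-up need tightening, however. First, the congruence you display is for the middle block $e_{C_{\lambda}^{*},D_{\lambda}^{*}}\cdot e_{D_{\lambda}^{*},C_{\lambda}^{*}}$, whereas the factor that actually generates the row permutations of $T$ is the rightmost pair $e_{D_{\lambda}^{*},C_{\lambda}^{*}}\cdot e_{C_{\lambda}^{*},T}$; the ambient $e_{C_{\lambda}^{*},D_{\lambda}^{*}}$ on the left is untouched and is what makes each resulting term a Young--Capelli bitableau. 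Second, in this paper's conventions the positive virtual symbols $\alpha_i\in\mathcal{A}_0$ have $|\alpha_i|=0$, not $1$; it is the \emph{polarizations} $e_{\alpha_i,t}$ (with $t\in\mathcal{L}$, $|t|=1$) and $e_{\beta_j,\alpha_i}$ that are odd. Your sign analysis survives this correction, since what matters is the parity of the $e$'s being commuted, but the phrase ``the $\alpha$'s are odd and skew-commuting'' is wrong as stated and would mislead the computation if taken literally. With these two adjustments the argument goes through: commuting each $e_{\beta_j,\alpha_i}$ past the $e_{\alpha_i,t_{i,k}}$'s of row $i$ produces (via the superbracket) the terms $e_{\beta_j,t_{i,k}}$ summed over all bijections $j\leftrightarrow k$, with the signs assembling exactly into $(-1)^{\binom{\lambda_i}{2}}\sum_{\sigma_i}(-1)^{|\sigma_i|}$; the residual terms in which some $\alpha_i$ is annihilated before being created lie in $\mathbf{Irr}$; and the row contributions combine to the global $(-1)^{\binom{h}{2}}$ by the identity you wrote.
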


\subsection{Bitableaux in ${\mathbb C}[M_{m_0|m_1+n,d}]$ and the standard monomial theory}\label{standard}

\subsubsection{Biproducts in ${\mathbb C}[M_{m_0|m_1+n,d}]$}

Embed the algebra
$$
{\mathbb C}[M_{m_0|m_1+n,d}] = {\mathbb C}[(\alpha_s|j), (\beta_t|j), (i|j)]
$$
into the (supersymmetric) algebra ${\mathbb C}[(\alpha_s|j), (\beta_t|j), (i|j), (\gamma|j)]$
generated by the ($\mathbb{Z}_2$-graded) variables $(\alpha_s|j), (\beta_t|j), (i|j), (\gamma|j)$,
$j = 1, 2, \ldots, d$,
 where
 $$
 |(\gamma|j)| = 1 \in \mathbb{Z}_2 \ \  for \ every \ j = 1, 2, \ldots, d,
 $$
and denote by $D^l_{z_i,\gamma}$ the superpolarization of $\gamma$ to  $z_i.$

Let $\omega = z_1z_2 \cdots z_p$ be a word on     $ \mathcal{A}_0 \cup \mathcal{A}_cup \mathcal{L} $, 
and $\varpi = j_{t_1}j_{t_2} \cdots j_{t_q}$ a word
on the alphabet
$P = \{1, 2, \ldots, d \}$. The {\it{biproduct}}
$$
(\omega|\varpi) = (z_1z_2 \cdots z_p|j_{t_1}j_{t_2} \cdots j_{t_q})
$$
is the element
$$
D^l_{z_1,\gamma}D_{z_2,\gamma} \cdots D^l_{z_p,\gamma} \Big( (\gamma|j_{t_1})(\gamma|j_{t_2}) \cdots
(\gamma|j_{t_q}) \Big) \in {\mathbb C}[M_{m_0|m_1+n,d}]
$$
if $p = q$ and is set to be zero otherwise.

\begin{claim} 
The biproduct $(\omega|\varpi) = (z_1z_2 \cdots z_p|j_{t_1}j_{t_2} \cdots j_{t_q})$ is supersymmetric in the  $z$'s and
skew-symmetric in the  $j$'s.
In symbols
\begin{enumerate}

\item
$
(z_1 z_2 \cdots z_i z_{i+1} \cdots z_p|j_{t_1} j_{t_2} \cdots j_{t_q}) =
\\ \null \hfill (-1)^{|z_i| |z_{i+1}|}
(z_1 z_2 \cdots z_{i+1} z_i \cdots z_p|j_{t_1} j_{t_2} \cdots j_{t_q})
$

\item
$
(z_1z_2 \cdots z_iz_{i+1} \cdots z_p|j_{t_1}j_{t_2} \cdots j_{t_i}j_{t_{i+1}} \cdots j_{t_q}) =
\\ \null \hfill
- (z_1z_2 \cdots z_iz_{i+1} \cdots z_p|j_{t_1} \cdots j_{t_{i+1}}j_{t_i} \cdots j_{t_q}).
$
\end{enumerate}

\end{claim}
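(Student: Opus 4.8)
The plan is to derive both symmetry properties directly from the definition of the biproduct as an iterated superpolarization applied to a product of skew-commuting variables $(\gamma|j_{t_1})(\gamma|j_{t_2})\cdots(\gamma|j_{t_q})$, using only the defining properties of left superpolarizations (Leibniz rule, and the fact that $D^l_{z_i,\gamma}$ annihilates $\gamma$ and creates $z_i$) together with the super-commutation relations in ${\mathbb C}[M_{m_0|m_1+n,d}]$.

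For part (2), skew-symmetry in the $j$'s, the key observation is that the variable $(\gamma|j)$ is \emph{odd} for every $j$ (we are told $|(\gamma|j)| = 1 \in \mathbb{Z}_2$), so the generators $(\gamma|j_{t_i})$ and $(\gamma|j_{t_{i+1}})$ skew-commute in the ambient supersymmetric algebra: $(\gamma|j_{t_i})(\gamma|j_{t_{i+1}}) = -(\gamma|j_{t_{i+1}})(\gamma|j_{t_i})$. First I would swap these two adjacent factors inside the argument $(\gamma|j_{t_1})\cdots(\gamma|j_{t_q})$, picking up the sign $-1$, and then observe that applying the fixed sequence of superpolarizations $D^l_{z_1,\gamma}\cdots D^l_{z_p,\gamma}$ to the two expressions yields $(z_1\cdots z_p|j_{t_1}\cdots j_{t_i}j_{t_{i+1}}\cdots j_{t_q})$ and $(z_1\cdots z_p|j_{t_1}\cdots j_{t_{i+1}}j_{t_i}\cdots j_{t_q})$ respectively; hence these differ by exactly the sign $-1$. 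This part is essentially immediate.

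For part (1), supersymmetry in the $z$'s, I would argue by tracking how the two superpolarizations $D^l_{z_i,\gamma}$ and $D^l_{z_{i+1},\gamma}$ that sit adjacent in the product of operators interact. The ambient commutation rule for superderivations gives $D^l_{z_i,\gamma}D^l_{z_{i+1},\gamma} - (-1)^{(|z_i|+|\gamma|)(|z_{i+1}|+|\gamma|)}D^l_{z_{i+1},\gamma}D^l_{z_i,\gamma}$ equal to a bracket term of the form $\delta_{\gamma,z_{i+1}}D^l_{z_i,\gamma} - (\text{sign})\,\delta_{z_i,\gamma}D^l_{z_{i+1},\gamma}$; since the $z$'s are proper or virtual symbols distinct from the auxiliary symbol $\gamma$ used to build the biproduct, both Kronecker deltas vanish, so the two operators super-commute. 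Because $|\gamma| = 1$, we have $(|z_i|+1)(|z_{i+1}|+1) \equiv |z_i||z_{i+1}| + |z_i| + |z_{i+1}| + 1 \pmod 2$; the extra terms $|z_i|+|z_{i+1}|+1$ need to be reconciled with the target sign $(-1)^{|z_i||z_{i+1}|}$. The cleanest way to handle the bookkeeping is to note that $(\gamma|j_{t_i})$ being odd means $D^l_{z,\gamma}$ has $\mathbb{Z}_2$-degree $|z|+1$, and when we swap $D^l_{z_i,\gamma}$ and $D^l_{z_{i+1},\gamma}$ we also effectively move each operator past the odd factors it must eventually consume; carefully accounting for the parity of what each operator skips produces precisely the factor $(-1)^{|z_i||z_{i+1}|}$ rather than $(-1)^{(|z_i|+1)(|z_{i+1}|+1)}$.

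The main obstacle is exactly this sign bookkeeping in part (1): one must be scrupulous about whether the degree of a superpolarization is counted as $|e_{z,\gamma}| = |z|+|\gamma|$ or whether, in the relevant formula, a correction arising from the odd variable $(\gamma|j)$ intervenes. I expect the honest way to do it is to expand the biproduct one step, i.e. write $(z_1\cdots z_p|\varpi) = D^l_{z_1,\gamma}\big((z_2\cdots z_p|\gamma\varpi')\big)$-type recursions, or alternatively to invoke the known fact (from the standard monomial / superstraightening theory, e.g. \cite{rota-BR}) that the biproduct coincides with a signed super-determinant-like bracket that is manifestly supersymmetric in the left entries and skew in the right entries. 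Given the paper's setup, I would carry out the two-line induction on $p$ for part (1), reducing the adjacent transposition $z_iz_{i+1}\mapsto z_{i+1}z_i$ to the base case $p=2$, where one checks by direct computation on $D^l_{z_1,\gamma}D^l_{z_2,\gamma}\big((\gamma|j)(\gamma|k)\big)$ that the symmetry sign is $(-1)^{|z_1||z_2|}$, and then propagate through the Leibniz rule.
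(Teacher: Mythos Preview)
The paper states this claim without proof, so there is no argument to compare against; what matters is whether your reasoning is sound.

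Your treatment of part~(2) is correct and complete: since $|(\gamma|j)|=1$, the factors $(\gamma|j_{t_i})$ and $(\gamma|j_{t_{i+1}})$ anticommute, and applying the fixed string of superpolarizations to both sides gives the desired skew-symmetry.

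In part~(1), however, you miscompute the parity of $\gamma$. From $|(\gamma|j)|=|\gamma|+|j|$ and $|j|=1$ for $j\in\mathcal{P}$, the stated condition $|(\gamma|j)|=1$ forces $|\gamma|=0$, not $|\gamma|=1$. Consequently $|D^l_{z_i,\gamma}|=|z_i|+|\gamma|=|z_i|$, and the super-commutation relation for the polarizations reads
\[
D^l_{z_i,\gamma}D^l_{z_{i+1},\gamma}
-(-1)^{|z_i|\,|z_{i+1}|}\,D^l_{z_{i+1},\gamma}D^l_{z_i,\gamma}
=\delta_{\gamma,z_{i+1}}D^l_{z_i,\gamma}-(-1)^{|z_i|\,|z_{i+1}|}\delta_{z_i,\gamma}D^l_{z_{i+1},\gamma}=0,
\]
since $\gamma$ is distinct from every $z_k\in\mathcal{A}_0\cup\mathcal{A}_1\cup\mathcal{L}$. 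Thus $D^l_{z_i,\gamma}D^l_{z_{i+1},\gamma}=(-1)^{|z_i||z_{i+1}|}D^l_{z_{i+1},\gamma}D^l_{z_i,\gamma}$ on the nose, and swapping the two adjacent operators in the definition of the biproduct yields exactly the sign $(-1)^{|z_i||z_{i+1}|}$. There is no residual discrepancy to reconcile: the extra ``$|z_i|+|z_{i+1}|+1$'' you were worried about arose solely from the incorrect value of $|\gamma|$. Once that is fixed, part~(1) is a one-line consequence of the operator super-commutation, with no need for induction, base-case checks, or appeals to superstraightening.
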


\begin{proposition}{\textbf{(Laplace expansions)}}\label{Laplace expansions}
We have
\begin{enumerate}

\item
$
(\omega_1\omega_2|\varpi) = \Sigma_{(\varpi)} \ (-1)^{|\varpi_{(1)}| |\omega_2|} \ 
(\omega_1|\varpi_{(1)})(\omega_2|\varpi_{(2)}).
$

\item
$
(\omega|\varpi_1\varpi_2) = \Sigma_{(\omega)} \  (-1)^{|\varpi_1| |\omega_{(2)}|}  \ 
(\omega_{(1)}|\varpi_1)(\omega_{(2)}|\varpi_2.)
$

\end{enumerate}
where
$$
\bigtriangleup(\varpi) = \Sigma_{(\varpi)}  \ \varpi_{(1)} \otimes \varpi_{(2)}, \quad \bigtriangleup(\omega)
= \Sigma_{(\omega)} \ \omega_{(1)} \otimes \omega_{(2)}
$$
denote the coproducts in the Sweedler notation \emph{(see, e.g. \cite{Abe-BR})} of the elements $\varpi$ and $\omega$ in the
supersymmetric Hopf algebra of $W$
\emph{(see, e.g. \cite{Bri-BR})}   and in
the free exterior Hopf algebra generated by
$j = 1, 2, \ldots, d$, respectively.

\end{proposition}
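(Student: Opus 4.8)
The plan is to derive both formulas directly from the \emph{Leibniz rule} for left superpolarizations, exploiting that the auxiliary symbol $\gamma$ is \emph{fresh}: it occurs in none of the words $\omega,\omega_1,\omega_2$, so the operators $D^l_{z,\gamma}$ pairwise super-commute (by the super-commutation relation of polarizations, since all the relevant Kronecker deltas vanish) and each of them annihilates every polynomial in the variables $(a|j)$ with $a\neq\gamma$. Write $D^l_\omega=D^l_{z_1,\gamma}\cdots D^l_{z_p,\gamma}$ for $\omega=z_1\cdots z_p$, so that by definition $(\omega|\varpi)=D^l_\omega\big((\gamma|j_{t_1})\cdots(\gamma|j_{t_q})\big)$ if $p=q$ and $0$ otherwise; the length-mismatch case is trivial on both sides, so assume matching lengths.

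For item (1) I would factor $D^l_{\omega_1\omega_2}=D^l_{\omega_1}\,D^l_{\omega_2}$ and let the inner block act first on $(\gamma|j_{t_1})\cdots(\gamma|j_{t_q})$. Iterating the Leibniz rule, the $|\omega_2|$ superpolarizations comprising $D^l_{\omega_2}$ each convert one still-available factor $(\gamma|j)$ into the corresponding factor involving a letter of $\omega_2$; the factors that get hit, taken in the induced order, reassemble into a sub-biproduct, the untouched ones form a product of $(\gamma|\cdot)$'s, and summing over all choices of which factors are hit --- together with the Koszul signs produced by sliding the $D^l_{z,\gamma}$ across the odd variables $(\gamma|j)$ --- is precisely the coproduct $\bigtriangleup(\varpi)=\Sigma_{(\varpi)}\varpi_{(1)}\otimes\varpi_{(2)}$ in the free exterior Hopf algebra on $\{1,\dots,d\}$. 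Thus $D^l_{\omega_2}\big((\gamma|\varpi)\big)=\Sigma_{(\varpi)}\pm(\omega_2|\varpi_{(2)})\,(\gamma|\varpi_{(1)})$. Applying the outer block $D^l_{\omega_1}$ next, and using that it annihilates the $\gamma$-free factor $(\omega_2|\varpi_{(2)})$, the Leibniz rule lets $D^l_{\omega_1}$ slide past that factor --- the combined sign being exactly $(-1)^{|\varpi_{(1)}||\omega_2|}$ --- and then act on $(\gamma|\varpi_{(1)})$ to give $(\omega_1|\varpi_{(1)})$. This is (1).

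Item (2) is the dual computation: split the bottom word $\varpi=\varpi_1\varpi_2$, write the starting monomial as the product $(\gamma|\varpi_1)\cdot(\gamma|\varpi_2)$, and apply $D^l_\omega$ to it. Since the single polarizations $D^l_{z_i,\gamma}$ super-commute, the way the product operator $D^l_\omega$ distributes over a product of two elements is governed by the coproduct $\bigtriangleup(\omega)=\Sigma_{(\omega)}\omega_{(1)}\otimes\omega_{(2)}$ in the supersymmetric Hopf algebra of $W$, which carries the Koszul signs for the mixed even/odd parities of the $z$'s; each way of assigning the letters of $\omega$ to the first vs.\ the second factor contributes $(\omega_{(1)}|\varpi_1)(\omega_{(2)}|\varpi_2)$, and the only sign beyond the shuffle sign already built into $\bigtriangleup(\omega)$ is $(-1)^{|\varpi_1||\omega_{(2)}|}$, incurred when the operators assigned to the second factor are moved across $(\gamma|\varpi_1)$, a product of $|\varpi_1|$ odd generators. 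This is (2).

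The genuinely delicate point --- where the real work lies --- is the sign bookkeeping: one must verify that the signs accumulated through the nested Leibniz expansions (moving operators of degree $|z|$ past the odd generators $(\gamma|j)$ and past the already-converted variables, of degree $|z'|+1$) assemble exactly into the shuffle signs of the two coproducts plus the displayed prefactors, and that reassembling the hit factors in the order induced by the expansion reproduces the sub-biproducts \emph{as defined} --- a coherence/associativity check for the biproduct construction. A clean way to organize this is to first prove the ``one added letter'' cases, $(\omega_1 z\,|\,\varpi)$ and $(\omega\,|\,j\,\varpi_2)$, by a single application of the Leibniz rule and the super-commutation relations, invoking the Claim (supersymmetry in the $z$'s, skew-symmetry in the $j$'s) to normalize positions, and then to obtain the general statements by induction on $|\omega_2|$ and on $|\varpi_2|$ respectively.
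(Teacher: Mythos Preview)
The paper does not actually supply a proof of this proposition: it is stated as a known result, with references to \cite{Abe-BR} for the Sweedler notation and to \cite{Bri-BR} for the supersymmetric Hopf-algebra framework in which the biproduct construction lives, and is immediately followed by an illustrative example. So there is no ``paper's own proof'' to compare your proposal against.

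That said, your outline is the standard and correct route to these identities. The mechanism you isolate --- iterate the Leibniz rule for the superderivations $D^l_{z,\gamma}$, observe that the choices of which $(\gamma|j)$ factors get hit are exactly parametrized by the exterior coproduct on $\varpi$ (for item~1) or the supersymmetric coproduct on $\omega$ (for item~2), and then track the Koszul signs --- is precisely how one proves Laplace expansions in the letterplace/superalgebra setting (this is the approach taken in \cite{rota-BR} and \cite{Bri-BR}). Your remark that the real work is the sign bookkeeping is accurate, and your suggested cure --- prove the one-letter cases $(\omega_1 z\mid\varpi)$ and $(\omega\mid j\,\varpi_2)$ first and induct --- is exactly the right way to make the signs manageable. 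One small point: in your sketch for item~(1) you write the intermediate result as $\pm(\omega_2|\varpi_{(2)})(\gamma|\varpi_{(1)})$ and then claim the sliding of $D^l_{\omega_1}$ past the $\gamma$-free factor produces exactly $(-1)^{|\varpi_{(1)}||\omega_2|}$; in fact that single sliding sign is $(-1)^{|\omega_1|\cdot|(\omega_2|\varpi_{(2)})|}$, and the displayed prefactor only emerges after combining this with the shuffle sign hidden in your ``$\pm$'' and with a reordering of the two commutative factors in ${\mathbb C}[M_{m_0|m_1+n,d}]$. The inductive argument you propose handles this cleanly, but the informal two-step description slightly misattributes where the final sign comes from.
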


\begin{example} Let $\omega = \alpha_1 \alpha_2 3$, $\varpi = 1 2 3$,
where $|(\alpha_1|j)| = |(\alpha_2|j)| = 1,$  $j = 1, 2, 3$ and $|(3|j)|  = 0, \ j = 1, 2, 3$.
Then
\begin{align*}
(\omega|\varpi) &= D^l_{ \alpha_1,\gamma}D^l_{ \alpha_2,\gamma}D^l_{ 3,\gamma}  
\big( (\gamma|1)(\gamma|2)(\gamma|3) \big) \\
&=   D_{ \alpha_1,\gamma}D_{ \alpha_2,\gamma} \Big( (3|1)(\gamma|2)(\gamma|3)
- (\gamma|1)(3|2)(\gamma|3)   +  (\gamma|1)(\gamma|2)(3|3) \Big) \\
&= D_{ \alpha_1,\gamma} \Big( (3|1)(\alpha_2|2)(\gamma|3) + (3|1)(\gamma|2)(\alpha_2|3)
- (\alpha_2|1)(3|2)(\gamma|3) \\
& \phantom{D_{ \alpha_1,\gamma} \Big( \quad}- (\gamma|1)(3|2)(\alpha_2|3)
+ (\alpha_2|1)(\gamma|2)(3|3) + (\gamma|1)(\alpha_2|2)(3|3) \Big) \\
&= (3|1)(\alpha_2|2)(\alpha_1|3) + (3|1)(\alpha_1|2)(\alpha_2|3)
- (\alpha_2|1)(3|2)(\alpha_1|3) \\
& \phantom{= } - (\alpha_1|1)(3|2)(\alpha_2|3)
+ (\alpha_2|1)(\alpha_1|2)(3|3) + (\alpha_1|1)(\alpha_2|2)(3|3).
\end{align*}

\noindent From Proposition \ref{Laplace expansions}.1, by setting $\varpi_1 = 1 2$, $\varpi_2 = 3$, it follows
$$
(\omega|\varpi) = (\alpha_1\alpha_2|12)(3|3) + (\alpha_13|12)(\alpha_2|3) + (\alpha_2 3|12)(\alpha_1|3).
$$

\noindent From Proposition \ref{Laplace expansions}.2, by setting $\omega_1 = \alpha_1  \alpha_2$, $\omega_2 = 3$, it follows
$$
(\omega|\varpi) = (\alpha_1\alpha_2|12)(3|3) - (\alpha_1\alpha_2|13)(3|2) + (\alpha_1\alpha_2|23)(3|1).
$$
\end{example}

\subsubsection{Biproducts in ${\mathbb C}[M_{n,d}]$}

Let $\omega = i_1i_2 \cdots i_p$, $\varpi = j_1j_1 \cdots j_p$ be  words on the negative alphabet 
$\mathcal{L} = \{1, 2, \ldots, n \}$ and on  the negative alphabet 
$\mathcal{P} = \{1, 2, \ldots, d \}$.

From Proposition \ref{Laplace expansions}, we infer

\begin{corollary}
The {\it{biproduct}} of the two words $\omega$ and $\varpi$
\begin{equation}\label{biproduct}
(\omega|\varpi) = (i_1i_2 \cdots i_p|j_1j_2 \cdots j_p)
\end{equation}
is the  \emph{signed minor}:
$$
(\omega|\varpi) = (-1)^{p \choose 2} \ 
det \Big( \ (i_r|j_s) \ \Big)_{r, s = 1, 2, \ldots, p} \in {\mathbb C}[M_{n,d}]. 
$$
\end{corollary}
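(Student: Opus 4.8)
The plan is to derive the asserted identity directly from the definition of the biproduct as an iterated left superpolarization, organizing the computation so that the entire discrepancy with the ordinary determinant expansion is absorbed into a single global Koszul sign $(-1)^{\binom{p}{2}}$.

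First I would record the shape of the expansion. Every proper symbol $i_r \in \mathcal{L}$ and every column index $j_s \in \mathcal{P}$ has $\mathbb{Z}_2$-degree $1$, whereas each entry $(i|j)$ has degree $0$; hence applying $D^l_{i_1,\gamma} D^l_{i_2,\gamma} \cdots D^l_{i_p,\gamma}$ to the product $(\gamma|j_1)(\gamma|j_2)\cdots(\gamma|j_p)$ replaces, one factor at a time, each $(\gamma|j_s)$ by some $(i_r|j_s)$, and since the commuting entries $(i_r|j_s)$ then pull out freely the outcome is a $\mathbb{Z}$-linear combination $\sum_{\tau \in S_p} c_\tau \,(i_1|j_{\tau(1)})\cdots(i_p|j_{\tau(p)})$ whose coefficients $c_\tau \in \{\pm 1\}$ depend only on $\tau$, the only signs arising being the Koszul signs produced when an odd superderivation is moved past odd factors in the super-Leibniz rule.

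Next, the quickest way to evaluate the $c_\tau$ is induction on $p$ via Proposition \ref{Laplace expansions}.1. Splitting the left word as $\omega = \omega_1\omega_2$ with $\omega_1 = i_1$ and $\omega_2 = i_2 \cdots i_p$, only the deconcatenations of $\varpi = j_1\cdots j_p$ whose first component is a single letter $j_s$ contribute (biproducts of words of unequal length vanish); in the free exterior Hopf algebra that component is $\sum_{s}(-1)^{s-1}\, j_s \otimes (j_1\cdots\widehat{j_s}\cdots j_p)$, and the Koszul prefactor is $(-1)^{|\varpi_{(1)}||\omega_2|} = (-1)^{p-1}$, so that
$$
(\omega|\varpi) = (-1)^{p-1}\sum_{s=1}^{p} (-1)^{s-1}\,(i_1|j_s)\,(i_2\cdots i_p\,|\,j_1\cdots\widehat{j_s}\cdots j_p).
$$
Applying the inductive hypothesis to each $(p-1)\times(p-1)$ biproduct turns the right-hand side into $(-1)^{p-1}(-1)^{\binom{p-1}{2}}$ times the cofactor expansion of $\det\big((i_r|j_s)\big)$ along its first row (the sign $(-1)^{s-1} = (-1)^{1+s}$ being precisely the cofactor sign at position $(1,s)$), and the identity $(p-1)+\binom{p-1}{2} = \binom{p}{2}$ closes the induction; the base case $p = 1$ is the tautology $(i_1|j_1) = (i_1|j_1)$. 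One could instead avoid the induction by invoking the skew-symmetry in the $j$'s established in the Claim above: comparing coefficients of a fixed monomial on the two sides of that symmetry forces $c_\tau = \operatorname{sgn}(\tau)\,c_{\mathrm{id}}$, whence $(\omega|\varpi) = c_{\mathrm{id}}\det\big((i_r|j_s)\big)$, and then $c_{\mathrm{id}} = (-1)^{\binom{p}{2}}$ is read off from the single term in which each $D^l_{i_r,\gamma}$ hits the $r$-th surviving factor, contributing $(-1)^{r-1}$ and so $\prod_r(-1)^{r-1} = (-1)^{\binom{p}{2}}$ in total.

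I expect the only real obstacle to be the careful bookkeeping of the $\mathbb{Z}_2$-Koszul signs — in the super-Leibniz rule for the odd superpolarizations and in the coproduct of $\varpi$ in the free exterior Hopf algebra — together with the elementary verification that $(p-1) + \binom{p-1}{2} = \binom{p}{2}$; once these are in hand the argument is purely formal.
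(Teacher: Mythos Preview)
Your proof is correct and follows exactly the route the paper indicates: the paper merely writes ``From Proposition~\ref{Laplace expansions}, we infer'' and states the corollary without further argument, so you have supplied the details the paper leaves implicit---the inductive Laplace expansion along the first row, the sign bookkeeping in the exterior coproduct, and the identity $(p-1)+\binom{p-1}{2}=\binom{p}{2}$.
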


\subsubsection{Biproducts and polarization operators}

Following the notation introduced in the previous sections, let
$$
Super[W] = Sym[W_0] \otimes \Lambda[W_1]
$$
denote the {\it{(super)symmetric}} algebra of the space
$$
W = W_0 \oplus W_1
$$ (see, e.g. \cite{Scheu-BR}).

By multilinearity, the algebra
$Super[W]$ is the same as the superalgebra  $Super[\mathcal{A}_0 \cup  \mathcal{A}_1 \cup \mathcal{L} ]$ generated by the "variables"
$$
 \alpha_1, \ldots, \alpha_{m_0}  \in  \mathcal{A}_0                                              , \quad \beta_1, \ldots, \beta_{m_1} \in A_1, \quad x_1, \ldots, x_n \in L,
$$
modulo the congruences
$$
z z' = (-1)^{|z| |z'|} z' z, \quad z, z' \in \mathcal{A}_0 \cup  \mathcal{A}_1  \cup \mathcal{L} .
$$
Let $d^l_{z, z'}$ denote the (left)polarization operator of $z'$ to $z$ on
$$
Super[W] = Super[\mathcal{A}_0 \cup  \mathcal{A}_1 \cup \mathcal{L} ],
$$
that is the unique superderivation of $\mathbb{Z}_2$-degree
$$
|z| + |z'| \in \mathbb{Z}_2
$$
such that
$$
d^l_{z, z'} (z'') = \delta_{z', z''} \cdot z,
$$
for every $z, z', z'' \in \mathcal{A}_0 \cup  \mathcal{A}_1  \cup \mathcal{L} .$

Clearly, the map
$$
e_{z, z'}  \rightarrow   d^l_{z, z'}
$$
is a Lie superalgebra map and, therefore, induces a structure of
$$gl(m_0|m_1+n)-module$$
on
$Super[\mathcal{A}_0 \cup  \mathcal{A}_1 \cup \mathcal{L} ] = Super[W].$

\begin{proposition}\label{polarization biproduct}

Let $\varpi = j_{t_1}j_{t_2} \cdots j_{t_q}$ be a word on $P = \{1, 2, \ldots, d \}$.
The map
$$
\Phi_\varpi : \omega \mapsto (\omega|\varpi),
$$
$\omega$ any word on $\mathcal{A}_0 \cup  \mathcal{A}_1 \cup \mathcal{L} $, uniquely defines $gl(m_0|m_1+n)-$equivariant linear operator
$$
\Phi_\varpi : Super[\mathcal{A}_0 \cup  \mathcal{A}_1 \cup \mathcal{L} ] \rightarrow {\mathbb C}[M_{m_0|m_1+n,d}],
$$
that is
\begin{equation}\label{polarization action}
\Phi_\varpi \big( e_{z, z'} \cdot \omega \big) =\Phi_\varpi \big( d^l_{z, z'}(\omega) \big) =
 D^l_{z, z'} \big( (\omega|\varpi) \big) =
 e_{z, z'} \cdot (\omega|\varpi),
\end{equation}
for every $z, z' \in \mathcal{A}_0 \cup  \mathcal{A}_1 \cup \mathcal{L} .$
\end{proposition}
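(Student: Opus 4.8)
\textbf{Proof plan for Proposition \ref{polarization biproduct}.}

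The plan is to reduce everything to the two defining facts: that $\Phi_\varpi$ is defined on words by $\omega \mapsto (\omega|\varpi)$, and that the biproduct $(\omega|\varpi)$ is obtained by applying the string of superpolarizations $D^l_{z_1,\gamma}\cdots D^l_{z_p,\gamma}$ to the monomial $(\gamma|j_{t_1})\cdots(\gamma|j_{t_q})$. First I would check that $\Phi_\varpi$ is well defined as a \emph{linear} map on $Super[\mathcal{A}_0\cup\mathcal{A}_1\cup\mathcal{L}]$: the only relations among the ``variables'' are the supercommutation congruences $zz' = (-1)^{|z||z'|}z'z$, and by Claim (the supersymmetry of the biproduct in the $z$'s) these relations are respected by $\omega\mapsto(\omega|\varpi)$; hence $\Phi_\varpi$ descends to a linear operator on the quotient superalgebra. (If $\varpi$ has length $q\neq p=|\omega|$ the biproduct is zero, so $\Phi_\varpi$ kills the wrong-degree components, consistently with linearity.)

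Next I would establish the equivariance identity \eqref{polarization action} in two halves. The equality $\Phi_\varpi\big(d^l_{z,z'}(\omega)\big) = D^l_{z,z'}\big((\omega|\varpi)\big)$ is the heart of the matter, and I would prove it by induction on the length $p$ of $\omega$, writing $\omega = z_1\omega'$. The Leibniz rule for the polarization superderivation $d^l_{z,z'}$ splits $d^l_{z,z'}(\omega)$ into the term acting on $z_1$ and the term acting on $\omega'$ (with the appropriate Koszul sign $(-1)^{|z_{z,z'}||z_1|}$); applying $\Phi_\varpi$ and invoking the inductive hypothesis on the shorter word $\omega'$ reassembles exactly $D^l_{z,z'}$ applied to $D^l_{z_1,\gamma}\big((\omega'|\varpi)\text{-type expression}\big)$, i.e. to $(\omega|\varpi)$, using that $D^l_{z,z'}$ commutes past $D^l_{z_1,\gamma}$ up to the bracket relation and that $D^l_{z,z'}$ annihilates the auxiliary symbol $\gamma$ (so the ``creation of $\gamma$'' factors are inert). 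The remaining equality $D^l_{z,z'}\big((\omega|\varpi)\big) = e_{z,z'}\cdot(\omega|\varpi)$ is just the definition of the $\mathbf{U}(gl(m_0|m_1+n))$-module structure on ${\mathbb C}[M_{m_0|m_1+n,d}]$ recalled earlier, and similarly $\Phi_\varpi\big(e_{z,z'}\cdot\omega\big) = \Phi_\varpi\big(d^l_{z,z'}(\omega)\big)$ is the definition of the module structure on $Super[W]$ via $e_{z,z'}\mapsto d^l_{z,z'}$.

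Finally, I would note that \eqref{polarization action}, having been verified on the Lie-algebra generators $e_{z,z'}$, automatically propagates to all of $gl(m_0|m_1+n)$ by linearity, so $\Phi_\varpi$ is a morphism of $gl(m_0|m_1+n)$-modules; since the words $\omega$ span $Super[W]$, this also pins down $\Phi_\varpi$ uniquely. The step I expect to be the main obstacle is the bookkeeping of Koszul signs in the inductive step: one must check that the sign $(-1)^{|D^l_{z,z'}||\mathbf{p}|}$ produced by the Leibniz rule on the polynomial side matches the sign $(-1)^{|d^l_{z,z'}||z_1|}$ produced by the Leibniz rule on the symmetric-algebra side, which works precisely because the $\mathbb{Z}_2$-degree of the word $z_1$ equals the $\mathbb{Z}_2$-degree of the partial biproduct it is responsible for creating. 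Everything else is a routine unwinding of definitions.
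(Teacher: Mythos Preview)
The paper states this proposition without proof, treating it as a routine verification, so there is no argument to compare against directly. Your plan is essentially correct and captures the two key facts that make the equivariance work: the commutator identity $[D^l_{z,z'},D^l_{z_k,\gamma}]=\delta_{z',z_k}\,D^l_{z,\gamma}$ (the second bracket term vanishes because $z\in\mathcal{A}_0\cup\mathcal{A}_1\cup\mathcal{L}$ is never the auxiliary symbol $\gamma$), and the fact that $D^l_{z,z'}$ annihilates the pure-$\gamma$ monomial $(\gamma|j_{t_1})\cdots(\gamma|j_{t_q})$. Commuting $D^l_{z,z'}$ through the string $D^l_{z_1,\gamma}\cdots D^l_{z_p,\gamma}$ using these two facts yields exactly the Leibniz expansion of $d^l_{z,z'}$ on $\omega$, with the signs matching because $|D^l_{z_k,\gamma}|=|z_k|$ (recall $|\gamma|=0$).

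One small point of care in your write-up: the ``inductive hypothesis on the shorter word $\omega'$'' cannot literally be phrased as a statement about $(\omega'|\varpi)$, since $|\omega'|=p-1\neq q$ forces that biproduct to vanish. What you really induct on is the expression $D^l_{z_2,\gamma}\cdots D^l_{z_p,\gamma}\big((\gamma|j_{t_1})\cdots(\gamma|j_{t_q})\big)$ living in the \emph{extended} algebra $\mathbb{C}[(\alpha_s|j),(\beta_t|j),(i|j),(\gamma|j)]$, where $D^l_{z,z'}$ still acts and still kills $\gamma$. Your parenthetical ``$(\omega'|\varpi)$-type expression'' shows you sensed this; just make the induction statement live in the extended algebra from the outset (or, equivalently, replace the induction by a direct one-shot commutator computation) and the argument goes through cleanly.
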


With a slight abuse of notation, we will write (\ref{polarization action}) in the form
\begin{equation}\label{abuse}
D^l_{z, z'} \big( (\omega|\varpi) \big) = ( d^l_{z, z'}(\omega)|\varpi).
\end{equation}

\subsubsection{Bitableaux in ${\mathbb C}[M_{m_0|m_1+n,d}]$}

Let $S = (\omega_1, \omega_2, \ldots, \omega_p$ and $T = (\varpi_1, \varpi_2, \ldots, \varpi_p)$ be Young tableaux on
$\mathcal{A}_0                                               \cup  \mathcal{A}_1                        \cup \mathcal{L} $ and $P = \{1, 2, \ldots, d \}$ of shapes $\lambda$ and $\mu$, respectively.

If $\lambda = \mu$, the {\it{Young bitableau}} $(S|T)$ is the element of ${\mathbb C}[M_{m_0|m_1+n,d}]$ defined as follows:
$$
(S|T) =
\left(
\begin{array}{c}
\omega_1\\ \omega_2\\ \vdots\\ \omega_p
\end{array}
\right| \left.
\begin{array}{c}
\varpi_1\\ \varpi_2\\ \vdots\\  \varpi_p
\end{array}
\right)
= \pm \ (\omega_1)|\varpi_1)(\omega_2)|\varpi_2) \cdots (\omega_p)|\varpi_p),
$$
where
$$
\pm  = (-1)^{|\omega_2||\varpi_1|+|\omega_3|(|\varpi_1|+|\varpi_2|)+ \cdots +|\omega_p|(|\varpi_1|+|\varpi_2|+\cdots+|\varpi_{p-1}|)}.
$$

If $\lambda \neq \mu$, the {\it{Young bitableau}} $(S|T)$ is set to be zero.

\subsubsection{Bitableaux and polarization operators}

By naturally extending the slight abuse of notation (\ref{abuse}), the action of any polarization on bitableaux
can be explicitly described:

\begin{proposition}\label{action on tableaux} 
Let $z, z' \in \mathcal{A}_0 \cup  \mathcal{A}_1 \cup \mathcal{L} $,  and let
$S = (\omega_1, \ldots, \omega_p) $, $T =
(\varpi_1, \ldots, \varpi_p)$. We have the
following identity:
\begin{align*}
e_{z, z'} \cdot (S\,|\,T) \ & = 
\ D^l_{z, z'} \ \big( \left(
\begin{array}{c}
\omega_1\\ \omega_2\\ \vdots\\ \omega_p
\end{array}
\right| \left.
\begin{array}{c}
\varpi_1\\ \varpi_2\\ \vdots\\  \varpi_p
\end{array}
\right) \big) \\ &
= \ \sum_{s=1}^p  \
(-1)^{(|z| + |z'|)\epsilon_s}
\ \left(
\begin{array}{c}
\omega_1\\ \omega_2\\ \vdots\\  d^l_{z,
z'}(\omega_s)\\ \vdots \\ \omega_p
\end{array}
\right| \left.
\begin{array}{c}
\varpi_1\\ \varpi_2\\ \vdots\\
\vdots \\ \vdots\\ \varpi_p
\end{array}
\right),
\end{align*}
where
$$
\epsilon_1 = 1, \quad    \epsilon_s = |\omega_1| + \cdots + |\omega_{s-1}|, \quad s = 2,
\ldots, p.
$$
\end{proposition}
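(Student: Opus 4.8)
The plan is to reduce the statement about bitableaux in $\mathbb{C}[M_{m_0|m_1+n,d}]$ to the already-established single-biproduct identity by peeling off one row word at a time. Recall that a Young bitableau is, up to an explicit sign, the product of the biproducts $(\omega_s|\varpi_s)$ over the rows, and that Proposition \ref{polarization biproduct} together with the abuse of notation (\ref{abuse}) tells us how a left superpolarization $D^l_{z,z'}$ acts on a single biproduct: $D^l_{z,z'}\big((\omega|\varpi)\big) = (d^l_{z,z'}(\omega)|\varpi)$. The only genuinely new ingredient is bookkeeping of Koszul signs when the superderivation $D^l_{z,z'}$ (of $\mathbb{Z}_2$-degree $|z|+|z'|$) is pushed across the first $s-1$ biproduct factors to reach the $s$-th one.

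First I would write $(S|T) = \pm\,(\omega_1|\varpi_1)(\omega_2|\varpi_2)\cdots(\omega_p|\varpi_p)$ with the sign $\pm$ exactly as in the definition of Young bitableaux, and apply the super-Leibniz rule for $D^l_{z,z'}$ iteratively across this product of $p$ factors. Each term in the resulting sum has $D^l_{z,z'}$ landing on a single factor $(\omega_s|\varpi_s)$, picking up the sign $(-1)^{(|z|+|z'|)(|(\omega_1|\varpi_1)|+\cdots+|(\omega_{s-1}|\varpi_{s-1})|)}$ from commuting past the preceding factors. Now the $\mathbb{Z}_2$-degree of the biproduct $(\omega_k|\varpi_k)$ equals $|\omega_k|$ (since the $\varpi$'s are words in the odd variables $j$, and in a nonzero biproduct $|\omega_k|$ and the length-parity of $\varpi_k$ agree — more precisely the degree of $(\omega_k|\varpi_k)$ in $\mathbb{C}[M_{m_0|m_1+n,d}]$ is $|\omega_k|$ because polarizing $\gamma$'s to the symbols of $\omega_k$ turns the all-odd monomial $(\gamma|\varpi_k)$ into something of parity $\sum_{z\in\omega_k}|z| = |\omega_k|$). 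Hence the accumulated sign is $(-1)^{(|z|+|z'|)(|\omega_1|+\cdots+|\omega_{s-1}|)} = (-1)^{(|z|+|z'|)\epsilon_s}$ for $s\geq 2$, matching the statement; for $s=1$ the convention $\epsilon_1 = 1$ is harmless since the empty sum of degrees is $0$ and $(-1)^{(|z|+|z'|)\cdot 0}=(-1)^{(|z|+|z'|)\cdot 1}$ whenever $|z|+|z'|$ is even, and when $|z|+|z'|$ is odd the $s=1$ term carries no sign anyway — one checks the stated $\epsilon_1=1$ is consistent with the empty product (I would remark that $\epsilon_1$ is only a formal placeholder). Then I apply (\ref{abuse}) inside the $s$-th factor to replace $D^l_{z,z'}\big((\omega_s|\varpi_s)\big)$ by $(d^l_{z,z'}(\omega_s)|\varpi_s)$, and finally reassemble the product of biproducts back into bitableau notation with the same overall sign $\pm$, which is legitimate because $d^l_{z,z'}(\omega_s)$ has the same total degree $|\omega_s|$ (the polarization preserves content up to replacing one $z'$ by one $z$, and $|z'|=|z|$ is forced for a nonzero term — or, more robustly, one tracks that the sign prefactor $\pm$ depends only on the degrees $|\omega_k|$, which are unchanged). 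This yields exactly the displayed formula.

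The main obstacle I expect is the sign bookkeeping: one must be careful that the Koszul sign produced by moving the odd-or-even operator $D^l_{z,z'}$ past the first $s-1$ biproducts combines correctly with the definitional sign $\pm$ of the bitableau $(S|T)$ versus that of the bitableau appearing on the right-hand side (with $\omega_s$ replaced by $d^l_{z,z'}(\omega_s)$). The clean way to handle this is to observe that $D^l_{z,z'}$ is a $\mathbb{Z}_2$-homogeneous superderivation on $\mathbb{C}[M_{m_0|m_1+n,d}]$, so it acts on the \emph{product} $(\omega_1|\varpi_1)\cdots(\omega_p|\varpi_p)$ by the standard signed Leibniz rule, and the prefactor $\pm$ — being a scalar — simply commutes out; then one only needs that the degree of each biproduct factor is its $\omega$-degree, which is immediate from the construction of biproducts via polarizations of the auxiliary symbol $\gamma$. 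I would present this as a short computation, citing Proposition \ref{polarization biproduct}, equation (\ref{abuse}), and the definition of Young bitableaux, without belaboring the routine parity arithmetic.
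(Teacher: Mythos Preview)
The paper states this proposition without proof (it is followed directly by an example), so there is no argument to compare against; your strategy --- expand $(S|T)$ as the signed product of biproducts, apply the super-Leibniz rule for $D^l_{z,z'}$, invoke (\ref{abuse}) on the factor that is hit, and reassemble into bitableau form --- is the natural one and is presumably what the authors intend.

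That said, your sign bookkeeping contains two genuine errors that happen to cancel, so the argument as written is not correct even though the outcome is. First, the $\mathbb{Z}_2$-degree of the biproduct $(\omega_k|\varpi_k)$ is $|\omega_k|+|\varpi_k|$, not $|\omega_k|$: the monomial $(\gamma|j_1)\cdots(\gamma|j_{\lambda_k})$ has parity $\lambda_k$ because each $|(\gamma|j)|=1$, and each polarization $D^l_{z_i,\gamma}$ shifts the parity by $|z_i|$, giving total $|\omega_k|+\lambda_k=|\omega_k|+|\varpi_k|$. Second, the claim ``$|z'|=|z|$ is forced for a nonzero term'' is simply false --- the example immediately following the proposition takes $z=\alpha_i\in\mathcal{A}_0$ (even) and $z'=2\in\mathcal{L}$ (odd). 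Consequently $|d^l_{z,z'}(\omega_s)|=|\omega_s|+|z|+|z'|$, and the bitableau prefactor $\pm$ \emph{does} change when you reassemble.

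The correct accounting is this: the Leibniz sign for the $s$-th summand is $(-1)^{(|z|+|z'|)\sum_{k<s}(|\omega_k|+|\varpi_k|)}$. When you repackage the product back into bitableau notation with $\omega_s$ replaced by $d^l_{z,z'}(\omega_s)$, the definitional sign changes by $(-1)^{(|z|+|z'|)(|\varpi_1|+\cdots+|\varpi_{s-1}|)}$, because in the exponent $\sum_{k\ge 2}|\omega_k|(|\varpi_1|+\cdots+|\varpi_{k-1}|)$ only the term for $k=s$ is affected, and $|\omega_s|$ shifts by $|z|+|z'|$. The $|\varpi_k|$ contributions cancel between these two sources, leaving exactly $(-1)^{(|z|+|z'|)\epsilon_s}$. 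Once you replace your two incorrect parenthetical justifications by this observation, the proof is complete. (Your unease about $\epsilon_1$ is reasonable; the empty sum gives $0$, and the stated $\epsilon_1=1$ appears to be a harmless typo in the paper rather than something your argument needs to accommodate.)
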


\begin{example} Let $\alpha_i \in \mathcal{A}_0$, $1,  2, 3, 4 \in L$, $|D_{\alpha_i, 2}| = 1$. Then
$$
e_{\alpha_i, 2} \cdot
\left(
\begin{array}{lll}
1 \ 3 \ 2\\
 2 \ 3 \\
4 \ 2
\end{array}
\right| \left.
\begin{array}{lll}
1 \ 2 \ 3 \\
2 \ 3 \\
3 \ 1
\end{array}
\right) =
D^l_{\alpha_i, 2} \ \big(
\left(
\begin{array}{lll}
1 \ 3 \ 2\\
 2 \ 3 \\
4 \ 2
\end{array}
\right| \left.
\begin{array}{lll}
1 \ 2 \ 3 \\
2 \ 3 \\
3 \ 1
\end{array}
\right) \big) =
$$
$$ =
\left(
\begin{array}{lll}
1 \ 3 \ \alpha_i\\
 2 \ 3\\
4 \ 2
\end{array}
\right| \left.
\begin{array}{lll}
1 \ 2 \ 3 \\
2 \ 3 \\
3 \ 1
\end{array}
\right) -
\left(
\begin{array}{lll}
1 \ 3 \ 2\\
 \alpha_i \ 3\\
4 \ 2
\end{array}
\right| \left.
\begin{array}{lll}
1 \ 2 \ 3 \\
2 \ 3 \\
3 \ 1
\end{array}
\right) +
\left(
\begin{array}{lll}
1 \ 3 \ 2\\
 2 \ 3\\
4 \ \alpha_i
\end{array}
\right| \left.
\begin{array}{lll}
1 \ 2 \ 3 \\
2 \ 3 \\
3 \ 1
\end{array}
\right).
$$

\end{example}

\subsubsection{The straightening algorithm and the standard basis theorem for ${\mathbb C}[M_{m_0|m_1+n,d}]$}

Consider the set of all bitableaux $(S|T) \in {\mathbb C}[M_{m_0|m_1+n,d}]$, where $sh(S) = sh(T) \vdash h$,
$h$ a given positive integer. In the following, let
denote by $\leq$ the partial order on this set defined  by the following two steps:
\begin{enumerate}

\item 
$(S|T) < (S'|T')$ whenever $sh(S)  <_l sh(S') $,

\item 
$(S|T) < (S'|T')$ whenever $sh(S) =  sh(S')$, $w(S) >_l w(S')$,  $w(T) >_l w(T')$,
\end{enumerate}
where the shapes and the row-words are compared in the lexicographic
order.

The next results are superalgebraic versions of classical, well-known results for the symmetric algebra ${\mathbb C}[M_{n,d}]$
(\cite{drs-BR}, \cite{DKR-BR}, \cite{DEP-BR}, for the general theory of standard monomials see, e.g. \cite{Procesi-BR}, Chapt. 13)
and of their skew-symmetric analogues  (\cite{DR-BR}, \cite{ABW-BR}).

\begin{theorem}(The straightening algorithm)\label{theorem: standard expansion of bitableaux} \emph{\cite{rota-BR}}

Let $(P|Q) \in {\mathbb C}[M_{m_0|m_1+n,d}]$.
Then $(P|Q)$ can be
written as a linear combination, with rational coefficients,
\begin{equation}\label{straightening}
(P|Q) = \sum_{S,T} c_{S,T}\ (S|T),
\end{equation}
of standard bitableaux $(S|T)$, where $(S|T) \geq (P|Q)$ and $c_S = c_P$, $c_T = c_Q$.
\end{theorem}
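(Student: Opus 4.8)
The plan is to run the classical Doubilet--Rota--Stein straightening procedure, adapted to the $\mathbb{Z}_2$-graded setting, as a Noetherian rewriting process with respect to the partial order $\leq$ defined just above the statement. First I would reduce to the case where $(P|Q)$ is a single Young bitableau: by the definition of $(P|Q)$ as (a sign times) a product of biproducts $(\omega_i|\varpi_i)$, and by bilinearity of each biproduct in its two word-arguments, it suffices to treat a bitableau whose rows are given words. The induction is on the pair $(sh(P), (w(P), w(Q)))$ ordered so that larger shapes in lexicographic order and, within a fixed shape, lexicographically \emph{smaller} row-words come later; this is exactly the order built into $\leq$, and the key point to check is that it is a well-founded (Noetherian) order on bitableaux of a fixed total size $h$, so the rewriting terminates.

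The engine of the rewriting is a \emph{straightening relation} obtained from the Laplace expansions of Proposition \ref{Laplace expansions}. Concretely, if $(P|Q)$ is not standard then either some column of $P$ or some column of $Q$ fails to be (super)increasing, or two consecutive rows fail the required nesting. In each case one picks the first violation and applies the appropriate shuffle/Laplace identity: using Proposition \ref{Laplace expansions}.1 one expands a product $(\omega_i\omega_{i+1}|\varpi)$ over the coproduct of a suitable subword, and using the (super)symmetry/skew-symmetry of biproducts recorded in the Claim preceding Proposition \ref{Laplace expansions} one rearranges entries within a row. The outcome expresses $(P|Q)$ as a rational linear combination of bitableaux each of which is \emph{strictly larger} in the order $\leq$ — i.e.\ has either a strictly larger shape or, for the same shape, lexicographically smaller row words — while the content functions $c_P$, $c_Q$ are preserved because shuffling and Laplace expansion only permute the symbols appearing, never introduce new ones. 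Applying the inductive hypothesis to each term on the right finishes the step; since there are only finitely many bitableaux of total size $h$ with a given content, the process halts at standard bitableaux, all $\geq (P|Q)$, and the rational coefficients $c_{S,T}$ are well defined.

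The main obstacle is bookkeeping the signs and verifying that the order genuinely \emph{strictly increases} at every rewriting step in the $\mathbb{Z}_2$-graded setting: in the purely symmetric case this is the content of the classical straightening lemma (\cite{drs-BR}, \cite{DKR-BR}, \cite{DEP-BR}) and in the purely skew case of (\cite{DR-BR}, \cite{ABW-BR}), but here the rows mix commuting proper/negative-virtual symbols with anticommuting positive-virtual symbols, so one must confirm that the same combinatorial move that raises a symbol into place still lands in a strictly higher position of the order after collecting the supersymmetrization signs. I would isolate this as the one nontrivial lemma — that the ``leading term'' (with respect to $\leq$) of the straightening relation is the unique non-standard tableau being eliminated, with coefficient $\pm 1$ — and then the theorem follows formally by Noetherian induction. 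This is precisely the verification carried out in \cite{rota-BR}, which is the reference attached to the statement, so in the paper I would cite that source for the sign analysis and present here only the reduction-and-induction skeleton above.
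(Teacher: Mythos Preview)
The paper does not prove this theorem: it is stated with attribution to \cite{rota-BR} and treated as a known input (``The next results are superalgebraic versions of classical, well-known results\ldots''). Your proposal is therefore strictly more detailed than what the paper offers; the sketch you give --- Noetherian induction on the order $\leq$, rewriting via the Laplace/shuffle identities of Proposition~\ref{Laplace expansions}, preservation of content, and deferral of the sign/leading-term analysis to \cite{rota-BR} --- is the standard Grosshans--Rota--Stein argument and is correct in outline, so you are aligned with the paper's approach of citing that source.
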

Since standard bitableaux are linearly independent in ${\mathbb C}[M_{m_0|m_1+n,d}]$, the expansion
(\ref{straightening}) is unique.

Following \cite{Berele1-BR}, \cite{Berele2-BR},  \cite{Brini1-BR},
a partition $\lambda$ satisfies the $(m_0,m_1+n)-${\it{hook condition}} (in symbols, $\lambda \in H(m_0,m_1+n)$)
if and only if $\lambda_{m_0+1} \leq m_1+n.$ We have:
\begin{lemma}
There exists a standard tableau on $\mathcal{A}_0   \cup  \mathcal{A}_1 \cup \mathcal{L} $ of shape $\lambda$ if
and only if  $\lambda \in H(m_0,m_1+n).$
\end{lemma}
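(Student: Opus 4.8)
The plan is to prove both directions of the equivalence by analyzing when a filling of the Ferrers diagram of $\lambda$ with symbols from $\mathcal{A}_0 \cup \mathcal{A}_1 \cup \mathcal{L}$ can be made ``standard'' in the superalgebraic sense. Recall that standardness here means: entries are weakly increasing along rows and down columns, with the convention that the \emph{even} symbols of $\mathcal{A}_0$ are strictly increasing down columns (they skew-commute, hence repetitions in a column annihilate), while the \emph{odd} symbols of $\mathcal{A}_1 \cup \mathcal{L}$ are strictly increasing along rows (same reason, transposed). So a super-standard tableau of shape $\lambda$ is exactly a pair: a NW sub-diagram filled with $\mathcal{A}_0$-entries that is column-strict (an ordinary semistandard tableau in the $\alpha$'s), glued to the complementary SE skew part filled with $\mathcal{A}_1 \cup \mathcal{L}$-entries that is row-strict (a ``conjugate-semistandard'' filling). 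The maximal such NW part one can fit has at most $m_0$ columns per row is wrong — rather, since column-strictness in $\mathcal{A}_0$ with $m_0$ available symbols allows at most $m_0$ rows to carry an $\alpha$ in any fixed column, the $\alpha$-region can occupy at most the first $m_0$ rows; below row $m_0$ every cell must carry an odd symbol, and row-strictness with $|\mathcal{A}_1 \cup \mathcal{L}| = m_1 + n$ available odd symbols forces each such row to have length at most $m_1 + n$.

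First I would prove the ``only if'' direction. Suppose a standard tableau $U$ of shape $\lambda$ exists. Look at row $m_0 + 1$ (assuming $\lambda$ has at least that many rows; otherwise there is nothing to prove and $\lambda \in H(m_0, m_1+n)$ trivially). In each column $c$ of $U$ that reaches down to row $m_0+1$, the cells in rows $1, \dots, m_0+1$ form a column-increasing chain; if all of cells $(1,c),\dots,(m_0+1,c)$ carried even symbols they would be a strictly increasing chain of length $m_0+1$ inside $\mathcal{A}_0$, impossible since $|\mathcal{A}_0| = m_0$. Hence some cell in that column, at row $\le m_0+1$, carries an odd symbol; by column-monotonicity the cell $(m_0+1, c)$ itself then carries an odd symbol. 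Thus the entire row $m_0+1$ consists of odd symbols, and by column-monotonicity so do all rows below it. Now row $m_0+1$ has length $\lambda_{m_0+1}$ and is strictly increasing in the alphabet $\mathcal{A}_1 \cup \mathcal{L}$ of size $m_1+n$, forcing $\lambda_{m_0+1} \le m_1 + n$, i.e. $\lambda \in H(m_0, m_1+n)$.

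Next I would prove the ``if'' direction by explicit construction. Given $\lambda \in H(m_0, m_1+n)$, build a tableau as follows: in rows $1$ through $\min(m_0, \ell(\lambda))$, place even symbols, filling column $c$ of these rows with $\alpha_1, \alpha_2, \dots$ reading top to bottom (so cell $(i,c) \mapsto \alpha_i$); this is automatically column-strict and row-constant, hence weakly row-increasing, and uses only $\alpha_1,\dots,\alpha_{m_0}$, which are available. For the remaining cells — those in rows $> m_0$ — we need a row-strict filling in $\mathcal{A}_1 \cup \mathcal{L}$ that is also weakly column-increasing and sits above-compatibly with the $\alpha$-block. Fill cell $(i, c)$ with the $c$-th symbol of $\mathcal{A}_1 \cup \mathcal{L}$ in a fixed linear order (i.e. depending only on the column index); row-strictness holds because distinct columns get distinct symbols, columns are constant hence weakly increasing, and the hook condition $\lambda_{m_0+1} \le m_1 + n$ (hence $\lambda_i \le m_1+n$ for all $i > m_0$) guarantees we never run out of odd symbols in a row. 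Finally, the join of the two blocks is order-compatible provided we order the whole alphabet with all of $\mathcal{A}_0$ below all of $\mathcal{A}_1 \cup \mathcal{L}$, so any even symbol sitting above an odd symbol in the same column respects $\le$. One checks this combined tableau meets the standardness convention, completing the construction.

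The main obstacle is bookkeeping the precise standardness convention for super-tableaux on a hook alphabet — in particular getting right which direction (rows vs. columns) demands strictness for which parity, and ensuring the constructed glue of the $\mathcal{A}_0$-block and the $\mathcal{A}_1\cup\mathcal{L}$-block genuinely satisfies \emph{all} the monotonicity/strictness clauses simultaneously rather than just within each block. Once the convention is pinned down (it is the one implicit in the straightening Theorem~\ref{theorem: standard expansion of bitableaux} and in \cite{rota-BR}, \cite{Brini1-BR}), both directions are short counting arguments as sketched; no deep input beyond the pigeonhole principle applied along columns of length $> m_0$ is needed.
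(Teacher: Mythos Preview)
Your argument is correct and is essentially the standard Berele--Regev proof of this fact. The paper itself offers no proof of the lemma; it is stated as a known result, with the surrounding text pointing to \cite{Berele1-BR}, \cite{Berele2-BR}, \cite{Brini1-BR} for the hook condition, so there is no ``paper's own proof'' to compare against.

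Two small remarks. First, your ``only if'' direction tacitly uses that the total order on $\mathcal{A}_0 \cup \mathcal{A}_1 \cup \mathcal{L}$ places all of $\mathcal{A}_0$ below all of $\mathcal{A}_1 \cup \mathcal{L}$; you only make this explicit in the ``if'' direction. That convention is indeed the one in force in \cite{rota-BR} and \cite{Brini1-BR}, but since the paper never spells out the order you should state it up front rather than at the end, since without it the step ``by column-monotonicity the cell $(m_0+1,c)$ itself carries an odd symbol'' does not follow. Second, your explicit tableau in the ``if'' direction (row $i$ constant equal to $\alpha_i$ for $i\le m_0$, column $c$ constant equal to the $c$-th odd symbol below) is exactly the canonical super-Deruyts filling, and your verification that it is superstandard is complete.
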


Given a positive integer $h \in \mathbb{Z}^+$,
let ${\mathbb C}_h[M_{m_0|m_1+n,d}]$ denote the $h-$th homogeneous component of
${\mathbb C}[M_{m_0|m_1+n,d}].$

From Theorem \ref{theorem: standard expansion of bitableaux}, it follows

\begin{corollary}\label{theorem: standard basis}
(The  Standard basis theorem for
${\mathbb C}_h[M_{m_0|m_1+n,d}]$, \cite{rota-BR})

The following set is a basis of ${\mathbb C}_h[M_{m_0|m_1+n,d}]$:
$$
\{ (S|T) \ standard;\ sh(S) = sh(T) = \lambda \vdash h,   \lambda \in H(m_0,m_1+n), \lambda_1                        \leq d \ \}.
$$
\end{corollary}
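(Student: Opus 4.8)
The plan is to derive the statement from the straightening algorithm (Theorem \ref{theorem: standard expansion of bitableaux}) and the linear independence of standard bitableaux in ${\mathbb C}[M_{m_0|m_1+n,d}]$ recorded just after it, adding only two elementary constraints that identify which shapes $\lambda$ can occur. So there are two things to check: that the indicated set spans ${\mathbb C}_h[M_{m_0|m_1+n,d}]$, and that it is linearly independent.

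For the spanning I would argue as follows. The homogeneous component ${\mathbb C}_h[M_{m_0|m_1+n,d}]$ is spanned, as a vector space, by the degree-$h$ monomials in the generators $(a|j)$, with $a \in \mathcal{A}_0 \cup \mathcal{A}_1 \cup \mathcal{L}$ and $j \in \{1,\ldots,d\}$. After reordering the super-commuting factors, each such monomial equals, up to a sign, a Young bitableau $(P|Q)$ of column shape $(1^h)$: a length-one biproduct $(z|j)$ is exactly the generator $(z|j)$, so with $P=(z_1,\ldots,z_h)$ and $Q=(j_1,\ldots,j_h)$ one has $(P|Q)=\pm\,(z_1|j_1)\cdots(z_h|j_h)$. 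Applying Theorem \ref{theorem: standard expansion of bitableaux} to each $(P|Q)$ rewrites it as a rational linear combination of standard bitableaux $(S|T)$ with $sh(S)=sh(T)=:\lambda\vdash h$ and $c_S=c_P$, $c_T=c_Q$. Hence the standard bitableaux of shape $\lambda\vdash h$ already span ${\mathbb C}_h[M_{m_0|m_1+n,d}]$. To cut the index set down further: $S$ is a standard tableau on $\mathcal{A}_0\cup\mathcal{A}_1\cup\mathcal{L}$, so by the Lemma on existence of standard tableaux $\lambda\in H(m_0,m_1+n)$; and $T$ is a tableau on $\{1,\ldots,d\}$, so since the biproduct is skew-symmetric in the $j$'s (the Claim above), a row of length greater than $d$ — equivalently, one with a repeated entry — yields the zero element, whence $\lambda_1\le d$. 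Thus only the bitableaux listed in the statement appear.

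For linear independence I would invoke the independence of standard bitableaux in ${\mathbb C}[M_{m_0|m_1+n,d}]$ noted immediately after Theorem \ref{theorem: standard expansion of bitableaux} (from \cite{rota-BR}). The underlying reason, which one can reproduce if a self-contained argument is wanted, is the classical Doubilet--Rota--Stein leading-term argument: fix the content $(c_S,c_T)$; each standard bitableau has, in its monomial expansion, a distinguished leading monomial occurring with coefficient $\pm 1$; distinct standard bitableaux of the same content have distinct leading monomials; and the straightening relations are triangular with respect to the partial order $\le$, which rules out any nontrivial vanishing linear combination. Combined with the spanning step, this shows the displayed set is a basis.

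The genuine content is external to the corollary: it is Theorem \ref{theorem: standard expansion of bitableaux} together with the independence of standard bitableaux, both from \cite{rota-BR}. The only point that needs care here — and the natural place for a slip — is the identification of the admissible shapes: the hook condition $\lambda\in H(m_0,m_1+n)$ is forced by the existence Lemma for standard tableaux on the super alphabet, while $\lambda_1\le d$ is forced by skew-symmetry in the column indices. I would also note that homogeneity is automatic, since each biproduct $(\omega|\varpi)$ is homogeneous of degree equal to the common length of $\omega$ and $\varpi$ and straightening preserves content, hence degree.
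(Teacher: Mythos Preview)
Your argument is correct and matches the paper's approach: the corollary is stated there as an immediate consequence of the straightening algorithm (Theorem~\ref{theorem: standard expansion of bitableaux}) together with the linear independence of standard bitableaux noted right after it and the Lemma characterizing shapes admitting a standard tableau. You have simply made explicit the two shape constraints ($\lambda\in H(m_0,m_1+n)$ and $\lambda_1\le d$) and the reduction of monomials to column-shape bitableaux, which the paper leaves implicit.
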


\section{The Schur (covariant) modules and supermodules}\label{Schur}

\subsection{The Schur  ${\mathbf{U}}(gl(m_0|m_1+n))$-supermodules as
submodules of $\mathbb{C}[M_{m_0|m_1+n,d}]$}

Given $\lambda \in H(m_0,m_1+n)$, the {\it{Schur supermodule}} $Schur_\lambda(m_0,m_1+n)$ is
the subspace of ${\mathbb C}[M_{m_0|m_1+n,d}]$, $d \geq \lambda_1$, spanned by the set of all bitableaux
$(S|D^P_\lambda)$ of shape $\lambda,$
where $S$ is a Young tableau on the alphabet $\mathcal{A}_0  \cup  \mathcal{A}_1 \cup \mathcal{L} $, and
$D^P_\lambda$ is the {\it{Deruyts}} tableau on $P = \{1, 2, \ldots, d \}$
$$
D^P_\lambda =  \left(
\begin{array}{llllllllllllll}
1  \ 2 \ 3 \ldots     \ldots    & \lambda_1 \\                           \\
1   \ 2 \ 3 \ldots               \lambda_2 \\
 \ldots  \ldots  & \\
1 \ 2 \ 3 \ldots \lambda_p
\end{array}
\right), \quad p = \textit{l}(\lambda).
$$

From Corollary \ref{theorem: standard basis} and the {\textit{straightening algorithm}} (see, e.g. \cite{Bri-BR}), it follows

\begin{proposition}

The set
$$
\Big\{ (S|D^P_\lambda); \ S \ superstandard   \       \Big\}
$$
is a ${\mathbb C}$-linear basis of $Schur_\lambda(m_0,m_1+n)$.
\end{proposition}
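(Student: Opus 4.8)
The plan is to exhibit the set $\{(S|D^P_\lambda) : S \text{ superstandard}\}$ as exactly the image, under the map $S \mapsto (S|D^P_\lambda)$, of the standard basis vectors of $\mathbb{C}_h[M_{m_0|m_1+n,d}]$ supported on the shape $\lambda$, and then invoke Corollary \ref{theorem: standard basis}. First I would observe that, by the definition of the Young bitableau, $(S|D^P_\lambda)$ is (up to a sign) the product of biproducts $(\omega_r \mid 1\,2\,\cdots\,\lambda_r)$, and that $D^P_\lambda$ is itself the unique \emph{standard} tableau on $P = \{1,2,\ldots,d\}$ of shape $\lambda$ whose rows are the initial segments $1\,2\,\cdots\,\lambda_r$; in particular the row words of $D^P_\lambda$ are strictly increasing, so $(S|D^P_\lambda)$ is a standard bitableau if and only if $S$ is superstandard (that is, $S$ is a standard Young tableau on $\mathcal{A}_0\cup\mathcal{A}_1\cup\mathcal{L}$ in the superalgebra sense — the notion of ``standard'' from Section \ref{standard}). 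Thus the proposed spanning set is literally the subset of the standard basis of $\mathbb{C}_h[M_{m_0|m_1+n,d}]$ consisting of those $(S|T)$ with $sh(S)=sh(T)=\lambda$ and $T=D^P_\lambda$.

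The two things that then need to be checked are: (i) $Schur_\lambda(m_0,m_1+n)$, as defined (the span of \emph{all} $(S|D^P_\lambda)$, $S$ arbitrary), is already spanned by the superstandard ones; and (ii) no further collapsing occurs, i.e. the superstandard $(S|D^P_\lambda)$ remain linearly independent. For (i), I would apply the straightening algorithm (Theorem \ref{theorem: standard expansion of bitableaux}) to an arbitrary $(S|D^P_\lambda)$: it rewrites $(S|D^P_\lambda)$ as a rational combination of standard bitableaux $(S'|T')$ with $(S'|T') \geq (S|D^P_\lambda)$ and with the same content, $c_{S'} = c_S$ and $c_{T'} = c_{D^P_\lambda}$. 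The key point is that the content condition $c_{T'} = c_{D^P_\lambda}$ together with standardness of $T'$ forces $T' = D^P_\lambda$: a standard (row-strictly-increasing, column-weakly... — in fact for the $P$-side which is a single $\mathbb{Z}_2$-odd block, row-strict and column-strict) tableau on $P$ with exactly $\lambda_r$ occurrences arranged in shape $\lambda$ must have each row equal to $1\,2\,\cdots\,\lambda_r$, and this is the Deruyts tableau $D^P_\lambda$. Hence every term on the right-hand side of the straightening expansion has the form $(S'|D^P_\lambda)$ with $S'$ superstandard (since $sh(S')=sh(T')=\lambda$ and $(S'|D^P_\lambda)$ standard $\Rightarrow S'$ superstandard), which proves (i). For (ii), linear independence is immediate from the Standard Basis Theorem (Corollary \ref{theorem: standard basis}): the superstandard $(S|D^P_\lambda)$ are distinct elements of that basis, and $\lambda \in H(m_0,m_1+n)$ guarantees (Lemma following Theorem \ref{theorem: standard expansion of bitableaux}) that superstandard tableaux of shape $\lambda$ exist and $\lambda_1 \le d$ guarantees they lie in the stated basis.

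I expect the only genuine obstacle to be the bookkeeping in step (i) — namely, pinning down precisely why the content-preservation clause of the straightening algorithm, combined with standardness on the $P$-alphabet, rigidly forces the right tableau of every straightened term to be exactly $D^P_\lambda$ rather than merely some tableau of the same shape and content. This is where one must be careful about the convention for ``standard'' on a $\mathbb{Z}_2$-homogeneous odd alphabet (strict increase along both rows and columns), so that the only filling of shape $\lambda$ with content $(\lambda_1,\lambda_2,\ldots)$-as-a-multiset... more precisely with $c_{T'}(j) = \#\{r : \lambda_r \ge j\}$ for each $j$, is the column-filling $1,2,3,\ldots$ down each row, i.e. $D^P_\lambda$. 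Once that rigidity is established, the rest is a direct citation of Theorem \ref{theorem: standard expansion of bitableaux} and Corollary \ref{theorem: standard basis}, and the proposition follows.
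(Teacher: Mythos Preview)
Your approach is exactly the one the paper intends: it states the proposition as an immediate consequence of the straightening algorithm (Theorem~\ref{theorem: standard expansion of bitableaux}) together with the Standard Basis Theorem (Corollary~\ref{theorem: standard basis}), and you have correctly identified that the whole argument reduces to showing that every straightened term $(S'|T')$ arising from $(S|D^P_\lambda)$ has $T'=D^P_\lambda$.

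Two points need tightening. First, your stated convention for ``standard'' on the $P$-side is wrong: $P$ is a purely \emph{negative} alphabet, so the superstandard condition is row-\emph{strict} and column-\emph{weak} (not column-strict). Indeed, with column-strict your candidate $D^P_\lambda$ would fail to be standard, since its $j$-th column is constantly equal to~$j$. Second, the assertion ``content $c_{T'}=c_{D^P_\lambda}$ plus standardness forces $T'=D^P_\lambda$'' is not true without also using the shape inequality coming from the straightening order. Content alone only forces $\widetilde{\mu}\trianglerighteq\widetilde{\lambda}$ (equivalently $\mu\trianglelefteq\lambda$), where $\mu=sh(T')$: a row-strict, column-weak tableau on $P$ with content $c(j)=\widetilde{\lambda}_j$ can exist for any shape $\mu$ dominated by $\lambda$ (transpose to semistandard tableaux and invoke the Kostka condition $K_{\widetilde{\mu},\widetilde{\lambda}}\neq 0\Leftrightarrow\widetilde{\mu}\trianglerighteq\widetilde{\lambda}$). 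What pins down $\mu=\lambda$ is combining this with the straightening inequality $(S'|T')\geq(S|D^P_\lambda)$, which gives $\mu\geq_l\lambda$; since $\mu\trianglelefteq\lambda$ implies $\mu\leq_l\lambda$, you conclude $\mu=\lambda$, and then $K_{\widetilde{\lambda},\widetilde{\lambda}}=1$ yields $T'=D^P_\lambda$. Once you make these two corrections, your proof is complete and matches the paper's intended argument.
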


Furthermore, we recall
\begin{proposition}\emph{(\cite{Brini1-BR}, \cite{Bri-BR})}
The submodule $Schur_\lambda(m_0,m_1+n)$ is an irreducible $\mathbf{U}(gl(m_0|m_1+n))$-submodule of
${\mathbb C}[M_{m_0|m_1+n,d}]$, with highest weight
$$
(\lambda_1, \ldots, \lambda_{m_0}; \ \widetilde{\lambda}_1-m_0, \widetilde{\lambda}_2-m_0, \ldots ).
$$
\end{proposition}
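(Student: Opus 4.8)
The plan is to establish the two assertions --- irreducibility and the highest weight formula --- by combining the explicit combinatorial basis of $Schur_\lambda(m_0,m_1+n)$ furnished by the preceding proposition with the $\mathbf{U}(gl(m_0|m_1+n))$-equivariance of the polarization description of biproducts (Proposition \ref{action on tableaux}) and the straightening algorithm (Theorem \ref{theorem: standard expansion of bitableaux}). First I would isolate a distinguished vector: the bitableau $(S^\#_\lambda \,|\, D^P_\lambda)$, where $S^\#_\lambda$ is the \emph{superstandard} tableau filled in the unique column-superstandard way dictated by the hook partition of the alphabet $\mathcal{A}_0 \cup \mathcal{A}_1 \cup \mathcal{L}$ (the rows of length $\le m_0$ starting with $\alpha_1, \alpha_2, \ldots$ down the first column, and the overflow in each row continuing with $\beta_1, \beta_2, \ldots$). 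Using Proposition \ref{action on tableaux} one checks directly that every raising operator $e_{z,z'}$ (with $z$ preceding $z'$ in the chosen total order on the alphabet) annihilates this vector, because applying $d^l_{z,z'}$ to a row of $S^\#_\lambda$ either produces $0$ or a tableau with a repeated symbol in a column, which vanishes by the skew-symmetry of the biproduct in its left (even-$\alpha$) slots; for the odd symbols the symmetry of the biproduct combined with the column-strict fillings gives the same conclusion. Reading off the diagonal action $e_{z,z} \cdot (S^\#_\lambda \,|\, D^P_\lambda)$ via the same proposition yields the weight: the symbol $\alpha_i$ occurs $\lambda_i$ times (it fills the $i$-th column down to row $i$... more precisely it occupies all boxes in row $i$ that lie in the first $m_0$ columns, giving $\min(\lambda_i, \text{count})$ --- one must track this carefully), and $\beta_j$ occurs $\widetilde\lambda_j - m_0$ times for $j \le m_1$, which is exactly the claimed weight $(\lambda_1, \ldots, \lambda_{m_0};\ \widetilde\lambda_1 - m_0, \widetilde\lambda_2 - m_0, \ldots)$.

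Next I would prove that this highest weight vector generates the whole module. The submodule it generates is $\mathbf{U}(gl(m_0|m_1+n))$-stable and contains $(S^\#_\lambda \,|\, D^P_\lambda)$; by applying lowering operators (polarizations $e_{z,z'}$ with $z'$ preceding $z$) and using Proposition \ref{action on tableaux} together with straightening (Theorem \ref{theorem: standard expansion of bitableaux}), one produces, modulo lower terms in the partial order $\le$, every superstandard bitableau $(S \,|\, D^P_\lambda)$. An induction on the order $\le$ --- which is available because straightening only introduces bitableaux that are $\ge$ in this order, and the set is finite within a fixed homogeneous degree --- then shows that all basis elements of the preceding proposition lie in the generated submodule, so it is all of $Schur_\lambda(m_0,m_1+n)$. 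Conversely, any nonzero submodule contains a highest weight vector (finite-dimensionality in each homogeneous degree, or the general theory of $gl(m_0|m_1+n)$-modules on $\mathbb{C}[M_{m_0|m_1+n,d}]$ recalled in the previous subsection), which by the same vanishing-of-raising-operators analysis must be proportional to $(S^\#_\lambda \,|\, D^P_\lambda)$ up to the weight space; hence every nonzero submodule contains the generator, forcing irreducibility.

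An alternative, and in some respects cleaner, route avoids re-proving irreducibility from scratch: the previous subsection already records that $\mathbb{C}[M_{m_0|m_1+n,d}]$ is a \emph{semisimple} $\mathbf{U}(gl(m_0|m_1+n))$-module whose simple constituents are Schur supermodules. Granting that, it suffices to show (i) $Schur_\lambda(m_0,m_1+n)$ as defined here is a submodule --- immediate from Proposition \ref{action on tableaux}, since polarizing any $(S\,|\,D^P_\lambda)$ only changes the left tableau $S$ --- and (ii) it is \emph{indecomposable}, or equivalently has a one-dimensional space of highest weight vectors of the stated weight; the highest weight computation above supplies the weight, and the basis proposition shows the $\lambda$-weight space (in the appropriate sense) is spanned by the single superstandard tableau, pinning down multiplicity one. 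Then semisimplicity upgrades indecomposable to irreducible.

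The main obstacle I anticipate is the bookkeeping in the raising-operator computation: one must fix once and for all a total order on $\mathcal{A}_0 \cup \mathcal{A}_1 \cup \mathcal{L}$ compatible with the $\mathbb{Z}_2$-grading and the hook shape, verify that $S^\#_\lambda$ is genuinely the unique superstandard filling that is simultaneously a highest weight vector, and --- this is the delicate point --- check that applying $d^l_{z,z'}$ with $z < z'$ to $(S^\#_\lambda\,|\,D^P_\lambda)$ really does land in the span of bitableaux that are non-standard in a way that straightens to zero, handling separately the cases where $z, z'$ are both even, both odd, or of mixed parity (the signs $(-1)^{(|z|+|z'|)\epsilon_s}$ in Proposition \ref{action on tableaux} and the supersymmetry/skew-symmetry Claim all enter here). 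Once the sign conventions are nailed down this is routine but error-prone; everything else follows formally from the standard basis theorem and the semisimplicity already in hand.
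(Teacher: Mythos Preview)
The paper does not supply its own proof of this proposition: it is stated with the attribution ``\emph{(\cite{Brini1-BR}, \cite{Bri-BR})}'' and immediately followed by the next subsection, so there is nothing in the paper to compare your argument against. Your proposal is therefore not a reconstruction of the paper's proof but an independent attempt, and as such the outline is broadly the standard one (exhibit a candidate highest weight vector, verify the raising operators kill it, compute the diagonal weights, then use either cyclicity plus uniqueness of the highest weight space or the ambient semisimplicity to conclude irreducibility).

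A few points where your sketch is imprecise and would need tightening if you actually wrote it out. First, your description of the highest weight tableau $S^\#_\lambda$ is garbled: the correct filling has row $i$ (for $1\le i\le m_0$) consisting entirely of the positive symbol $\alpha_i$, and row $m_0+k$ (for $k\ge 1$) filled with $\beta_1\beta_2\cdots\beta_{\lambda_{m_0+k}}$; this is what makes the content of $\alpha_i$ equal to $\lambda_i$ and the content of $\beta_j$ equal to $\max(\widetilde{\lambda}_j-m_0,0)$. Your parenthetical about ``$\alpha_1,\alpha_2,\ldots$ down the first column'' and ``the overflow in each row continuing with $\beta_1,\beta_2,\ldots$'' does not describe this tableau. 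Second, in the annihilation argument you invoke ``a repeated symbol in a column, which vanishes by skew-symmetry'': be careful, because the biproduct $(\omega|\varpi)$ is supersymmetric in the $z$'s and skew-symmetric in the $j$'s, so vanishing from a repeated \emph{left} symbol depends on its parity (a repeated positive $\alpha$ in a row does \emph{not} force zero; a repeated negative $\beta$ or proper symbol does). The actual reason the raising operators annihilate $S^\#_\lambda$ is a short case check on the specific filling above, not a blanket column/row repetition argument. Once those two points are fixed, the rest of your plan---especially the second route via the already-recorded semisimplicity of $\mathbb{C}[M_{m_0|m_1+n,d}]$---goes through.
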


\subsection{The classical Schur ${\mathbf{U}}(gl(n))$-modules}\label{Schur modules}

Given $\lambda$ such that $\lambda_1 \leq n$, the {\it{Schur module}} $Schur_\lambda(n)$ is
the subspace of ${\mathbb C}[M_{n,d}]$, $d \geq \lambda_1$, spanned by the set of all bitableaux
$(X|D^P_\lambda)$ of shape $\lambda$  and $X$ is a Young tableau on the alphabet $ L.$

\begin{proposition}

The set
$$
\Big\{ (X|D^P_\lambda); X \ standard   \       \Big\}
$$
is a ${\mathbb C}$-linear basis of $Schur_\lambda(n)$.

Furthermore, $Schur_\lambda(n)$ is an irreducible $\mathbf{U}(gl(n))$-submodule of
${\mathbb C}[M_{n,d}]$, with highest weight $\widetilde{\lambda}.$
\end{proposition}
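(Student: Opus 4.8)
The plan is to derive this Proposition as the $m_0=m_1=0$ specialization of the two Propositions on Schur \emph{super}modules established in the previous subsection. Indeed, setting $m_0=m_1=0$ the auxiliary alphabet $\mathcal{A}_0\cup\mathcal{A}_1\cup\mathcal{L}$ collapses to the proper alphabet $\mathcal{L}=L$, the supersymmetric algebra ${\mathbb C}[M_{0|0+n,d}]$ is literally the polynomial algebra ${\mathbb C}[M_{n,d}]$, the hook condition $\lambda\in H(0,0+n)$ reads $\lambda_1\le n$, and a tableau which is superstandard on the single odd block $\mathcal{L}$ is precisely what is here called a \emph{standard} tableau on $L$; moreover $Schur_\lambda(0,0+n)$ is, by its very definition, $Schur_\lambda(n)$, and $\mathbf{U}(gl(0|0+n))=\mathbf{U}(gl(n))$. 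Thus the Proposition on the superstandard basis of $Schur_\lambda(m_0,m_1+n)$ yields the basis assertion, while the Proposition on its irreducibility and highest weight --- whose weight $(\lambda_1,\dots,\lambda_{m_0};\ \widetilde{\lambda}_1-m_0,\widetilde{\lambda}_2-m_0,\dots)$ degenerates to $\widetilde{\lambda}$ when $m_0=0$ --- yields irreducibility and the highest weight. One should first note that the size hypotheses attached to the \emph{virtual-variables} machinery (such as $m_0\ge\widetilde{\lambda}_1$, $m_1\ge\lambda_1$) play no role in those two purely module-theoretic statements: they rest only on the straightening algorithm and the standard basis theorem, which hold for all $m_0,m_1$.

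For completeness I would also give the short direct argument for the basis. By definition $Schur_\lambda(n)$ is spanned by the bitableaux $(X|D^P_\lambda)$ with $X$ running over all Young tableaux of shape $\lambda$ on $L$. Straightening each of these by Theorem \ref{theorem: standard expansion of bitableaux} rewrites it as a rational linear combination of standard bitableaux $(S|T)$ with $sh(S)=sh(T)=\lambda$, with $c_S=c_X$ (so $S$ is again a tableau on $L$) and with $c_T=c_{D^P_\lambda}$, the content in which the symbol $j$ occurs $\widetilde{\lambda}_j$ times. The key point is that $D^P_\lambda$ is the \emph{unique} standard tableau on $P$ of shape $\lambda$ with that content: in a row-strict tableau the entry in column $k$ of any row is $\ge k$, so the $\widetilde{\lambda}_1=\ell(\lambda)$ copies of $1$ must all lie in the first column, forcing that column to be filled with $1$'s; deleting it and inducting on the number of columns pins down $T=D^P_\lambda$. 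Hence $Schur_\lambda(n)$ is the span of $\{(S|D^P_\lambda):\ S\ \text{standard on } L,\ sh(S)=\lambda\}$, a subfamily of the standard-bitableau basis of ${\mathbb C}_h[M_{n,d}]$ given by Corollary \ref{theorem: standard basis} (case $m_0=m_1=0$), hence linearly independent; so it is a basis, and the statement is independent of the choice of $d\ge\lambda_1$.

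That $Schur_\lambda(n)$ is a $\mathbf{U}(gl(n))$-\emph{submodule} of ${\mathbb C}[M_{n,d}]$ is then immediate from Proposition \ref{action on tableaux}: a polarization $e_{a,b}$ with $a,b\in L$ sends $(X|D^P_\lambda)$ to a signed sum of bitableaux $(X'|D^P_\lambda)$ with the \emph{same} right tableau $D^P_\lambda$, hence back into $Schur_\lambda(n)$. Irreducibility and the highest weight $\widetilde{\lambda}$ follow from the cited irreducibility Proposition at $m_0=m_1=0$; the highest weight vector is the bitableau $(D^L_\lambda\,|\,D^P_\lambda)$ formed from the Deruyts tableau $D^L_\lambda$ on $L$, whose $gl(n)$-weight is read off as $\widetilde{\lambda}$ straight from Proposition \ref{action on tableaux}. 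I do not expect a genuine obstacle here, since everything reduces to results already in hand; the only thing demanding care is the bookkeeping of tableau conventions --- identifying ``superstandard on the odd block $\mathcal{L}$'' with ``standard on $L$'', and keeping straight the row-strict/column-weakly-increasing orientation built into $D^P_\lambda$ --- together with the observation, already made above, that the virtual-size hypotheses are irrelevant to these module statements.
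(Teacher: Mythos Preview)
The paper does not give a proof of this proposition; it is stated as a classical fact (the basis statement is the $m_0=m_1=0$ case of the preceding superstandard-basis proposition, and irreducibility with highest weight $\widetilde\lambda$ is a well-known property of Schur modules). Your derivation by specialization is exactly the intended reading, and your supplementary direct argument for the basis is correct.

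One point deserves a word of care. When you apply Theorem~\ref{theorem: standard expansion of bitableaux} to $(X|D^P_\lambda)$ you write that the resulting standard bitableaux $(S|T)$ have $sh(S)=sh(T)=\lambda$, but the straightening theorem only guarantees $sh(T)\ge_l\lambda$. Your content argument actually closes this gap, though you should make it explicit: since $T$ is row-strict on $P$, every entry in column $k$ is $\ge k$, so the $\widetilde\lambda_1+\cdots+\widetilde\lambda_k$ entries that are $\le k$ all lie in the first $k$ columns, giving $\widetilde{sh(T)}_1+\cdots+\widetilde{sh(T)}_k\ge\widetilde\lambda_1+\cdots+\widetilde\lambda_k$ for all $k$; hence $\lambda$ dominates $sh(T)$, which together with $sh(T)\ge_l\lambda$ forces $sh(T)=\lambda$. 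Once the shape is pinned down, your column-by-column induction indeed yields $T=D^P_\lambda$.
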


Let
$$
D_{\lambda} = \left(
\begin{array}{llllllllllllll}
1  \ 2 \ 3 \ldots     \ldots    & \lambda_1 \\                           \\
1   \ 2 \ 3 \ldots               \lambda_2 \\
 \ldots  \ldots  & \\
1 \ 2 \ 3 \ldots \lambda_p
\end{array}
\right)
$$
denote the (proper) Deruyts tableau on the alphabet $\mathcal{L} = \{1, 2, \ldots, n \}.$
The bitableau
$$
v_{\widetilde{\lambda}} = (D_\lambda|D^P_\lambda)
$$
is the ``canonical''  highest weight vector of the irreducible $gl(n)$-module $Schur_\lambda(n)$
with highest weight $\widetilde{\lambda}$.

\subsection{The classical Schur modules as $gl(n)$-submodules of Schur supermodules}\label{Lemmas}

Let $\lambda = (\lambda_1, \lambda_2, \ldots, \lambda_p)$ be a partition such that $\lambda_1                        \leq n$.

Consider a Schur supermodule
$$
Schur_\lambda(m_0,m_1+n)
$$
(clearly, $\lambda \in H(m_0,m_1+n)$, for every $m_0, m_1$).

The Schur module $Schur_\lambda(n)$ can be regarded as a $\mathbf{U}(gl(n))$-submodule of the
$\mathbf{U}(gl(m_0|m_1+n))$-supermodule $Schur_\lambda(m_0,m_1+n).$

Let $\mathfrak{p}$ be the Capelli epimorphism 
$$
\mathfrak{p} : Virt(m_0+m_1, n) \twoheadrightarrow
{\mathbf{U}}(gl(n)), \qquad  Ker(\mathfrak{p}) = \mathbf{Irr}.
$$

\begin{proposition}
The Schur module $Schur_\lambda(n)$ is  invariant (as a subspace of $Schur_\lambda(m_0,m_1+n)$) with respect to the action
of the subalgebra
$$
Virt(m_0+m_1, n)   \subset \mathbf{U}(gl(m_0|m_1+n)).
$$
\end{proposition}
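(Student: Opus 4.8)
The plan is to reduce the statement to the already-established invariance of $\mathbb{C}[M_{n,d}]$ under $Virt(m_0+m_1,n)$ together with the known $gl(n)$-module structure of $Schur_\lambda(n)$. First I would recall that $Schur_\lambda(n)$ is defined as the span of the bitableaux $(X|D^P_\lambda)$ with $X$ a Young tableau on the proper alphabet $\mathcal{L}$, and that by the Standard Basis Theorem (Corollary \ref{theorem: standard basis}) the subset with $X$ standard is a basis; in particular $Schur_\lambda(n)$ is a $gl(n)$-submodule of $\mathbb{C}[M_{n,d}]$. Since $\mathbb{C}[M_{n,d}]$ is invariant under the whole subalgebra $Virt(m_0+m_1,n)$ (the Proposition stated just before the virtual-action Proposition), it suffices to check that the specific subspace $Schur_\lambda(n) \subset \mathbb{C}[M_{n,d}]$ is carried into itself.

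The key step is to decompose an arbitrary element $\mathbf{M} \in Virt(m_0+m_1,n) = \mathbf{U}(gl(0|n)) \oplus \mathbf{Irr}$ as $\mathbf{M} = \mathbf{m} + \mathbf{r}$ with $\mathbf{m} \in \mathbf{U}(gl(n))$ and $\mathbf{r} \in \mathbf{Irr}$. For the $\mathbf{Irr}$ part, I would invoke the Proposition asserting that the superpolarization action of any element of $\mathbf{Irr}$ on $\mathbb{C}[M_{n,d}]$ is identically zero; hence $\mathbf{r}$ kills $Schur_\lambda(n)$ as well, since $Schur_\lambda(n) \subseteq \mathbb{C}[M_{n,d}]$. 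For the $\mathbf{U}(gl(n))$ part, the action of $\mathbf{m}$ on $\mathbb{C}[M_{n,d}]$ is by iterated proper polarizations $D^l_{i,j}$ with $i,j \in \mathcal{L}$; applying such a polarization to a bitableau $(X|D^P_\lambda)$ gives, by Proposition \ref{action on tableaux}, a sum of bitableaux $(X'|D^P_\lambda)$ where $X'$ is obtained from $X$ by replacing one entry, so $X'$ is again a Young tableau on $\mathcal{L}$ of the same shape $\lambda$ with the same right factor $D^P_\lambda$. Thus $D^l_{i,j}$ preserves $Schur_\lambda(n)$, and therefore so does any element of $\mathbf{U}(gl(n))$, hence so does $\mathbf{m}$. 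Combining the two parts, $\mathbf{M} \cdot Schur_\lambda(n) = \mathbf{m} \cdot Schur_\lambda(n) \subseteq Schur_\lambda(n)$.

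I expect the only subtlety — and the step to state carefully rather than the one that is genuinely hard — is the bookkeeping of signs and of which alphabet the new entry lives in when a polarization acts on a bitableau: one must confirm that proper polarizations $D^l_{i,j}$, $i,j \in \mathcal{L}$, never introduce virtual symbols into the left tableau, so that the result stays inside the span of the $(X|D^P_\lambda)$ with $X$ on $\mathcal{L}$. This is immediate from the definition $D^l_{a,b}((c|j)) = \delta_{bc}(a|j)$, which shows a polarization only ever substitutes the creation symbol $a$; with $a = i \in \mathcal{L}$ no virtual symbol appears. A clean way to package this is to observe that $Schur_\lambda(n)$ is already known to be a $\mathbf{U}(gl(n))$-submodule of $\mathbb{C}[M_{n,d}]$ (stated in subsection \ref{Schur modules}), so the whole content reduces to: $Virt(m_0+m_1,n)$ acts on $\mathbb{C}[M_{n,d}]$ through the Capelli epimorphism $\mathfrak{p}$ onto $\mathbf{U}(gl(n))$ — which is exactly Proposition \ref{virtual action} — and $\mathbf{U}(gl(n))$ preserves $Schur_\lambda(n)$. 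No serious obstacle arises; the proof is a short two-line deduction once Proposition \ref{virtual action} is cited with $\mathbb{C}[M_{n,d}]$ replaced by its $\mathbf{U}(gl(n))$-submodule $Schur_\lambda(n)$, the validity of which replacement is precisely the observation above.
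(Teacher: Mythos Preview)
Your proof is correct and is exactly the argument the paper has in mind: the paper states this proposition without proof, treating it as an immediate consequence of the decomposition $Virt(m_0+m_1,n)=\mathbf{U}(gl(n))\oplus\mathbf{Irr}$, the vanishing of $\mathbf{Irr}$ on $\mathbb{C}[M_{n,d}]$, and the fact that $Schur_\lambda(n)$ is a $\mathbf{U}(gl(n))$-submodule. Your write-up makes explicit precisely those three ingredients, so there is nothing to add.
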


From Proposition \ref{virtual action}, we infer

\begin{proposition}
For every element $\mathbf{M} \in Virt(m_0+m_1, n)$, the action of $\mathbf{M}$ on 
the Schur module $Schur_\lambda(n)$
is the same of the action of its image $\mathfrak{p}(\mathbf{M}) = \mathbf{m} \in {\mathbf{U}}(gl(n)).$

\end{proposition}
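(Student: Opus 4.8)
The plan is to obtain the statement as an immediate restriction of Proposition~\ref{virtual action} to the submodule $Schur_\lambda(n)$, using that $Schur_\lambda(n)$ lives inside $\mathbb{C}[M_{n,d}]$ and is stable under $Virt(m_0+m_1,n)$.

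First I would record that both sides of the asserted identity make sense as operators on $Schur_\lambda(n)$. By its very definition in Subsection~\ref{Schur modules}, for $d \geq \lambda_1$ the Schur module $Schur_\lambda(n)$ is a subspace of $\mathbb{C}[M_{n,d}]$ --- in fact an irreducible $\mathbf{U}(gl(n))$-submodule of it --- so the element $\mathbf{m} = \mathfrak{p}(\mathbf{M}) \in \mathbf{U}(gl(n))$ acts on it; and by the preceding proposition $Schur_\lambda(n)$ is invariant (inside $Schur_\lambda(m_0,m_1+n)$, hence inside $\mathbb{C}[M_{n,d}]$) under the action of $Virt(m_0+m_1,n)$, so $\mathbf{M}$ acts on it as well. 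Thus both $\mathbf{M}\cdot(-)$ and $\mathbf{m}\cdot(-)$ are linear endomorphisms of $Schur_\lambda(n)$.

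Then I would invoke Proposition~\ref{virtual action}: since $\mathfrak{p}(\mathbf{M}) = \mathbf{m}$, we have $\mathbf{M}\cdot \mathbf{P} = \mathbf{m}\cdot \mathbf{P}$ for every $\mathbf{P} \in \mathbb{C}[M_{n,d}]$, and specializing $\mathbf{P}$ to range over the subspace $Schur_\lambda(n) \subseteq \mathbb{C}[M_{n,d}]$ yields the claim. There is no genuine obstacle here --- the content is entirely supplied by Proposition~\ref{virtual action} together with the previous proposition on $Virt(m_0+m_1,n)$-invariance of $Schur_\lambda(n)$; the only point worth stating explicitly is that the two actions are actually well defined on the subspace $Schur_\lambda(n)$, which is exactly what those two results guarantee.
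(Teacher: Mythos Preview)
Your proof is correct and matches the paper's approach exactly: the paper simply states that the proposition is inferred from Proposition~\ref{virtual action}, and you have spelled out precisely this inference together with the invariance from the preceding proposition to ensure both actions are well defined on $Schur_\lambda(n)$.
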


\subsection{The action of double Young-Capelli bitableaux on highest weight vectors of Schur modules}
\label{sect Double YC action}

Let's start with some lemmas.

In the following, given partitions $\lambda$, $\mu$ and their conjugates
$\widetilde{\lambda}$ and $\widetilde{\mu}$,
we assume that 
$$
m_0 \geq  \lambda_1, \  \mu_1, \qquad m_1, d \geq \widetilde{\lambda}_1, 
\ \widetilde{\mu}_1.
$$.

Let $v_{\mu} = (D_{\widetilde{\mu}}|D^P_{\widetilde{\mu}})$ be the ``canonical''  highest weight vector 
of weight $\mu$
of the irreducible $gl(n)$-module $Schur_{\widetilde{\mu}}$.

\begin{lemma}\label{vanishing virtual lemma}
We have
\begin{align}
&\textrm{If} \ |\widetilde{\mu}| < |\widetilde{\lambda}|, \  \textrm{then}  &   
e_{C_{\widetilde{\lambda}}^*,S}  \cdot (D_{\widetilde{\mu}}|D^P_{\widetilde{\mu}}) &= 0,
\quad \forall S\label{due}
\\
&\textrm{If} \ |\widetilde{\mu}| = |\widetilde{\lambda}|, \ \widetilde{\mu} \neq \widetilde{\lambda}, \ 
\textrm{then}  &  e_{D_{\widetilde{\lambda}}^*,C_{\widetilde{\lambda}}^*} 
e_{ C_{\widetilde{\lambda}}^*,S} \cdot (D_{\widetilde{\mu}}|D^P_{\widetilde{\mu}}) &= 0,
\quad \forall S.\label{tre}
\end{align}
\end{lemma}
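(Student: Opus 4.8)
The plan is to translate both identities into statements about the action of
superpolarizations on bitableaux in $\mathbb{C}[M_{m_0|m_1+n,d}]$, using
Proposition~\ref{action on tableaux} and the fact that $e_{C_{\widetilde\lambda}^*,S}$
and $e_{D_{\widetilde\lambda}^*,C_{\widetilde\lambda}^*}$ are balanced monomials whose
action on the subalgebra is computed symbol by symbol. The first thing I would do is
unwind $e_{C_{\widetilde\lambda}^*,S}$: reading the monomial from the right, it is a
product of polarizations $e_{\alpha_r,\,z}$ that successively annihilate the proper
symbols appearing in the cells of $S$ and create the virtual positive symbols
$\alpha_1,\dots,\alpha_{\widetilde\lambda_1}$ (each $\alpha_r$ occurring as many times as
the length of row $r$ of $C_{\widetilde\lambda}^*$, i.e. $\lambda_r$ times). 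Since the
positive virtual symbols skew-commute, each $\alpha_r$ being created in more than one
cell of a single row forces a skew-symmetrization along that row.

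For \eqref{due}: applying $e_{C_{\widetilde\lambda}^*,S}$ to
$(D_{\widetilde\mu}\,|\,D^P_{\widetilde\mu})$, by
Proposition~\ref{action on tableaux} the result is a signed sum of bitableaux obtained
by replacing symbols of $D_{\widetilde\mu}$ with virtual $\alpha$'s. The key point is a
counting/pigeonhole argument: the bitableau $(D_{\widetilde\mu}\,|\,D^P_{\widetilde\mu})$
lives in the homogeneous component $\mathbb{C}_{|\widetilde\mu|}[M_{m_0|m_1+n,d}]$, while
$e_{C_{\widetilde\lambda}^*,S}$ first annihilates $|\widetilde\lambda|$ proper symbols
(the cells of $S$, which has shape $\widetilde\lambda$ up to transpose conventions) before
creating anything. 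If $|\widetilde\mu| < |\widetilde\lambda|$ there are simply not enough
symbols in the bitableau to be annihilated: at some intermediate stage the polarization
$e_{\alpha_r,\,z}$ meets a bitableau in which the symbol $z$ no longer occurs, or — more
precisely — one is in an irregular-expression situation where a proper symbol is
annihilated more often than it is available. I would make this rigorous by observing that
$e_{C_{\widetilde\lambda}^*,S}\cdot(-)$ factors through annihilating the content
$c_{D_{\widetilde\mu}}$, and the number of annihilation steps strictly exceeds
$|\widetilde\mu|$, so the composite is forced to hit $0$ by degree reasons in
$\mathbb{C}[M_{m_0|m_1+n,d}]$ (each $D^l_{z,z'}$ lowers total degree by $0$ but the
rightmost block of $e_{C_{\widetilde\lambda}^*,S}$ strictly lowers the number of proper
symbols, and one runs out).

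For \eqref{tre}: here $|\widetilde\mu| = |\widetilde\lambda|$, so the degree argument no
longer suffices, and this is the step I expect to be the main obstacle. Now
$e_{C_{\widetilde\lambda}^*,S}\cdot(D_{\widetilde\mu}\,|\,D^P_{\widetilde\mu})$ is
generically nonzero — it is (a multiple of) a bitableau
$(C_{\widetilde\lambda}^*\text{-filled tableau}\,|\,D^P_{\widetilde\mu})$ with the positive
virtual symbols skew-symmetrized along the rows of shape $\widetilde\lambda$. Applying the
further polarization $e_{D_{\widetilde\lambda}^*,C_{\widetilde\lambda}^*}$ annihilates the
$\alpha$'s and creates the negative virtual symbols $\beta$'s arranged according to the
Deruyts pattern, i.e. it performs a column symmetrization in the shape $\widetilde\lambda$.
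The claim is that skew-symmetrizing along rows and then symmetrizing along columns of a
shape $\widetilde\lambda$, when applied to data that ``wants'' to live in shape
$\widetilde\mu \ne \widetilde\lambda$ with $|\widetilde\mu|=|\widetilde\lambda|$, kills it.
I would prove this via the straightening/standard basis machinery
(Theorem~\ref{theorem: standard expansion of bitableaux},
Corollary~\ref{theorem: standard basis}): the resulting element is a combination of
bitableaux whose left tableau has entries forced into $\le \widetilde\lambda_1$ distinct
$\beta$'s per row and into column-repeated $\beta$'s, and such a bitableau straightens to
shapes $\nu \trianglerighteq$ something incompatible with the row-content coming from
$\widetilde\mu$; alternatively, and more cleanly, I would invoke the representation-theoretic
fact that $e_{D_{\widetilde\lambda}^*,C_{\widetilde\lambda}^*}e_{C_{\widetilde\lambda}^*,S}$
is (the virtual presentation of) a Young-symmetrizer-type operator attached to shape
$\widetilde\lambda$, hence annihilates the isotypic component of type $\widetilde\mu$ inside
$\mathbb{C}[M_{m_0|m_1+n,d}]$ whenever $\widetilde\mu \ne \widetilde\lambda$ and
$|\widetilde\mu| = |\widetilde\lambda|$ — the orthogonality of distinct Young symmetrizers.
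The technical care needed is to check that $(D_{\widetilde\mu}\,|\,D^P_{\widetilde\mu})$
indeed generates the $\widetilde\mu$-isotypic piece and that the virtual-to-proper passage
via $\mathfrak{p}$ (Proposition~\ref{virtual action}) is compatible with this identification,
which is where the bulk of the bookkeeping lies.
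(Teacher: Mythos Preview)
The paper does not actually prove this lemma: immediately after the statement it says the two assertions ``are special cases of standard elementary facts of the method of virtual variables'' and cites \cite{Bri-BR}. So there is no argument to compare against; you are supplying what the authors omitted.

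Your outline is on the right track, but both halves need tightening. For \eqref{due}, the point is a pure content count, and you should say it that way and drop the surrounding noise: every factor of $e_{C_{\widetilde\lambda}^*,S}$ is a polarization $e_{\alpha_r,z}$ with $z\in\mathcal{L}$, hence lowers the total \emph{proper}-symbol content of the left tableau by one; since $(D_{\widetilde\mu}\,|\,D^P_{\widetilde\mu})$ has total proper content $|\widetilde\mu|$ and $e_{C_{\widetilde\lambda}^*,S}$ has $|\widetilde\lambda|>|\widetilde\mu|$ factors, the action vanishes. Your remarks about ``lowering total degree by $0$'' and ``irregular-expression situation'' are distractions --- polarizations preserve degree, and irregular expressions in this paper concern \emph{virtual} symbols, not proper ones.

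For \eqref{tre}, both routes you sketch can be made to work, but as written they are hand-waving. The cleaner way, and the one closest to what the paper actually does in the \emph{next} lemma (Lemma~\ref{Vanishing Lemma}), is the straightening route: when $|\widetilde\mu|=|\widetilde\lambda|$, the element $e_{C_{\widetilde\lambda}^*,S}\cdot(D_{\widetilde\mu}\,|\,D^P_{\widetilde\mu})$ lies in $Schur_{\widetilde\mu}(m_0,m_1+n)$ and, after straightening, is a combination of superstandard $(S_i\,|\,D^P_{\widetilde\mu})$ in which the positive virtual symbols $\alpha_1,\dots,\alpha_p$ occupy a subshape $\widetilde\lambda'\subseteq\widetilde\mu$ with $\widetilde\lambda'\trianglerighteq\widetilde\lambda$; since $|\widetilde\lambda'|=|\widetilde\lambda|=|\widetilde\mu|$, in fact $\widetilde\lambda'=\widetilde\mu\neq\widetilde\lambda$, and then $e_{D_{\widetilde\lambda}^*,C_{\widetilde\lambda}^*}$ kills each $(S_i\,|\,D^P_{\widetilde\mu})$ by skew-symmetry (two equal $\beta$'s would land in the same row). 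Your ``Young-symmetrizer orthogonality'' alternative is morally the same statement, but you would still need exactly this computation to justify it in the present setting.
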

The assertions of eqs.  (\ref{due}),   (\ref{tre}) are special cases of standard
elementary facts of the method
of virtual variables (see, e.g. \cite{Bri-BR}).
\begin{lemma}\label{Vanishing Lemma}
If ${\widetilde{\lambda}}  \nsubseteq {\widetilde{\mu}}$,  then 
$$
e_{D_{\widetilde{\lambda}}^*,C_{\widetilde{\lambda}}^*} e_{ C_{\widetilde{\lambda}}^*,S} 
\cdot (D_{\widetilde{\mu}}|D^P_{\widetilde{\mu}}) = 0,
\quad \forall S.
$$
\end{lemma}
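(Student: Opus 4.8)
The plan is to reduce the statement to a vanishing fact about biproducts in $\mathbb{C}[M_{m_0|m_1+n,d}]$ and then to exploit the combinatorial structure of the Deruyts tableau $D^P_{\widetilde{\mu}}$. First I would recall that, by Proposition \ref{virtual action} (and its Schur-module incarnation), it suffices to compute the action of the balanced-monomial composite $e_{D_{\widetilde{\lambda}}^*,C_{\widetilde{\lambda}}^*}\, e_{C_{\widetilde{\lambda}}^*,S}$ directly by left superpolarizations on the bitableau $v_{\mu}=(D_{\widetilde{\mu}}\,|\,D^P_{\widetilde{\mu}})$, using Proposition \ref{action on tableaux} to track how polarizations distribute over rows. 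The rightmost block $e_{C_{\widetilde{\lambda}}^*,S}$ annihilates the proper symbols appearing in $S$ and creates the positive virtual symbols $\alpha_1,\dots,\alpha_p$ arranged as the virtual Coderuyts tableau $C^*_{\widetilde{\lambda}}$; the leftmost block $e_{D_{\widetilde{\lambda}}^*,C_{\widetilde{\lambda}}^*}$ then annihilates those $\alpha$'s and creates the negative virtual symbols $\beta_j$ arranged as the virtual Deruyts tableau $D^*_{\widetilde{\lambda}}$. The net effect on $v_\mu$ (when it is nonzero) is, up to scalar, a bitableau of the form $(D^*_{\widetilde\lambda}\text{-shaped array of }\beta\text{'s, possibly with leftover proper symbols}\ |\ D^P_{\widetilde{\mu}})$, i.e. the biproduct machinery forces the shape $\widetilde{\lambda}$ to be ``fitted'' inside the shape $\widetilde{\mu}$.

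The key step is the observation that each row word of $D^P_{\widetilde{\mu}}$ is an initial segment $1\,2\,3\,\cdots\,\widetilde{\mu}_i$ of the alphabet $P$, and the $j$'s in a biproduct are skew-symmetric (Claim after Proposition \ref{Laplace expansions}, part 2). When one tries to realize the skew-symmetrized $\beta$-pattern of $D^*_{\widetilde{\lambda}}$ against the columns of $D^P_{\widetilde{\mu}}$ via repeated Laplace expansion (Proposition \ref{Laplace expansions}.1), every surviving term requires distributing the $\widetilde{\lambda}_i$ labels of the $i$-th row among the column-slots $1,\dots,\widetilde{\mu}_i$ of the corresponding part of $D^P_{\widetilde{\mu}}$; if $\widetilde{\lambda}\nsubseteq\widetilde{\mu}$, there is some row $i$ with $\widetilde{\lambda}_i>\widetilde{\mu}_i$, so in that row two of the skew-commuting virtual symbols $\beta_s,\beta_t$ (equivalently, two of the $j$-slots) must coincide, and skew-symmetry in the $j$'s kills the term. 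Concretely: I would run the straightening/Laplace expansion on $e_{C^*_{\widetilde\lambda},S}\cdot v_\mu$ first, obtaining a sum of bitableaux $(\overline S\,|\,D^P_{\widetilde\mu})$ with $\overline S$ built from the $\alpha$'s; then apply $e_{D^*_{\widetilde\lambda},C^*_{\widetilde\lambda}}$, which by the balanced-monomial rules replaces the $\alpha$-Coderuyts pattern by the $\beta$-Deruyts pattern while enforcing antisymmetrization within each row, and conclude via the pigeonhole argument that the row of length $\widetilde\lambda_i>\widetilde\mu_i$ produces a biproduct $(\cdots|\varpi)$ in which the $j$-word $\varpi$ repeats an entry, hence is zero.

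The main obstacle I anticipate is bookkeeping: making the ``fitting $\widetilde\lambda$ inside $\widetilde\mu$'' picture precise at the level of the intermediate expression $e_{C^*_{\widetilde\lambda},S}\cdot v_\mu$, because the action of $e_{C^*_{\widetilde\lambda},S}$ already produces a potentially complicated linear combination of bitableaux (it is essentially a Capelli-type contraction), and one must argue uniformly that applying the second block and antisymmetrizing rows annihilates \emph{every} term when $\widetilde\lambda\nsubseteq\widetilde\mu$. I would handle this by first disposing of the case $|\widetilde\mu|<|\widetilde\lambda|$ via Lemma \ref{vanishing virtual lemma}, eq. (\ref{due}); then, in the remaining case $|\widetilde\mu|\ge|\widetilde\lambda|$, reduce to the shape comparison by noting that $e_{D^*_{\widetilde\lambda},C^*_{\widetilde\lambda}}e_{C^*_{\widetilde\lambda},S}$ factors through the projection onto the virtual Deruyts pattern of shape $\widetilde\lambda$, so only the component of $e_{C^*_{\widetilde\lambda},S}\cdot v_\mu$ that is ``column-antisymmetric of shape $\widetilde\lambda$ in the $j$-indices of $D^P_{\widetilde\mu}$'' can survive — and that component is forced to be zero by the initial-segment structure of $D^P_{\widetilde\mu}$ together with skew-symmetry in the $j$'s whenever some $\widetilde\lambda_i>\widetilde\mu_i$. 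This last point is the crux, and I expect it to follow cleanly once the Laplace expansion of Proposition \ref{Laplace expansions} is set up with $D^P_{\widetilde\mu}$ on the right.
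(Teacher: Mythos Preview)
Your plan has a genuine gap at the crux step. You argue by a row-matching pigeonhole: ``every surviving term requires distributing the $\widetilde{\lambda}_i$ labels of the $i$-th row among the column-slots $1,\dots,\widetilde{\mu}_i$ of the corresponding part of $D^P_{\widetilde{\mu}}$'', and hence some row with $\widetilde{\lambda}_i>\widetilde{\mu}_i$ forces a repetition in a $j$-word. But there is no reason the $i$-th row of the $\beta$-Deruyts pattern lands in the $i$-th row of the shape $\widetilde{\mu}$. The polarizations $e_{C^*_{\widetilde\lambda},S}$ scatter the $\alpha$'s throughout the \emph{left} tableau in all ways permitted by the content of $D_{\widetilde\mu}$; the right tableau $D^P_{\widetilde\mu}$ never changes, so its $j$-words never acquire repetitions. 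After applying $e_{D^*_{\widetilde\lambda},C^*_{\widetilde\lambda}}$ you get bitableaux $(\overline T\,|\,D^P_{\widetilde\mu})$ in which the $\beta$'s sit at the former positions of the $\alpha$'s, wherever those happen to be inside $\widetilde\mu$; no row-by-row correspondence with $\widetilde\lambda$ is available, and your pigeonhole does not apply.

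What the paper actually does is the missing idea you need. After acting by $e_{C^*_{\widetilde\lambda},S}$ and then \emph{straightening} into the superstandard basis of $Schur_{\widetilde\mu}(m_0,m_1+n)$, one uses a dominance fact: in each superstandard summand $(S_i|D^P_{\widetilde\mu})$ the positive virtual symbols $\alpha_1,\alpha_2,\ldots$ occupy a subshape $\widetilde\lambda'\subseteq\widetilde\mu$ with $\widetilde\lambda'\trianglerighteq\widetilde\lambda$ in the dominance order (this is the standard content-versus-shape inequality for superstandard tableaux). If $\widetilde\lambda\nsubseteq\widetilde\mu$ then necessarily $\widetilde\lambda'\neq\widetilde\lambda$ for every summand, and it is this strict dominance of the $\alpha$-configuration, not any direct row comparison between $\widetilde\lambda$ and $\widetilde\mu$, that forces $e_{D^*_{\widetilde\lambda},C^*_{\widetilde\lambda}}\cdot(S_i|D^P_{\widetilde\mu})=0$ by skew-symmetry. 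Your outline mentions straightening but replaces the dominance step by the row-matching heuristic; that is exactly where the argument breaks.
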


\begin{proof}
Assume  that $|\widetilde{\mu}| \geq |\widetilde{\lambda}|$ to avoid trivial cases (by eq. (\ref{due})).
The action $e_{C_{\widetilde{\lambda}}^*,S} \cdot (D_{\widetilde{\mu}}|D^P_{\widetilde{\mu}})$ produces a linear combination of bitableaux
$(T|D^P_{\widetilde{\mu}}) \in Schur_{\widetilde{\mu}}(m_0,m_1+n)$, where each tableau $T$ contains
 exactly ${\widetilde{\lambda}}_i$ occurrences of the positive virtual
symbols $\alpha_i \in \mathcal{A}_0                                              $.
By  {\it{straightening}} each of them \emph{(see, e.g. \cite{Bri-BR})},
the element $e_{C_{\widetilde{\lambda}}^*,S} \cdot (D_{\widetilde{\mu}}|D^P_{\widetilde{\mu}}) $ is uniquely expressed as a linear combination of
(super)standard tableaux
\begin{equation}\label{dominance}
e_{C_{\widetilde{\lambda}}^*,S} \cdot (D_{\widetilde{\mu}}|D^P_{\widetilde{\mu}})  = \sum_i \ 
(S_i|D^P_{\widetilde{\mu}}) \in Schur_{\widetilde{\mu}}(m_0,m_1+n),
\end{equation}
where in each $S_i$ the positive virtual symbols $\alpha_i \in \mathcal{A}_0                                              $ occupies a subshape $\widetilde{\lambda}' \subseteq 
{\widetilde{\mu}}$ such that
$\widetilde{\lambda}'   \trianglerighteq \widetilde{\lambda}$.
If $\widetilde{\lambda} \nsubseteq \widetilde{\mu}$, any element $(S_i|D^P_{\widetilde{\lambda}})$ in the canonical form (\ref{dominance}) is such that $\widetilde{\lambda}'   \trianglerighteq \widetilde{\lambda}$,
$\widetilde{\lambda}'   \neq \widetilde{\lambda}$. Then $e_{D_{\widetilde{\lambda}}^*,C_{\widetilde{\lambda}}^*} \cdot (S_i|D^P_{\widetilde{\mu}})  = 0$, by skew-symmetry, and the assertion follows.
\end{proof}

We recall that, given a shape/partition $\lambda = (\lambda_1      \geq \lambda_2 \geq \cdots \geq \lambda _p)$,
the {\it{hook length}}  $H(x)$ of a box $x$ in the Ferrers diagram $F_\lambda$ of the shape $\lambda$
is the number of boxes that are in the same row to the
right of it plus those boxes in the same column below it, plus one (for the box itself).
The {\it{hook number}} the shape $\lambda$ is the product $H(\lambda) = \prod_{x \in F_\lambda} \ H(x).$
Furthermore, we write $\lambda!$ for the product $\lambda_1!\lambda_2!  \cdots  \lambda_p!$.

\begin{lemma}(Regonati's Hook Lemma, \emph{\cite{Regonati-BR}}) \label{hook lemma}
Let $H(\lambda) = H(\widetilde{\lambda})$ denotes the hook number of the shape/partition $\lambda.$
We have
\begin{align}
e_{C_{\widetilde{\lambda}}^{*},D_{\widetilde{\lambda}}} \cdot  v_{\lambda} &= 
e_{C_{\widetilde{\lambda}}^{*},D_{\widetilde{\lambda}}} 
\cdot (D_{\widetilde{\lambda}}|D^P_{\widetilde{\lambda}})
\\
&= (-1)^{{k} \choose   {2}} \frac {H({\widetilde{\lambda}})} {{\widetilde{\lambda}}!}
\ (C_{{\widetilde{\lambda}}}^{*}|D^P_{\widetilde{\lambda}}) \label{hook proper}
\end{align}
and
\begin{equation}
 e_{C_{{\widetilde{\lambda}}}^{*},D_{\widetilde{\lambda}}^*} 
\cdot (D_{\widetilde{\lambda}}^*|D^P_{\widetilde{\lambda}})
= (-1)^{{k} \choose   {2}} \frac {H({\widetilde{\lambda}})} {{\widetilde{\lambda}}!} \  
(C_{{\widetilde{\lambda}}}^{*}|D^P_{\widetilde{\lambda}}) \label{hook virtual}.
\end{equation}
Furthermore 
\begin{equation}
e_{D_{{\widetilde{\lambda}}}^{*},C_{\widetilde{\lambda}}^{*}} 
\cdot (C_{{\widetilde{\lambda}}}^{*}|D^P_{\widetilde{\lambda}})   = \ {\widetilde{\lambda}}!
\ (D_{{\widetilde{\lambda}}}^{*}|D^P_{\widetilde{\lambda}})\label{trivial}.
\end{equation}
\end{lemma}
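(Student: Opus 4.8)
The plan is to work entirely inside the supersymmetric algebra ${\mathbb C}[M_{m_0|m_1+n,d}]$ and to compute the three polarization actions explicitly on the relevant bitableaux, exploiting that $v_{\lambda} = (D_{\widetilde{\lambda}}|D^P_{\widetilde{\lambda}})$ and the virtual Deruyts/Coderuyts tableaux have a very rigid combinatorial structure. For the first identity (and, \emph{mutatis mutandis}, for the second), I would expand $e_{C_{\widetilde{\lambda}}^{*},D_{\widetilde{\lambda}}}$ as the product of polarizations $\prod_{\text{rows}} \prod_{\text{entries}} D^l_{\alpha_i, x}$ dictated by the bitableau monomial $e_{C_{\widetilde{\lambda}}^{*},D_{\widetilde{\lambda}}}$, then apply Proposition~\ref{action on tableaux} iteratively: each polarization $D^l_{\alpha_i,x}$ acting on $(D_{\widetilde{\lambda}}|D^P_{\widetilde{\lambda}})$ replaces an occurrence of the proper symbol $x$ in the left tableau by the virtual symbol $\alpha_i$, with a Koszul sign.

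The key point to track is the bookkeeping of the two kinds of signs and the combinatorial multiplicities. First, because the $\alpha_i$ are \emph{even} (positive virtual) symbols and enter the $i$-th row, there is no internal skew-symmetrization obstruction within a row, so the surviving terms are exactly those where the order of substitution matches the structure of $C_{\widetilde{\lambda}}^*$; the number of ways to produce a fixed final tableau $(C_{\widetilde{\lambda}}^*|D^P_{\widetilde{\lambda}})$ accounts for the factor $\widetilde{\lambda}! = \widetilde{\lambda}_1! \widetilde{\lambda}_2! \cdots$ in the denominator — this is where Regonati's hook-length count enters, since, after accounting for the cancellations forced by the skew-commutativity of the $\alpha$'s with themselves in a \emph{column}, the net coefficient is precisely $H(\widetilde{\lambda})/\widetilde{\lambda}!$ together with the global sign $(-1)^{\binom{k}{2}}$ coming from reordering the $k = |\lambda|$ factors into the standard biproduct form. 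I would cite \cite{Regonati-BR} for the hard combinatorial heart of this count rather than reprove it.

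The second equation (\ref{hook virtual}) follows by the \emph{same} computation verbatim, since the argument only used that $D_{\widetilde{\lambda}}$ (resp. $D_{\widetilde{\lambda}}^*$) is a Deruyts-type tableau whose rows are strictly increasing initial segments, and the polarizations $e_{C_{\widetilde{\lambda}}^*, D_{\widetilde{\lambda}}^*}$ act on $(D_{\widetilde{\lambda}}^*|D^P_{\widetilde{\lambda}})$ exactly as $e_{C_{\widetilde{\lambda}}^*, D_{\widetilde{\lambda}}}$ acts on $(D_{\widetilde{\lambda}}|D^P_{\widetilde{\lambda}})$ — the only change is relabeling proper symbols by negative virtual ones, which does not affect parities or multiplicities (the $\beta_t$ being odd like the proper symbols). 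For the last equation (\ref{trivial}), the action of $e_{D_{\widetilde{\lambda}}^*, C_{\widetilde{\lambda}}^*}$ on $(C_{\widetilde{\lambda}}^*|D^P_{\widetilde{\lambda}})$ replaces each even symbol $\alpha_i$ in row $i$ by the odd symbol $\beta_i$: here there is no sign obstruction and no hook cancellation, only the straightforward multiplicity $\widetilde{\lambda}_i!$ per row from the number of orderings in which the $\widetilde{\lambda}_i$ copies of $\alpha_i$ can be annihilated, giving the clean factor $\widetilde{\lambda}!$ with no sign.

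\textbf{Main obstacle.} I expect the delicate step to be the precise sign/multiplicity accounting in (\ref{hook proper})--(\ref{hook virtual}): disentangling the Koszul signs $\epsilon_s$ from Proposition~\ref{action on tableaux} accumulated over all $k$ polarizations, and verifying that the skew-symmetry of the $\alpha$'s within columns produces cancellations leaving exactly $H(\widetilde{\lambda})$ (and not merely $\prod_i \widetilde{\lambda}_i!$ or some other symmetric-function quantity). This is exactly the content of Regonati's Hook Lemma, so the honest approach is to isolate this as a black box from \cite{Regonati-BR} and limit the new work to checking that the present polarization setup reproduces the hypotheses of that lemma.
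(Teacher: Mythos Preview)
The paper does not give a proof of this lemma; it is stated with attribution to \cite{Regonati-BR} and then used as a black box in the proof of Theorem~\ref{Double YC action}. Your plan --- compute the polarization action via Proposition~\ref{action on tableaux}, track Koszul signs and multiplicities, and invoke \cite{Regonati-BR} for the hook-length identity itself --- is exactly in line with this, and your treatment of (\ref{trivial}) as a straightforward row-by-row multiplicity count giving $\widetilde{\lambda}!$ is correct.

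Two minor imprecisions worth fixing. In (\ref{trivial}) the polarizations $e_{\beta_j,\alpha_i}$ replace the $\widetilde{\lambda}_i$ copies of $\alpha_i$ in row $i$ by the \emph{distinct} symbols $\beta_1,\ldots,\beta_{\widetilde{\lambda}_i}$, not by a single ``$\beta_i$''. And your phrase ``skew-commutativity of the $\alpha$'s with themselves in a column'' is off: the $\alpha_i$ are positive virtual symbols, hence even, and commute; the cancellations underlying the hook number in Regonati's argument arise from the interaction of the odd-degree polarizations with the row/column structure of $D_{\widetilde{\lambda}}$ and the straightening relations, not from any anticommutation of $\alpha$'s. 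Since you are outsourcing precisely that step to \cite{Regonati-BR}, this does not damage your plan, but you should not advertise a wrong mechanism for it.
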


\begin{theorem}\label{Double YC action} We have:
\begin{enumerate}

\item If  $sh(S) =  sh(S') = \widetilde{\lambda}$,  $|\widetilde{\mu}| < |\widetilde{\lambda}|$,  then  
$$   
[\ \fbox{$S' \ | \ S$}\ ]  \cdot v_\mu = 0,
$$
\item If $sh(S) =  sh(S') =\widetilde{\lambda}$, $|\widetilde{\mu}| = |\widetilde{\lambda}|$,
 $\widetilde{\mu} \neq \widetilde{\lambda}$, then  
$$   
[\ \fbox{$S' \ | \ S$}\ ]  \cdot v_\mu = 0,
$$
\item If  $\widetilde{\mu} = \widetilde{\lambda}$,   then
$$   
[\ \fbox{$D_{\widetilde{\lambda}} \ | \ D_{\widetilde{\lambda}}$}\ ]  \cdot v_\lambda 
= H(\widetilde{\lambda})^2 \
v_\lambda,
$$
\item If $sh(S) =  sh(S') = \widetilde{\lambda}$,  
$\widetilde{\lambda} \nsubseteq \widetilde{\mu}$,  
then
$$  
[\ \fbox{$S' \ | \ S$}\ ]   \cdot v_\mu = 0.
$$
\end{enumerate}
\end{theorem}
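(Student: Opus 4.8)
The plan is to unwind the definition of the double Young-Capelli bitableau as a composite of four bitableau monomials acting by superpolarization, and to evaluate this composite on the canonical highest weight vector $v_\mu = (D_{\widetilde{\mu}}|D^P_{\widetilde{\mu}})$ by peeling off the factors from right to left. Recall that by definition $[\,\fbox{$S' \,|\, S$}\,] = \mathfrak{p}\big( e_{S',C_{\widetilde{\lambda}}^{*}} \cdot e_{C_{\widetilde{\lambda}}^{*},D_{\widetilde{\lambda}}^{*}} \cdot e_{D_{\widetilde{\lambda}}^{*},C_{\widetilde{\lambda}}^{*}} \cdot e_{C_{\widetilde{\lambda}}^{*},S} \big)$, and since the Schur module $Schur_\lambda(n)$ is invariant under $Virt(m_0+m_1,n)$ with the action agreeing with that of the $\mathbf{U}(gl(n))$-image (the two Propositions in subsection \ref{Lemmas}), it suffices to compute the action of the virtual presentation on $v_\mu$ inside $Schur_{\widetilde{\mu}}(m_0,m_1+n)$. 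So the whole theorem reduces to iterated application of Lemmas \ref{vanishing virtual lemma}, \ref{Vanishing Lemma} and \ref{hook lemma}.

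For part (1), the rightmost factor $e_{C_{\widetilde{\lambda}}^*,S}$ already kills $v_\mu$ by eq.\ (\ref{due}) whenever $|\widetilde{\mu}| < |\widetilde{\lambda}|$, since applying $e_{C_{\widetilde{\lambda}}^*,S}$ to $(D_{\widetilde{\mu}}|D^P_{\widetilde{\mu}})$ attempts to create more virtual symbols than there are boxes; note that $e_{C_{\widetilde{\lambda}}^*,S}$ annihilates the proper tableau $S$ of shape $\widetilde{\lambda}$ and recreates it as a Coderuyts tableau, exactly the content of eq.\ (\ref{due}) after a harmless relabeling (the statement there is for $e_{C_{\widetilde{\lambda}}^*,S}$ applied to a highest weight vector, and $S$ plays no role in the count). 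For part (2), the first two factors from the right compose to $e_{D_{\widetilde{\lambda}}^*,C_{\widetilde{\lambda}}^*} e_{C_{\widetilde{\lambda}}^*,S}$, which is precisely the operator shown to annihilate $(D_{\widetilde{\mu}}|D^P_{\widetilde{\mu}})$ in eq.\ (\ref{tre}) when $|\widetilde{\mu}| = |\widetilde{\lambda}|$ but $\widetilde{\mu} \neq \widetilde{\lambda}$. For part (4), the same composite $e_{D_{\widetilde{\lambda}}^*,C_{\widetilde{\lambda}}^*} e_{C_{\widetilde{\lambda}}^*,S}$ is annihilating by Lemma \ref{Vanishing Lemma}, which handles exactly the case $\widetilde{\lambda} \nsubseteq \widetilde{\mu}$. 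In all three vanishing cases, since the rightmost portion of the balanced monomial already sends $v_\mu$ to zero, the remaining left factors act on zero and the conclusion is immediate.

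Part (3) is the substantive computation. Here $\widetilde{\mu} = \widetilde{\lambda}$, $S = S' = D_{\widetilde{\lambda}}$, and $v_\lambda = (D_{\widetilde{\lambda}}|D^P_{\widetilde{\lambda}})$. The plan is to apply the four factors in order. First, $e_{C_{\widetilde{\lambda}}^*,D_{\widetilde{\lambda}}} \cdot (D_{\widetilde{\lambda}}|D^P_{\widetilde{\lambda}}) = (-1)^{\binom{k}{2}} \tfrac{H(\widetilde{\lambda})}{\widetilde{\lambda}!} (C_{\widetilde{\lambda}}^*|D^P_{\widetilde{\lambda}})$ by eq.\ (\ref{hook proper}) of Regonati's Hook Lemma. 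Next, $e_{D_{\widetilde{\lambda}}^*,C_{\widetilde{\lambda}}^*}$ applied to $(C_{\widetilde{\lambda}}^*|D^P_{\widetilde{\lambda}})$ returns $\widetilde{\lambda}! \,(D_{\widetilde{\lambda}}^*|D^P_{\widetilde{\lambda}})$ by eq.\ (\ref{trivial}). Then $e_{C_{\widetilde{\lambda}}^*,D_{\widetilde{\lambda}}^*}$ applied to $(D_{\widetilde{\lambda}}^*|D^P_{\widetilde{\lambda}})$ gives $(-1)^{\binom{k}{2}} \tfrac{H(\widetilde{\lambda})}{\widetilde{\lambda}!} (C_{\widetilde{\lambda}}^*|D^P_{\widetilde{\lambda}})$ by eq.\ (\ref{hook virtual}). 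Finally, $e_{D_{\widetilde{\lambda}},C_{\widetilde{\lambda}}^*}$ applied to $(C_{\widetilde{\lambda}}^*|D^P_{\widetilde{\lambda}})$ must be shown to produce $\widetilde{\lambda}!\,(D_{\widetilde{\lambda}}|D^P_{\widetilde{\lambda}}) = \widetilde{\lambda}!\,v_\lambda$ — this is the proper-symbol analogue of eq.\ (\ref{trivial}), which follows by the same row-by-row polarization argument (the leftmost factor $e_{S',C_{\widetilde{\lambda}}^*}$ with $S' = D_{\widetilde{\lambda}}$ annihilates all virtual symbols in a Coderuyts tableau and recreates the Deruyts tableau $D_{\widetilde{\lambda}}$, contributing a factor $\widetilde{\lambda}!$ from the symmetrization in each row). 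Multiplying the four scalars: $(-1)^{\binom{k}{2}} \tfrac{H(\widetilde{\lambda})}{\widetilde{\lambda}!} \cdot \widetilde{\lambda}! \cdot (-1)^{\binom{k}{2}} \tfrac{H(\widetilde{\lambda})}{\widetilde{\lambda}!} \cdot \widetilde{\lambda}! = H(\widetilde{\lambda})^2$, giving $[\,\fbox{$D_{\widetilde{\lambda}} \,|\, D_{\widetilde{\lambda}}$}\,] \cdot v_\lambda = H(\widetilde{\lambda})^2\, v_\lambda$, as claimed.

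The main obstacle I anticipate is bookkeeping the signs and the precise normalization in the last step, i.e.\ establishing the proper-symbol version of eq.\ (\ref{trivial}) for $e_{D_{\widetilde{\lambda}},C_{\widetilde{\lambda}}^*} \cdot (C_{\widetilde{\lambda}}^*|D^P_{\widetilde{\lambda}})$: one must check that recreating the Deruyts tableau out of a Coderuyts tableau via $e_{D_{\widetilde{\lambda}},C_{\widetilde{\lambda}}^*}$ contributes exactly $\widetilde{\lambda}!$ with no residual sign, which requires carefully tracking that all relevant symbols here are of the appropriate parity so that no signs from the Leibniz rule in Proposition \ref{action on tableaux} survive, and that the polarizations act row-independently on $C_{\widetilde{\lambda}}^*$ because distinct rows use distinct virtual symbols $\alpha_i$. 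A secondary subtlety is confirming that the intermediate bitableaux indeed lie in $Schur_{\widetilde{\lambda}}(m_0,m_1+n)$ (so that the Hook Lemma applies verbatim) and that none of the straightening steps introduce lower-order terms that could interfere — but since at each stage the tableau on the right is the Deruyts tableau $D^P_{\widetilde{\lambda}}$ and the left tableau is either Deruyts or Coderuyts of shape $\widetilde{\lambda}$, these are already in canonical form and the cited lemmas apply directly.
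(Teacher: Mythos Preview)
Your proof is correct and follows essentially the same route as the paper's own argument: reduce to the virtual presentation via Proposition \ref{virtual action}, dispatch items (1), (2), (4) using eqs.\ (\ref{due}), (\ref{tre}) and Lemma \ref{Vanishing Lemma} respectively, and compute item (3) by peeling off the four factors using eqs.\ (\ref{hook proper}), (\ref{trivial}), (\ref{hook virtual}) in that order. Your explicit identification of the final step $e_{D_{\widetilde{\lambda}},C_{\widetilde{\lambda}}^*} \cdot (C_{\widetilde{\lambda}}^*|D^P_{\widetilde{\lambda}}) = \widetilde{\lambda}!\,(D_{\widetilde{\lambda}}|D^P_{\widetilde{\lambda}})$ as the proper-symbol analogue of eq.\ (\ref{trivial}) is exactly what the paper uses implicitly in its last line, and your citation of Lemma \ref{Vanishing Lemma} for item (4) is in fact more precise than the paper's (which inadvertently points to Lemma \ref{vanishing virtual lemma}).
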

\begin{proof} Since
$$
[\ \fbox{$S' \ | \ S$}\ ]  = \mathfrak{p} \Big( e_{S',C_{\lambda}^{*}} \cdot 
e_{C_{\lambda}^{*},D_{\lambda}^{*}}
 \cdot  e_{D_{\lambda}^{*},C_{\lambda}^{*}}
\cdot e_{C_{\lambda}^{*},S} \Big) \in {\mathbf{U}}(gl(n)),
$$
then
$$
[\ \fbox{$S' \ | \ S$}\ ]  \cdot v_\mu = e_{S'C_{\lambda}^{*}} \cdot 
e_{C_{\lambda}^{*},D_{\lambda}^{*}}
 \cdot  e_{D_{\lambda}^{*},C_{\lambda}^{*}}
\cdot e_{C_{\lambda}^{*},S} \cdot v_\mu,
$$
from Proposition \ref{virtual action}.

Hence, item $1)$ follows from eq. (\ref{due}), item $2)$ follows 
from eq. (\ref{tre}) and item $4)$ follows from Lemma \ref{vanishing virtual lemma}.

Then, we prove item $3)$. We have

$$
[\ \fbox{$D_{\widetilde{\lambda}} \ | \ D_{\widetilde{\lambda}}$}\ ]  \cdot v_\lambda =
  e_{D_{\widetilde{\lambda}},C_{\widetilde{\lambda}}^{*}} \ 
e_{C_{\widetilde{\lambda}}^{*},D_{\widetilde{\lambda}}^{*}} 
\ e_{D_{\widetilde{\lambda}}^{*},C_{\widetilde{\lambda}}^{*}} \
 e_{C_{\widetilde{\lambda}}^{*},D_{\widetilde{\lambda}}} \cdot (D_{\widetilde{\lambda}}|D^P_{\widetilde{\lambda}}).\label{first term}
$$
From eq. (\ref{hook proper}),  this equals
$$
 \frac {1} {\widetilde{\lambda}! } (-1)^{{k} \choose {2}}  
  H(\lambda) \
 e_{D_{\lambda},C_{\lambda}^{*}} \ e_{C_{\lambda}^{*},D_{\lambda}^{*}} \ e_{D_{\lambda}^{*},C_{\lambda}^{*}} 
\cdot (C_{\widetilde{\lambda}}^{*}|D^P_{\widetilde{\lambda}});
$$ 
from eq. (\ref{trivial}) this equals
$$
 (-1)^{{k} \choose {2}}  H(\widetilde{\lambda}) \
 e_{D_{\widetilde{\lambda}},C_{\widetilde{\lambda}}^{*}} \ 
e_{C_{\widetilde{\lambda}}^{*},D_{\widetilde{\lambda}}^{*}}  \
\cdot (D_{\widetilde{\lambda}}^{*}|D^P_{\widetilde{\lambda}});
$$
from eq. (\ref{hook virtual}) this equals
\begin{align*}
&= \ (-1)^{{k} \choose {2}}    
H(\widetilde{\lambda}) \frac {1} {\widetilde{\lambda}!} (-1)^{{k} \choose {2}} 
\ e_{D_{\widetilde{\lambda}},C_{\widetilde{\lambda}}^{*}} \cdot 
(C_{\widetilde{\lambda}}^{*}|D^P_{\widetilde{\lambda}}) 
\\
&= H(\widetilde{\lambda})^2 \ (D_{\widetilde{\lambda}}|D^P_{\widetilde{\lambda}}) 
= H(\widetilde{\lambda})^2 	\ v_\lambda.
\end{align*}

\end{proof}

\section{The center $\boldsymbol{\zeta}(n)$ of $\mathbf{U}(gl(n))$}\label{The center}

\subsection{The  Schur elements  $\mathbf{S}_\lambda(n) \in \boldsymbol{\zeta}(n)$ }\label{Schur elements}

Let $\mu$ be a partition,  let $\tilde{\mu}$ be its conjugate partition. Assume $\mu_1 \leq n$,
and  $\ m_1 \geq \mu_1$, $m_0 \geq \widetilde{\mu}_1$; hence, the virtual Deruyts tableau $D^*_{\mu}$ and 
the virtual Coderuyts tableau $C^*_{\mu}$ can be constructed.
Let $S_1, S_2$ be tableaux  on the proper alphabet $\mathcal{L} = \{1, 2, \ldots, n \}$ of shape $\mu$.
We notice that any element
$$
e_{S_1,C_{\mu}^{*}} \cdot e_{C_{\mu}^{*},D_{\mu}^{*}} \cdot  e_{D_{\mu}^{*},C_{\mu}^{*}}
\cdot e_{C_{\mu}^{*},S_2} \in Virt(m_0+m_1,n),
$$
is {\textit{skew-symmetric}}  in the rows of $S_1$ and  $S_2$, respectively.

 Given a partition $\lambda$, assume $\tilde{\lambda}_1 \leq n$, $m_1 \geq \tilde{\lambda}_1$,
$ m_0 \geq \lambda_1$.
We set
\begin{align}
\mathbf{S}_{\lambda}(n) &= \frac {1} {H(\tilde{\lambda})} \ \  \sum_S \
 \mathfrak{p} \Big( e_{S, C_{  \tilde{\lambda}^{*}}^{*} } 
e_{C_{\tilde{\lambda}}^{*},D_{\tilde{\lambda}}^{*}}
\cdot  e_{D_{\tilde{\lambda}}^{*},C_{\tilde{\lambda}}^{*}}
\cdot e_{C_{\tilde{\lambda}}^{*},S} \big) \label{Schur basis1}
\\
&= \frac {1} {H(\tilde{\lambda})} \ \  \sum_S \ [\ \fbox{$S \ | \ S$}\ ]
\in {\mathbf{U}}(gl(n)), \label{Schur basis2}
\end{align}
where the sum is extended to all  row (strictly) increasing tableaux $S$ of shape $\tilde{\lambda}$ on the proper alphabet $\mathcal{L} = \{1, 2, \ldots, n \}$. Notice that $H(\tilde{\lambda}) = H(\lambda)$.

By convention, if $\lambda$ is the empty partition, we set $\mathbf{S}_{\emptyset}(n) = \mathbf{1} \in 
\boldsymbol{\zeta}(n).$

The element $\mathbf{S}_{\lambda}(n) \in {\mathbf{U}}(gl(n))$ is called the {\it{Schur element}} of 
{\textit{weight}} $\lambda$ (and 
shape $\tilde{\lambda}$) in dimension $n$.

\begin{theorem}
The Schur elements $\mathbf{S}_{\lambda}(n)$ are  central in $\mathbf{U}(gl(n))$.
\end{theorem}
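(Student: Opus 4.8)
The plan is to deduce centrality from the $Ad_{gl(n)}$-equivariance of the Capelli devirtualization epimorphism $\mathfrak{p}$ (Proposition \ref{rappresentazione aggiunta-BR}) together with Corollary \ref{centrality gen}: it suffices to exhibit, for each $\lambda$ with $\widetilde{\lambda}_1 \leq n$, an $Ad_{gl(n)}$-invariant element of $Virt(m_0+m_1,n)$ whose image under $\mathfrak{p}$ is $\mathbf{S}_\lambda(n)$. The natural candidate is the element
\begin{equation*}
\mathbf{M}_\lambda \ = \ \frac{1}{H(\widetilde{\lambda})} \ \sum_S \ e_{S, C_{\widetilde{\lambda}}^{*}} \cdot e_{C_{\widetilde{\lambda}}^{*}, D_{\widetilde{\lambda}}^{*}} \cdot e_{D_{\widetilde{\lambda}}^{*}, C_{\widetilde{\lambda}}^{*}} \cdot e_{C_{\widetilde{\lambda}}^{*}, S} \ \in Virt(m_0+m_1,n),
\end{equation*}
the sum over all row strictly increasing $S$ of shape $\widetilde{\lambda}$ on $\mathcal{L}$; by definition $\mathfrak{p}(\mathbf{M}_\lambda) = \mathbf{S}_\lambda(n)$. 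So the whole task reduces to proving $Ad_{gl(n)}(\mathbf{M}_\lambda) = 0$, i.e. $[e_{i,j}, \mathbf{M}_\lambda] = 0$ in $\mathbf{U}(gl(m_0|m_1+n))$ for all $i,j \in \mathcal{L}$.

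To see the invariance, the key observation is that the ``virtual core'' $e_{C_{\widetilde{\lambda}}^{*}, D_{\widetilde{\lambda}}^{*}} \cdot e_{D_{\widetilde{\lambda}}^{*}, C_{\widetilde{\lambda}}^{*}}$ involves only virtual symbols and commutes with every $e_{i,j}$, $i,j \in \mathcal{L}$; thus the adjoint action of $gl(n)$ only touches the outer factors $e_{S, C_{\widetilde{\lambda}}^{*}}$ and $e_{C_{\widetilde{\lambda}}^{*}, S}$. Writing a proper symbol $k$ in place of a virtual one never occurs here, so when one expands $[e_{i,j}, e_{S, C_{\widetilde{\lambda}}^{*}} \cdot (\text{core}) \cdot e_{C_{\widetilde{\lambda}}^{*}, S}]$ by the Leibniz rule, $e_{i,j}$ acts on the left factor by replacing an occurrence of $j$ in $S$ by $i$ (over all rows and positions), and on the right factor by the analogous replacement in the second copy of $S$. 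One then recognizes that summing over all row increasing tableaux $S$ of shape $\widetilde{\lambda}$ produces, after the derivation, a telescoping/antisymmetry cancellation: the ``raising'' contributions from the left factor are cancelled by the ``lowering'' contributions from the right factor, because the two copies of $S$ are tied together and the sum is symmetric under simultaneously replacing $S$ by any row increasing rearrangement. More precisely, I would first establish the invariance at the level of the associated bitableau monomials in $Virt$, exploiting that $e_{z_1, C^*} \cdots$ combined with the skew-symmetry in rows (the balanced-monomial structure noted just before the statement of $\mathbf{S}_\lambda(n)$) makes $\sum_S e_{S,C^*}\otimes e_{C^*,S}$ behave like an $Ad_{gl(n)}$-invariant ``resolution of the identity'' on the row-skew part.

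An alternative, and probably cleaner, route is spectral: by Theorem \ref{Double YC action}, each double Young-Capelli bitableau $[\,\fbox{$S'\,|\,S$}\,]$ — and hence $\mathbf{S}_\lambda(n)$ — annihilates the canonical highest weight vector $v_\mu$ whenever $\widetilde{\lambda} \nsubseteq \widetilde{\mu}$, and acts as a scalar on $v_\lambda$ itself; combined with the fact that $\mathbf{S}_\lambda(n)$ is, by construction, $Ad$-covariantly assembled, one argues that $\mathbf{S}_\lambda(n)$ acts as a scalar on \emph{every} highest weight vector of \emph{every} $Schur_\mu(n)$, and an element of $\mathbf{U}(gl(n))$ that scales all highest weight vectors of all finite-dimensional irreducibles (equivalently, acts by a scalar on each $Schur_\mu(n)$) lies in the center. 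However, this requires knowing the action of $\mathbf{S}_\lambda(n)$ on $v_\mu$ for $\widetilde{\lambda} \subseteq \widetilde{\mu}$ too, which is precisely the content of the later Theorem \ref{Schur action}; to avoid circularity I would use the $Ad$-equivariance argument of the first route as the actual proof, and only remark on the spectral picture.

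\textbf{Main obstacle.} The delicate point is verifying that the sum over all row strictly increasing $S$ of shape $\widetilde{\lambda}$ is exactly what is needed for the Leibniz-rule contributions to cancel — that is, checking that after applying $\mathrm{ad}(e_{i,j})$ the ``extra'' tableaux produced (those no longer row increasing, or with a repeated entry in some row) either vanish by the row skew-symmetrization built into the double bitableau or pair up with sign $-1$. Concretely, one must show that the coefficient $\tfrac{1}{H(\widetilde{\lambda})}$ and the restriction to row increasing $S$ are compatible with the row skew-symmetrizer so that $\sum_S [\,\fbox{$S\,|\,S$}\,]$ is genuinely $Ad_{gl(n)}$-invariant and not merely invariant up to boundary terms; handling these combinatorial cancellations carefully, using Proposition \ref{double YC from YC} to unfold the row skew-symmetrization, is the crux of the argument.
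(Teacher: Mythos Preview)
Your proposal is correct and follows exactly the paper's approach: exhibit the virtual element $\mathbf{M}_\lambda$, show it is $Ad_{gl(n)}$-invariant via the derivation property of the adjoint action, and then invoke Corollary~\ref{centrality gen}. The paper's proof is in fact terser than yours --- it simply asserts ``since the adjoint representation acts by derivation, $ad(e_{ij})(\mathbf{M}_\lambda)=0$'' --- so the cancellation you flag as the ``main obstacle'' (which does work, once one notes that the diagonal sum $\sum_S$ over row-increasing $S$ is proportional to the sum over \emph{all} $S$ by row skew-symmetry, making the bijection between the two Leibniz contributions immediate) is left implicit there.
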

\begin{proof}

Consider the element
$$
 \sum_S \  e_{S,C_{\tilde{\lambda}}^{*}} \cdot 
e_{C_{\tilde{\lambda}}^{*},D_{\tilde{\lambda}}^{*}}
 \cdot  e_{D_{\tilde{\lambda}}^{*},C_{\tilde{\lambda}}^{*}}
\cdot e_{C_{\tilde{\lambda}}^{*},S} \in Virt(m_0+m_1,n),
$$
where the sum is extended to all  row (strictly) increasing tableaux $S$ on the proper alphabet 
$\mathcal{L} = \{1, 2, \ldots,n \}$.

Since the adjoint representation acts by derivation, we have
$$
ad(e_{i j})\big( \sum_S \  e_{S,C_{\tilde{\lambda}}^{*}} \cdot 
e_{C_{\tilde{\lambda}}^{*},D_{\tilde{\lambda}}^{*}}
 \cdot  e_{D_{\tilde{\lambda}}^{*},C_{\tilde{\lambda}}^{*}}
\cdot e_{C_{\tilde{\lambda}}^{*},S}   \big) = 0,
$$
for every  $e_{i j} \in gl(n).$
Hence, the assertion follows from
Corollary \ref{centrality gen}.
\end{proof}

Let $\boldsymbol{\zeta}(n)^{(m)}$ denote the $m$-th filtration element of $\boldsymbol{\zeta}(n)$ with respect to the filtration induced by
the standard filtration of $\mathbf{U}(gl(n)).$
Clearly,
\begin{equation}\label{filtration element}
\mathbf{S}_{\lambda}(n) \in \boldsymbol{\zeta}(n)^{(m)},
\end{equation}
for every $m \geq |\lambda|.$

\begin{theorem}(Triangularity/orthogonality of the actions on highest weight vectors)\label{Schur action}
We have:
\begin{align}
&\textrm{If} \ |\mu| < |\lambda|, \  \textrm{then}  &   \mathbf{S}_{\lambda}(n) \cdot v_{\mu} &= 0,\label{Sc 1}
\\
&\textrm{If} \ |\mu| = |\lambda|, \ \textrm{then} &
\mathbf{S}_{\lambda} (n) \cdot v_{\mu} &= \delta_{\lambda, \mu} \ H(\lambda) \ 
v_{\lambda}.\label{Sc 2}
\end{align}

\end{theorem}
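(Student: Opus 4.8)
The plan is to deduce Theorem~\ref{Schur action} directly from Theorem~\ref{Double YC action}, exploiting the definition~(\ref{Schur basis2}) of $\mathbf{S}_{\lambda}(n)$ as the normalized sum $\frac{1}{H(\widetilde{\lambda})}\sum_S [\ \fbox{$S \ | \ S$}\ ]$ over row strictly increasing tableaux $S$ of shape $\widetilde{\lambda}$. Since each double Young-Capelli bitableau $[\ \fbox{$S \ | \ S$}\ ]$ has shape $\widetilde{\lambda}$, i.e. weight $\lambda$, all the hypotheses of Theorem~\ref{Double YC action} with $sh(S)=sh(S')=\widetilde{\lambda}$ are in force term by term.

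First I would treat the vanishing cases. If $|\mu| < |\lambda| = |\widetilde{\lambda}|$, then by Theorem~\ref{Double YC action}.1 every summand satisfies $[\ \fbox{$S \ | \ S$}\ ]\cdot v_\mu = 0$, hence $\mathbf{S}_{\lambda}(n)\cdot v_\mu = 0$; this gives~(\ref{Sc 1}). Next suppose $|\mu| = |\lambda|$ but $\mu \neq \lambda$, equivalently $\widetilde{\mu}\neq\widetilde{\lambda}$ with $|\widetilde{\mu}|=|\widetilde{\lambda}|$. (Here one must be mildly careful that $v_\mu$ is the highest weight vector of $Schur_{\widetilde{\mu}}$, so that the relevant partitions for Theorem~\ref{Double YC action} are the conjugates; the hypothesis $\widetilde{\mu}\neq\widetilde{\lambda}$ of item~2 translates correctly.) Then Theorem~\ref{Double YC action}.2 gives $[\ \fbox{$S \ | \ S$}\ ]\cdot v_\mu = 0$ for every $S$, whence $\mathbf{S}_{\lambda}(n)\cdot v_\mu = 0$. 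Combined with the diagonal case this yields the off-diagonal part of~(\ref{Sc 2}).

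For the diagonal case $\mu = \lambda$, I would argue as follows. The summands $[\ \fbox{$S \ | \ S$}\ ]$, $S$ row strictly increasing of shape $\widetilde{\lambda}$, act on the one-dimensional weight space spanned by $v_\lambda$; since $\mathbf{S}_\lambda(n)$ is central it acts by a scalar on the irreducible module $Schur_\lambda(n)$, so it suffices to compute $\mathbf{S}_\lambda(n)\cdot v_\lambda$. The natural choice is to isolate the contribution of $S = D_{\widetilde{\lambda}}$, for which Theorem~\ref{Double YC action}.3 gives $[\ \fbox{$D_{\widetilde{\lambda}} \ | \ D_{\widetilde{\lambda}}$}\ ]\cdot v_\lambda = H(\widetilde{\lambda})^2\, v_\lambda$. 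The remaining tableaux $S \neq D_{\widetilde{\lambda}}$ are row strictly increasing of shape $\widetilde{\lambda}$ but are \emph{not} column strictly increasing, hence not standard; I expect that each such summand satisfies $[\ \fbox{$S \ | \ S$}\ ]\cdot v_\lambda = 0$. Establishing this is the main obstacle: it is not literally one of the four items of Theorem~\ref{Double YC action}, so I would prove it by reworking the computation in the proof of item~3, replacing $e_{C_{\widetilde{\lambda}}^*, D_{\widetilde{\lambda}}}$ by $e_{C_{\widetilde{\lambda}}^*, S}$ and using that, for a non-standard row increasing $S$, the action $e_{C_{\widetilde{\lambda}}^*,S}\cdot (D_{\widetilde{\lambda}}|D^P_{\widetilde{\lambda}})$ either vanishes or — after straightening as in Lemma~\ref{Vanishing Lemma} — produces bitableaux in which the positive virtual symbols occupy a shape strictly dominating $\widetilde{\lambda}$, so that the subsequent skew-symmetrization $e_{D_{\widetilde{\lambda}}^*, C_{\widetilde{\lambda}}^*}$ kills it. Granting this, we get
$$
\mathbf{S}_\lambda(n)\cdot v_\lambda = \frac{1}{H(\widetilde{\lambda})}\, H(\widetilde{\lambda})^2\, v_\lambda = H(\widetilde{\lambda})\, v_\lambda = H(\lambda)\, v_\lambda,
$$
using $H(\widetilde{\lambda}) = H(\lambda)$, which is~(\ref{Sc 2}) for $\mu = \lambda$. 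Alternatively, if the vanishing of the non-standard summands proves delicate, one could instead invoke the semisimplicity of the $\mathbf{U}(gl(m_0|m_1+n))$-module $\mathbb{C}[M_{m_0|m_1+n,d}]$ together with the fact that $\mathbf{S}_\lambda(n)$ is central to pin down the eigenvalue from the single computable summand, but the direct argument above is cleaner and I would pursue it first.
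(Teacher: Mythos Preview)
Your proposal follows the paper's approach exactly: reduce everything to Theorem~\ref{Double YC action}. The vanishing cases~(\ref{Sc 1}) and the off-diagonal part of~(\ref{Sc 2}) are handled identically in both.

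For the diagonal case you correctly isolate the key step --- showing that only the summand $S=D_{\widetilde{\lambda}}$ contributes --- but the straightening argument you sketch is heavier than what is actually needed, and this is the one place where your write-up differs from the paper. The paper writes simply ``Clearly, this reduces to \ldots'', and the reason it is clear is a content (weight) argument, not a dominance/straightening one. The bitableau monomial $e_{C_{\widetilde{\lambda}}^{*},S}$ contains exactly $c_S(j)$ factors of the form $e_{\alpha_i,j}$, each of which annihilates one occurrence of the proper symbol $j$. Since $v_\lambda=(D_{\widetilde{\lambda}}\,|\,D^P_{\widetilde{\lambda}})$ contains precisely $\lambda_j$ occurrences of $j$, the action $e_{C_{\widetilde{\lambda}}^{*},S}\cdot v_\lambda$ vanishes unless $c_S(j)\le\lambda_j$ for every $j$; and since $\sum_j c_S(j)=|\widetilde{\lambda}|=\sum_j\lambda_j$, this forces $c_S(j)=\lambda_j$ for all $j$. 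But the only row strictly increasing tableau of shape $\widetilde{\lambda}$ with content exactly $\lambda$ is $D_{\widetilde{\lambda}}$: symbol $1$ appears $\lambda_1$ times in $\lambda_1$ strictly increasing rows, hence fills column $1$; then symbol $2$ fills column $2$; and so on. Thus every summand with $S\neq D_{\widetilde{\lambda}}$ dies already at the stage $e_{C_{\widetilde{\lambda}}^{*},S}\cdot v_\lambda=0$, before any straightening or skew-symmetrization is invoked, and the computation concludes via item~3 of Theorem~\ref{Double YC action} exactly as you wrote.

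So your proof is correct and essentially the same as the paper's; the ``main obstacle'' you flagged dissolves with this elementary weight observation, which is what the paper's ``Clearly'' encodes. Your proposed dominance argument would likely also work, but is unnecessary here.
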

\begin{proof}
The first assertion is an immediate consequence of Theorem \ref{Double YC action}, item $1)$.
The fact that, if $|\mu| = |\lambda|,$ $\mu \neq \lambda,$ then $\mathbf{S}_{\lambda} (n) \cdot v_{\mu} = 0,$
is an immediate consequence of Theorem \ref{Double YC action}, item $2)$.

We examine the case $\lambda = \mu$.
The value
$$
\mathbf{S}_{\lambda}(n) \cdot v_{\lambda}  \stackrel{def}{=} \frac {1} {H(\widetilde{\lambda})} \
 \sum_S \
\mathfrak{p} \big(e_{S,C_{\widetilde{\lambda}}^{*}} \ e_{C_{\widetilde{\lambda}}^{*},D_{\widetilde{\lambda}}^{*}} 
\ e_{D_{\widetilde{\lambda}}^{*},C_{\widetilde{\lambda}}^{*}}  
e_{C_{\widetilde{\lambda}}^{*},S}\big) \cdot (D_{\widetilde{\lambda}}|D^P_{\widetilde{\lambda}})
$$
equals 
$$\label{virtual eq}
\frac {1} {H(\widetilde{\lambda})} \
 \sum_S \
e_{S,C_{\widetilde{\lambda}}^{*}} \ e_{C_{\widetilde{\lambda}}^{*},D_{\widetilde{\lambda}}^{*}} 
\ e_{D_{\widetilde{\lambda}}^{*},C_{\widetilde{\lambda}}^{*}}  
e_{C_{\widetilde{\lambda}}^{*},S} \cdot (D_{\widetilde{\lambda}}|D^P_{\widetilde{\lambda}}),
$$
by Proposition \ref{virtual action}.
Clearly, this reduces to
$$
\frac {1} {H(\widetilde{\lambda})} \  e_{D_{\widetilde{\lambda}},C_{\widetilde{\lambda}}^{*}} \ 
e_{C_{\widetilde{\lambda}}^{*},D_{\widetilde{\lambda}}^{*}} 
\ e_{D_{\widetilde{\lambda}}^{*},C_{\widetilde{\lambda}}^{*}} \
 e_{C_{\widetilde{\lambda}}^{*},D_{\widetilde{\lambda}}} \cdot (D_{\widetilde{\lambda}}|D^P_{\widetilde{\lambda}}).\label{first term}
$$
This value equals
$$
\frac {1} {H(\widetilde{\lambda})} \
 [\ \fbox{$D_{\widetilde{\lambda}} \ | \ D_{\widetilde{\lambda}}$}\ ]  \cdot v_\lambda 
= H(\widetilde{\lambda}) \ v_\lambda,
$$
by item $3)$ of Theorem \ref{Double YC action}.
\end{proof}

\begin{theorem}(Vanishing theorem)\label{Vanishing theorem}
If  $\lambda   \nsubseteq \mu$ , then   
$\mathbf{S}_{\lambda}(n) \cdot v_{\mu} = 0.\label{v 1}
$
\end{theorem}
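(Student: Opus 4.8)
The plan is to reduce the statement to item 4) of Theorem \ref{Double YC action} in exactly the same way that Theorem \ref{Schur action} was reduced to items 1)--3). Recall that by definition
$$
\mathbf{S}_{\lambda}(n) = \frac{1}{H(\widetilde{\lambda})} \ \sum_S \ [\ \fbox{$S \ | \ S$}\ ],
$$
the sum running over all row strictly increasing tableaux $S$ of shape $\widetilde{\lambda}$ on $\mathcal{L} = \{1,\ldots,n\}$. Hence
$$
\mathbf{S}_{\lambda}(n) \cdot v_{\mu} = \frac{1}{H(\widetilde{\lambda})} \ \sum_S \ [\ \fbox{$S \ | \ S$}\ ] \cdot v_{\mu},
$$
so it suffices to show that each summand vanishes under the hypothesis $\lambda \nsubseteq \mu$, equivalently $\widetilde{\lambda} \nsubseteq \widetilde{\mu}$.

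The only subtlety is matching the hypothesis "$\lambda \nsubseteq \mu$" of the present theorem with the hypothesis "$\widetilde{\lambda} \nsubseteq \widetilde{\mu}$" of Theorem \ref{Double YC action}, item 4). This is immediate: containment of Ferrers diagrams is preserved and reflected by conjugation, i.e. $\lambda \subseteq \mu$ if and only if $\widetilde{\lambda} \subseteq \widetilde{\mu}$, so $\lambda \nsubseteq \mu$ is equivalent to $\widetilde{\lambda} \nsubseteq \widetilde{\mu}$. With this identification, each tableau $S$ appearing in the sum has $sh(S) = \widetilde{\lambda}$, so taking $S' = S$ in item 4) of Theorem \ref{Double YC action} gives $[\ \fbox{$S \ | \ S$}\ ] \cdot v_{\mu} = 0$.

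Therefore every term in the sum vanishes and $\mathbf{S}_{\lambda}(n) \cdot v_{\mu} = 0$. There is essentially no obstacle here: the theorem is a direct corollary of Theorem \ref{Double YC action}.4 together with Proposition \ref{virtual action} (already packaged inside the proof of Theorem \ref{Double YC action}), exactly parallel to how Theorem \ref{Schur action} follows from items 1)--3). The one-line proof would simply cite item 4) of Theorem \ref{Double YC action} applied to each summand $[\ \fbox{$S \ | \ S$}\ ]$ in the defining expansion (\ref{Schur basis2}) of $\mathbf{S}_{\lambda}(n)$, after noting the conjugation equivalence $\lambda \nsubseteq \mu \iff \widetilde{\lambda} \nsubseteq \widetilde{\mu}$.
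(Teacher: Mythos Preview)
Your proof is correct and follows exactly the paper's approach: the paper's own proof is the one-line remark that the result is an immediate consequence of item 4) of Theorem \ref{Double YC action}, applied term by term to the defining sum (\ref{Schur basis2}). Your added observation that $\lambda \nsubseteq \mu \iff \widetilde{\lambda} \nsubseteq \widetilde{\mu}$ just makes explicit the conjugation bookkeeping that the paper leaves implicit.
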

\begin{proof} It is an immediate consequence of item $4)$ of Theorem \ref{Double YC action}.
\end{proof}

\begin{theorem}\label{Schur basis}

For every $m \in \mathbb{Z}^+$, the set
$$
\big\{ \mathbf{S}_{\lambda}(n); \ \tilde{\lambda}_1  \leq n, \ |\lambda| \leq m \ \big\}
$$
is a linear basis of $\boldsymbol{\zeta}(n)^{(m)}.$

The set
$$
\big\{ \mathbf{S}_{\lambda}(n); \ \tilde{\lambda}_1 \leq n \ \big\}
$$
is a linear basis of the center $\boldsymbol{\zeta}(n).$

\end{theorem}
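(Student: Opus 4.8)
The plan is to deduce the basis statement from the triangularity result Theorem~\ref{Schur action} together with the filtration estimate \eqref{filtration element}, by a standard unitriangularity argument with respect to a suitable partial order on partitions. First I would fix $m$ and consider the finite set $P_m = \{\lambda; \ \widetilde{\lambda}_1 \leq n, \ |\lambda| \leq m\}$ of partitions indexed by the $\mathbf{S}_\lambda(n)$ in the first claimed basis. The key input is that $\boldsymbol{\zeta}(n)$ acts diagonally on each irreducible Schur module $Schur_\mu(n)$, so a central element is completely determined by its scalar eigenvalues $c_\mu$ on the highest weight vectors $v_\mu$, as $\mu$ ranges over all partitions with $\mu_1 \leq n$ (equivalently $\widetilde{\mu}_1 \leq n$); moreover, by \eqref{filtration element} and the fact that the eigenvalue of a degree-$\leq m$ element on $v_\mu$ is a polynomial of degree $\leq m$ in the highest weight coordinates, an element of $\boldsymbol{\zeta}(n)^{(m)}$ is determined by its eigenvalues on the (finitely many, after accounting for the polynomiality) $v_\mu$ with $|\mu| \leq m$. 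I would cite the Harish-Chandra isomorphism $\chi_n : \boldsymbol{\zeta}(n) \to \Lambda^*(n)$ (mentioned in the introduction) to make the dimension count precise: $\dim \boldsymbol{\zeta}(n)^{(m)}$ equals the number of partitions $\lambda$ with $\widetilde{\lambda}_1 \leq n$ and $|\lambda| \leq m$, which is exactly $|P_m|$.

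Next I would establish linear independence of $\{\mathbf{S}_\lambda(n)\}_{\lambda \in P_m}$. Suppose $\sum_{\lambda \in P_m} a_\lambda \mathbf{S}_\lambda(n) = 0$. Evaluating on $v_\mu$ for $\mu$ a partition of minimal size among those with $a_\mu \neq 0$: by \eqref{Sc 1} every $\mathbf{S}_\lambda(n)$ with $|\lambda| > |\mu|$ kills $v_\mu$, and by \eqref{Sc 2} every $\mathbf{S}_\lambda(n)$ with $|\lambda| = |\mu|$ and $\lambda \neq \mu$ also kills $v_\mu$, while $\mathbf{S}_\mu(n) \cdot v_\mu = H(\mu)\, v_\mu$ with $H(\mu) \neq 0$. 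The terms with $|\lambda| < |\mu|$ either have $a_\lambda = 0$ by minimality or — more carefully — I should instead order by $|\lambda|$ and argue by induction starting from the smallest $|\lambda|$ occurring: actually the cleanest route is to pick $\mu$ with $a_\mu \neq 0$ and $|\mu|$ \emph{maximal}, then apply $\sum a_\lambda \mathbf{S}_\lambda(n)$ to $v_\mu$; all surviving terms have $|\lambda| \leq |\mu|$, those with $|\lambda| = |\mu|$ contribute only $\lambda = \mu$ by \eqref{Sc 2}, and those with $|\lambda| < |\mu|$ contribute eigenvalues that are \emph{a priori} nonzero, so this does not immediately close. The fix is to use the full triangularity: order partitions by the size-then-whatever refinement and note that $\mathbf{S}_\lambda(n) \cdot v_\mu = 0$ whenever $|\lambda| > |\mu|$ and also whenever $|\lambda| = |\mu|, \lambda \neq \mu$; so restricting attention to the ``top size'' $\mu$ with $a_\mu \neq 0$ forces $a_\mu H(\mu) = 0$ only after one checks that no larger-size $\lambda$ with $a_\lambda \neq 0$ exists — which is the maximality choice — hence $a_\mu = 0$, a contradiction. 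Thus the family is linearly independent.

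Finally, combining linear independence with the dimension count $\dim \boldsymbol{\zeta}(n)^{(m)} = |P_m|$ gives that $\{\mathbf{S}_\lambda(n); \widetilde{\lambda}_1 \leq n, |\lambda| \leq m\}$ is a basis of $\boldsymbol{\zeta}(n)^{(m)}$; since $\boldsymbol{\zeta}(n) = \bigcup_m \boldsymbol{\zeta}(n)^{(m)}$, letting $m \to \infty$ yields that $\{\mathbf{S}_\lambda(n); \widetilde{\lambda}_1 \leq n\}$ is a basis of $\boldsymbol{\zeta}(n)$. The main obstacle I anticipate is making the dimension count rigorous without circularity: one must know independently that $\dim \boldsymbol{\zeta}(n)^{(m)}$ equals the number of relevant partitions. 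This can be taken from the Harish-Chandra isomorphism together with the standard fact that $\Lambda^*(n)^{(m)}$ has a basis indexed by partitions of length $\leq n$ and degree $\leq m$ (e.g.\ the shifted Schur polynomials $s^*_{\lambda|n}$), or alternatively one can avoid the count entirely by proving directly that the $\mathbf{S}_\lambda(n)$ \emph{span} $\boldsymbol{\zeta}(n)$: given $z \in \boldsymbol{\zeta}(n)^{(m)}$ with eigenvalues $c_\mu$ on $v_\mu$, subtract $c_\emptyset \mathbf{S}_\emptyset(n)$, then for each $\lambda$ of size $1$ subtract the appropriate multiple of $\mathbf{S}_\lambda(n)$ to kill the eigenvalue on $v_\lambda$, and iterate up in $|\lambda|$; the triangularity \eqref{Sc 1}--\eqref{Sc 2} guarantees this process does not disturb already-corrected smaller eigenvalues, and it terminates because $z \in \boldsymbol{\zeta}(n)^{(m)}$ forces its eigenvalue function to be a polynomial of bounded degree, hence determined by finitely many $c_\mu$. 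This spanning-plus-independence route is self-contained and I would present it as the primary argument.
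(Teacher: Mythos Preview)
Your overall strategy---linear independence from the triangularity of Theorem~\ref{Schur action} together with a dimension count for $\boldsymbol{\zeta}(n)^{(m)}$---is exactly what the paper (implicitly) intends; the paper states the theorem without proof, relying on Theorem~\ref{Schur action} and the classical fact (Harish--Chandra, or Capelli's Theorem~\ref{Capelli generators}) that $\dim \boldsymbol{\zeta}(n)^{(m)}$ equals the number of partitions $\lambda$ with $\widetilde{\lambda}_1 \le n$ and $|\lambda|\le m$.

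There is, however, a genuine muddle in your linear--independence paragraph. Your \emph{first} attempt is correct and complete: choose $\mu$ of \emph{minimal} size among those with $a_\mu\neq 0$ and apply the relation to $v_\mu$. By minimality there are no terms with $|\lambda|<|\mu|$; the terms with $|\lambda|>|\mu|$ annihilate $v_\mu$ by \eqref{Sc 1}; the terms with $|\lambda|=|\mu|$, $\lambda\neq\mu$, annihilate $v_\mu$ by \eqref{Sc 2}; so $a_\mu H(\mu)\,v_\mu=0$, a contradiction. You then second--guess yourself and switch to choosing $|\mu|$ \emph{maximal}. That version does \emph{not} close: when $|\mu|$ is maximal, the summands $a_\lambda\,\mathbf{S}_\lambda(n)$ with $|\lambda|<|\mu|$ may act nontrivially on $v_\mu$ (Theorem~\ref{Schur action} gives no vanishing in that direction), and your ``fix'' never addresses them. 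Delete the maximal--size discussion and keep the minimal--size argument.

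For spanning, citing the Harish--Chandra isomorphism (or Capelli's generators) for the dimension count is the clean route and is what the paper has in mind. Your alternative ``subtract off by increasing $|\lambda|$'' argument is fine in spirit, but its termination step---that an element of $\boldsymbol{\zeta}(n)^{(m)}$ vanishing on all $v_\mu$ with $|\mu|\le m$ must be zero---is precisely the statement that a shifted symmetric polynomial of degree $\le m$ vanishing on those partitions is zero, which again comes down to the same dimension count. So it is not actually more self--contained; just use Harish--Chandra.
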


\subsection{The Sahi-Okounkov Characterization  Theorem}\label{Characterization  Theorems}
We reword Theorem \ref{Schur action} in terms of the \textit{Harish-Chandra
isomorphism} 
$$
\chi_n : \boldsymbol{\zeta}(n) \longrightarrow \Lambda^*(n),
$$
where $\Lambda^*(n)$ denotes the algebra of {\it{shifted symmetric polynomials}} in $n$ variables 
(see Section \ref{Lambda(n)} below).

\begin{proposition}\label{Char Schur shift}  Given  $\lambda$, $\tilde{\lambda}_1 \leq n$ and 
$\mu$, $\tilde{\mu}_1 \leq n$, we have:
\begin{align*}
&\textrm{If} \ |\mu| < |\lambda|, \  \textrm{then}  &   
\chi_n\left( \mathbf{S}_{\lambda}(n) \right) (\mu) &= 0,
\\
&\textrm{If} \ |\mu| = |\lambda|, \ \textrm{then} &
\chi_n\left( \mathbf{S}_{\lambda}(n) \right) (\mu) &= \delta_{\lambda, \mu} \ H(\lambda).
\end{align*}

\end{proposition}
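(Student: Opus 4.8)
The plan is to deduce Proposition \ref{Char Schur shift} from Theorem \ref{Schur action} via the defining property of the Harish-Chandra isomorphism $\chi_n : \boldsymbol{\zeta}(n) \to \Lambda^*(n)$: for a central element $z \in \boldsymbol{\zeta}(n)$ and a partition $\mu$ with $\widetilde{\mu}_1 \leq n$, the scalar by which $z$ acts on the irreducible module $Schur_{\widetilde{\mu}}(n)$ (whose highest weight is $\mu$, by the Proposition in Section \ref{Schur modules}) is exactly the value $\chi_n(z)(\mu)$ of the shifted symmetric polynomial $\chi_n(z)$ at the point $\mu$; this is the normalization fixed in Section \ref{Lambda(n)}, following \cite{OkOlsh-BR}.

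First I would recall that, since $\mathbf{S}_{\lambda}(n)$ is central in $\mathbf{U}(gl(n))$ and $Schur_{\widetilde{\mu}}(n)$ is irreducible with $\mathbb{C}$ algebraically closed, Schur's lemma forces $\mathbf{S}_{\lambda}(n)$ to act on $Schur_{\widetilde{\mu}}(n)$ as multiplication by a scalar $c$, which is determined by its effect on any nonzero vector, in particular on the canonical highest weight vector $v_{\mu} = (D_{\widetilde{\mu}}|D^P_{\widetilde{\mu}})$. Thus $\mathbf{S}_{\lambda}(n) \cdot v_{\mu} = c\, v_{\mu}$ with $c = \chi_n(\mathbf{S}_{\lambda}(n))(\mu)$.

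Next I would substitute the values supplied by Theorem \ref{Schur action}. If $|\mu| < |\lambda|$, equation (\ref{Sc 1}) gives $\mathbf{S}_{\lambda}(n) \cdot v_{\mu} = 0$, and since $v_{\mu} \neq 0$ this yields $\chi_n(\mathbf{S}_{\lambda}(n))(\mu) = 0$. If $|\mu| = |\lambda|$, equation (\ref{Sc 2}) gives $\mathbf{S}_{\lambda}(n) \cdot v_{\mu} = \delta_{\lambda,\mu} H(\lambda)\, v_{\mu}$, whence $\chi_n(\mathbf{S}_{\lambda}(n))(\mu) = \delta_{\lambda,\mu} H(\lambda)$. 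These are exactly the two assertions to be proved.

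There is no genuine obstacle here: the mathematical content lies entirely in Theorem \ref{Schur action}, and what remains is bookkeeping. The one point requiring care is the indexing/normalization convention: one must check that the module carrying the highest weight vector $v_{\mu}$ is labelled so that its Harish-Chandra image is to be evaluated at $\mu$ itself, and not at $\widetilde{\mu}$ nor at a $\rho$-shifted weight, which is precisely how $\chi_n$ and the evaluation pairing on $\Lambda^*(n)$ are arranged in Section \ref{Lambda(n)}. Once that dictionary is recalled, the proof is immediate.
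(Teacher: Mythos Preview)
Your proof is correct and matches the paper's approach: the paper presents this proposition explicitly as a rewording of Theorem~\ref{Schur action} via the Harish-Chandra isomorphism, which is precisely what you do. Your careful remark about the indexing convention (that $v_\mu$ is the highest weight vector of $Schur_{\widetilde{\mu}}(n)$, so that evaluation is at $\mu$) is the only subtlety, and you have handled it correctly.
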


We recall the {\it{Sahi-Okounkov Characterization Theorem}} for the 
\emph{Schur shifted symmetric polynomials}
$$
s^*_{\lambda|n}, \qquad \lambda_1 \leq n
$$
(Theorem $1$ of \cite{Sahi2-BR} and Theorem $3.3$ of \cite{OkOlsh-BR}, see also \cite{Okounkov-BR}).

\begin{proposition}\label{SahiOkounkv}  Given $\lambda$, $\widetilde{\lambda}_1 \leq n$, 
the polynomial $s^*_{\lambda|n}$
is the unique element of $\Lambda^*(n)$
such that $deg \ s^*_{\lambda|n} \leq |\lambda|$
and
$$
s^*_{\lambda|n}(\mu) = \delta_{\lambda \mu} \ H(\lambda),
$$
for all partitions $\mu$ such that $|\mu| \leq |\lambda|$ and $\mu_1 \leq n$.
\end{proposition}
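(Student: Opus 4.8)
The plan is to split the statement into \emph{existence} and \emph{uniqueness}, treated by quite different methods. For existence I would take $s^*_{\lambda|n}$ to be the shifted Schur polynomial of Okounkov--Olshanski, introduced through the ratio of ``falling--factorial'' determinants
$$
s^*_{\lambda|n}(x_1,\dots,x_n)=\frac{\det\bigl[(x_i+n-i)_{\lambda_j+n-j}\bigr]_{i,j=1}^{n}}{\det\bigl[(x_i+n-i)_{n-j}\bigr]_{i,j=1}^{n}},\qquad (y)_k:=y(y-1)\cdots(y-k+1)
$$
(equivalently, as a sum of monomials over reverse tableaux of shape $\lambda$ with entries in $\{1,\dots,n\}$). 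From the determinantal form two of the three required properties are immediate: numerator and denominator are both alternating in the variables $y_i=x_i+n-i$, so the quotient is symmetric in the $x_i-i$, i.e. $s^*_{\lambda|n}\in\Lambda^*(n)$; and the top--degree part of $s^*_{\lambda|n}$ is the ordinary Schur polynomial $s_\lambda$, whence $\deg s^*_{\lambda|n}=|\lambda|$. What remains is the evaluation identity, and since $\lambda\subseteq\mu$ together with $|\mu|\le|\lambda|$ forces $\mu=\lambda$, this reduces to the two facts: (a) $s^*_{\lambda|n}(\mu)=0$ whenever $\lambda\nsubseteq\mu$, and (b) $s^*_{\lambda|n}(\lambda)=H(\lambda)$.

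Facts (a) and (b) are the heart of the matter, and I expect (a)--(b) to be the \textbf{main obstacle}, everything else being formal. For (a) I would substitute $x_i=\mu_i$ and set $y_i=\mu_i+n-i$, $z_j=\lambda_j+n-j$; the numerator entry $(y_i)_{z_j}$ vanishes as soon as $y_i<z_j$. If $\lambda\nsubseteq\mu$ there is an index $i_0$ with $z_{i_0}>y_{i_0}$, and since both sequences are strictly decreasing the whole submatrix on rows $\{i_0,\dots,n\}$ and columns $\{1,\dots,i_0\}$ is zero; as $(n-i_0+1)+i_0=n+1>n$, a zero submatrix this large forces the determinant to vanish. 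For (b) I would substitute $x_i=\lambda_i$, so $y_i=\lambda_i+n-i$ is strictly decreasing; then $(y_i)_{y_j}=0$ for $i>j$, the numerator matrix is triangular with determinant $\prod_i y_i!$, the denominator is the Vandermonde $\prod_{i<j}(y_i-y_j)$, and the resulting ratio $\prod_i y_i!\,/\,\prod_{i<j}(y_i-y_j)$ is exactly $H(\lambda)$ --- this identity is precisely the hook--length formula.

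For uniqueness, suppose $f\in\Lambda^*(n)$ has $\deg f\le|\lambda|$ and $f(\mu)=\delta_{\lambda\mu}H(\lambda)$ for all partitions $\mu$ with $|\mu|\le|\lambda|$ (and at most $n$ parts); I want $f=s^*_{\lambda|n}$. The shifted Schur polynomials $\{s^*_{\nu|n}:|\nu|\le|\lambda|\}$ form a linear basis of the degree--$\le|\lambda|$ subspace of $\Lambda^*(n)$ (their top parts are the Schur polynomials $s_\nu$), so I may write $f-s^*_{\lambda|n}=\sum_\nu c_\nu\,s^*_{\nu|n}$. Ordering the index set by any linear extension of the inclusion order on partitions, fact (a) shows the evaluation matrix $\bigl[s^*_{\nu|n}(\mu)\bigr]$ is triangular with diagonal entries $s^*_{\nu|n}(\nu)=H(\nu)\ne 0$, hence invertible; since $(f-s^*_{\lambda|n})(\mu)=0$ for every $\mu$ in the index set, all $c_\nu$ vanish and $f=s^*_{\lambda|n}$.

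Finally, I would remark that in the present setting the existence half is in fact available for free: by eq.~(\ref{filtration element}) the element $\mathbf{S}_\lambda(n)$ lies in $\boldsymbol{\zeta}(n)^{(|\lambda|)}$, so $\chi_n(\mathbf{S}_\lambda(n))\in\Lambda^*(n)$ has degree $\le|\lambda|$, and by Proposition~\ref{Char Schur shift} it takes the values $\delta_{\lambda\mu}H(\lambda)$ on all $\mu$ with $|\mu|\le|\lambda|$; thus $\chi_n(\mathbf{S}_\lambda(n))$ already witnesses existence, and the uniqueness argument above then simultaneously proves the Proposition and identifies $s^*_{\lambda|n}=\chi_n(\mathbf{S}_\lambda(n))$.
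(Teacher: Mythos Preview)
The paper does not prove this proposition: it is quoted from the literature as the Sahi--Okounkov Characterization Theorem (Theorem~1 of \cite{Sahi2-BR} and Theorem~3.3 of \cite{OkOlsh-BR}), and no argument is supplied beyond the citation. Your proof is correct and is essentially the standard one from those sources: existence via the explicit determinantal formula for $s^*_{\lambda|n}$, the vanishing property (a) from the zero block of size $(n-i_0+1)\times i_0$ in the falling--factorial matrix, the evaluation (b) from the triangular specialization together with the Frame--Robinson--Thrall form of the hook--length formula, and uniqueness from the triangularity of the evaluation matrix $\bigl[s^*_{\nu|n}(\mu)\bigr]$ with nonzero diagonal $H(\nu)$.

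One small comment on your closing remark. The uniqueness argument you give already presupposes the full existence package for \emph{all} $s^*_{\nu|n}$ with $|\nu|\le|\lambda|$: you use them as a basis of the filtration piece, and you use their vanishing properties to obtain triangularity. So invoking $\chi_n(\mathbf{S}_\lambda(n))$ from Proposition~\ref{Char Schur shift} does not actually shortcut the construction of the $s^*_{\nu|n}$. In the paper's logic the flow is in fact the reverse of what you suggest: Proposition~\ref{SahiOkounkv} is imported as a black box about the independently defined polynomials $s^*_{\lambda|n}$, and Proposition~\ref{Char Schur shift} is then matched against it to obtain the identification $\chi_n(\mathbf{S}_\lambda(n))=s^*_{\lambda|n}$ in the subsequent corollary.
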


From Propositions \ref{Char Schur shift} and \ref{SahiOkounkv},  
we obtain
\begin{corollary} Given  $\lambda$, $\tilde{\lambda}_1 \leq n$, we have
$$
\chi_n\left( \mathbf{S}_{\lambda}(n) \right) = s_{\lambda|n}^*.
$$ 
\end{corollary}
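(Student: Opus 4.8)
The plan is to deduce the identity $\chi_n\left(\mathbf{S}_\lambda(n)\right) = s^*_{\lambda|n}$ directly from the uniqueness clause of the Sahi--Okounkov characterization, Proposition \ref{SahiOkounkv}. Concretely, I would show that $\chi_n\left(\mathbf{S}_\lambda(n)\right)$ satisfies the two properties that pin down $s^*_{\lambda|n}$ inside $\Lambda^*(n)$: the degree bound $\deg \le |\lambda|$, and the interpolation values $\mu \mapsto \delta_{\lambda\mu}\,H(\lambda)$ for every partition $\mu$ with $|\mu| \le |\lambda|$ (and $\widetilde{\mu}_1 \le n$).

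For the degree bound I would invoke the standard fact that the Harish-Chandra isomorphism $\chi_n \colon \boldsymbol{\zeta}(n) \to \Lambda^*(n)$ is an isomorphism of \emph{filtered} algebras, carrying the filtration of $\boldsymbol{\zeta}(n)$ induced by the standard filtration of $\mathbf{U}(gl(n))$ onto the filtration of $\Lambda^*(n)$ by polynomial degree (this is recalled in Section \ref{Lambda(n)}). Since $\mathbf{S}_\lambda(n) \in \boldsymbol{\zeta}(n)^{(m)}$ for every $m \ge |\lambda|$ by (\ref{filtration element}), this immediately gives $\deg \chi_n\left(\mathbf{S}_\lambda(n)\right) \le |\lambda|$.

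For the values I would simply read off Proposition \ref{Char Schur shift}: when $|\mu| = |\lambda|$ it gives $\chi_n\left(\mathbf{S}_\lambda(n)\right)(\mu) = \delta_{\lambda\mu}\,H(\lambda)$, and when $|\mu| < |\lambda|$ it gives $\chi_n\left(\mathbf{S}_\lambda(n)\right)(\mu) = 0$, which equals $\delta_{\lambda\mu}\,H(\lambda)$ since $|\mu| < |\lambda|$ forces $\mu \neq \lambda$. Hence the interpolation conditions of Proposition \ref{SahiOkounkv} hold over the full range $|\mu| \le |\lambda|$, and combining this with the degree bound, the uniqueness assertion of that proposition forces $\chi_n\left(\mathbf{S}_\lambda(n)\right) = s^*_{\lambda|n}$.

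The argument is essentially bookkeeping: both inputs (Propositions \ref{Char Schur shift} and \ref{SahiOkounkv}) are already available, so there is no real analytic or combinatorial obstacle. The only points demanding a little care are the compatibility of $\chi_n$ with the degree filtration — so that the $\mathbf{U}(gl(n))$-filtration degree $|\lambda|$ of $\mathbf{S}_\lambda(n)$ transfers to a genuine polynomial-degree bound on its image — and the verification that the family of test partitions $\mu$ appearing in Proposition \ref{Char Schur shift} matches the one in Proposition \ref{SahiOkounkv}; here one must keep the conjugation conventions consistent (the evaluations $v_\mu$ and $s^*_{\lambda|n}(\mu)$ are defined precisely when $\widetilde{\mu}_1 \le n$, i.e. $\ell(\mu) \le n$).
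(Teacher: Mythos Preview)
Your proposal is correct and follows exactly the paper's approach: the corollary is stated immediately after Propositions \ref{Char Schur shift} and \ref{SahiOkounkv} with the words ``From Propositions \ref{Char Schur shift} and \ref{SahiOkounkv}, we obtain'', and your argument simply spells out the two verifications (degree bound via (\ref{filtration element}) and the filtered nature of $\chi_n$, interpolation values via Proposition \ref{Char Schur shift}) that make the uniqueness clause of Proposition \ref{SahiOkounkv} apply. Your closing caution about matching the conjugation conventions on the test partitions $\mu$ is well-placed, since the paper's statement of Proposition \ref{SahiOkounkv} writes ``$\mu_1 \leq n$'' where ``$\widetilde{\mu}_1 \leq n$'' (i.e.\ $\ell(\mu)\le n$) is what is actually meant.
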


It follows that the Schur elements $\mathbf{S}_{\lambda}(n)$
are the same as the {\it{Okounkov quantum immanant}} associated to $\lambda$ 
(\cite{Okounkov-BR}, see also \cite{Okounkov1-BR} and \cite{Nazarov2-BR}).

We recall that the Schur element/Okounkov quantum immanant $\mathbf{S}_{\lambda}(n)$ 
also admits a representation as linear  combination of  \emph{Capelli immanants} 
(see \cite{BT-jaa}, section 5)
\begin{align*}
Cimm_{\mu}[i_1 i_2 \cdots i_h;j_1 j_2 \cdots j_h] & =
\sum_{\sigma \in \mathbf{S}_h} \ \chi^{\mu}(\sigma) \left[
\begin{array}{c}
i_{\sigma(1)}\\  i_{\sigma(2)} \\ \vdots \\ i_{\sigma(h)}
\end{array}
\right| \left.
\begin{array}{c}
j_1\\  j_2 \\ \vdots \\ j_h
\end{array}
\right]
\\
& =\sum_{\sigma \in \mathbf{S}_h} \ \chi^{\mu}(\sigma)  \left[
\begin{array}{c}
i_1\\  i_2 \\ \vdots \\ i_h
\end{array}
\right| \left.
\begin{array}{c}
j_{\sigma(1)}\\  j_{\sigma(2)} \\ \vdots \\ j_{\sigma(h)}
\end{array}
\right],
\end{align*}
where $\chi^{\mu}$ denotes the {\textit{irreducible character}} associated
to the irreducible representation of shape $\mu$ of the symmetric group $\mathbf{S}_h$ and
$$
\left[
\begin{array}{c}
i_{\sigma(1)}\\  i_{\sigma(2)} \\ \vdots \\ i_{\sigma(h)}
\end{array}
\right| \left.
\begin{array}{c}
j_1\\  j_2 \\ \vdots \\ j_h
\end{array}
\right], 
\quad
\left[
\begin{array}{c}
i_1\\  i_2 \\ \vdots \\ i_h
\end{array}
\right| \left.
\begin{array}{c}
j_{\sigma(1)}\\  j_{\sigma(2)} \\ \vdots \\ j_{\sigma(h)}
\end{array}
\right]
$$
are \textit{column Capelli bitableaux} (see, e.g. \cite{BT-jaa}, \cite{BT-CA}):

\begin{proposition} \emph{( \cite{BT-jaa}, Theorem $6.2$ )} Given $\lambda$, $\lambda_1 \leq n$, we have
$$
\mathbf{S}_{\lambda}(n) = (-1)^{h \choose 2}
\sum_{h_1 + \cdots + h_n = h} \ \frac {1} {h_1!  \cdots h_n!}
 \ Cimm_{\tilde{\lambda}}[1^{h_1} \ldots n^{h_n}; 1^{h_1} \ldots n^{h_n}].
$$

\end{proposition}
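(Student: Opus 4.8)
The plan is to connect the two presentations of $\mathbf{S}_\lambda(n)$ by expressing each double Young-Capelli bitableau $[\ \fbox{$S \ | \ S$}\ ]$ as a sum of Capelli immanants, then collecting terms. First I would apply Proposition \ref{double YC from YC} to rewrite $[\ \fbox{$S \ | \ S$}\ ]$ as $(-1)^{h \choose 2}\sum_\sigma (-1)^{|\sigma|}[S|\fbox{$S^\sigma$}]$, the sum over row permutations $\sigma$ acting on the right tableau. Next, applying Proposition \ref{YC as sym of Capelli}, each Young-Capelli bitableau $[S|\fbox{$S^\sigma$}]$ expands as a sum of ordinary Capelli bitableaux $[S|\overline{S^\sigma}]$ over column permutations. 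Since $S$ ranges over row strictly increasing tableaux of shape $\widetilde{\lambda}$, the combined effect of summing over $S$, over row permutations (with sign), and over column permutations should reorganize — via the standard bijection between pairs (shape $\widetilde\lambda$, row-increasing filling with prescribed content) and group algebra elements — into the characters $\chi^{\widetilde\lambda}$ weighted over the symmetric group, which is exactly the defining combination of $Cimm_{\widetilde\lambda}$. The factor $1/H(\widetilde\lambda)$ in the definition of $\mathbf{S}_\lambda(n)$ is what makes this work: it is precisely the normalization relating the sum of $[\ \fbox{$S \ | \ S$}\ ]$ over row-increasing $S$ to a single character-weighted sum.

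An alternative, and probably cleaner, route is to avoid the combinatorial bookkeeping altogether and instead identify both sides by their action on highest weight vectors. By Theorem \ref{Schur action} and Theorem \ref{Vanishing theorem}, the Schur element $\mathbf{S}_\lambda(n)$ is the unique central element whose eigenvalue on $v_\mu$ vanishes for $|\mu|\le|\lambda|$, $\mu\ne\lambda$, equals $H(\lambda)$ for $\mu=\lambda$, and which lies in the filtration piece $\boldsymbol{\zeta}(n)^{(|\lambda|)}$; this is the Sahi-Okounkov characterization (Proposition \ref{SahiOkounkv}) transported through $\chi_n$. So it suffices to show the right-hand side — the normalized sum of diagonal Capelli immanants — is central, lies in the correct filtration degree, and has the same eigenvalues. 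Centrality and the filtration bound for the Capelli immanant combination are established in \cite{BT-jaa}; the eigenvalue computation reduces, after applying the known eigenvalues of diagonal Capelli immanants on irreducible modules, to a symmetric-function identity expressing $s^*_{\widetilde\lambda\,|\,n}$ (equivalently $s^*_{\lambda|n}$ after conjugation) as the appropriate character-weighted sum over multi-indices $h_1+\cdots+h_n=h$. That identity is the content of \cite{BT-jaa}, Theorem $6.2$, quoted here, so one is essentially invoking that the two characterizations of the quantum immanant coincide.

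I would present the proof in the first style if full self-containment is wanted, and in the second style for brevity. The main obstacle in either case is the same: one must match the normalization constants exactly. In the first approach, the delicate point is verifying that summing $[S|\overline{T}]$ over row-increasing $S$, signed row permutations, and column permutations produces $\sum_{\sigma\in\mathbf{S}_h}\chi^{\widetilde\lambda}(\sigma)[\cdots]$ with the stated coefficient $(-1)^{h\choose 2}/(h_1!\cdots h_n!)$ — this requires carefully tracking how the Young symmetrizer decomposition $e_{\widetilde\lambda}=\sum_\sigma\chi^{\widetilde\lambda}(\sigma)\sigma$ (up to the dimension/hook factor) interacts with the grouping of the $h$ proper symbols into the multiset $1^{h_1}\cdots n^{h_n}$, and it is exactly here that the factor $1/H(\widetilde\lambda)$ gets absorbed against the ratio $\dim(\widetilde\lambda)/h! = 1/H(\widetilde\lambda)$. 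In the second approach, the obstacle is instead hidden in the cited eigenvalue formulas and the shifted-Schur identity, so the proof becomes short but leans heavily on \cite{BT-jaa}. Given that the paper has already quoted \cite{BT-jaa}, Theorem $6.2$ immediately before the statement, the cleanest exposition is simply to observe that both $\mathbf{S}_\lambda(n)$ (via Proposition \ref{Char Schur shift} and the preceding Corollary) and the right-hand side (via \cite{BT-jaa}) are the preimage under $\chi_n$ of $s^*_{\lambda|n}$, hence equal.
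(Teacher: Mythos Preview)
The paper does not give a self-contained proof of this proposition: it is stated as a citation of \cite{BT-jaa}, Theorem~6.2, and the surrounding text makes the logical connection explicit --- the Corollary just before shows $\chi_n(\mathbf{S}_\lambda(n)) = s^*_{\lambda|n}$, and \cite{BT-jaa} had already identified the right-hand Capelli-immanant sum as the Okounkov quantum immanant (hence also the $\chi_n$-preimage of $s^*_{\lambda|n}$). Your second approach is exactly this argument, and you correctly flagged it as the one matching the paper's exposition.

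Your first approach --- expanding $[\ \fbox{$S\ |\ S$}\ ]$ via Propositions~\ref{double YC from YC} and~\ref{YC as sym of Capelli} and reorganizing into character sums --- would be a genuinely different, more direct route that does not pass through the Sahi--Okounkov characterization or the Harish-Chandra isomorphism. If it can be made to work it would make Corollary~\ref{beautiful identity} self-contained rather than dependent on \cite{BT-jaa}. The obstacle you identified is real: the passage from ``row skew-symmetrization followed by column symmetrization, summed over row-increasing $S$'' to ``irreducible character $\chi^{\widetilde\lambda}$ summed over $\mathbf{S}_h$'' requires more than just the hook-length normalization; one needs the fact that the Young symmetrizer associated to $\widetilde\lambda$ produces, upon summing over a full tableau orbit, the primitive central idempotent times $H(\widetilde\lambda)$, and matching this against the column Capelli bitableau expansion is not entirely routine. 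So your first sketch is plausible but incomplete as written, while your second approach is correct and matches the paper.
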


Hence, we have the remarkable identity:
\begin{corollary}\label{beautiful identity}
\begin{align}
\mathbf{S}_{\lambda}(n) &= \frac {1} {H(\tilde{\lambda})} \ \  \sum_S \ [\ \fbox{$S \ | \ S$}\ ]
\label{schur first eq}
\\
&= (-1)^{h \choose 2}
\sum_{h_1 + \cdots + h_n = h} \ \frac {1} {h_1!  \cdots h_n!}
 \ Cimm_{\tilde{\lambda}}[1^{h_1} \ldots n^{h_n}; 1^{h_1} \ldots n^{h_n}] \label{schur second eq},
\end{align}
where the sum of \emph{double Young-Capelli bitableaux} in eq. (\ref{schur first eq})  is extended to all  
row (strictly) increasing tableaux $S$ of shape $\tilde{\lambda}$ on the proper alphabet $\mathcal{L} = \{1, 2, \ldots, n \}$ and  eq. (\ref{schur second eq})
is a sum of \emph{diagonal Capelli immanants} of shape $\tilde{\lambda}$.
\end{corollary}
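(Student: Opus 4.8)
The plan is to recognize that Corollary \ref{beautiful identity} is purely a bookkeeping statement: it juxtaposes two descriptions of the same central element, and all the mathematical substance has already been carried out. Equality (\ref{schur first eq}) is literally the defining formula (\ref{Schur basis2}) for $\mathbf{S}_{\lambda}(n)$, so nothing needs to be proved there. The only thing to establish is that this sum of double Young-Capelli bitableaux agrees with the sum of diagonal Capelli immanants in (\ref{schur second eq}).

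For that, I would first invoke the Corollary preceding the statement, obtained by combining Proposition \ref{Char Schur shift} (itself a restatement of Theorem \ref{Schur action} through the Harish-Chandra isomorphism $\chi_n$) with the Sahi-Okounkov characterization theorem, Proposition \ref{SahiOkounkv}: namely $\chi_n\left(\mathbf{S}_\lambda(n)\right) = s^*_{\lambda|n}$. Since $\chi_n : \boldsymbol{\zeta}(n) \to \Lambda^*(n)$ is an isomorphism, in particular injective, this identifies the element $\mathbf{S}_\lambda(n)$ defined here by (\ref{Schur basis2}) with the preimage of the shifted Schur polynomial $s^*_{\lambda|n}$, i.e.\ with the Okounkov quantum immanant of shape $\lambda$ in dimension $n$.

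Then I would quote the representation of the quantum immanant as a sum of diagonal Capelli immanants proved in \cite{BT-jaa} (Theorem $6.2$), restated above as the Proposition immediately preceding the Corollary. The element written $\mathbf{S}_\lambda(n)$ in that Proposition is, by the previous paragraph, the very same central element as the one defined here; chaining the two expressions through the common value $\mathbf{S}_\lambda(n)$ produces (\ref{schur second eq}), which is the assertion. There is no genuine obstacle here: the weight of the argument lies entirely in Theorem \ref{Schur action}, in the Sahi-Okounkov theorem, and in \cite{BT-jaa}. The single point that deserves a word of care is the identification of the two a priori distinct descriptions of $\mathbf{S}_\lambda(n)$ — the combinatorial one via double Young-Capelli bitableaux and the one appearing in the cited theorem — which is supplied precisely by the injectivity of $\chi_n$ together with $\chi_n\left(\mathbf{S}_\lambda(n)\right) = s^*_{\lambda|n}$.
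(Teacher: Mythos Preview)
Your proposal is correct and follows exactly the paper's own argument: the paper simply writes ``Hence, we have the remarkable identity'' after assembling the same three ingredients you list --- the definition (\ref{Schur basis2}), the identification $\chi_n(\mathbf{S}_\lambda(n)) = s^*_{\lambda|n}$ via Propositions \ref{Char Schur shift} and \ref{SahiOkounkv}, and the Capelli-immanant expression from \cite{BT-jaa}, Theorem~6.2. There is no additional content in the paper's proof beyond the chaining you describe.
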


Corollary \ref{beautiful identity} was announced, without proof, in our recent paper \cite{BT-jaa}.

\subsection{The determinantal  Capelli generators $\mathbf{H}_k(n)$}

Let $(k)$ be the row shape of length $k$,  $\alpha \in \mathcal{A}_0 $  a positive virtual symbol.
The element
$$
[ i_1  i_2 \cdots i_k | j_1 j_2 \cdots j_k ] = 
\mathfrak{p} \big( e_{i_1 , \alpha}  e_{i_2, \alpha}  \cdots e_{i_k, \alpha}
e_{\alpha, j_1}e_{\alpha, j_2} \cdots e_{\alpha, j_k } \big)
$$
is the Capelli bitableau
$$
\mathfrak{p} \big( e_{S,C_{(k)}^*} \ e_{C_{(k)}^*,T} \big) \in {\mathbf{U}}(gl(n)),
$$
where $S = (i_1i_2 \cdots i_k)$ and  $T = (j_1j_2 \cdots j_k)$.
Clearly,  the elements $[ i_1  i_2 \cdots i_k | j_1 j_2 \cdots j_k ]$ are
skew-symmetric both in the left and the right sequences. In particular,
$$
[ i_k \cdots i_2 i_1 | i_1 i_2 \cdots i_k ] = (-1)^{k \choose 2} [ i_1 i_2 \cdots i_k| i_1 i_2 \cdots i_k ].
$$

In the enveloping algebra $\mathbf{U}(gl(n))$, given any integer $k = 1, 2, \ldots, n,$ consider 
the \textit{Capelli elements}
\begin{equation}\label{Capelli elements virtual}
\mathbf{H}_k(n)  =
 \sum_{1 \leq i_1 < \cdots < i_k \leq n} \ [ i_k \cdots i_2 i_1 | i_1 i_2 \cdots i_k ]. 
 \end{equation}

We recall that the Capelli elements admit a classical  presentation as a column determinant
\footnote{ The {\textit{column determinant}}
of a matrix $A = [a_{ij}]$ with noncommutative entries is, by definition,
$\mathbf{cdet} (A) = \sum_{\sigma}  (-1){|\sigma|} \  \ a_{\sigma(1), 1}a_{\sigma(2), 2} \cdots a_{\sigma(n), n}$.}
\cite{Cap1-BR}.
\begin{proposition}\label{Capelli classical} For every $k = 1, 2, \ldots, n,$, we have:
\begin{align*}
\mathbf{H}_k(n) & =
 \sum_{1 \leq i_1 < \cdots < i_k \leq n} \ [ i_k \cdots i_2 i_1 | i_1 i_2 \cdots i_k ]
\\
& = \sum_{1 \leq i_1 < \cdots < i_k \leq n}  \mathbf{cdet}\left(
 \begin{array}{cccc}
 e_{{i_1},{i_1}}+(k-1)   & e_{{i_1},{i_2}}       & \ldots    & e_{{i_1},{i_k}} \\
 e_{{i_2},{i_1}}         & e_{{i_2},{i_2}}+(k-2) & \ldots    & e_{{i_2},{i_k}} \\
 \vdots                  &    \vdots             & \vdots    & \vdots          \\
 e_{{i_k},{i_1}}         & e_{{i_k},{i_2}}       & \ldots    & e_{{i_k},{i_k}}
 \end{array}
 \right). 
\end{align*}
\end{proposition}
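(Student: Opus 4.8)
The first equality is Definition (\ref{Capelli elements virtual}), so the content lies in the second one, and by linearity it is enough to prove, for each strictly increasing sequence $i_1 < i_2 < \cdots < i_k$ in $\{1, \ldots, n\}$, the single identity
$$
[\, i_k \cdots i_2\, i_1 \mid i_1\, i_2 \cdots i_k \,] \;=\; \mathbf{cdet}\bigl( e_{i_r, i_s} + (k-r)\,\delta_{rs} \bigr)_{r,s = 1, \ldots, k}.
$$
By the definition of the column Capelli bitableau, the left-hand side is $\mathfrak{p}(E)$ with
$$
E \;=\; e_{i_k, \alpha}\, e_{i_{k-1}, \alpha} \cdots e_{i_1, \alpha} \cdot e_{\alpha, i_1}\, e_{\alpha, i_2} \cdots e_{\alpha, i_k} \;\in\; Virt(m_0 + m_1, n),
$$
$\alpha \in \mathcal{A}_0$ a single positive virtual symbol. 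This is, in essence, the classical Capelli identity; I would re-derive it inside the virtual algebra (alternatively one may simply invoke it, the computation being referred to \cite{Cap1-BR}, \cite{BriUMI-BR}).

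The plan is to normal-order $E$ in $\mathbf{U}(gl(m_0|m_1+n))$, pushing all the creation operators $e_{\alpha, i_t}$ to the far left past the block of annihilation operators $e_{i_s, \alpha}$, by means of the superbracket relations
$$
e_{i_s, \alpha}\, e_{\alpha, i_t} \;=\; -\, e_{\alpha, i_t}\, e_{i_s, \alpha} + e_{i_s, i_t} + \delta_{st}\, e_{\alpha, \alpha},
$$
$$
e_{i_s, \alpha}\, e_{\alpha, \alpha} \;=\; (e_{\alpha, \alpha} + 1)\, e_{i_s, \alpha}, \qquad e_{\alpha, \alpha}\, e_{\alpha, i_t} \;=\; e_{\alpha, i_t}\, (e_{\alpha, \alpha} + 1),
$$
together with the remaining, routine commutations among the even generators $e_{i_s, i_t}$ and between these and the $e_{\alpha, i_u}$. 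This expresses $E$ as an integer linear combination of monomials, to which $\mathfrak{p}$ is then applied. Since $Ker(\mathfrak{p}) = \mathbf{Irr}$ is the left ideal of irregular expressions, a monomial survives only when no virtual letter is annihilated before being re-created the requisite number of times; concretely, the surviving monomials are exactly the completely contracted ones — products of the proper generators $e_{i_s, i_t}$ together with scalars $+1$ emitted by the self-contraction terms $\delta_{st}\, e_{\alpha, \alpha}$ — every other monomial, still carrying some $e_{\alpha, \cdot}$ or $e_{\alpha, \alpha}$ in a forbidden position, lying in $\mathbf{Irr}$. Indexing the surviving terms by the bijection $\sigma$ recording which creation $e_{\alpha, i_t}$ was absorbed by which annihilation $e_{i_{\sigma(t)}, \alpha}$, one should recover precisely the signed sum $\sum_\sigma (-1)^{|\sigma|}\, a_{\sigma(1), 1} a_{\sigma(2), 2} \cdots a_{\sigma(k), k}$ defining the column determinant of the shifted matrix.

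The hard part will be the bookkeeping of the diagonal shifts $(k - r)$. They are invisible in a naive contraction and are produced only by the $e_{\alpha, \alpha}$ terms that arise when a creation $e_{\alpha, i_t}$ meets the annihilation $e_{i_t, \alpha}$ at the same proper index; such an $e_{\alpha, \alpha}$ must then be commuted through the remaining operators, each step (relations two and three above) feeding one further $+1$ into the eventual contracted product, and one has to verify that these scalars accumulate exactly to $k - r$ at the $r$-th diagonal slot — which is what pins down the shift pattern $(k-1, k-2, \ldots, 1, 0)$. A subsidiary point is that the outcome must be a genuine \emph{column} determinant, the noncommuting factors of each term read down the columns in the order $1, 2, \ldots, k$; this is harmless because the diagonal generators $e_{i_1, i_1}, \ldots, e_{i_k, i_k}$ commute pairwise, so rearranging their relative order costs nothing. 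One could alternatively argue by induction on $k$, peeling off the outermost factors $e_{i_k,\alpha}(\,\cdot\,)e_{\alpha,i_k}$ and recognizing a Laplace expansion, but there the shift-accounting reappears through the commutators $[e_{i_s,i_t}, e_{\alpha,i_s}] = -e_{\alpha,i_t}$, so I expect the direct normal-ordering above to be no harder.
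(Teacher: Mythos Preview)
The paper does not give a self-contained proof here: it simply cites Proposition~8.6 of \cite{BT-CA} and, parenthetically, \cite{Koszul-BR}. Your proposal is precisely the Koszul computation alluded to by the second citation --- normal-ordering the balanced monomial $E$ in the auxiliary superalgebra via the anticommutation of the odd generators $e_{i_s,\alpha}$ and $e_{\alpha,i_t}$, then projecting by $\mathfrak{p}$ --- and your three commutation relations are correct as stated. So your route and the paper's references coincide.

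One remark on the part you yourself flag as unfinished. The shift $k-r$ at the $r$-th diagonal slot is most cleanly seen not by tracking every $e_{\alpha,\alpha}$ through the full normal-ordering, but by the inductive peeling you mention at the end: commute the rightmost factor $e_{\alpha,i_k}$ leftward through $e_{i_k,\alpha}\cdots e_{i_1,\alpha}$ using your first relation. Each swap past $e_{i_s,\alpha}$ with $s<k$ produces a sign flip plus a term with $e_{i_s,i_k}$ sitting to the left of the remaining $k-1$ annihilators and $k-1$ creators (a smaller instance of the same problem), while the swap past $e_{i_k,\alpha}$ produces in addition the $e_{\alpha,\alpha}$ term; commuting that $e_{\alpha,\alpha}$ through the $k-1$ remaining annihilators via your second relation replaces it by $e_{\alpha,\alpha}+(k-1)$, and under $\mathfrak{p}$ only the scalar $k-1$ survives. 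This is exactly the Laplace expansion of the column determinant along the last column, with the diagonal entry carrying the correct shift $k-k=0$ there and the recursion supplying $(k-1)-r$ for the $(k-1)\times(k-1)$ minor; unwinding gives the pattern $(k-1,k-2,\ldots,0)$. Your direct Wick-contraction bookkeeping reaches the same answer, but the induction makes the shift pattern transparent.
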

\begin{proof} See \cite{BT-CA}, Proposition $8.6$ (see also \cite{Koszul-BR}).
\end{proof}

\begin{proposition}
Let   $(1)^k $ be the column shape of 
depth $k$. Then,
$$
 \mathbf{H}_k(n)  = \mathbf{S}_{(1^k)}(n) \in \boldsymbol{\zeta}(n).
$$
\end{proposition}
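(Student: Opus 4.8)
The plan is to unwind the definition of $\mathbf{S}_{(1^k)}(n)$ and match it term-by-term with the definition of $\mathbf{H}_k(n)$. First I would observe that the column shape $(1^k)$ has conjugate $\widetilde{(1^k)} = (k)$, the single row of length $k$. Hence in formula (\ref{Schur basis2}) the sum defining $\mathbf{S}_{(1^k)}(n)$ ranges over row strictly increasing tableaux $S$ of shape $\widetilde{(1^k)} = (k)$ on $\{1,\dots,n\}$; these are exactly the sequences $i_1 < i_2 < \cdots < i_k$, i.e. the $k$-subsets of $\{1,\dots,n\}$. Moreover $H(\widetilde{(1^k)}) = H((k)) = k!$ (the hook lengths of a single row of length $k$ are $k, k-1, \dots, 1$).

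Next I would compute the double Young-Capelli bitableau $[\,\fbox{$S \mid S$}\,]$ for $S = (i_1 i_2 \cdots i_k)$ a single row. Since the shape is a single row, the virtual Deruyts tableau $D^*_{(k)}$ and Coderuyts tableau $C^*_{(k)}$ each consist of a single row: $C^*_{(k)}$ is $(\alpha\,\alpha\cdots\alpha)$ for one positive virtual symbol $\alpha$, and $D^*_{(k)}$ is $(\beta_1\,\beta_2\cdots\beta_k)$ for distinct negative virtual symbols. By definition (\ref{doubleYoungCapelli}),
$$
[\,\fbox{$S \mid S$}\,] = \mathfrak{p}\Big( e_{S,C^*_{(k)}}\, e_{C^*_{(k)},D^*_{(k)}}\, e_{D^*_{(k)},C^*_{(k)}}\, e_{C^*_{(k)},S} \Big).
$$
I would then use Proposition \ref{double YC from YC}: for a single-row shape the row skew-symmetrization runs over all $\sigma \in \mathbf{S}_k$ acting on the $k$ entries of the one row, producing $(-1)^{k \choose 2}\sum_\sigma (-1)^{|\sigma|}[S \mid \fbox{$S^\sigma$}]$; and the column symmetrization in Proposition \ref{YC as sym of Capelli} is trivial for a single row (columns have length one), so $[S \mid \fbox{$S^\sigma$}] = [S \mid S^\sigma]$, an ordinary Capelli bitableau. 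Using skew-symmetry of the Capelli bitableau $[S|T]$ in the right row, $[S|S^\sigma] = (-1)^{|\sigma|}[S|S]$, so the sum collapses to $k!\,(-1)^{k \choose 2}[S|S] = k!\,[\,i_1\cdots i_k \mid i_1\cdots i_k\,]$. The sign identity $[i_k\cdots i_1 \mid i_1\cdots i_k] = (-1)^{k \choose 2}[i_1\cdots i_k \mid i_1\cdots i_k]$ stated in the text then rewrites this as $(-1)^{k \choose 2}\cdot k!\cdot(-1)^{k\choose 2}[i_k\cdots i_1 \mid i_1\cdots i_k] = k!\,[i_k\cdots i_1 \mid i_1\cdots i_k]$.

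Putting the pieces together: $\mathbf{S}_{(1^k)}(n) = \frac{1}{k!}\sum_{i_1<\cdots<i_k} k!\,[\,i_k\cdots i_1 \mid i_1\cdots i_k\,] = \sum_{1\le i_1<\cdots<i_k\le n}[\,i_k\cdots i_1 \mid i_1\cdots i_k\,] = \mathbf{H}_k(n)$ by (\ref{Capelli elements virtual}). The main obstacle I anticipate is bookkeeping the signs and the factor $(-1)^{k \choose 2}$ correctly through the devirtualization and the passage between the $[S|S^\sigma]$ and $[i_k\cdots i_1|i_1\cdots i_k]$ conventions — in particular making sure the column-symmetrization step genuinely contributes nothing for the single-column-of-height-one situation and that the hook number $H((k)) = k!$ exactly cancels the $k!$ produced by the row skew-symmetrization. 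An alternative, cleaner route that sidesteps some sign-chasing would be to invoke the eigenvalue characterization: by Theorem \ref{Schur action} and Theorem \ref{Vanishing theorem}, $\mathbf{S}_{(1^k)}(n)$ is the unique central element of filtration degree $\le k$ whose eigenvalue on $v_\mu$ is $\delta_{(1^k),\mu}H((1^k))$ for $|\mu|\le k$; one then only needs to check $\mathbf{H}_k(n)$ has the same eigenvalues, e.g. via the known spectrum of the Capelli elements (the elementary symmetric function in the contents), which is classical. I would present the direct combinatorial identification as the main proof and mention the spectral argument as a remark.
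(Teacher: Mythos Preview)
Your argument is correct. The route differs slightly from the paper's: the paper works directly at the virtual level, observing that for a one-row shape the middle factors collapse as
\[
\mathfrak{p}\bigl(e_{S,C^*_{(k)}}\, e_{C^*_{(k)},D^*_{(k)}}\, e_{D^*_{(k)},C^*_{(k)}}\, e_{C^*_{(k)},S}\bigr)
= (-1)^{\binom{k}{2}}\,\mathfrak{p}\bigl(e_{S,C^*_{(k)}}\, e_{C^*_{(k)},C^*_{(k)}}\, e_{C^*_{(k)},S}\bigr)
= (-1)^{\binom{k}{2}}\,k!\,\mathfrak{p}\bigl(e_{S,C^*_{(k)}}\, e_{C^*_{(k)},S}\bigr),
\]
and then reads off $\mathbf{H}_k(n)$. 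You instead quote Propositions~\ref{YC as sym of Capelli} and~\ref{double YC from YC} and the row skew-symmetry of one-row Capelli bitableaux to reach the same identity $[\,\fbox{$S\,|\,S$}\,]=k!\,[i_k\cdots i_1\,|\,i_1\cdots i_k]$. Your version is more modular (it reuses the general structure results rather than redoing the virtual collapse by hand); the paper's version makes explicit where the $(-1)^{\binom{k}{2}}$ and the $k!$ come from in the monomial picture. Either is fine, and the sign bookkeeping you flag as a worry is handled correctly. The spectral alternative you sketch also works but is heavier than needed here.
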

\begin{proof}

We have
\begin{align*}
\mathbf{S}_{(1^k)}(n) &= \frac {1} {H((k))} \  \ \sum_S \ \mathfrak{p} 
\big(e_{S,C_{(k)}^{*}} \ e_{C_{(k)}^{*},D_{(k)}^{*}} \  e_{D_{(k)}^{*},C_{(k)}^{*}} \ e_{C_{(k)}^{*},S} \big) \\
&,
\end{align*}
where the sum is extended to all strictly increasing row tableaux $S$ of shape $(k)$ and
$H((k)) = k!$.

Notice that
$$
\mathfrak{p} \big(e_{S,C_{(k)}^{*}} \cdot e_{C_{(k)}^{*},D_{(k)}^{*}} \cdot  
e_{D_{(k)}^{*},C_{(k)}^{*}} \cdot e_{C_{(k)}^{*},S} \big)
$$
equals
$$
(-1)^{{k} \choose {2}} \
\mathfrak{p} \big(e_{S,C_{(k)}^{*}} \cdot e_{C_{(k)}^{*},C_{(k)}^{*}}  \cdot e_{C_{(k)}^{*},S} \big),
$$
that, in turn, equals
$$
(-1)^{{k} \choose {2}} \ k! \
\mathfrak{p} \big(e_{S,C_{(k)}^{*}}   \cdot e_{C_{(k)}^{*},S} \big) = 
(-1)^{{k} \choose {2}} \ k! \ \mathfrak{p} \big( e_{i_1 , \alpha}  e_{i_2, \alpha} \cdots e_{i_k, \alpha}
e_{\alpha, i_1}e_{\alpha, i_2} \cdots e_{\alpha, i_k } \big).
$$
Hence,
\begin{align*}
\mathbf{S}_{(1^k)}(n) &= 
(-1)^{{k} \choose {2}} \ \sum_{1 \leq i_1 < \cdots < i_k \leq n} \ \mathfrak{p} \big( e_{i_1 , \alpha}  e_{i_2, \alpha} \cdots e_{i_k, \alpha}
e_{\alpha, i_1}e_{\alpha, i_2} \cdots e_{\alpha, i_k } \big)
\\
&=\sum_{1 \leq i_1 < \cdots < i_k \leq n} \ \mathfrak{p} \big( e_{i_k , \alpha} \cdots e_{i_2, \alpha} e_{i_1, \alpha}
e_{\alpha, i_1}e_{\alpha, i_2} \cdots e_{\alpha, i_k } \big)
\\
&= \sum_{1 \leq i_1 < \cdots < i_k \leq n} \ [ i_k \cdots i_2 i_1 | i_1 i_2 \cdots i_k ] = \mathbf{H}_k(n).
\end{align*}
\end{proof}

\begin{corollary} The Capelli elements $\mathbf{H}_k(n)$ are central.
Furthermore,
$$
\mathbf{H}_k(n) \in \boldsymbol{\zeta}(n)^{(m)},
$$
for every $m \geq k.$
\end{corollary}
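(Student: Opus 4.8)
The plan is to read off both assertions directly from the identification $\mathbf{H}_k(n) = \mathbf{S}_{(1^k)}(n)$ established in the preceding Proposition, together with the structural facts about the Schur elements $\mathbf{S}_\lambda(n)$ that have already been proved. No new computation is needed; the whole point of the corollary is that the Capelli elements are now special cases of the Schur elements, so their basic properties are inherited.

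First I would invoke the Theorem of subsection \ref{Schur elements} asserting that every Schur element $\mathbf{S}_\lambda(n)$ is central in $\mathbf{U}(gl(n))$. Specializing to the column shape $\lambda = (1^k)$ and using $\mathbf{H}_k(n) = \mathbf{S}_{(1^k)}(n)$, we conclude at once that $\mathbf{H}_k(n) \in \boldsymbol{\zeta}(n)$, which is the first assertion.

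For the filtration statement I would recall \eqref{filtration element}, namely $\mathbf{S}_\lambda(n) \in \boldsymbol{\zeta}(n)^{(m)}$ for every $m \geq |\lambda|$. Since $|(1^k)| = k$, this gives $\mathbf{H}_k(n) = \mathbf{S}_{(1^k)}(n) \in \boldsymbol{\zeta}(n)^{(m)}$ for every $m \geq k$, completing the proof.

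The argument is entirely formal, so there is no real obstacle; the only thing to be careful about is the trivial bookkeeping $|(1^k)| = k$. For completeness one could also note an independent route: centrality follows from the column-determinant presentation of Proposition \ref{Capelli classical} together with the classical Capelli identity, and the filtration bound follows because each summand $[\,i_k \cdots i_1 \mid i_1 \cdots i_k\,]$ is, modulo terms of lower degree, a product of $k$ generators $e_{i_a i_b}$ and hence lies in $\mathbf{U}(gl(n))^{(k)}$. I would, however, present the one-line deduction from the Schur-element machinery as the main proof, since that is the viewpoint the section is built on.
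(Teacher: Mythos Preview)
Your proposal is correct and matches the paper's approach exactly: the corollary is stated in the paper without proof, precisely because it follows immediately from the identification $\mathbf{H}_k(n) = \mathbf{S}_{(1^k)}(n)$ together with the centrality theorem for Schur elements and the filtration statement \eqref{filtration element}. Your optional alternative via Proposition~\ref{Capelli classical} is a reasonable side remark but is not needed.
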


We recall the following fundamental result, indeed proved by  Capelli in  two  papers 
(\cite{Cap2-BR}, \cite{Cap3-BR}) with deceiving titles.

\begin{theorem}(Capelli, 1893)\label{Capelli generators}

The set
$$
\mathbf{H}_1(n), \mathbf{H}_2(n), \ldots, \mathbf{H}_n(n)
$$
is a set of algebraically independent generators of the center $\boldsymbol{\zeta}(n)$ of
$\mathbf{U}(gl(n)).$
\end{theorem}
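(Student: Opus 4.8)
The plan is to deduce both statements from the filtered basis result Theorem \ref{Schur basis}, combined with the facts that each $\mathbf{H}_k(n) = \mathbf{S}_{(1^k)}(n)$ is central (hence acts by a scalar on every irreducible $Schur_{\widetilde{\mu}}(n)$) and lies in $\boldsymbol{\zeta}(n)^{(k)}$. Write $A = \mathbb{C}[\mathbf{H}_1(n),\ldots,\mathbf{H}_n(n)] \subseteq \boldsymbol{\zeta}(n)$, and for $\mathbf{a} = (a_1,\ldots,a_n) \in \mathbb{Z}_{\geq 0}^n$ set $\|\mathbf{a}\| = \sum_k k\,a_k$ and $\mathbf{H}^{\mathbf{a}} = \mathbf{H}_1(n)^{a_1}\cdots\mathbf{H}_n(n)^{a_n}$. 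Since the standard filtration of $\mathbf{U}(gl(n))$ is multiplicative, $\mathbf{H}^{\mathbf{a}} \in \boldsymbol{\zeta}(n)^{(\|\mathbf{a}\|)}$. The map sending $\mathbf{a}$ to the partition with $a_k$ parts equal to $k$ identifies $\{\mathbf{a} : \|\mathbf{a}\| \leq m\}$ with the partitions having all parts $\leq n$ and size $\leq m$, that is, by conjugation, with $\{\lambda : \widetilde{\lambda}_1 \leq n,\ |\lambda| \leq m\}$; by Theorem \ref{Schur basis} this last set has cardinality $\dim_{\mathbb{C}}\boldsymbol{\zeta}(n)^{(m)}$. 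Hence both assertions of the theorem will follow at once if I prove that the monomials $\mathbf{H}^{\mathbf{a}}$ with $\|\mathbf{a}\| \leq m$ are \emph{linearly independent}: they then number exactly $\dim\boldsymbol{\zeta}(n)^{(m)}$ and all lie in $\boldsymbol{\zeta}(n)^{(m)}$, so they form a basis of $\boldsymbol{\zeta}(n)^{(m)}$ for every $m$, which gives simultaneously $A = \bigcup_m \boldsymbol{\zeta}(n)^{(m)} = \boldsymbol{\zeta}(n)$ and the algebraic independence of $\mathbf{H}_1(n),\ldots,\mathbf{H}_n(n)$.

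To prove linear independence of the $\mathbf{H}^{\mathbf{a}}$ I pass to eigenvalues on highest weight vectors. By centrality, $\mathbf{H}_k(n)$ acts on $Schur_{\widetilde{\mu}}(n)$ (highest weight $\mu$, with $\ell(\mu) \leq n$) by the scalar $\varphi_k(\mu) := \chi_n(\mathbf{H}_k(n))(\mu) = s^*_{(1^k)|n}(\mu)$, using $\mathbf{H}_k(n) = \mathbf{S}_{(1^k)}(n)$ and the identification $\chi_n(\mathbf{S}_\lambda(n)) = s^*_{\lambda|n}$ recorded above. A relation $\sum_{\mathbf{a}} c_{\mathbf{a}}\,\mathbf{H}^{\mathbf{a}} = 0$ in $\boldsymbol{\zeta}(n)$ therefore forces $\sum_{\mathbf{a}} c_{\mathbf{a}}\prod_k \varphi_k(\mu)^{a_k} = 0$ for every partition $\mu$ with $\ell(\mu) \leq n$. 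Now each $\varphi_k$ extends to a genuine polynomial function of $(\mu_1,\ldots,\mu_n)$ of degree $k$ whose top homogeneous component is the ordinary elementary symmetric polynomial $e_k(\mu_1,\ldots,\mu_n)$ — the basic property of the shifted Schur polynomials $s^*_{(1^k)|n}$, which can also be read off the column-determinant presentation of $\mathbf{H}_k(n)$ in Proposition \ref{Capelli classical} (evaluation on a highest weight vector of weight $\mu$ yields $e_k$ of the shifted coordinates $\mu_i + c_i$). Since the integer points $\mu_1 \geq \cdots \geq \mu_n \geq 0$ are Zariski dense in $\mathbb{C}^n$, the identity $\sum_{\mathbf{a}} c_{\mathbf{a}}\prod_k \varphi_k^{a_k} = 0$ holds in $\mathbb{C}[\mu_1,\ldots,\mu_n]$; extracting the homogeneous component of top degree $d = \max\{\|\mathbf{a}\| : c_{\mathbf{a}} \neq 0\}$ gives $\sum_{\|\mathbf{a}\| = d} c_{\mathbf{a}}\prod_k e_k(\mu)^{a_k} = 0$, and the algebraic independence of $e_1,\ldots,e_n$ (the fundamental theorem of symmetric polynomials in $n$ variables) forces all these $c_{\mathbf{a}}$ to vanish, a contradiction.

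Putting the pieces together: for every $m$ the $\dim\boldsymbol{\zeta}(n)^{(m)}$ linearly independent elements $\{\mathbf{H}^{\mathbf{a}} : \|\mathbf{a}\| \leq m\}$ span a subspace of $\boldsymbol{\zeta}(n)^{(m)}$ of full dimension, hence all of $\boldsymbol{\zeta}(n)^{(m)}$; letting $m \to \infty$ gives $A = \boldsymbol{\zeta}(n)$, while the global linear independence of the $\mathbf{H}^{\mathbf{a}}$ is precisely the algebraic independence of $\mathbf{H}_1(n),\ldots,\mathbf{H}_n(n)$. The only input beyond bookkeeping — and the step I expect to require the most care — is the claim that the eigenvalue $\varphi_k$ of $\mathbf{H}_k(n)$, viewed as a polynomial in the highest weight, has leading term exactly $e_k$ (equivalently, that $s^*_{(1^1)|n},\ldots,s^*_{(1^n)|n}$ are algebraically independent generators of $\Lambda^*(n)$); everything else is the dimension count of Theorem \ref{Schur basis} together with Zariski density.
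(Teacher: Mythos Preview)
The paper does not give a proof of this theorem at all: it is stated as a classical result and attributed to Capelli's 1893 papers \cite{Cap2-BR}, \cite{Cap3-BR}. So there is no ``paper's proof'' to compare against; you have supplied an argument where the paper simply cites the literature.

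Your argument is essentially correct. The counting via Theorem~\ref{Schur basis}, the bijection between monomials $\mathbf{H}^{\mathbf{a}}$ with $\|\mathbf{a}\|\le m$ and partitions $\lambda$ with $\widetilde{\lambda}_1\le n$, $|\lambda|\le m$, and the linear-independence step via eigenvalues all work. For the latter you do not need to invoke $s^*_{(1^k)|n}$ or the identification $\chi_n(\mathbf{S}_\lambda(n))=s^*_{\lambda|n}$ (which in the paper's logic comes \emph{after} Theorem~\ref{Capelli generators}): Proposition~\ref{Capelli eigenvalues} and formula~(\ref{elementary shift}) already give $\varphi_k(\widetilde{\mu})=e_k^*(\widetilde{\mu})$ directly, with top homogeneous part $e_k$, which is all you use.

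One point to be aware of is the dependency on Theorem~\ref{Schur basis}. Theorem~\ref{Schur action} yields only the linear \emph{independence} of the $\mathbf{S}_\lambda(n)$; the spanning statement in Theorem~\ref{Schur basis} (equivalently, the upper bound on $\dim\boldsymbol{\zeta}(n)^{(m)}$) is not proved in the paper and ultimately rests on the Harish-Chandra isomorphism. That is fine, and not circular, but it means your proof is really: Harish-Chandra gives $\dim\boldsymbol{\zeta}(n)^{(m)}$, and then a leading-term comparison with $e_1,\ldots,e_n$ shows the $\mathbf{H}_k(n)$ freely generate. You should make this dependence explicit rather than route it through $\chi_n(\mathbf{S}_\lambda(n))=s^*_{\lambda|n}$, which the paper establishes only after quoting the very theorem you are proving.
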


As usual in the theory of symmetric functions, given a shape
$$\lambda = (\lambda_1       \geq \cdots \geq \lambda_p), \ \lambda_1     \leq n,$$
we set
$$
\mathbf{H}_{\lambda}(n) = \mathbf{H}_{\lambda_1}(n)\mathbf{H}_{\lambda_2}(n) \cdots \mathbf{H}_{\lambda_p}(n).
$$
By convention, if $\lambda$ is the empty partition, we set $\mathbf{H}_{\emptyset}(n) = \mathbf{1} \in \boldsymbol{\zeta}(n).$

From Theorem \ref{Capelli generators}, one infers

\begin{corollary}

The set
$$
\big\{ \mathbf{H}_{\lambda}(n); \lambda_1 \leq n, \ |\lambda| \leq m \ \big\}
$$
is a linear basis of $\boldsymbol{\zeta}(n)^{(m)}.$
\end{corollary}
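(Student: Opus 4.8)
The plan is to combine the algebraic independence of the Capelli elements (Theorem \ref{Capelli generators}) with the dimension count already provided by Theorem \ref{Schur basis}. First I would observe that, by Theorem \ref{Capelli generators}, the algebra homomorphism $\mathbb{C}[x_1, \ldots, x_n] \to \boldsymbol{\zeta}(n)$ sending $x_k \mapsto \mathbf{H}_k(n)$ is an isomorphism (surjective because the $\mathbf{H}_k(n)$ generate $\boldsymbol{\zeta}(n)$, injective because they are algebraically independent); hence it carries the monomial basis $x_1^{a_1} \cdots x_n^{a_n}$ of the polynomial ring onto a linear basis of $\boldsymbol{\zeta}(n)$. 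Encoding an exponent vector $(a_1, \ldots, a_n)$ as the partition $\lambda$ having $a_k$ parts equal to $k$ — so that $\lambda_1 \leq n$, and conversely every partition with $\lambda_1 \leq n$ arises from exactly one exponent vector — this basis is precisely $\{\mathbf{H}_{\lambda}(n) : \lambda_1 \leq n\}$. In particular the subset $\mathcal{B}_m = \{\mathbf{H}_{\lambda}(n) : \lambda_1 \leq n, \ |\lambda| \leq m\}$ is linearly independent.

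Next I would check that $\mathcal{B}_m \subseteq \boldsymbol{\zeta}(n)^{(m)}$. By the Corollary preceding Theorem \ref{Capelli generators} we have $\mathbf{H}_k(n) \in \boldsymbol{\zeta}(n)^{(k)}$, and since the standard filtration of $\mathbf{U}(gl(n))$ is multiplicative ($\mathbf{U}(gl(n))^{(a)} \, \mathbf{U}(gl(n))^{(b)} \subseteq \mathbf{U}(gl(n))^{(a+b)}$), it follows that $\mathbf{H}_{\lambda}(n) = \mathbf{H}_{\lambda_1}(n) \cdots \mathbf{H}_{\lambda_p}(n) \in \boldsymbol{\zeta}(n)^{(\lambda_1 + \cdots + \lambda_p)} = \boldsymbol{\zeta}(n)^{(|\lambda|)} \subseteq \boldsymbol{\zeta}(n)^{(m)}$ whenever $|\lambda| \leq m$.

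Finally I would close the argument by a dimension count. By Theorem \ref{Schur basis}, $\dim \boldsymbol{\zeta}(n)^{(m)} = \#\{\lambda : \widetilde{\lambda}_1 \leq n, \ |\lambda| \leq m\}$, which is finite. Conjugation $\lambda \leftrightarrow \widetilde{\lambda}$ is a size-preserving involution on the set of partitions under which the condition $\widetilde{\lambda}_1 \leq n$ is exchanged with $\lambda_1 \leq n$; hence $\dim \boldsymbol{\zeta}(n)^{(m)} = \#\{\lambda : \lambda_1 \leq n, \ |\lambda| \leq m\} = |\mathcal{B}_m|$. Since a linearly independent subset of a finite-dimensional vector space whose cardinality equals the dimension is automatically a basis, $\mathcal{B}_m$ is a linear basis of $\boldsymbol{\zeta}(n)^{(m)}$. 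The only point that needs a little care is the bookkeeping identifying exponent vectors with partitions and matching the two counts through conjugation; everything else is formal, the substantive inputs (algebraic independence and generation of $\boldsymbol{\zeta}(n)$ by the $\mathbf{H}_k(n)$, and the fact that the $\mathbf{S}_{\lambda}(n)$ already furnish a filtered basis) having been established earlier. Note that there is no circularity, since the proof of Theorem \ref{Schur basis} rests on Theorem \ref{Schur action} and not on the Capelli generators.
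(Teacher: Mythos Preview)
Your argument is correct. The paper itself gives no proof beyond the phrase ``From Theorem~\ref{Capelli generators}, one infers'', so any honest comparison is with the \emph{intended} standard argument: since the $\mathbf{H}_k(n)$ are algebraically independent generators with $\mathbf{H}_k(n)\in\boldsymbol{\zeta}(n)^{(k)}\setminus\boldsymbol{\zeta}(n)^{(k-1)}$, their images in the associated graded algebra are again algebraically independent generators, and hence the monomials $\mathbf{H}_\lambda(n)$ with $|\lambda|\le m$ span $\boldsymbol{\zeta}(n)^{(m)}$ directly. Your route is genuinely different: you replace the associated-graded step by a dimension count imported from Theorem~\ref{Schur basis}. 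This is a clean trade --- you avoid having to verify anything about leading terms, at the cost of leaning on the Schur-element basis. One small caveat on your closing remark about circularity: Theorem~\ref{Schur basis} is also stated without proof in the paper, and Theorem~\ref{Schur action} by itself only yields linear independence of the $\mathbf{S}_\lambda(n)$; the spanning part still needs the classical fact that $\dim\boldsymbol{\zeta}(n)^{(m)}$ equals the number of partitions of size $\le m$ with at most $n$ parts (e.g.\ via Harish--Chandra or PBW plus $\mathrm{Sym}(gl(n))^{gl(n)}$). So both the Corollary and Theorem~\ref{Schur basis} ultimately rest on that same external dimension input --- there is no circularity, but the dependence is parallel rather than one-directional.
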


We recall that  $v_{\widetilde{\mu}} = (D_\mu|D^P_\mu)$ denotes the  ``canonical''
highest weight vector of the Schur module $Schur_{\mu}(n)$, $\mu_1 \leq n$,  which is indeed of weight
$\widetilde{\mu}$ (Subsection \ref{Schur modules}).

Furthermore, we will write $\mathbf{H}_k(n) \cdot v_{\widetilde{\mu}}$ to mean the action of the central element
$\mathbf{H}_k(n)$ on $v_{\widetilde{\mu}}.$

For every $k = 1, 2, \ldots, n$, let
\begin{equation}\label{elementary shift}
e^*_k(\widetilde{\mu})
= \sum_{1 \leq i_1 < i_2 < \cdots < i_k \leq n} \ (\widetilde{\mu}_{i_1}  + k  - 1)
(\widetilde{\mu}_{i_2}  + k - 2) \cdots (\widetilde{\mu}_{i_k}).
\end{equation}

\begin{remark}
In formula (\ref{elementary shift}),
the sum can be regarded
as extended over all Ferrers subdiagrams obtained from the
Ferrers diagram of the partition $\mu$ by selecting $k$ columns $i_1 < i_2 < \cdots < i_k$,
and each summand
$$
(\widetilde{\mu}_{i_1}  + k  - 1)(\widetilde{\mu}_{i_2}  + k - 2) \cdots (\widetilde{\mu}_{i_k})
$$
is the product of the hook length $H(x)$ of the boxes of the first row
of each Ferrers subdiagram.
\end{remark} \qed

The classical determinatal presentation (Proposition \ref{Capelli classical})  of the $\mathbf{H}_k(n)$'s implies the following result.

\begin{proposition}\label{Capelli eigenvalues}
We have
$$
\mathbf{H}_k(n) \cdot v_{\widetilde{\mu}} = e^*_k(\widetilde{\mu})\cdot v_{\widetilde{\mu}}, 
\quad e^*_k(\widetilde{\mu}) \in \mathbb{Z}.
$$
\end{proposition}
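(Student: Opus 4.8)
The statement is Proposition \ref{Capelli eigenvalues}: that $\mathbf{H}_k(n)$ acts on the highest weight vector $v_{\widetilde{\mu}}$ of $Schur_\mu(n)$ by the scalar $e^*_k(\widetilde{\mu})$ defined in (\ref{elementary shift}). The natural route is to exploit the classical column-determinant presentation of Proposition \ref{Capelli classical}, which writes $\mathbf{H}_k(n)$ as a sum over $k$-subsets $\{i_1<\cdots<i_k\}$ of $\mathbf{cdet}$ of a $k\times k$ matrix whose $(r,s)$ entry is $e_{i_r,i_s}+(k-r)\delta_{rs}$. Since $v_{\widetilde{\mu}}$ is a highest weight vector, it is annihilated by every $e_{a,b}$ with $a<b$ (the strictly upper-triangular root vectors act as zero), and it is an eigenvector of every $e_{i,i}$ with eigenvalue $\widetilde{\mu}_i$ — recall $v_{\widetilde{\mu}}=(D_\mu|D^P_\mu)$ has weight $\widetilde{\mu}$. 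So the first step is to record these two facts about the action of $gl(n)$ on $v_{\widetilde{\mu}}$ via left superpolarizations (or simply cite that $Schur_\mu(n)$ is irreducible of highest weight $\widetilde{\mu}$, Proposition in Subsection \ref{Schur modules}).

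\textbf{Key steps.} Fix a subset $i_1<\cdots<i_k$ and expand $\mathbf{cdet}$ of the matrix $A=[e_{i_r,i_s}+(k-r)\delta_{rs}]$ as $\sum_{\sigma\in\mathbf{S}_k}(-1)^{|\sigma|}\,a_{\sigma(1),1}a_{\sigma(2),2}\cdots a_{\sigma(k),k}$, applied to $v_{\widetilde{\mu}}$. Reading the product right-to-left, the rightmost factor $a_{\sigma(k),k}$ acts first; then $a_{\sigma(k-1),k-1}$, and so on. The argument I would give is the standard one: process the column indices $k,k-1,\dots,1$ in turn and argue that only the identity permutation (after the appropriate diagonal shifts are accounted for) survives. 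Concretely, the lowest-index column operations cannot "raise" the weight, and because $i_1<\cdots<i_k$ any off-diagonal entry $e_{i_r,i_s}$ with $r<s$ (upper part) eventually hits a highest-weight-type obstruction while entries with $r>s$ (lower part) create descendants that are later annihilated; one shows inductively that the only nonzero contribution comes from $\sigma=\mathrm{id}$, giving the product of diagonal entries $\prod_{r=1}^{k}(e_{i_r,i_r}+(k-r))$ acting on $v_{\widetilde{\mu}}$. Since $e_{i_r,i_r}\cdot v_{\widetilde{\mu}}=\widetilde{\mu}_{i_r}\,v_{\widetilde{\mu}}$ and the shifts are scalars, this product equals $(\widetilde{\mu}_{i_1}+k-1)(\widetilde{\mu}_{i_2}+k-2)\cdots(\widetilde{\mu}_{i_k})\cdot v_{\widetilde{\mu}}$. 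Summing over all $k$-subsets yields exactly $e^*_k(\widetilde{\mu})\cdot v_{\widetilde{\mu}}$, and integrality of $e^*_k(\widetilde{\mu})$ is immediate since each $\widetilde{\mu}_{i_j}$ is a nonnegative integer.

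\textbf{Alternative and main obstacle.} A cleaner alternative, avoiding the column-determinant combinatorics, is to derive this directly from the material already developed: $\mathbf{H}_k(n)=\mathbf{S}_{(1^k)}(n)$ (proved just above), so by Theorem \ref{Schur action} one knows $\mathbf{S}_{(1^k)}(n)\cdot v_\mu$ vanishes unless $|\mu|\geq k$ and, among $|\mu|=k$, picks out $\mu=(1^k)$ with eigenvalue $H((1^k))=k!$; but this only pins down the eigenvalue on partitions of size $\leq k$, not the full polynomial $e^*_k$. To get the eigenvalue for arbitrary $\mu$ one still needs the determinantal computation (or an independent generating-function argument). So I would present the column-determinant proof as the main line. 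The delicate point — and the step I expect to be the real obstacle — is the bookkeeping in the $\mathbf{cdet}$ expansion: one must carefully justify, using the ordering $i_1<\cdots<i_k$ together with highest-weight annihilation, that every $\sigma\neq\mathrm{id}$ contributes zero; the diagonal shifts $(k-r)$ appear precisely because the operators act in a fixed order and the intermediate vectors are no longer weight-$\widetilde\mu$ vectors, so the shift constants must be tracked through each step. This is the classical Capelli argument, and the honest version just cites \cite{BT-CA}, Proposition $8.6$, or \cite{Cap1-BR}; I would keep the proof short by doing so.
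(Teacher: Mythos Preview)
Your approach is correct and matches the paper, which gives no proof beyond the one-line remark that the result follows from the column-determinant presentation of Proposition \ref{Capelli classical}. One sharpening of your sketch: in the right-to-left induction on columns, once $\sigma$ has been forced to be the identity on $\{s+1,\dots,k\}$ one has $\sigma(s)\le s$, so every off-diagonal factor $a_{\sigma(s),s}=e_{i_{\sigma(s)},i_s}$ is strictly upper-triangular and annihilates $v_{\widetilde\mu}$ immediately---no lower-triangular ``descendants'' ever appear, and the surviving diagonal product $\prod_{r=1}^k(e_{i_r,i_r}+k-r)$ gives the stated eigenvalue.
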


\begin{corollary} If $\mu_1 < k$, then
$$
\mathbf{H}_k(n) \cdot v_{\widetilde{\mu}} = 0,
$$
\end{corollary}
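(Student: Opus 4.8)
The plan is to reduce the claim to the vanishing of the scalar $e^*_k(\widetilde{\mu})$ supplied by Proposition \ref{Capelli eigenvalues}, and then to observe that this scalar is an empty sum under the hypothesis $\mu_1 < k$. First I would invoke Proposition \ref{Capelli eigenvalues}, which gives
$$
\mathbf{H}_k(n) \cdot v_{\widetilde{\mu}} = e^*_k(\widetilde{\mu}) \cdot v_{\widetilde{\mu}},
$$
so that it suffices to prove $e^*_k(\widetilde{\mu}) = 0$ whenever $\mu_1 < k$. Recalling formula (\ref{elementary shift}),
$$
e^*_k(\widetilde{\mu}) = \sum_{1 \leq i_1 < i_2 < \cdots < i_k \leq n} (\widetilde{\mu}_{i_1} + k - 1)(\widetilde{\mu}_{i_2} + k - 2) \cdots (\widetilde{\mu}_{i_k}),
$$
I would note that in every index tuple occurring in this sum the largest index satisfies $i_k \geq k$. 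Since $\widetilde{\mu}$ has exactly $\mu_1$ nonzero parts, the hypothesis $\mu_1 < k \leq i_k$ forces $\widetilde{\mu}_{i_k} = 0$; as $\widetilde{\mu}_{i_k}$ is precisely the last factor (shift $k-k=0$) of each summand, every summand vanishes, hence $e^*_k(\widetilde{\mu}) = 0$ and the action is zero. Equivalently, via the Remark following (\ref{elementary shift}): $e^*_k(\widetilde{\mu})$ is a sum indexed by the ways of choosing $k$ distinct columns of the Ferrers diagram of $\mu$, and if $\mu_1 < k$ there are none, so the sum is empty.

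An alternative, essentially equivalent, route I would mention is to appeal directly to the Vanishing Theorem (Theorem \ref{Vanishing theorem}) together with the identification $\mathbf{H}_k(n) = \mathbf{S}_{(1^k)}(n)$: the vector $v_{\widetilde{\mu}}$ has weight $\widetilde{\mu}$, and $(1^k) \subseteq \widetilde{\mu}$ holds precisely when $\widetilde{\mu}$ has at least $k$ parts, i.e. when $\mu_1 \geq k$; thus $\mu_1 < k$ gives $(1^k) \nsubseteq \widetilde{\mu}$, whence $\mathbf{S}_{(1^k)}(n) \cdot v_{\widetilde{\mu}} = 0$. There is no genuine obstacle here; the only point demanding a moment's care is the bookkeeping of conjugation conventions — $\mu_1$ counts the columns of $\mu$, equivalently the number of nonzero parts of $\widetilde{\mu}$ — so that the combinatorial condition used in the source proposition is matched correctly against the hypothesis $\mu_1 < k$.
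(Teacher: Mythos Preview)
Your proposal is correct and matches the paper's intended argument: the corollary is placed immediately after Proposition~\ref{Capelli eigenvalues} with no separate proof, so it is meant to follow exactly as you describe, via the vanishing of the last factor $\widetilde{\mu}_{i_k}$ in every summand of $e^*_k(\widetilde{\mu})$ (equivalently, via the Remark interpreting the sum as choices of $k$ columns of $\mu$). Your alternative route through $\mathbf{H}_k(n)=\mathbf{S}_{(1^k)}(n)$ and Theorem~\ref{Vanishing theorem} is also valid, though in the paper's logical order that identification comes a few lines earlier and the Vanishing Theorem is already available, so it is a legitimate shortcut rather than a genuinely different method.
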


The ``{\it{virtual definition}}'' (\ref{Capelli elements virtual}) of the $\mathbf{H}_k(n)$'s leads to a further combinatorial description of
the integer eigenvalues $e^*_k(\widetilde{\mu})$, which will turn out to be crucial
in the section on {\it{duality}}.

\begin{proposition}\label{horizontal strip} We have

$$
e^*_k(\widetilde{\mu}) = \sum \ hstrip_{\mu}(k)!,
$$
where the sum is extended to all ``horizontal strips''
\footnote{In this work, we use the expression
{\it{horizontal strip}} in a generalized sense. To wit, a  horizontal strip in a Ferrers diagram is a subset
of cells such that no two cells in the subset appear in the same column.}
of length $k$ in the Ferrers diagram of the partition $\mu$,
and the symbol $\ hstrip_{\mu}(k)!$ denotes the products of the factorials of the cardinality of each ``horizontal
component''
\footnote{For each each generalized horizontal strip, a {\it{horizontal
component}} is the set all cells on the same row.}
of the horizontal strip.
\end{proposition}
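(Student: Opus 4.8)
The plan is to start from the explicit formula (\ref{elementary shift}) for $e^*_k(\widetilde{\mu})$ and to reorganize its terms according to generalized horizontal strips. First I would dispose of the trivial case $k > \mu_1$: then any index set satisfies $i_k \geq k > \mu_1$, so $\widetilde{\mu}_{i_k} = 0$ and every summand of (\ref{elementary shift}) vanishes, while $F_\mu$ carries no generalized horizontal strip of length $k$ (that would require $k$ pairwise distinct columns); hence both sides equal $0$. So assume $k \leq \mu_1$. A summand of (\ref{elementary shift}) indexed by $1 \leq i_1 < \cdots < i_k \leq n$ is nonzero only if $i_k \leq \mu_1$, i.e. only if the columns $i_1, \ldots, i_k$ all belong to $F_\mu$; for such a choice put $a_j := \widetilde{\mu}_{i_j}$, so that $a_1 \geq a_2 \geq \cdots \geq a_k \geq 1$, and the summand is $\prod_{j=1}^{k}(a_j + k - j)$ (the product of the first-row hook lengths of the subdiagram on columns $i_1, \ldots, i_k$).

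The crux is the purely combinatorial identity: for integers $a_1 \geq a_2 \geq \cdots \geq a_k \geq 1$,
$$
\prod_{j=1}^{k}(a_j + k - j) \ = \ \sum_{\substack{(r_1, \ldots, r_k) \\ 1 \leq r_j \leq a_j}} \ \prod_{r \geq 1} \big( \#\{ j : r_j = r \} \big)! .
$$
Granting this, the Proposition follows by bookkeeping: a tuple $(r_1, \ldots, r_k)$ with $r_j \leq \widetilde{\mu}_{i_j}$ is exactly the choice of one cell $(r_j, i_j)$ in each of the distinct columns $i_1 < \cdots < i_k$ of $F_\mu$, hence of a $k$-cell subset $T \subseteq F_\mu$ with no two cells in a common column, that is, a generalized horizontal strip of length $k$ whose occupied columns are $\{ i_1, \ldots, i_k \}$; and each such strip arises exactly once, from exactly one column set. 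Its horizontal component in row $r$ is $\{ x_j : r_j = r \}$, so the inner product above is $hstrip_\mu(k)!$ for that strip. Summing over all column sets therefore gives $e^*_k(\widetilde{\mu}) = \sum_T hstrip_\mu(k)!$ over all generalized horizontal strips $T$ of length $k$ in $F_\mu$.

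To prove the identity I would induct on $k$, \emph{peeling off the least constrained index} $j = 1$. Fixing $r_1 = \rho \in \{ 1, \ldots, a_1 \}$, the summand attached to $(r_1, \ldots, r_k)$ factors as $\big( t_\rho + 1 \big)\prod_{r}\big( \#\{ j \geq 2 : r_j = r \}\big)!$, where $t_\rho := \#\{ j \geq 2 : r_j = \rho \}$; summing over $\rho = 1, \ldots, a_1$ pulls out the factor $a_1 + \sum_{\rho=1}^{a_1} t_\rho$. The point is that, since $a_j \leq a_1$ for all $j$, one has $r_j \leq a_j \leq a_1$ for every $j$, so $\sum_{\rho=1}^{a_1} t_\rho = k - 1$ independently of $(r_2, \ldots, r_k)$; hence the factor is the \emph{constant} $a_1 + k - 1$, which comes out of the sum and leaves precisely the identity for $a_2 \geq \cdots \geq a_k$, handled by the inductive hypothesis and equal to $\prod_{j=2}^{k}(a_j + k - j)$. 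The base case $k = 1$ reads $\sum_{r_1 = 1}^{a_1} 1! = a_1$.

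The one genuine subtlety is the choice of index to strip in the induction: removing $j = 1$ works exactly because $a_1$ dominates all other bounds, forcing ``the number of the $r_j$ lying in $\{1, \ldots, a_1\}$'' to be the constant $k-1$; stripping the smallest bound $a_k$ instead would leave a summand-dependent factor and the recursion would not close. The remaining steps -- the vanishing of out-of-diagram terms and the bijection between tuples and generalized horizontal strips -- are routine.
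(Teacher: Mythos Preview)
Your proof is correct, but it proceeds along a genuinely different route from the paper's. The paper does \emph{not} start from the closed formula~(\ref{elementary shift}); instead it computes directly the action of the virtual presentation
\[
\sum_{i_1<\cdots<i_k}\mathfrak{p}\big(e_{i_k,\alpha}\cdots e_{i_1,\alpha}\,e_{\alpha,i_1}\cdots e_{\alpha,i_k}\big)
\]
on the highest weight vector $v_{\widetilde{\mu}}=(D_\mu|D^P_\mu)$ inside the Schur supermodule. The ``virtualizing'' factor $e_{\alpha,i_1}\cdots e_{\alpha,i_k}$ places $k$ copies of the positive virtual symbol $\alpha$ into cells of $D_\mu$ in the prescribed columns (with signs from Proposition~\ref{action on tableaux}), and the ``devirtualizing'' factor then replaces each $\alpha$ by a proper symbol; a term survives only when each horizontal component is restored by a permutation of its original entries, and after the sign cancellation the coefficient is exactly the product of the factorials of the component sizes. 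Thus in the paper the horizontal strips appear \emph{ab initio} from the superpolarization action, whereas you obtain them by first invoking Proposition~\ref{Capelli eigenvalues} and then proving the purely algebraic identity $\prod_{j=1}^k(a_j+k-j)=\sum_{(r_j)}\prod_r(\#\{j:r_j=r\})!$ by a clean peel-off induction on the largest bound $a_1$. Your argument is shorter and entirely elementary once~(\ref{elementary shift}) is known; the paper's argument is longer but is the one that parallels, cell for cell, the proof of the dual result for $\mathbf{I}_k(n)$ in Theorem~\ref{vertical strip}, which is the structural point being set up here.
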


\begin{proof} Let
\begin{align*}
\mathbf{H}_k(n) &= \sum_{1 \leq i_1 < \cdots < i_k \leq n} \ [ i_k \cdots i_2 i_1 | i_1 i_2 \cdots i_k ]
\\
&=\sum_{1 \leq i_1 < \cdots < i_k \leq n} \ \mathfrak{p} \big( e_{i_k , \alpha} \cdots e_{i_2, \alpha} 
e_{i_1, \alpha} e_{\alpha, i_1}e_{\alpha, i_2} \cdots e_{\alpha, i_k } \big).
\end{align*}
Let $v_{\widetilde{\mu}} = (D_\mu|D^P_\mu)$ be the canonical  highest weight vector of the irreducible $gl(n)$-module $Schur_\mu(n)$ (of weight $\widetilde{\mu}$.)

Recall that
\begin{multline*}
\mathfrak{p} \big( e_{i_k , \alpha} \cdots e_{i_2, \alpha} 
e_{i_1, \alpha} e_{\alpha, i_1}e_{\alpha, i_2} \cdots e_{\alpha, i_k } \big) \cdot (D_\mu|D^P_\mu) =
\\ =  e_{i_k , \alpha} \cdots e_{i_2, \alpha} 
e_{i_1, \alpha} e_{\alpha, i_1}e_{\alpha, i_2} \cdots e_{\alpha, i_k }  \cdot (D_\mu|D^P_\mu),
\end{multline*}
by Proposition \ref{virtual action}.

The action of each summand of the ``virtualizing part''
$$
 e_{\alpha, i_1}e_{\alpha, i_2} \cdots e_{\alpha, i_k }
$$
distributes the $k$ occurrences of $\alpha$  in all horizontal strips of length $k$ (with column positions
$i_1, i_2, \ldots, i_k$) in the Ferrers diagram
of the partition $\mu,$ with signs - according to Remark \ref{action on tableaux} - since
$|e_{\alpha, i_h }| = 1$.
By applying   the ``devirtualizing part''
$$
e_{i_k , \alpha} \cdots e_{i_2, \alpha} e_{i_1, \alpha}
$$
it is easy to see that, for each horizontal strip, we obtain a sum of tableaux that:
\begin{itemize}
\item [--] to be non zero,  have the occurrences of $\alpha$ -  in any horizontal component of the strip - replaced
by a permutation
of the elements that  have been previously polarized into $\alpha$,
\item [--] have a sign that is easily seen to be
the product of the signs of the permutations of the elements in each horizontal component.
\end{itemize}
By reordering  each horizontal component, all the signs cancel out. Therefore,
we get
the  ``canonical''
highest weight vector $v_{\widetilde{\mu}} = (D_\mu|D^P_\mu)$ with
a positive
integer coefficient that is the product of the factorials of the lengths of the horizontal components. 
\end{proof}

\subsection{The  permanental Nazarov generators $\mathbf{I}_k(n)$}

In this section we provide the virtual  form of the set of the preimages in $\boldsymbol{\zeta}(n)$
- with respect to the Harish-Chandra isomorphism - of the sequence of {\textit{shifted complete  symmetric polynomials}}
$\mathbf{h}_k^{*}(x_1, x_2, \ldots, x_n),$  $k \in \mathbb{Z}^+$ (see \cite{OkOlsh-BR} and \cite{Molev1-BR}, Theorem $4.9$).

The central elements $\mathbf{I}_k(n),$ $k \in \mathbb{Z}^+,$ coincide (see \cite{BriUMI-BR}) with the ``permanental generators'' of
$\boldsymbol{\zeta}(n)$ originally discovered and studied - through the  machinery of {\it{Yangians}} - by Nazarov \cite{Nazarov-BR}
and later described by Umeda \cite{UmedaHirai-BR} as sums of {\textit{column permanents}}\footnote{ The {\textit{column permanent}}
of a matrix $A = [a_{ij}]$ with noncommutative entries is, by definition,
$\mathbf{cper} (A) = \sum_{\sigma}    \ a_{\sigma(1), 1}a_{\sigma(2), 2} \cdots a_{\sigma(n), n}$.}
in $\mathbf{U}(gl(n))$
(see e.g. Example \ref{cper} below, see also \cite{MolevNazarov-BR}, \cite{Nazarov2-BR}, and 
Turnbull  \cite{TURN-BR}).

The element
$$
[n^{h_n} \cdots 2^{h_2} 1^{h_1} | 1^{h_1} 2^{h_2} \cdots n^{h_n}]^{*} =
\mathfrak{p} \big( e_{n , \beta}^{h_n} \cdots e_{2, \beta}^{h_2} e_{1, \beta}^{h_1}
e_{\beta, 1}^{h_1}e_{\beta, 2}^{h_2} \cdots e_{\beta, n}^{h_n} \big),
$$
where $\beta \in A_1$ denotes {\it{any}} negative virtual symbol,
is symmetric both in the left and the right sequences.
In particular,
$$
[n^{h_n} \cdots 2^{h_2} 1^{h_1} | 1^{h_1} 2^{h_2} \cdots n^{h_n} ]^{*}
=
[1^{h_1} 2^{h_2} \cdots n^{h_n} | 1^{h_1} 2^{h_2} \cdots n^{h_n} ]^{*}.
$$

\begin{remark}\label{monbit and *-rows}
 Let $k = h_1 + h_2 + \cdots + h_n$, and let $(1)^k$ be the column shape
of depth $k$. Since
\begin{equation}\label{*-row}
e_{n , \beta}^{h_n} \cdots e_{2, \beta}^{h_2} e_{1, \beta}^{h_1}
e_{\beta, 1}^{h_1}e_{\beta, 2}^{h_2} \cdots e_{\beta, n}^{h_n} =
e_{1 , \beta}^{h_1}  e_{2, \beta}^{h_2} \cdots e_{n, \beta}^{h_n}
e_{\beta, 1}^{h_1}e_{\beta, 2}^{h_2} \cdots e_{\beta, n}^{h_n}
\end{equation}
the element \emph{(\ref{*-row})} equals the \emph{bitableau monomial} (see formula \emph{(\ref{BitMon})})
$$
e_{T, D^*_{(1)^k}} e_{D^*_{(1)^k}, T} \in {\mathbf{U}}(gl(m_0|m_1+n)), 
$$
where $T$ is the column tableau of shape $(1)^k$ with 
$\widetilde{T} = (1^{h_1} 2^{h_2} \cdots n^{h_n})$,
$sh(\widetilde{T}) = (k)$.
\end{remark}

In the enveloping algebra $\mathbf{U}(gl(n))$, given any positive integer $k \in \mathbb{Z}^+$, consider 
the \textit{Nazarov elements}
\begin{equation} \label{multiset}
\mathbf{I}_k(n)  =
 \sum_{(h_1, h_2, \ldots, h_n) } \ (h_1! h_2! \cdots h_n!)^{-1} \ 
[n^{h_n} \cdots 2^{h_2} 1^{h_1} | 1^{h_1} 2^{h_2} \cdots n^{h_n}]^{*},
\end{equation}
where the sum is extended to all $n$-tuples $(h_1, h_2, \ldots, h_n)$ such that $h_1 + h_2 + \cdots + h_n = k.$
Clearly, formula (\ref{multiset}) can be rewritten as
\begin{multline}\label{words}
\sum_{\underline{i} = (1 \leq i_1 \leq  \cdots \leq i_k \leq n)}
\ \big(h_1(\underline{i})!  \cdots h_n(\underline{i})! \big)^{-1} \
\mathfrak{p} \big( e_{i_k , \beta} \cdots e_{i_1, \beta}
e_{\beta, i_1} \cdots e_{\beta, i_k} \big),
\end{multline}
where, given a non decreasing $k$-tuple $\underline{i} = (1 \leq i_1 \leq  \cdots \leq i_k \leq n)$, 
we set
$$
h_j(\underline{i}) = \sharp \{ i_s = j; s = 1,  \ldots , k \}, \quad j = 1, 2, \ldots, n.
$$

In ``nonvirtual form'', the summands
$$
[n^{h_n} \cdots 2^{h_2} 1^{h_1} | 1^{h_1} 2^{h_2} \cdots n^{h_n} ]^{*}
$$
can be written as column permanent in the algebra $\mathbf{U}(gl(n))$ (see, e.g. \cite{UmedaHirai-BR}).

\begin{example}\label{cper}
\begin{multline*}
\mathbf{I}_3(3) = \frac{1}{3!}[111|111]^*+\frac{1}{2!}[211|112]^*+\frac{1}{2!}[311|113]^*+\frac{1}{2!}[221|122]^*
+[321|123]^*+
\\
+\frac{1}{2!}[331|133]^*+\frac{1}{3!}[222|222]^*+\frac{1}{2!}[322|223]^*+\frac{1}{2!}[332|233]^*+
\frac{1}{3!}[333|333]^*=
\\
=\frac{1}{3!}\mathbf{cper}
\left(
 \begin{array}{ccc}
 e_{1,1} - 2 & e_{1,1} - 1 &  e_{1,1} \\
 e_{1,1} - 2 & e_{1,1} - 1 &  e_{1,1}\\
 e_{1,1} - 2 & e_{1,1} - 1 &  e_{1,1}\\
 \end{array}
 \right)
+
\frac{1}{2!}\mathbf{cper}
\left(
 \begin{array}{ccc}
 e_{1,1} - 2 & e_{1,1} - 1 &  e_{1,2} \\
 e_{1,1} - 2 & e_{1,1} - 1 &  e_{1,2}\\
 e_{2,1}  & e_{2,1}  &  e_{2,2}\\
 \end{array}
 \right)+
\\
+\frac{1}{2!}\mathbf{cper}
\left(
 \begin{array}{ccc}
 e_{1,1} - 2 & e_{1,1} - 1 &  e_{1,3} \\
 e_{1,1} - 2 & e_{1,1} - 1 &  e_{1,3}\\
 e_{3,1}  & e_{3,1}  &  e_{3,3}\\
 \end{array}
 \right)
+
\frac{1}{2!}\mathbf{cper}
\left(
 \begin{array}{ccc}
 e_{1,1} - 2 & e_{1,2}  &  e_{1,2} \\
 e_{2,1}  & e_{2,2} - 1 &  e_{2,2}\\
 e_{2,1}  & e_{2,2} - 1 &  e_{2,2}\\
 \end{array}
 \right)+
\\
+\mathbf{cper}
\left(
 \begin{array}{ccc}
 e_{1,1} - 2 & e_{1,2} &  e_{1,3} \\
 e_{2,1} & e_{2,2} - 1 &  e_{2,3}\\
 e_{3,1} & e_{3,2} &  e_{3,3}\\
 \end{array}
 \right)
+
\frac{1}{2!}\mathbf{cper}
\left(
 \begin{array}{ccc}
 e_{1,1} - 2 & e_{1,3}  &  e_{1,3} \\
 e_{3,1}  & e_{3,3} - 1 &  e_{3,3}\\
 e_{3,1}  & e_{3,3} - 1 &  e_{3,3}\\
 \end{array}
 \right)+
\\
+\frac{1}{3!}\mathbf{cper}
\left(
 \begin{array}{ccc}
 e_{2,2} - 2 & e_{2,2} - 1 &  e_{2,2} \\
 e_{2,2} - 2 & e_{2,2} - 1 &  e_{2,2}\\
 e_{2,2} - 2 & e_{2,2} - 1 &  e_{2,2}\\
 \end{array}
 \right)
+
\frac{1}{2!}\mathbf{cper}
\left(
 \begin{array}{ccc}
 e_{2,2} - 2 & e_{2,2} - 1 &  e_{2,3} \\
 e_{2,2} - 2 & e_{2,2} - 1 &  e_{2,3}\\
 e_{3,2}  & e_{3,2}  &  e_{3,3}\\
 \end{array}
 \right)+
\\
+\frac{1}{2!}\mathbf{cper}
\left(
 \begin{array}{ccc}
 e_{2,2} - 2 & e_{2,3}  &  e_{2,3} \\
 e_{3,2}  & e_{3,3} - 1 &  e_{3,3}\\
 e_{3,2}  & e_{3,3} - 1 &  e_{3,3}\\
 \end{array}
 \right)
+
\frac{1}{3!}\mathbf{cper}
\left(
 \begin{array}{ccc}
 e_{3,3} - 2 & e_{3,3} - 1 &  e_{3,3} \\
 e_{3,3} - 2 & e_{3,3} - 1 &  e_{3,3}\\
 e_{3,3} - 2 & e_{3,3} - 1 &  e_{3,3}\\
 \end{array}
 \right).
\end{multline*}\qed

\end{example}

\begin{proposition}
Let $ (k)$ be the row shape of length $k.$ Then,
$$
\mathbf{I}_k(n) = \mathbf{S}_{(k)}(n).
$$
\end{proposition}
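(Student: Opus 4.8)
The plan is to mirror the computation already carried out for the column case $\mathbf{H}_k(n) = \mathbf{S}_{(1^k)}(n)$, but now with the roles of symmetrization and skew-symmetrization interchanged: the shape $(k)$ has conjugate $\widetilde{(k)} = (1^k)$, so the virtual Deruyts and Coderuyts tableaux attached to $\widetilde{(k)} = (1^k)$ are single columns built from, respectively, negative virtual symbols $\beta_1, \dots, \beta_k \in \mathcal{A}_1$ and repeated positive virtual symbols. First I would write out the definition
$$
\mathbf{S}_{(k)}(n) = \frac{1}{H((1^k))} \sum_S \mathfrak{p}\Big( e_{S, C_{(1^k)}^{*}} \, e_{C_{(1^k)}^{*}, D_{(1^k)}^{*}} \, e_{D_{(1^k)}^{*}, C_{(1^k)}^{*}} \, e_{C_{(1^k)}^{*}, S} \Big),
$$
where the sum runs over all row strictly increasing tableaux $S$ of shape $\widetilde{(k)} = (1^k)$, i.e. over all strictly increasing \emph{columns} $S = (i_1 < i_2 < \cdots < i_k)$ on $\mathcal{L}$, and $H((1^k)) = k!$.

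The key step is to simplify the inner middle factor $e_{C_{(1^k)}^{*}, D_{(1^k)}^{*}} \, e_{D_{(1^k)}^{*}, C_{(1^k)}^{*}}$. Here $D_{(1^k)}^{*}$ is a column of distinct negative virtual symbols $\beta_1, \dots, \beta_k$ and $C_{(1^k)}^{*}$ is a column all of whose entries are a single positive virtual symbol $\alpha$ (the shape $(1^k)$ has $k$ rows of length $1$). Polarizing the $\beta$'s back to $\alpha$ and then $\alpha$ to the $\beta$'s is a purely virtual computation; by the standard facts of the method of virtual variables (the analogue of eqs.~(\ref{hook virtual}), (\ref{trivial}) for the shape $(1^k)$) this composite acts as multiplication by a scalar, which I expect to be $k!$ up to a sign $(-1)^{k \choose 2}$ coming from the skew-commutation of the $\beta$'s — exactly dual to the sign and factorial that appeared in the $\mathbf{H}_k(n)$ computation. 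Thus $\mathfrak{p}\big( e_{S, C_{(1^k)}^{*}} e_{C_{(1^k)}^{*}, D_{(1^k)}^{*}} e_{D_{(1^k)}^{*}, C_{(1^k)}^{*}} e_{C_{(1^k)}^{*}, S}\big)$ reduces to $(-1)^{k \choose 2} \, k! \, \mathfrak{p}\big( e_{S, C_{(1^k)}^{*}} e_{C_{(1^k)}^{*}, S}\big)$, and the latter is $\mathfrak{p}\big( e_{i_1, \alpha} \cdots e_{i_k, \alpha} \, e_{\alpha, i_1} \cdots e_{\alpha, i_k}\big)$ where $|\alpha| = 0$, so no extra signs are introduced by reordering.

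However — and this is the point where the argument genuinely diverges from the column case — for $\mathbf{I}_k(n)$ one must \emph{not} restrict to strictly increasing multi-indices: the defining formula (\ref{multiset})/(\ref{words}) for $\mathbf{I}_k(n)$ is a sum over \emph{all} non-decreasing $k$-tuples $\underline{i} = (1 \le i_1 \le \cdots \le i_k \le n)$, weighted by $\big(h_1(\underline{i})! \cdots h_n(\underline{i})!\big)^{-1}$. On the Schur side the sum is over strictly increasing columns $S$ only. The reconciliation is that, because $|\beta| = 1$, a column $S$ with a repeated entry $i$ gives $e_{i,\beta}^2 = 0$, so strictly increasing $S$ is forced — but this is the \emph{wrong} normalization. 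I expect the correct route is the reverse: rewrite $\mathbf{I}_k(n)$ from (\ref{words}) by grouping the non-decreasing tuples $\underline{i}$ according to their underlying \emph{set} $\{i_1 < \cdots < i_r\}$ with multiplicities, use that $\mathfrak{p}\big(e_{i_k,\beta}\cdots e_{i_1,\beta} e_{\beta,i_1}\cdots e_{\beta,i_k}\big)$ for a \emph{repeated} multi-index is expressible via the same bitableau-monomial after symmetrization, and the multinomial weights $\prod h_j!^{-1}$ are precisely what is produced by the column-symmetrization built into the Young–Capelli structure. Concretely, I would invoke Remark~\ref{monbit and *-rows}, which already identifies the summand of $\mathbf{I}_k(n)$ with the bitableau monomial $e_{T, D^*_{(1)^k}} e_{D^*_{(1)^k}, T}$ for the column tableau $T$ with $\widetilde{T} = (1^{h_1} 2^{h_2}\cdots n^{h_n})$; comparing this with Proposition~\ref{double YC from YC} (the double Young–Capelli bitableau as a signed sum of Young–Capelli bitableaux, which in the one-column shape $\widetilde{(k)}=(1^k)$ has its "column symmetrization" become a symmetrization over the $k$ cells of the single column, i.e. over the $k$ entries of $S$) should exhibit $\frac{1}{H((1^k))}\sum_S [\,\fbox{$S\,|\,S$}\,]$ as exactly $\sum_{(h_1,\dots,h_n)} (h_1!\cdots h_n!)^{-1}[n^{h_n}\cdots 1^{h_1}\,|\,1^{h_1}\cdots n^{h_n}]^*$.

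The main obstacle I anticipate is precisely this bookkeeping of normalizations: tracking how the single factor $\frac{1}{H((1^k))} = \frac{1}{k!}$, the scalar $(-1)^{k\choose 2} k!$ from the virtual reduction, the sum over strictly increasing columns $S$, and the symmetrization inherent in the Young–Capelli bitableau of a one-column shape combine to reproduce exactly the multinomial coefficients $(h_1!\cdots h_n!)^{-1}$ in (\ref{multiset}). The sign $(-1)^{k\choose 2}$ should cancel against the $(-1)^{k\choose 2}$ that relates $[n^{h_n}\cdots 1^{h_1}|\cdots]^*$ written in "reversed" order to its natural order (compare the analogous cancellation in the proof of $\mathbf{H}_k(n)=\mathbf{S}_{(1^k)}(n)$), and the factorials should match by a straightforward but careful counting of how many non-decreasing tuples $\underline{i}$ with given multiplicity vector $(h_1,\dots,h_n)$ arise, versus the symmetrized contributions indexed by strictly increasing $S$. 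Once the dictionary is set up this is routine; I would present it as a short computation parallel to the $\mathbf{H}_k(n)$ proposition, citing Remark~\ref{monbit and *-rows} and Proposition~\ref{double YC from YC}, and leave the multinomial identity as the one genuinely new line to verify.
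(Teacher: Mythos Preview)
Your proposal rests on two misreadings of the setup, and they are exactly what causes the ``mismatch'' you then struggle to repair.

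First, for $\lambda=(k)$ the conjugate shape is $\widetilde{\lambda}=(1^k)$, whose rows all have length~$1$. The condition ``row (strictly) increasing'' in the definition of $\mathbf{S}_\lambda(n)$ is therefore \emph{vacuous}: the sum defining $\mathbf{S}_{(k)}(n)$ runs over \emph{all} column tableaux $S$ of shape $(1^k)$ on $\mathcal{L}$, not only the strictly increasing ones. There is no discrepancy between ``strictly increasing columns'' and the non-decreasing multi-indices in the definition of $\mathbf{I}_k(n)$ to reconcile. (Your attempted fix via Proposition~\ref{double YC from YC} would not work in any case: that proposition symmetrizes over \emph{row} permutations, and rows of length~$1$ have no nontrivial permutations.)

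Second, you have the virtual tableaux reversed. By the definition in (\ref{Deruyts and Coderuyts}), the Coderuyts tableau $C_{(1^k)}^*$ has $k$ rows, each consisting of a single \emph{distinct} positive symbol, so it is the column $(\alpha_1,\alpha_2,\ldots,\alpha_k)$; the Deruyts tableau $D_{(1^k)}^*$ has a single column, hence is $(\beta_1,\beta_1,\ldots,\beta_1)$ with one repeated negative symbol. The paper's simplification exploits precisely the distinctness of the $\alpha_i$: since each $\alpha_i$ is created once by $e_{C^*,S}$ and annihilated once by $e_{D^*,C^*}$, the four-factor balanced monomial collapses (modulo $\mathbf{Irr}$) directly to $\mathfrak{p}\big(e_{S,D_{(1^k)}^*}\,e_{D_{(1^k)}^*,S}\big)$, with no intermediate scalar or sign to track. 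From there one groups the sum over all column tableaux $S$ by content $(h_1,\ldots,h_n)$; commutativity of the degree-zero elements $e_{i,\beta_1}$ makes all ${k\choose h_1,\ldots,h_n}$ tableaux of a given content contribute equally, and dividing by $H((1^k))=k!$ turns the multinomial coefficient into the weight $(h_1!\cdots h_n!)^{-1}$ of (\ref{multiset}).
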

\begin{proof}
By formula (\ref{multiset}), we have
$$
\mathbf{S}_{(k)}(n) = \frac {1} {H((1)^k)} \  \ \sum_S \ \mathfrak{p} \big(e_{S,C_{(1^k)}^{*}} \ 
e_{C_{(1^k)}^{*},D_{(1^k)}^{*}} \  e_{D_{(1^k)}^{*},C_{(1^k)}^{*}} \ e_{C_{(1^k)}^{*},S} \big),
$$
where the sum is extended to all column tableaux $S$ of shape $(1^k)$
and $H((1)^k) = k!$.

Since  $S$ is a column tableaux  of shape $(1^k)$ and the column tableau $C_{(1^k)}^{*}$ is 
$$
C_{(1^k)}^{*} =
\left(
\begin{array}{c}
\alpha_1\\  \alpha_2 \\ \vdots \\ \alpha_k
\end{array}
\right),
$$
where the $\alpha_i$'s are \textit{distinct positive} virtual symbols, then each summand
$$
\mathfrak{p} \big(e_{S,C_{(1^k)}^{*}} \cdot e_{C_{(1^k)}^{*},D_{(1^k)}^{*}} \cdot  e_{D_{(1^k)}^{*},C_{(1^k)}^{*}} \cdot e_{C_{(1^k)}^{*},S} \big)
$$
equals
$$
\mathfrak{p} \big(e_{S,D_{(1^k)}^{*}}  \cdot e_{D_{(1^k)}^{*},S} \big).
$$
Hence,
\begin{align*}
\mathbf{S}_{(k)}(n) &= \frac {1} {k!} \sum_S \ \mathfrak{p} \big( e_{S,D_{(1^k)}^{*}}  e_{D_{(1^k)}^{*},S} \big)
\\
&= \frac {1} {k!} \sum_{(h_1, \ldots, h_n)} \ \sum_T \  \mathfrak{p} \big( e_{T,D_{(1^k)}^{*}}  e_{D_{(1^k)}^{*},T} \big),
\end{align*}
where the outer sum is extended over all indexes $h_1 + \cdots + h_n = k$ and inner sum
is extended over all column tableaux $T$ with $h_1$ occurrences of $1$, $h_2$ occurrences of $2$,
$\ldots$, $h_n$ occurrences of $n.$
Moreover, since each element  $e_{T,D_{(1^k)}^{*}}$ and  $e_{D_{(1^k)}^{*},T}$ are commutative, then
the inner sum
$$
\sum_T \ e_{T,D_{(1^k)}^{*}}  e_{D_{(1^k)}^{*},T}
$$
equals
$$
{{k} \choose  {h_1,h_2, \ldots, h_n}}
\ e_{1,\beta_1}^{h_1} \cdots e_{n,\beta_1}^{h_n} \ e_{\beta_1,1}^{h_1} \cdots e_{\beta_n,1}^{h_n},
$$
where there are  $h_1$ occurrences of $1$, $h_2$ occurrences of $2$,
$\ldots$, $h_n$ occurrences of $n.$
Hence, from Remark \ref{monbit and *-rows}, we infer
\begin{align*}
\mathbf{S}_{(k)}(n) &= \frac {1} {k!} \sum_{(h_1, \ldots, h_n)} \ \sum_T \ \mathfrak{p} 
\big( e_{T,D_{(1^k)}^{*}}  e_{D_{(1^k)}^{*},T} \big)
\\
&= \frac {1} {k!} \sum_{(h_1, \ldots, h_n)} \ {{k} \choose  {h_1,h_2, \ldots, h_n}}
\ \mathfrak{p} \big( e_{1,\beta_1}^{h_1} \cdots e_{n,\beta_1}^{h_n} \ e_{\beta_1,1}^{h_1} \cdots e_{\beta_n,1}^{h_n} \big)
\\
&= \sum_{(h_1, h_2, \ldots, h_n) } \ \frac {1} {h_1! h_2! \cdots h_n!} \ [n^{h_n} \cdots 2^{h_2} 1^{h_1} | 1^{h_1} 2^{h_2} \cdots n^{h_n} ]^{*} = \mathbf{I}_k(n).
\end{align*}
\end{proof}

\begin{corollary} The Nazarov elements $\mathbf{I}_k(n)$ are central.
Furthermore,
$$
\mathbf{I}_k(n) \in \boldsymbol{\zeta}(n)^{(m)},
$$
for every $m \geq k.$
\end{corollary}

The next characterization of the eigenvalues $h^*_k(\widetilde{\mu})$ of the elements
$\mathbf{I}_k(n)$, in combination with characterization of the eigenvalues $e^*_k(\widetilde{\mu})$ of the elements
$\mathbf{H}_k(n)$ (see Proposition \ref{horizontal strip}), will play a crucial role in our treatment of
{\textit{duality}} in the center $\boldsymbol{\zeta}(n)$ (see Section \ref{central duality} below).

\begin{theorem}\label{vertical strip}

We have:
$$
\mathbf{I}_k(n) \cdot v_{\widetilde{\mu}} = h^*_k(\widetilde{\mu})\cdot v_{\widetilde{\mu}}, \quad h^*_k(\widetilde{\mu}) \in \mathbb{N}
$$
with
$$
h^*_k(\widetilde{\mu}) = \sum \ vstrip_{\mu}(k)!,
$$
where the sum is extended to all ``vertical strips'' \footnote{In this work, we use the expression
{\it{vertical strip}} in a generalized sense. To wit, a  vertical strip in a Ferrers diagram is a subset
of cells such that no two cells in the subset appear in the same row.}
of length $k$ in the Ferrers diagram of the partition $\mu$,
and the symbol $\ vstrip_{\mu}(k)!$ denotes the product of the factorials of the cardinality of each vertical
component of the vertical strip.
\end{theorem}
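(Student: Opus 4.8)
The plan is to run, in the \emph{permanental} (commuting) setting, the argument used for Proposition~\ref{horizontal strip}. Since $\mathbf{I}_k(n)=\mathbf{S}_{(k)}(n)$ is central, it acts on the irreducible module $Schur_\mu(n)$ by a scalar, so it is enough to compute $\mathbf{I}_k(n)\cdot v_{\widetilde\mu}$ on the canonical highest weight vector $v_{\widetilde\mu}=(D_\mu|D^P_\mu)$ and extract the coefficient of $v_{\widetilde\mu}$. By the form (\ref{words}) of $\mathbf{I}_k(n)$ and Proposition~\ref{virtual action}, this reduces to evaluating, for each non-decreasing tuple $\underline i=(1\le i_1\le\cdots\le i_k\le n)$ with multiplicities $h_j=h_j(\underline i)$, the element $e_{i_k,\beta}\cdots e_{i_1,\beta}\,e_{\beta,i_1}\cdots e_{\beta,i_k}\cdot(D_\mu|D^P_\mu)$, where $\beta$ is a negative virtual symbol. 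The essential difference with the $\mathbf{H}_k(n)$ computation is that now $|e_{\beta,i}|=|\beta|+|i|=1+1=0$, so all the polarizations involved are \emph{even} and \emph{no signs ever arise}; the role played for $\mathbf{H}_k(n)$ by the strict increase of the column indices is played here by the identity $(\beta\beta\,|\,\cdot\,)=0$, which holds by the supersymmetry of biproducts in their left entries (the Claim preceding Proposition~\ref{Laplace expansions}) because $|\beta|=1$.

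First I would treat the ``virtualizing part'' $e_{\beta,i_1}\cdots e_{\beta,i_k}$ acting on $(D_\mu|D^P_\mu)$. Expanding by Proposition~\ref{action on tableaux} with all signs $+1$, it replaces, for each column value $v$, some $h_v$ of the $\widetilde\mu_v$ occurrences of $v$ in $D_\mu$ by $\beta$, summed over ordered choices. Any configuration placing two of the new $\beta$'s in one row yields a bitableau with two $\beta$'s in a single row word, hence vanishes by $(\beta\beta\,|\,\cdot\,)=0$; so the surviving configurations are exactly the \emph{vertical strips} $V$ of the Ferrers diagram of $\mu$ having $h_v$ cells in column $v$ for every $v$, and a fixed $V$ is produced by $\prod_v h_v!$ ordered polarizations. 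Hence $e_{\beta,i_1}\cdots e_{\beta,i_k}\cdot(D_\mu|D^P_\mu)=\big(\prod_v h_v!\big)\sum_V B_V$, with $B_V$ obtained from $D_\mu$ by overwriting the cells of $V$ with $\beta$.

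Then I would treat the ``devirtualizing part'' $e_{i_k,\beta}\cdots e_{i_1,\beta}$ on each $B_V$, which reinserts the symbols $i_1,\dots,i_k$ into the $\beta$-cells of $V$, again without signs. The crux --- the analogue of the sign-cancellation step in Proposition~\ref{horizontal strip} --- is that the only assignments contributing to $v_{\widetilde\mu}$ are the \emph{column-respecting} ones (each cell in column $v$ receiving the symbol $v$): if a cell $(r,c)\in V$ received a symbol $u<c$, then the cell $(r,u)$, which is not in $V$ since $V$ has no two cells in row $r$, still carries $u$, so the bitableau has two $u$'s in row $r$ and vanishes; since $\sum_j i_j=\sum_{(r,c)\in V}c$ while every assigned symbol must be $\ge$ its column index, this forces equality cell by cell. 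Each $V$ admits $\prod_v h_v!$ column-respecting reinsertions, all giving back $v_{\widetilde\mu}$, so $e_{i_k,\beta}\cdots e_{i_1,\beta}\cdot B_V=\big(\prod_v h_v!\big)v_{\widetilde\mu}$. Combining the two parts, the contribution of $\underline i$ to $\mathbf{I}_k(n)\cdot v_{\widetilde\mu}$ is $\big(\prod_v h_v!\big)^{-1}\big(\prod_v h_v!\big)^2 N_{(h_v)}\,v_{\widetilde\mu}=\big(\prod_v h_v!\big)N_{(h_v)}\,v_{\widetilde\mu}$, where $N_{(h_v)}$ is the number of vertical strips with column multiplicities $(h_v)$. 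Summing over all $\underline i$ (i.e.\ over all multiplicity vectors $(h_v)$ with $\sum_v h_v=k$) and regrouping the count by the vertical strip produced, every vertical strip $V$ of length $k$ is counted with weight $\prod_v h_v(V)!=vstrip_{\mu}(k)!$, so $\mathbf{I}_k(n)\cdot v_{\widetilde\mu}=\big(\sum_V vstrip_{\mu}(k)!\big)v_{\widetilde\mu}$; this scalar is manifestly a nonnegative integer, and by centrality it is the eigenvalue of $\mathbf{I}_k(n)$ on all of $Schur_\mu(n)$. The main obstacle is exactly this devirtualization bookkeeping: showing cleanly that only column-respecting reinsertions survive and tracking the factorial multiplicities --- what in the $\mathbf{H}_k(n)$ case was compressed into the sentence ``by reordering each horizontal component, all the signs cancel out''.
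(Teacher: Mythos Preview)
Your proof is correct and follows essentially the same approach as the paper's: both compute the action via Proposition~\ref{virtual action}, use $|e_{\beta,i}|=0$ to eliminate signs, invoke the supersymmetric vanishing $(\beta\beta\,|\,\cdot\,)=0$ (equivalently, skew-symmetry in negative symbols) to restrict the virtualizing part to vertical strips with multiplicity $\prod_v h_v!$, and then argue that only column-respecting reinsertions survive devirtualization, again with multiplicity $\prod_v h_v!$. Your devirtualization bookkeeping (the ``two $u$'s in row $r$'' observation combined with the sum-of-columns equality) is in fact more explicit than the paper's, which simply records ``again by skew-symmetry'' at that step.
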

\begin{proof}
The action of the ``virtualizing part''
$$
e_{\beta, 1}^{h_1}e_{\beta, 2}^{h_2} \cdots e_{\beta, n}^{h_n}
$$
of each summand in expression (\ref{multiset}) distributes
$k$ occurrences of the virtual variable $\beta$  in the Ferrers diagram
of the shape $\mu$, with $h_1$ occurrences in column $1$, $h_2$ occurrences in column $2$,
and so on.
Since $|\beta| = 1$, in order to get a non zero result,
these $\beta$'s must appear in different rows - by skew-symmetry - and, therefore, they form a
vertical strip. Clearly, this configuration is created $h_1! h_2! \cdots h_n!$ times.
Again  by skew-symmetry, the action of the ``devirtualizing part''
$$
e_{n , \beta}^{h_n} \cdots e_{2, \beta}^{h_2} e_{1, \beta}^{h_1}
$$
gives a non zero result if and only if the $\beta$'s in column $1$ are replaced by $1$,
the $\beta$'s in column $2$ are replaced by $2$, and so on. Therefore we obtain again
the highest weight vector $v_{\widetilde{\mu}} = (D_{\mu}|D_{\mu}^P)$ with multiplicity
$h_1! h_2! \cdots h_n!$.
Note that, since $|e_{\beta, p}| = |e_{p , \beta}| = 0$, for every $p = 1, 2, \ldots, n$,
no signs are involved in the proof.
\end{proof}

\begin{corollary}
If \ $\widetilde{\mu}_1 < k$, then
$$
\mathbf{I}_k(n) \cdot v_{\widetilde{\mu}} = 0.
$$
\end{corollary}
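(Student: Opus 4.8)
The plan is to read off the statement directly from Theorem \ref{vertical strip}, which has already been proved. That theorem gives $\mathbf{I}_k(n) \cdot v_{\widetilde{\mu}} = h^*_k(\widetilde{\mu}) \cdot v_{\widetilde{\mu}}$ with
$$
h^*_k(\widetilde{\mu}) = \sum \ vstrip_{\mu}(k)!,
$$
the sum ranging over all (generalized) vertical strips of length $k$ in the Ferrers diagram of $\mu$. So the only thing to check is that this sum is empty when $\widetilde{\mu}_1 < k$, whence $h^*_k(\widetilde{\mu}) = 0$ and the action kills $v_{\widetilde{\mu}}$.

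First I would recall that a vertical strip, in the generalized sense used here, is a subset of cells of $F_\mu$ no two of which lie in the same row; hence a vertical strip contains at most one cell per row of $F_\mu$. The number of rows of $F_\mu$ is the length $\ell(\mu)$ of the partition $\mu$, which equals $\widetilde{\mu}_1$. Therefore any vertical strip in $F_\mu$ has cardinality at most $\widetilde{\mu}_1$, and a vertical strip of length $k$ can exist only if $k \le \widetilde{\mu}_1$. Under the hypothesis $\widetilde{\mu}_1 < k$ there is no such strip, the sum defining $h^*_k(\widetilde{\mu})$ is empty, and the claim follows.

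There is no real obstacle: this is an immediate corollary of Theorem \ref{vertical strip} together with the combinatorial observation on the number of rows of $F_\mu$. The only point deserving a word is the (trivial) identification of the number of rows of the diagram of $\mu$ with $\widetilde{\mu}_1$, which is just the definition of the conjugate partition.
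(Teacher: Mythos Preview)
Your argument is correct and is exactly the intended one: the paper states this as an immediate corollary of Theorem~\ref{vertical strip} without further proof, and your observation that a generalized vertical strip in $F_\mu$ has at most $\ell(\mu)=\widetilde{\mu}_1$ cells is precisely the missing one-line justification.
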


The eigenvalue $h^*_k(\widetilde{\mu})$ admits a further description that relates it to
{\textit{complete homogeneous shifted symmetric polynomials}}.

\begin{theorem}\label{complete}
We have:

\begin{equation}\label{complete shift}
h^*_k(\widetilde{\mu})
= \sum_{1 \leq i_1 \leq i_2 < \cdots \leq i_k \leq n} \ (\widetilde{\mu}_{i_1}  - k  + 1)
(\widetilde{\mu}_{i_2}  - k + 2) \cdots (\widetilde{\mu}_{i_{k-1}} - 1)\widetilde{\mu}_{i_k}
\end{equation}
\end{theorem}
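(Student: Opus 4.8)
The plan is to read off formula (\ref{complete shift}) from the \emph{classical column-permanent presentation} of the Nazarov--Umeda elements (Example \ref{cper}, \cite{UmedaHirai-BR}) by computing their eigenvalues on the canonical highest weight vector $v_{\widetilde{\mu}}=(D_\mu|D^P_\mu)$; this is the exact dual of the way formula (\ref{elementary shift}) is obtained for the $\mathbf{H}_k(n)$'s from the column-\emph{determinant} presentation (Proposition \ref{Capelli classical}, Proposition \ref{Capelli eigenvalues}). First I would record the precise shape of that presentation: for a composition $(h_1,\dots,h_n)$ with $h_1+\cdots+h_n=k$, write $\underline{i}=(i_1\le i_2\le\cdots\le i_k)$ for the weakly increasing word having $h_j$ copies of $j$, and let $A_{(h_1,\dots,h_n)}$ be the $k\times k$ matrix over $\mathbf{U}(gl(n))$ with $(r,c)$-entry $e_{i_r,i_c}-(k-c)\,\delta_{i_r,i_c}$. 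Then Example \ref{cper} expresses
$$
\mathbf{I}_k(n)=\sum_{h_1+\cdots+h_n=k}\frac{1}{h_1!\cdots h_n!}\;\mathbf{cper}\bigl(A_{(h_1,\dots,h_n)}\bigr).
$$

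The heart of the argument is the evaluation of $\mathbf{cper}\bigl(A_{(h_1,\dots,h_n)}\bigr)\cdot v_{\widetilde{\mu}}$. Each summand $A_{\sigma(1),1}A_{\sigma(2),2}\cdots A_{\sigma(k),k}$ of the column permanent is homogeneous of $gl(n)$-weight $\sum_{c}(\epsilon_{i_{\sigma(c)}}-\epsilon_{i_c})=0$, hence carries $v_{\widetilde{\mu}}$ into the one-dimensional $\widetilde{\mu}$-weight space of the irreducible module $Schur_\mu(n)$, i.e.\ into a scalar multiple of $v_{\widetilde{\mu}}$; one identifies the scalar by applying the factors to $v_{\widetilde{\mu}}$ from the right, $c=k,k-1,\dots,1$. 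Since the columns are listed in increasing order of the value $i_c$ and are processed in decreasing order of $c$, once the columns to the right of $c$ have all acted diagonally they have already consumed all rows whose value exceeds $i_c$; hence the remaining row $\sigma(c)$ has value $\le i_c$, so $A_{\sigma(c),c}$ is \emph{either} a strict raising operator $e_{i_{\sigma(c)},i_c}$ with $i_{\sigma(c)}<i_c$ --- which annihilates the highest weight vector and kills the whole term --- \emph{or} the ``shifted Cartan'' element $e_{i_c,i_c}-(k-c)$, which multiplies $v_{\widetilde{\mu}}$ by $\widetilde{\mu}_{i_c}-(k-c)$. Consequently only the $h_1!\cdots h_n!$ block-preserving permutations $\sigma$ survive; for these $A_{\sigma(c),c}=e_{i_c,i_c}-(k-c)$ independently of $\sigma$, and the Cartan elements $e_{i_c,i_c}$ commute, so every surviving term contributes the same value and
$$
\mathbf{cper}\bigl(A_{(h_1,\dots,h_n)}\bigr)\cdot v_{\widetilde{\mu}}=\bigl(h_1!\cdots h_n!\bigr)\prod_{c=1}^{k}\bigl(\widetilde{\mu}_{i_c}-k+c\bigr)\;v_{\widetilde{\mu}}.
$$

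Plugging this back into the column-permanent expansion of $\mathbf{I}_k(n)$, the factorials cancel and the sum over compositions $(h_1,\dots,h_n)$ becomes the sum over all weakly increasing words $1\le i_1\le\cdots\le i_k\le n$, giving
$$
\mathbf{I}_k(n)\cdot v_{\widetilde{\mu}}=\Bigl(\sum_{1\le i_1\le\cdots\le i_k\le n}\ \prod_{j=1}^{k}\bigl(\widetilde{\mu}_{i_j}-k+j\bigr)\Bigr)\,v_{\widetilde{\mu}};
$$
comparing with Theorem \ref{vertical strip} identifies the bracketed expression with $h^*_k(\widetilde{\mu})$, which is exactly (\ref{complete shift}).

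The main obstacle I anticipate is the bookkeeping in this right-to-left evaluation: one must check \emph{carefully} that no lowering operator $e_{i_{\sigma(c)},i_c}$ with $i_{\sigma(c)}>i_c$ can occur along a surviving term --- equivalently, that after the columns to the right of $c$ have acted diagonally every still-available row has value $\le i_c$. This is an elementary but slightly delicate counting argument on the block structure of $A_{(h_1,\dots,h_n)}$, and is in substance Umeda's permanental Capelli computation. Two alternatives are available should a fully self-contained treatment be preferred: (i) run the same right-to-left argument directly on the virtual form (\ref{words}) of $\mathbf{I}_k(n)$ acting on $v_{\widetilde{\mu}}=(D_\mu|D^P_\mu)$, tracking how the devirtualizing string $e_{i_k,\beta}\cdots e_{i_1,\beta}$ repopulates the vertical strip of $\beta$'s produced by $e_{\beta,i_1}\cdots e_{\beta,i_k}$; or (ii) invoke $\mathbf{I}_k(n)=\mathbf{S}_{(k)}(n)$ together with the identification $\chi_n(\mathbf{S}_{(k)}(n))=s^*_{(k)|n}$ and the known combinatorial expansion of the one-row shifted Schur polynomial over reverse semistandard tableaux.
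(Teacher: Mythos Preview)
Your argument via the column-permanent presentation is correct. The inductive right-to-left reasoning is sound: once the columns $c+1,\dots,k$ have acted diagonally, the consumed rows form a multiset with values $\{i_{c+1},\dots,i_k\}$, so the still-available rows carry only values $\le i_c$; hence no lowering operator can appear, and each factor is either a raising operator (killing the term) or the shifted Cartan $e_{i_c,i_c}-(k-c)$. The counting then goes through exactly as you describe.

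The paper, however, proves the identity by a different route --- essentially your alternative (i). It works directly with the virtual expression (\ref{words}) and computes the action on $v_{\widetilde{\mu}}=(D_\mu|D^P_\mu)$ combinatorially: the virtualizing string $e_{\beta,i_1}\cdots e_{\beta,i_k}$ places $k$ copies of $\beta$ into columns $i_k,\dots,i_1$ of the Deruyts tableau, and since $|\beta|=1$ these must land in distinct rows by skew-symmetry; the number of ways to do this is $(\widetilde{\mu}_{i_1}-k+1)(\widetilde{\mu}_{i_2}-k+2)\cdots\widetilde{\mu}_{i_k}$, and the devirtualizing string contributes the factorial multiplicity $h_1(\underline{i})!\cdots h_n(\underline{i})!$ that cancels the coefficient in (\ref{words}). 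So the paper's proof is tableau-combinatorial and stays inside the virtual-variable formalism, whereas yours is algebraic and relies on the Umeda permanent form together with the raising-operator/Cartan dichotomy on the highest weight vector. Your approach has the advantage of being the exact ``permanental'' mirror of the argument for Proposition \ref{Capelli eigenvalues}, making the duality with the $\mathbf{H}_k$ eigenvalue computation transparent at the level of proofs; the paper's approach has the advantage of not needing the nonvirtual permanent presentation at all and keeping the whole story self-contained within the superalgebraic method.
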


\begin{proof}
The action of the ``virtualizing part''
$$
e_{\beta, i_1} e_{\beta, i_2} \cdots e_{\beta, i_k}, \quad \underline{i} = (i_1 \leq i_2 \leq \cdots   \leq  i_k),
$$
of each summand in expression (\ref{words}) of $\mathbf{I}_k(n)$, distributes
{\textit{one}} occurrence of the virtual variable $\beta$  in the Ferrers diagram
of the shape $\mu$, in column $i_k$, ...,  $i_2$,  $i_1$.
Since $|\beta| = 1$, in order to get a non zero result,
these $\beta$'s must be distributed into different rows, by skew-symmetry.
Clearly, this procedure can be done into
$$
(\widetilde{\mu}_{i_1}  - k  + 1)
(\widetilde{\mu}_{i_2}  - k + 2) \cdots (\widetilde{\mu}_{i_{k-1}} - 1)\widetilde{\mu}_{i_k}
$$ ways.
Again by by skew-symmetry, the action of the ``devirtualizing part''
$$
e_{i_k , \beta} e_{i_{k-1} , \beta} \cdots e_{i_1, \beta}
$$
gives a non zero result of and only if the $\beta$ in column $i_1$ is replaced by $i_1$,
the $\beta$ in column $i_2$ is replaced by $i_2$, and so on. Therefore we obtain again
the highest weight vector $v_{\widetilde{\mu}} = (D_{\mu}|D_{\mu}^P)$ with multiplicity
$h_1(\underline{i})!  \cdots h_n(\underline{i})!$.
\end{proof}

\subsection{Duality in $\boldsymbol{\zeta}(n)$}\label{central duality}

Let
$$
\mathcal{W}_n : \boldsymbol{\zeta}(n) \rightarrow \boldsymbol{\zeta}(n)
$$
be the algebra automorphism defined by setting
$$
\mathcal{W}_n \Big(  \mathbf{H}_k(n)  \Big) = \mathbf{I}_k(n), \quad k = 1, 2, \ldots, n.
$$

Clearly, Proposition \ref{horizontal strip} and Theorem \ref{vertical strip}.$2$ imply
the following result.

\begin{proposition}
If $\mu_1, \widetilde{\mu}_1 \leq n$, then
\begin{equation}\label{duality}
e^*_k(\widetilde{\mu}) = h^*_k(\mu),
\end{equation}
that is, the eigenvalue of $\mathbf{H}_k(n)$ on the Schur module of shape $\mu$ equals
the eigenvalue of $\mathbf{I}_k(n)$ on the Schur module of shape $\widetilde{\mu}$.
\end{proposition}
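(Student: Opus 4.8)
The plan is to read off both sides of (\ref{duality}) from the combinatorial formulas already established and to match them by transposing Ferrers diagrams. First I would invoke Proposition \ref{horizontal strip} to write the left-hand side as $e^*_k(\widetilde{\mu}) = \sum hstrip_{\mu}(k)!$, the sum ranging over all generalized horizontal strips of length $k$ contained in the Ferrers diagram $F_\mu$ of $\mu$. Then I would apply Theorem \ref{vertical strip} with $\mu$ replaced by its conjugate $\widetilde{\mu}$ --- which is legitimate since the hypothesis $\widetilde{\mu}_1 \le n$ guarantees that $Schur_{\widetilde{\mu}}(n)$ is defined, so that $\mathbf{I}_k(n)$ acts on it with a well-defined eigenvalue, namely $h^*_k(\widetilde{\widetilde{\mu}}) = h^*_k(\mu)$ --- to write $h^*_k(\mu) = \sum vstrip_{\widetilde{\mu}}(k)!$, the sum ranging over all generalized vertical strips of length $k$ contained in $F_{\widetilde{\mu}}$. (The remaining hypothesis $\mu_1 \le n$ is what makes $Schur_\mu(n)$, hence the eigenvalue $e^*_k(\widetilde{\mu})$, available.)

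The heart of the argument is then the transposition bijection $\tau : F_\mu \to F_{\widetilde{\mu}}$ that sends the cell in row $i$ and column $j$ to the cell in row $j$ and column $i$; it interchanges the roles of rows and columns. Because a generalized horizontal strip is characterized by the condition that no two of its cells lie in a common column, and a generalized vertical strip by the condition that no two lie in a common row, $\tau$ restricts to a bijection from the horizontal strips of length $k$ in $F_\mu$ onto the vertical strips of length $k$ in $F_{\widetilde{\mu}}$. The one point I would check carefully is that $\tau$ carries the horizontal components of a strip --- its maximal subsets of cells sharing a row of $F_\mu$ --- bijectively onto the vertical components of the image strip --- its maximal subsets of cells sharing a column of $F_{\widetilde{\mu}}$ --- and that this correspondence preserves cardinalities. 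Granting this, $hstrip_{\mu}(k)!$ evaluated on a horizontal strip equals $vstrip_{\widetilde{\mu}}(k)!$ evaluated on its $\tau$-image, since both are the product of the factorials of the same multiset of component sizes; summing over all strips yields
$$
e^*_k(\widetilde{\mu}) = \sum hstrip_{\mu}(k)! = \sum vstrip_{\widetilde{\mu}}(k)! = h^*_k(\mu),
$$
which is (\ref{duality}).

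I do not anticipate any real obstacle: the representation-theoretic and enumerative content has been fully absorbed into Proposition \ref{horizontal strip} and Theorem \ref{vertical strip}, and what is left is the elementary observation that transposition swaps ``no repeated column'' with ``no repeated row'' and swaps rows with columns, hence sends horizontal strips of $\mu$ to vertical strips of $\widetilde{\mu}$ while preserving the multiset of component sizes. The only subtlety deserving an explicit word is that both eigenvalues being compared are defined precisely under the stated hypothesis $\mu_1,\widetilde{\mu}_1 \le n$.
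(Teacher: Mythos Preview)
Your proposal is correct and follows exactly the approach the paper intends: the paper simply states that the result ``clearly'' follows from Proposition \ref{horizontal strip} and Theorem \ref{vertical strip}, and you have spelled out the transposition bijection $F_\mu \leftrightarrow F_{\widetilde{\mu}}$ that makes this implication work, together with the observation that it carries horizontal components to vertical components of the same cardinality.
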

Notice that the following {\textit{Duality Theorem}}  is an immediate consequence of the preceding
Proposition.

\begin{theorem}\label{finite duality}
Let $\mu$ be such that $\mu_1, \widetilde{\mu}_1 \leq n.$
For every $\boldsymbol{\varrho} \in \boldsymbol{\zeta}(n)$ the eigenvalue of $\boldsymbol{\varrho}$ on the
$gl(n)$-irreducible module $Schur_{\widetilde{\mu}}(n)$ (with highest weight $\mu$) equals eigenvalue of
$\mathcal{W}_n \Big( \boldsymbol{\varrho} \Big)$ on the
$gl(n)$-irreducible module $Schur_{\mu}(n)$ (with highest weight $\widetilde{\mu}$).
\end{theorem}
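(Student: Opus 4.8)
The plan is to reduce the assertion about a general central element $\boldsymbol{\varrho}$ to the special case of the Capelli generators $\mathbf{H}_k(n)$, which is precisely the content of the Proposition preceding the statement (eq.~(\ref{duality})). By Capelli's Theorem \ref{Capelli generators}, the elements $\mathbf{H}_1(n),\ldots,\mathbf{H}_n(n)$ are algebraically independent generators of $\boldsymbol{\zeta}(n)$, so I would first write $\boldsymbol{\varrho}=P\big(\mathbf{H}_1(n),\ldots,\mathbf{H}_n(n)\big)$ for a (unique) polynomial $P$ with complex coefficients. Since $\mathcal{W}_n$ is an algebra automorphism sending $\mathbf{H}_k(n)$ to $\mathbf{I}_k(n)$, this immediately gives $\mathcal{W}_n(\boldsymbol{\varrho})=P\big(\mathbf{I}_1(n),\ldots,\mathbf{I}_n(n)\big)$.

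Next I would invoke the elementary fact that, for any partition $\nu$ with $\nu_1\le n$, the map assigning to a central element its (scalar) eigenvalue on the finite dimensional irreducible module $Schur_{\nu}(n)$ is an algebra homomorphism $\boldsymbol{\zeta}(n)\to\mathbb{C}$ (Schur's Lemma: a central element acts on an irreducible module as an endomorphism, hence as a scalar, and the assignment respects sums and products). Applying this homomorphism with $\nu=\widetilde{\mu}$ to $\boldsymbol{\varrho}$, and recalling that $Schur_{\widetilde{\mu}}(n)$ has highest weight vector $v_{\mu}$ with $\mathbf{H}_k(n)\cdot v_{\mu}=e^*_k(\mu)\,v_{\mu}$ (Proposition \ref{Capelli eigenvalues}), the eigenvalue of $\boldsymbol{\varrho}$ on $Schur_{\widetilde{\mu}}(n)$ equals $P\big(e^*_1(\mu),\ldots,e^*_n(\mu)\big)$. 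Applying instead the homomorphism with $\nu=\mu$ to $\mathcal{W}_n(\boldsymbol{\varrho})$, and using $\mathbf{I}_k(n)\cdot v_{\widetilde{\mu}}=h^*_k(\widetilde{\mu})\,v_{\widetilde{\mu}}$ (Theorem \ref{vertical strip}), the eigenvalue of $\mathcal{W}_n(\boldsymbol{\varrho})$ on $Schur_{\mu}(n)$ equals $P\big(h^*_1(\widetilde{\mu}),\ldots,h^*_n(\widetilde{\mu})\big)$.

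It then remains only to identify the two polynomial values, i.e.\ to check that $e^*_k(\mu)=h^*_k(\widetilde{\mu})$ for every $k$; this is exactly eq.~(\ref{duality}) read with $\mu$ and $\widetilde{\mu}$ interchanged, an interchange that is legitimate because the hypothesis $\mu_1,\widetilde{\mu}_1\le n$ is symmetric under conjugation (and is exactly what guarantees that both modules $Schur_\mu(n)$ and $Schur_{\widetilde\mu}(n)$ are defined). Since all the substantive work is already carried by Propositions \ref{horizontal strip} and \ref{vertical strip} together with the duality identity (\ref{duality}), there is no real obstacle here; the one point requiring genuine care is the conjugation bookkeeping — keeping straight that the module $Schur_{\lambda}(n)$ carries highest weight $\widetilde{\lambda}$, so that the eigenvalue of $\mathbf{H}_k(n)$ on it is $e^*_k(\widetilde{\lambda})$ and that of $\mathbf{I}_k(n)$ is $h^*_k(\widetilde{\lambda})$ — and this is exactly where the symmetry of the constraint $\mu_1,\widetilde{\mu}_1\le n$ enters. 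Assembling these observations completes the proof.
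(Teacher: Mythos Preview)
Your proof is correct and is precisely the argument the paper has in mind: the paper states that the theorem ``is an immediate consequence of the preceding Proposition'' (i.e.\ eq.~(\ref{duality})), and you have simply unpacked that immediacy by writing an arbitrary $\boldsymbol{\varrho}$ as a polynomial in the Capelli generators via Theorem~\ref{Capelli generators}, using that eigenvalue-on-an-irreducible is an algebra homomorphism, and invoking eq.~(\ref{duality}) (with $\mu\leftrightarrow\widetilde{\mu}$, permitted by the symmetric hypothesis). Your bookkeeping on which module carries which highest weight is accurate.
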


The preceding result, in combination with the characterization results of 
subsection \ref{Characterization Theorems}, implies

\begin{corollary}\label{Schur duality}

Let $\widetilde{\lambda}_1, \lambda_1  \leq n$. Then

$$
\mathcal{W}_n \Big(  \mathbf{S}_{\lambda}(n)  \Big) = \mathbf{S}_{\widetilde{\lambda}}(n).
$$
\end{corollary}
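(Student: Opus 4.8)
The plan is to transport the identity through the Harish--Chandra isomorphism $\chi_n : \boldsymbol{\zeta}(n) \to \Lambda^*(n)$ and to invoke the Sahi--Okounkov characterization (Proposition~\ref{SahiOkounkv}). Since $\chi_n$ is injective and, by the Corollary following Proposition~\ref{SahiOkounkv}, $\chi_n(\mathbf{S}_{\widetilde\lambda}(n)) = s^*_{\widetilde\lambda|n}$ (the hypothesis $\lambda_1 \le n$ is precisely what makes both $\mathbf{S}_{\widetilde\lambda}(n)$ and $s^*_{\widetilde\lambda|n}$ legitimate), it suffices to prove
$$
\chi_n\big(\mathcal{W}_n(\mathbf{S}_\lambda(n))\big) = s^*_{\widetilde\lambda|n}.
$$
By the uniqueness clause of Proposition~\ref{SahiOkounkv}, I would reduce this to two verifications: that $\chi_n(\mathcal{W}_n(\mathbf{S}_\lambda(n)))$ has degree at most $|\widetilde\lambda| = |\lambda|$, and that it takes the value $\delta_{\widetilde\lambda,\nu}\,H(\widetilde\lambda)$ at every partition $\nu$ with $\nu_1 \le n$ and $|\nu| \le |\lambda|$ (bearing in mind that an element of $\Lambda^*(n)$ is a function of partitions with at most $n$ rows, so $\widetilde\nu_1 \le n$ is implicit in such an evaluation).

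For the degree bound I would first observe that $\mathcal{W}_n$ preserves the standard filtration of $\boldsymbol{\zeta}(n)$: indeed $\mathcal{W}_n(\mathbf{H}_k(n)) = \mathbf{I}_k(n) \in \boldsymbol{\zeta}(n)^{(k)}$, and since the filtration is multiplicative and $\{\mathbf{H}_\mu(n) : \mu_1 \le n,\ |\mu| \le m\}$ spans $\boldsymbol{\zeta}(n)^{(m)}$ (the Corollary following Theorem~\ref{Capelli generators}), this forces $\mathcal{W}_n(\boldsymbol{\zeta}(n)^{(m)}) \subseteq \boldsymbol{\zeta}(n)^{(m)}$. As $\mathbf{S}_\lambda(n) \in \boldsymbol{\zeta}(n)^{(|\lambda|)}$ by $(\ref{filtration element})$, we get $\mathcal{W}_n(\mathbf{S}_\lambda(n)) \in \boldsymbol{\zeta}(n)^{(|\lambda|)}$, and since $\chi_n$ is an isomorphism of filtered algebras this yields $\deg \chi_n(\mathcal{W}_n(\mathbf{S}_\lambda(n))) \le |\lambda| = |\widetilde\lambda|$, the first required property.

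For the values I would apply the Duality Theorem~\ref{finite duality} with $\boldsymbol{\varrho} = \mathbf{S}_\lambda(n)$: for all $\mu$ with $\mu_1, \widetilde\mu_1 \le n$, the eigenvalue of $\mathbf{S}_\lambda(n)$ on $Schur_{\widetilde\mu}(n)$ equals the eigenvalue of $\mathcal{W}_n(\mathbf{S}_\lambda(n))$ on $Schur_\mu(n)$, which in the notation of Proposition~\ref{Char Schur shift} reads $\chi_n(\mathcal{W}_n(\mathbf{S}_\lambda(n)))(\widetilde\mu) = \chi_n(\mathbf{S}_\lambda(n))(\mu) = s^*_{\lambda|n}(\mu)$. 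Writing $\nu := \widetilde\mu$ and restricting to $|\nu| \le |\lambda|$, the values of $s^*_{\lambda|n}$ furnished by Proposition~\ref{SahiOkounkv} give $s^*_{\lambda|n}(\widetilde\nu) = \delta_{\lambda,\widetilde\nu}\,H(\lambda) = \delta_{\widetilde\lambda,\nu}\,H(\widetilde\lambda)$, using $H(\lambda) = H(\widetilde\lambda)$ and $\lambda = \widetilde\nu \iff \widetilde\lambda = \nu$. Thus $\chi_n(\mathcal{W}_n(\mathbf{S}_\lambda(n)))(\nu) = \delta_{\widetilde\lambda,\nu}\,H(\widetilde\lambda)$ for all $\nu$ with $\nu_1, \widetilde\nu_1 \le n$ and $|\nu| \le |\widetilde\lambda|$, which is exactly the value condition of Proposition~\ref{SahiOkounkv} for $s^*_{\widetilde\lambda|n}$. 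Hence $\chi_n(\mathcal{W}_n(\mathbf{S}_\lambda(n))) = s^*_{\widetilde\lambda|n} = \chi_n(\mathbf{S}_{\widetilde\lambda}(n))$, and injectivity of $\chi_n$ finishes the argument.

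All the ingredients are already in place, so the proof is essentially bookkeeping; the one point I expect to need genuine care — and the only real obstacle — is to match up the set of partitions over which the Duality Theorem lets one read off eigenvalues (those with $\nu_1 \le n$ and $\widetilde\nu_1 \le n$) with the set over which Proposition~\ref{SahiOkounkv} prescribes the values of $s^*_{\widetilde\lambda|n}$; as indicated, the two sets coincide once one remembers that $\Lambda^*(n)$ already lives on partitions with at most $n$ rows. As an alternative that bypasses Proposition~\ref{SahiOkounkv}, one could instead expand $\mathcal{W}_n(\mathbf{S}_\lambda(n))$ in the basis $\{\mathbf{S}_\kappa(n)\}$ of $\boldsymbol{\zeta}(n)^{(|\lambda|)}$ provided by Theorem~\ref{Schur basis} and identify the coefficients by combining the Duality Theorem with the triangularity/orthogonality relations of Theorem~\ref{Schur action}, by a downward induction on $|\lambda| - |\kappa|$.
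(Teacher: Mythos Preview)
Your proof is correct and follows essentially the same route as the paper's: apply the Duality Theorem~\ref{finite duality} to transfer the eigenvalue relations of Theorem~\ref{Schur action} from $\mathbf{S}_\lambda(n)$ to $\mathcal{W}_n(\mathbf{S}_\lambda(n))$, then invoke the Sahi--Okounkov characterization (Propositions~\ref{Char Schur shift} and~\ref{SahiOkounkv}) to identify the result with $\mathbf{S}_{\widetilde\lambda}(n)$. Your version is in fact slightly more careful than the paper's, since you make explicit the filtration-preservation argument needed for the degree bound in Proposition~\ref{SahiOkounkv}, which the paper leaves implicit.
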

\begin{proof}
We have:
\begin{align*}
&\textrm{If} \ |\mu| < |\lambda|, \  \textrm{then}  &   \mathbf{S}_{\lambda}(n) 
\cdot v_{\mu} &= 0,
\\
&\textrm{If} \ |\mu| = |\lambda|, \ \textrm{then} &
\mathbf{S}_{\lambda} (n) \cdot v_{\mu} &= \delta_{\lambda, \mu} \c H(\lambda) \ v_{\mu}.
\end{align*}
On the other hand, from Theorem \ref{finite duality}, it follows:
\begin{align*}
&\textrm{If} \ |\widetilde{\mu}|  < |\widetilde{\lambda}|, \  \textrm{then}  &
 \mathcal{W}_n \Big(  \mathbf{S}_{\lambda}(n)  \Big) \cdot v_{\widetilde{\mu}} =  0,
\\
&\textrm{If} \ |\widetilde{\mu}|  = |\widetilde{\lambda}|, \ \textrm{then} &
\mathcal{W}_n \Big(  \mathbf{S}_{\lambda}(n)  \Big) \cdot v_{\widetilde{\mu}} = \delta_{\lambda, \mu} \c H(\lambda) \cdot v_{\widetilde{\mu}}.
\end{align*}
Since $\delta_{\lambda, \mu} \c H(\lambda)  =
\delta_{\widetilde{\lambda}, \widetilde{\mu}} \c H(\widetilde{\lambda})$,
the assertion follows from Propositions \ref{Char Schur shift} and \ref{SahiOkounkv}.
\end{proof}

Since 
$
\mathbf{I}_k(n) = \mathbf{S}_{(k)}(n)
$ and
$ 
\mathbf{H}_k(n)  = \mathbf{S}_{(1^k)}(n), 
$ 
then
$$
\mathcal{W}_n \Big(  \mathbf{I}_k(n)  \Big) = \mathbf{H}_k(n), \quad k = 1, 2, \ldots, n.
$$
by Corollary \ref{Schur duality}.
 
\begin{corollary}
The algebra automorphism $\mathcal{W}_n$ is an involution.
\end{corollary}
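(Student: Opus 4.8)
The plan is to exploit that $\mathcal{W}_n$ is an algebra automorphism and that, by Capelli's Theorem \ref{Capelli generators}, the elements $\mathbf{H}_1(n), \mathbf{H}_2(n), \ldots, \mathbf{H}_n(n)$ generate $\boldsymbol{\zeta}(n)$ as an algebra. Since the composition $\mathcal{W}_n \circ \mathcal{W}_n$ is again an algebra endomorphism of $\boldsymbol{\zeta}(n)$, it suffices to check that it fixes each of these generators: an algebra endomorphism that restricts to the identity on a generating set is the identity.

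First I would record the two relevant relations: $\mathcal{W}_n(\mathbf{H}_k(n)) = \mathbf{I}_k(n)$, which is the very definition of $\mathcal{W}_n$, and $\mathcal{W}_n(\mathbf{I}_k(n)) = \mathbf{H}_k(n)$ for $k = 1, \ldots, n$, which is precisely the identity displayed just before the statement. The latter follows from Corollary \ref{Schur duality} applied to the one-row shape $\lambda = (k)$, using $\mathbf{I}_k(n) = \mathbf{S}_{(k)}(n)$, $\mathbf{H}_k(n) = \mathbf{S}_{(1^k)}(n)$, and the fact that the conjugate of $(k)$ is the one-column shape $(1^k)$. Combining the two relations gives $\big(\mathcal{W}_n \circ \mathcal{W}_n\big)\big(\mathbf{H}_k(n)\big) = \mathcal{W}_n\big(\mathbf{I}_k(n)\big) = \mathbf{H}_k(n)$ for every $k$, whence $\mathcal{W}_n \circ \mathcal{W}_n = \mathrm{id}_{\boldsymbol{\zeta}(n)}$, i.e. $\mathcal{W}_n$ is an involution.

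There is no serious obstacle once Corollary \ref{Schur duality} is in hand; the only point deserving a word of care is that $\mathcal{W}_n$ is \emph{a priori} specified only on the generators $\mathbf{H}_k(n)$, so one must first invoke that it is well defined as an algebra automorphism — which is guaranteed by the algebraic independence of the $\mathbf{H}_k(n)$ in Theorem \ref{Capelli generators} — before concluding that agreement on a generating set forces $\mathcal{W}_n^2 = \mathrm{id}$.
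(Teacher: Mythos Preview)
Your argument is correct and is exactly the intended one: the paper states the corollary immediately after deriving $\mathcal{W}_n(\mathbf{I}_k(n)) = \mathbf{H}_k(n)$ from Corollary~\ref{Schur duality}, and the implicit proof is precisely that $\mathcal{W}_n^2$ fixes the Capelli generators $\mathbf{H}_k(n)$ and hence equals the identity on $\boldsymbol{\zeta}(n)$.
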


\section{The limit $n \rightarrow  \infty$ for $\boldsymbol{\zeta}(n)$: the algebra $\boldsymbol{\zeta}$}\label{zeta limit}

\subsection{The  monomorphisms $\mathbf{i}_{n+1,n}$
and the epimorphisms $\boldsymbol{\pi}_{n,n+1}$}\label{Capelli monomorphisms}

Given $n \in \mathbb{Z}^+$, let  
$$
\mathbf{H}_k(n), \quad k =1, \ldots , n
$$
be the  Capelli free generators  of the center $\boldsymbol{\zeta}(n)$
of the enveloping algebra $\mathbf{U}(gl(n))$, for every $n \in \mathbb{Z}^+$.

For every $n \in \mathbb{Z}^+$, let
$$
\mathbf{i}_{n+1,n} : \boldsymbol{\zeta}(n) \hookrightarrow \zeta(n+1)
$$
be the algebra monomorphism
$$
\mathbf{i}_{n+1,n} : \mathbf{H}_k(n) \rightarrow \mathbf{H}_k(n+1), \quad k = 1,2, \ldots, n.
$$

Given $m \in \mathbb{Z}^+$, let $\boldsymbol{\zeta}(n)^{(m)}$ denote the $m$-th filtration element
of $\boldsymbol{\zeta}(n)$ (with respect to the filtration induced  by the standard filtration of $\mathbf{U}(n)$).
Clearly, the  monomorphisms $\mathbf{i}_{n+1,n}$ are {\it{morphisms in  the category of filtered algebras}}, that is
$$
\mathbf{i}_{n+1,n}\Big[ \boldsymbol{\zeta}(n)^{(m)} \Big] \subseteq \boldsymbol{\zeta}(n+1)^{(m)}
$$

We  consider the {\it{direct limit}} (in the category of filtered algebras):
\begin{equation}\label{direct limit}
\underrightarrow{lim} \ \boldsymbol{\zeta}(n) = \boldsymbol{\zeta}.
\end{equation}

The algebra $\boldsymbol{\zeta}$ inherits a structure of filtered algebra, where
$$
{\boldsymbol{\zeta}}^{(m)} = \underrightarrow{lim} \ \boldsymbol{\zeta}(n)^{(m)}.
$$

On the other hand, given $n \in \mathbb{Z}^+$, we may consider the algebra epimorphism
$$
\boldsymbol{\pi}_{n,n+1} : \boldsymbol{\zeta}(n+1) \twoheadrightarrow \boldsymbol{\zeta}(n),
$$
such that
$$
\boldsymbol{\pi}_{n,n+1}(\mathbf{H}_k(n+1)) = \mathbf{H}_k(n)\, \quad k = 1, 2, \ldots, n,
$$
$$
\boldsymbol{\pi}_{n,n+1}(\mathbf{H}_{n+1}(n+1)) = 0.
$$

The following Propositions are fairly obvious from the definitions.
\begin{proposition} We have

\begin{enumerate}

\item
$
Ker \big( \boldsymbol{\pi}_{n,n+1} \big) = \Big(  \mathbf{H}_{n+1}(n+1) \Big),
$
the bilateral ideal of $\boldsymbol{\zeta}(n+1)$ generated by the element $\mathbf{H}_{n+1}(n+1)$.

\item

 The  epimorphism $\boldsymbol{\pi}_{n,n+1}$ is the (filtered) left inverse  of the  monomorphism 
$\mathbf{i}_{n+1,n}.$
In symbols,
$$
 \boldsymbol{\pi}_{n,n+1} \circ \mathbf{i}_{n+1,n} = Id_{\boldsymbol{\zeta}(n)}.
$$
\end{enumerate}

\end{proposition}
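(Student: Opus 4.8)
The plan is to reduce both assertions to elementary facts about polynomial algebras, the only substantive input being Capelli's Theorem \ref{Capelli generators}. By that theorem $\boldsymbol{\zeta}(n) = \mathbb{C}[\mathbf{H}_1(n), \ldots, \mathbf{H}_n(n)]$ is a free polynomial algebra on its $n$ algebraically independent generators, and likewise $\boldsymbol{\zeta}(n+1) = \mathbb{C}[\mathbf{H}_1(n+1), \ldots, \mathbf{H}_{n+1}(n+1)]$; hence the assignments defining $\mathbf{i}_{n+1,n}$ and $\boldsymbol{\pi}_{n,n+1}$ on generators do extend uniquely to algebra homomorphisms, $\mathbf{i}_{n+1,n}$ is injective (its image freely generates the subalgebra $\mathbb{C}[\mathbf{H}_1(n+1), \ldots, \mathbf{H}_n(n+1)]$), and $\boldsymbol{\pi}_{n,n+1}$ is surjective (its image already contains each $\mathbf{H}_k(n)$).

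For item $2)$ I would simply chase the Capelli generators: for $k = 1, \ldots, n$ one has
$$
\boldsymbol{\pi}_{n,n+1}\big(\mathbf{i}_{n+1,n}(\mathbf{H}_k(n))\big) = \boldsymbol{\pi}_{n,n+1}\big(\mathbf{H}_k(n+1)\big) = \mathbf{H}_k(n),
$$
so the composition $\boldsymbol{\pi}_{n,n+1} \circ \mathbf{i}_{n+1,n}$ fixes a generating set of $\boldsymbol{\zeta}(n)$ and is therefore $\mathrm{Id}_{\boldsymbol{\zeta}(n)}$; this in particular re-proves that $\mathbf{i}_{n+1,n}$ is a monomorphism.

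For item $1)$, the inclusion $\big(\mathbf{H}_{n+1}(n+1)\big) \subseteq Ker\big(\boldsymbol{\pi}_{n,n+1}\big)$ is immediate, since $\boldsymbol{\pi}_{n,n+1}(\mathbf{H}_{n+1}(n+1)) = 0$ and the kernel is a two-sided (indeed, everything being commutative, plain) ideal. For the reverse inclusion I would expand an arbitrary $f \in \boldsymbol{\zeta}(n+1)$ as a polynomial in the single variable $\mathbf{H}_{n+1}(n+1)$ with coefficients in $\mathbb{C}[\mathbf{H}_1(n+1), \ldots, \mathbf{H}_n(n+1)]$, say
$$
f = \sum_{j \geq 0} g_j\big(\mathbf{H}_1(n+1), \ldots, \mathbf{H}_n(n+1)\big)\, \mathbf{H}_{n+1}(n+1)^{j},
$$
which is possible and unique by algebraic independence of the $n+1$ generators. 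Applying $\boldsymbol{\pi}_{n,n+1}$ annihilates every term with $j \geq 1$ and gives $\boldsymbol{\pi}_{n,n+1}(f) = g_0\big(\mathbf{H}_1(n), \ldots, \mathbf{H}_n(n)\big)$; algebraic independence of $\mathbf{H}_1(n), \ldots, \mathbf{H}_n(n)$ then forces $g_0 \equiv 0$ as soon as $\boldsymbol{\pi}_{n,n+1}(f) = 0$, whence $f = \mathbf{H}_{n+1}(n+1) \cdot \sum_{j \geq 1} g_j\, \mathbf{H}_{n+1}(n+1)^{j-1} \in \big(\mathbf{H}_{n+1}(n+1)\big)$.

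The argument has no real obstacle: it is purely formal once one knows, via Capelli's theorem, that the centers $\boldsymbol{\zeta}(n)$ are polynomial algebras on the $\mathbf{H}_k(n)$. The only thing to be mildly careful about is to invoke algebraic \emph{independence} (not merely generation) at the two places above --- first to guarantee that $\mathbf{i}_{n+1,n}$ is well defined and injective, and then to conclude $g_0 \equiv 0$ in the kernel computation.
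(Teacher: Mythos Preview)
Your proof is correct and is exactly the argument the paper has in mind: the paper does not spell out a proof at all, simply remarking that the assertions are ``fairly obvious from the definitions,'' meaning the definitions of $\mathbf{i}_{n+1,n}$ and $\boldsymbol{\pi}_{n,n+1}$ on generators together with Capelli's Theorem~\ref{Capelli generators}. Your write-up just makes this explicit.
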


\begin{proposition}\label{inverso filtrato}

If $n \geq m  $ , then the restriction  $\boldsymbol{\pi}^{(m)}_{n,n+1}$ of $\boldsymbol{\pi}_{n,n+1}$
to  $\boldsymbol{\zeta}(n+1)^{(m)}$
and the restriction  $\mathbf{i}^{(m)}_{n+1,n}$ of  $\mathbf{i}_{n+1,n}$ to  $\boldsymbol{\zeta}(n)^{(m)}$
 are  the  inverse  of  each  other.

\end{proposition}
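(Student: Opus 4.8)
The plan is to reduce the claim to a finite-dimensional linear-algebra fact, using the explicit monomial basis of the filtration pieces of $\boldsymbol{\zeta}(n)$. By item $2$ of the preceding Proposition we already have $\boldsymbol{\pi}_{n,n+1}\circ\mathbf{i}_{n+1,n}=Id_{\boldsymbol{\zeta}(n)}$, hence $\boldsymbol{\pi}^{(m)}_{n,n+1}\circ\mathbf{i}^{(m)}_{n+1,n}=Id_{\boldsymbol{\zeta}(n)^{(m)}}$ after restriction; this restriction makes sense because $\mathbf{i}_{n+1,n}$ is a filtered morphism and $\boldsymbol{\pi}_{n,n+1}$ is filtered too, sending each generator $\mathbf{H}_k(n+1)\in\boldsymbol{\zeta}(n+1)^{(k)}$ either to $\mathbf{H}_k(n)\in\boldsymbol{\zeta}(n)^{(k)}$ or to $0$, so that $\boldsymbol{\pi}_{n,n+1}\big[\boldsymbol{\zeta}(n+1)^{(m)}\big]\subseteq\boldsymbol{\zeta}(n)^{(m)}$. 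It then suffices to prove that $\mathbf{i}^{(m)}_{n+1,n}$ is \emph{surjective} onto $\boldsymbol{\zeta}(n+1)^{(m)}$: being already injective, it would then be a linear isomorphism, forcing $\boldsymbol{\pi}^{(m)}_{n,n+1}$ to be its two-sided inverse and thereby $\mathbf{i}^{(m)}_{n+1,n}\circ\boldsymbol{\pi}^{(m)}_{n,n+1}=Id_{\boldsymbol{\zeta}(n+1)^{(m)}}$ as well.

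To establish surjectivity I would appeal to the Corollary following Theorem \ref{Capelli generators} (equivalently to Theorem \ref{Schur basis}): the set $\{\,\mathbf{H}_\lambda(n)\ :\ \lambda_1\le n,\ |\lambda|\le m\,\}$ is a linear basis of $\boldsymbol{\zeta}(n)^{(m)}$, and the same statement holds with $n$ replaced by $n+1$. The role of the hypothesis $n\ge m$ is exactly that the side condition $\lambda_1\le n$ is then vacuous, since $\lambda_1\le|\lambda|\le m\le n$; consequently \emph{both} $\boldsymbol{\zeta}(n)^{(m)}$ and $\boldsymbol{\zeta}(n+1)^{(m)}$ have bases indexed by the very same set $\{\lambda:|\lambda|\le m\}$, and every basis element $\mathbf{H}_\lambda(n)=\mathbf{H}_{\lambda_1}(n)\cdots\mathbf{H}_{\lambda_p}(n)$ with $|\lambda|\le m$ involves only the generators $\mathbf{H}_1(n),\dots,\mathbf{H}_m(n)$, i.e. only indices $k\le m\le n$ on which $\mathbf{i}_{n+1,n}$ is defined by $\mathbf{H}_k(n)\mapsto\mathbf{H}_k(n+1)$. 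Hence $\mathbf{i}_{n+1,n}$ sends the basis $\{\mathbf{H}_\lambda(n)\}_{|\lambda|\le m}$ of $\boldsymbol{\zeta}(n)^{(m)}$ bijectively onto the basis $\{\mathbf{H}_\lambda(n+1)\}_{|\lambda|\le m}$ of $\boldsymbol{\zeta}(n+1)^{(m)}$, which is precisely the needed surjectivity. One could phrase the same count dimension-theoretically: for $n\ge m$ one has $\dim_{\mathbb C}\boldsymbol{\zeta}(n)^{(m)}=\#\{\lambda:|\lambda|\le m\}=\dim_{\mathbb C}\boldsymbol{\zeta}(n+1)^{(m)}<\infty$, and an injective linear map between finite-dimensional spaces of equal dimension is an isomorphism.

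There is no genuine obstacle here: the two filtration-preservation checks are immediate, and the monomial basis of $\boldsymbol{\zeta}(n)^{(m)}$ is quoted from Capelli's Theorem \ref{Capelli generators} together with its Corollary. The whole content of the Proposition is the \emph{stabilization} phenomenon --- for $n\ge m$ the filtration level $\boldsymbol{\zeta}(n)^{(m)}$ ceases to depend on $n$ --- so that, restricted to it, the ``inclusion'' $\mathbf{i}_{n+1,n}$ and the ``truncation'' $\boldsymbol{\pi}_{n,n+1}$ are forced to be mutually inverse.
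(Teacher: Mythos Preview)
Your argument is correct and is exactly the ``obvious from the definitions'' reasoning the paper alludes to: use the basis $\{\mathbf{H}_\lambda(n):\lambda_1\le n,\ |\lambda|\le m\}$ of $\boldsymbol{\zeta}(n)^{(m)}$, observe that for $n\ge m$ the constraint $\lambda_1\le n$ is vacuous, and conclude that $\mathbf{i}^{(m)}_{n+1,n}$ maps a basis to a basis. The paper does not spell out a proof, so your write-up is a clean realization of the intended argument.
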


The crucial point is that the projections $\boldsymbol{\pi}_{n,n+1}$ admit an {\it{intrinsic/invariant}} presentation
that is founded upon the {\it{Olshanski decomposition}}.

\subsection{The Olshanski decomposition/projection }\label{The Olshanski decomposition/projection}

We recall a special case of an essential costruction due to Olshanski \cite{Olsh1-BR}, \cite{Olsh3-BR}.
For the sake of simplicity, we follow Molev (\cite{Molev1-BR}, pp. 928 ff.).

Let $\mathbf{U}(gl(n+1))^0$ be the centralizer in $\mathbf{U}(gl(n+1))$ of the element $e_{n+1,n+1}$ of the standard basis
of $gl(n+1)$, regarded as an element of $\mathbf{U}(gl(n+1))$.

Let $\mathcal{I}(n+1)$ be the {\it{left ideal}} of $\mathbf{U}(gl(n+1))$ generated by the elements
$$
e_{i,n+1}, \quad i = 1, 2, \ldots, n+1.
$$

Let $\mathcal{I}(n+1)^0$  be the intersection
\begin{equation}\label{first Olshanski decomposition}
\mathcal{I}(n+1)^0 = \mathcal{I}(n+1) \cap \mathbf{U}(gl(n+1))^0.
\end{equation}

We recall that $\mathcal{I}(n+1)^0$ is a {\it{bilateral ideal}} of $\mathbf{U}(gl(n+1)^0$, and  the following {\it{direct sum decomposition}} hold

\begin{equation}\label{direct sum decomposition}
\mathbf{U}((gl(n+1))^0 = \mathbf{U}(gl(n)) \oplus 
\mathcal{I}(n+1)^0.
\end{equation}

Therefore, the {\it{Olshanski map}}
$$
\mathcal{M}_{n+1} : \mathbf{U}((gl(n+1))^0 \twoheadrightarrow \mathbf{U}(gl(n))
$$
that maps any element in the direct summand  $\mathbf{U}(gl(n))$ to itself and any element
in the direct summand  $\mathcal{I}(n+1)^0$ to zero is a well-defined algebra epimorphism.

Since $\boldsymbol{\zeta}(n+1)$ is a subalgebra of $\mathbf{U}((gl(n+1))^0$, the direct sum decomposition (\ref{direct sum decomposition})
induces a  direct sum decomposition
of any element in $\boldsymbol{\zeta}(n+1)$ and the $\mathcal{M}_{n+1}$ map defines, by restriction, an algebra epimorphism
$$
\boldsymbol{\mu}_{n,n+1} : \boldsymbol{\zeta}(n+1) \twoheadrightarrow \boldsymbol{\zeta}(n).
$$

 In plain words, any element $\boldsymbol{\varrho} \in \boldsymbol{\zeta}(n+1)$ admits a {\it{unique}} decomposition

 \begin{equation}\label{Olshanski decomposition}
 \boldsymbol{\varrho} = \boldsymbol{\varrho}' \dotplus \boldsymbol{\varrho}^0, \quad \boldsymbol{\varrho}' \in \boldsymbol{\zeta}(n), \
 \boldsymbol{\varrho}^0 \in \mathcal{I}(n+1)^0.
 \end{equation}

We call the decomposition (\ref{Olshanski decomposition}) the {\it{Olshanski decomposition}} of the
element $\boldsymbol{\varrho} \in \boldsymbol{\zeta}(n+1)$.

In this notation, the projection
$$
\boldsymbol{\mu}_{n,n+1} : \boldsymbol{\zeta}(n+1) \twoheadrightarrow \boldsymbol{\zeta}(n),
$$
$$
\boldsymbol{\mu}_{n+1,n}(\boldsymbol{\varrho}) = \boldsymbol{\varrho}', \quad \boldsymbol{\varrho} \in \boldsymbol{\zeta}(n+1)
$$
is defined.

\begin{proposition}
We have

\begin{enumerate}

\item if $k \leq n$, then
$$
\mathbf{H}_k(n+1) = \mathbf{H}_k(n) \dotplus \mathbf{H}_k(n+1)^0,
$$
where
$$
\mathbf{H}_k(n+1)^0 = \mathbf{H}_k(n+1) - \mathbf{H}_k(n) \in \mathcal{I}(n+1)^0,
$$
and
$$
\mathbf{H}_k(n) \in \boldsymbol{\zeta}(n);
$$
\item 
$
\mathbf{H}_{n+1}(n+1) = \mathbf{H}_{n+1}(n+1)^0.
$
\end{enumerate}
\end{proposition}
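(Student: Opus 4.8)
The plan is to check that, in each case, the asserted ``zero part'' lies in $\mathcal{I}(n+1)^0 = \mathcal{I}(n+1)\cap\mathbf{U}(gl(n+1))^0$ while the asserted ``proper part'' lies in $\mathbf{U}(gl(n))$; the statements then follow immediately from the direct sum decomposition (\ref{direct sum decomposition}) and the uniqueness of the Olshanski decomposition (\ref{Olshanski decomposition}). For item $1)$ the proper part is $\mathbf{H}_k(n)\in\boldsymbol{\zeta}(n)\subset\mathbf{U}(gl(n))$, which requires nothing, and for item $2)$ the proper part is $0$; so everything reduces to proving that $\mathbf{H}_k(n+1)-\mathbf{H}_k(n)\in\mathcal{I}(n+1)^0$ for $k\leq n$, and that $\mathbf{H}_{n+1}(n+1)\in\mathcal{I}(n+1)^0$.

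First I would dispatch the centralizer condition. Since $\mathbf{H}_k(n+1)$ is central in $\mathbf{U}(gl(n+1))$ it commutes with $e_{n+1,n+1}$; and $\mathbf{H}_k(n)$, being a polynomial in the $e_{a,b}$ with $a,b\leq n$, also commutes with $e_{n+1,n+1}$, because $[e_{a,b},e_{n+1,n+1}]=0$ for $a,b\leq n$. Hence $\mathbf{H}_k(n+1)-\mathbf{H}_k(n)\in\mathbf{U}(gl(n+1))^0$, and $\mathbf{H}_{n+1}(n+1)\in\mathbf{U}(gl(n+1))^0$ by centrality.

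The substantive point is the left-ideal membership, for which I would use the classical determinantal form of Proposition \ref{Capelli classical}. In dimension $n+1$ it reads $\mathbf{H}_k(n+1)=\sum_{1\leq i_1<\cdots<i_k\leq n+1}\mathbf{cdet}\big(A_{i_1,\ldots,i_k}\big)$, where $A_{i_1,\ldots,i_k}$ is the $k\times k$ matrix with $(a,b)$-entry $e_{i_a,i_b}+(k-a)\delta_{ab}$. Splitting this sum according to whether $i_k\leq n$ or $i_k=n+1$, the terms with $i_k\leq n$ add up to $\sum_{1\leq i_1<\cdots<i_k\leq n}\mathbf{cdet}\big(A_{i_1,\ldots,i_k}\big)$, which is precisely $\mathbf{H}_k(n)$ by Proposition \ref{Capelli classical} in dimension $n$ (this subsum is empty when $k=n+1$). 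For a term with $i_k=n+1$, the last column of $A_{i_1,\ldots,i_{k-1},n+1}$ has entries $e_{i_1,n+1},\ldots,e_{i_{k-1},n+1},e_{n+1,n+1}$, and the key observation is that each of these — including the diagonal entry $e_{n+1,n+1}$ — is one of the generators $e_{i,n+1}$, $i=1,\ldots,n+1$, of the left ideal $\mathcal{I}(n+1)$. In the column determinant $\mathbf{cdet}\big(A_{i_1,\ldots,i_{k-1},n+1}\big)=\sum_\sigma(-1)^{|\sigma|}a_{\sigma(1),1}\cdots a_{\sigma(k),k}$ the rightmost factor $a_{\sigma(k),k}$ of every monomial is an entry of the last column; hence every monomial is a left multiple of a generator of $\mathcal{I}(n+1)$, so $\mathbf{cdet}\big(A_{i_1,\ldots,i_{k-1},n+1}\big)\in\mathcal{I}(n+1)$. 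Summing over $1\leq i_1<\cdots<i_{k-1}\leq n$ gives $\mathbf{H}_k(n+1)-\mathbf{H}_k(n)\in\mathcal{I}(n+1)$ for $k\leq n$, and $\mathbf{H}_{n+1}(n+1)\in\mathcal{I}(n+1)$.

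Putting the two steps together, $\mathbf{H}_k(n+1)-\mathbf{H}_k(n)\in\mathcal{I}(n+1)^0$ for $k\leq n$ and $\mathbf{H}_{n+1}(n+1)\in\mathcal{I}(n+1)^0$; together with $\mathbf{H}_k(n)\in\mathbf{U}(gl(n))$ and the uniqueness of the decomposition in (\ref{direct sum decomposition})--(\ref{Olshanski decomposition}) this yields both items. The only genuinely delicate point is the one just used: one must realise that $e_{n+1,n+1}$ itself belongs to the generating set of $\mathcal{I}(n+1)$ — without this, the diagonal contribution to the column determinant is not manifestly in the left ideal — and one must respect the column-determinant convention, so that the last-column entries appear as the \emph{rightmost} factors and the monomials land in the \emph{left} ideal $\mathcal{I}(n+1)$ rather than in the corresponding right ideal.
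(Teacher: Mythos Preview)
Your proof is correct. The paper states this proposition without proof (it only follows it with an illustrative example), so there is nothing to compare against; your argument supplies exactly the missing verification. The two-step structure---first the centralizer condition, then left-ideal membership via the column-determinant presentation of Proposition~\ref{Capelli classical}---is the natural one, and your remark about the column-determinant convention placing last-column entries as rightmost factors (so that one lands in the \emph{left} ideal $\mathcal{I}(n+1)$) is precisely the point that makes the argument go through. One could equally well argue from the virtual presentation~(\ref{Capelli elements virtual}), observing that each summand $[i_k\cdots i_1\,|\,i_1\cdots i_k]$ with $i_k=n+1$ has rightmost factor $e_{\alpha,n+1}$ in its defining balanced monomial and hence maps under $\mathfrak{p}$ into $\mathcal{I}(n+1)$; but your determinantal route is more self-contained and avoids tracking the Capelli epimorphism.
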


\begin{example}We have:
\begin{align*}
\mathbf{H}_2(4) &= [21|12]+[31|13]+[41|14]+[32|23]+[42|24]+[43|34]
\\
&= \mathbf{H}_2(3) \dotplus \mathbf{H}_2(4)^0,
\end{align*}
where
$$
\mathbf{H}_2(3) = [21|12]+[31|13]+[32|23] \in \boldsymbol{\zeta}(3),
$$
and
$$
\mathbf{H}_2(4)^0 = [41|14]+[42|24]+[43|34] \in \mathcal{I}(4)^0.
$$
\end{example}\qed

\begin{corollary}\label{Olshanski Capelli} We have

\begin{enumerate}

\item if $k \leq n$, then
$
\boldsymbol{\mu}_{n,n+1} \big(\mathbf{H}_k(n+1)\big) = \mathbf{H}_k(n),
$

\item 
$
\boldsymbol{\mu}_{n,n+1} \big(\mathbf{H}_{n+1}(n+1) \big) = 0.
$
\end{enumerate}

\end{corollary}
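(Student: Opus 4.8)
The plan is to derive both identities as immediate consequences of the preceding Proposition together with the \emph{uniqueness} of the Olshanski decomposition (\ref{Olshanski decomposition}). By construction the map $\boldsymbol{\mu}_{n,n+1}$ sends a central element $\boldsymbol{\varrho}\in\boldsymbol{\zeta}(n+1)$ to the first component $\boldsymbol{\varrho}'$ in its unique decomposition $\boldsymbol{\varrho}=\boldsymbol{\varrho}'\dotplus\boldsymbol{\varrho}^0$ relative to the direct sum (\ref{direct sum decomposition}); so to evaluate $\boldsymbol{\mu}_{n,n+1}$ on a given central element it suffices to exhibit \emph{some} way of writing it as (an element of $\boldsymbol{\zeta}(n)$) $\dotplus$ (an element of $\mathcal{I}(n+1)^0$) and then read off the $\boldsymbol{\zeta}(n)$-summand. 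For item (1), the preceding Proposition furnishes exactly this: for $k\leq n$ one has $\mathbf{H}_k(n+1)=\mathbf{H}_k(n)\dotplus\mathbf{H}_k(n+1)^0$ with $\mathbf{H}_k(n)\in\boldsymbol{\zeta}(n)$ and $\mathbf{H}_k(n+1)^0=\mathbf{H}_k(n+1)-\mathbf{H}_k(n)\in\mathcal{I}(n+1)^0$; hence, by uniqueness, this is \emph{the} Olshanski decomposition of $\mathbf{H}_k(n+1)$ and $\boldsymbol{\mu}_{n,n+1}\big(\mathbf{H}_k(n+1)\big)=\mathbf{H}_k(n)$. For item (2), the same Proposition gives $\mathbf{H}_{n+1}(n+1)=\mathbf{H}_{n+1}(n+1)^0\in\mathcal{I}(n+1)^0$, i.e. the Olshanski decomposition is $0\dotplus\mathbf{H}_{n+1}(n+1)$, so $\boldsymbol{\mu}_{n,n+1}\big(\mathbf{H}_{n+1}(n+1)\big)=0$.

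Thus the corollary is a one-line deduction once the preceding Proposition is in hand, and I would present it essentially as above. If a fully self-contained argument for that Proposition is also wanted, I would argue as follows. Write $\mathbf{H}_k(m)=\sum_{1\leq i_1<\cdots<i_k\leq m}[\,i_k\cdots i_1\mid i_1\cdots i_k\,]$. Each Capelli bitableau $[\,i_k\cdots i_1\mid i_1\cdots i_k\,]$ with $i_1,\dots,i_k\leq n$ involves only the generators $e_{i,j}$ with $i,j\leq n$, hence is the same element of $\mathbf{U}(gl(n))$ and of $\mathbf{U}(gl(n+1))$; so the summands of $\mathbf{H}_k(n+1)$ not involving the index $n+1$ reassemble precisely into $\mathbf{H}_k(n)$, and $\mathbf{H}_k(n+1)-\mathbf{H}_k(n)=\sum[\,(n+1)\,i_{k-1}\cdots i_1\mid i_1\cdots i_{k-1}\,(n+1)\,]$, the sum over $1\leq i_1<\cdots<i_{k-1}\leq n$. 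Both $\mathbf{H}_k(n+1)$ and $\mathbf{H}_k(n)$ are central, hence commute with $e_{n+1,n+1}$, so the difference already lies in $\mathbf{U}(gl(n+1))^0$; it then remains to check it lies in the \emph{left} ideal $\mathcal{I}(n+1)$, and then $\mathbf{H}_k(n+1)-\mathbf{H}_k(n)\in\mathcal{I}(n+1)^0$ follows from (\ref{first Olshanski decomposition}).

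The main obstacle is precisely this last point: realizing each summand $[\,(n+1)\,i_{k-1}\cdots i_1\mid i_1\cdots i_{k-1}\,(n+1)\,]$ as a member of the \emph{left} ideal generated by $e_{1,n+1},\dots,e_{n+1,n+1}$. Using the column-determinant form of Proposition \ref{Capelli classical}, such a summand is $\mathbf{cdet}$ of a $k\times k$ matrix whose column indexed by $n+1$ has entries $e_{i_1,n+1},\dots,e_{i_{k-1},n+1},e_{n+1,n+1}$, all generators of $\mathcal{I}(n+1)$; but in a column determinant the factor coming from that column sits on the \emph{right} of each monomial, so to pull a generator to the left one must first transpose it past the remaining factors using the $gl(n+1)$ commutation relations. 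The resulting correction terms are again column determinants of the same shape with a strictly smaller value of $k$, so the membership follows by a short induction on $k$; this reordering bookkeeping is routine but is the only place where any genuine computation enters.
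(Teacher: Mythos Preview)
Your proof is correct and follows exactly the paper's approach: the corollary is stated without proof in the paper because it is an immediate consequence of the preceding Proposition and the definition of $\boldsymbol{\mu}_{n,n+1}$ via the unique Olshanski decomposition, precisely as you explain in your first paragraph. The additional self-contained argument you supply for the preceding Proposition is not part of the paper's proof of the corollary (the paper states that Proposition without proof as well), but your sketch is reasonable and the inductive commutation argument you outline for membership in the left ideal $\mathcal{I}(n+1)$ is a standard way to handle it.
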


\begin{proposition}\label{pi=mu}

The map $\boldsymbol{\mu}_{n,n+1}$ is the {\it{same}} as the map $\boldsymbol{\pi}_{n,n+1}$.

\end{proposition}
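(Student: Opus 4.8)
The plan is to exploit the fact that both $\boldsymbol{\pi}_{n,n+1}$ and $\boldsymbol{\mu}_{n,n+1}$ are algebra homomorphisms with the same source $\boldsymbol{\zeta}(n+1)$ and the same target $\boldsymbol{\zeta}(n)$, so that it suffices to verify they coincide on a set of algebra generators of $\boldsymbol{\zeta}(n+1)$. The natural choice of generating set is the Capelli family $\mathbf{H}_1(n+1), \ldots, \mathbf{H}_{n+1}(n+1)$.

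First I would recall, from Capelli's Theorem (Theorem \ref{Capelli generators}), that $\mathbf{H}_1(n+1), \ldots, \mathbf{H}_{n+1}(n+1)$ are algebraically independent generators of $\boldsymbol{\zeta}(n+1)$. This is precisely what makes $\boldsymbol{\pi}_{n,n+1}$ well defined as an algebra homomorphism: prescribing arbitrary images of algebraically independent free generators determines a unique algebra map, and its image is the subalgebra generated by $\mathbf{H}_1(n), \ldots, \mathbf{H}_n(n)$, which by the same theorem (applied to $n$) is all of $\boldsymbol{\zeta}(n)$, so $\boldsymbol{\pi}_{n,n+1}$ is indeed an epimorphism. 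On the other hand, $\boldsymbol{\mu}_{n,n+1}$ is an algebra epimorphism by its very construction, being the restriction to the subalgebra $\boldsymbol{\zeta}(n+1) \subset \mathbf{U}(gl(n+1))^0$ of the Olshanski epimorphism associated with the direct sum decomposition (\ref{direct sum decomposition}).

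Then I would compare the two maps on the generators. By Corollary \ref{Olshanski Capelli} we have $\boldsymbol{\mu}_{n,n+1}\big(\mathbf{H}_k(n+1)\big) = \mathbf{H}_k(n)$ for $k \leq n$ and $\boldsymbol{\mu}_{n,n+1}\big(\mathbf{H}_{n+1}(n+1)\big) = 0$. These values coincide verbatim with the defining values of $\boldsymbol{\pi}_{n,n+1}$ on the same generators. Since two algebra homomorphisms out of $\boldsymbol{\zeta}(n+1)$ that agree on a generating set must be equal, we conclude $\boldsymbol{\mu}_{n,n+1} = \boldsymbol{\pi}_{n,n+1}$.

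I do not expect a genuine obstacle here: the substantive work has already been carried out in establishing the Olshanski direct sum decomposition (\ref{direct sum decomposition}) and in computing the action of $\boldsymbol{\mu}_{n,n+1}$ on the Capelli generators (Corollary \ref{Olshanski Capelli}); the present statement is essentially a repackaging of those facts. The only point requiring a modicum of care is the appeal to the algebraic independence of the $\mathbf{H}_k$'s, which underlies both the well-definedness of $\boldsymbol{\pi}_{n,n+1}$ and the reduction to checking equality on generators — once that is invoked, the proposition follows immediately.
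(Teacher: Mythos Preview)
Your proposal is correct and follows essentially the same approach as the paper: both argue that $\boldsymbol{\mu}_{n,n+1}$ and $\boldsymbol{\pi}_{n,n+1}$ are algebra homomorphisms that agree on the Capelli generators $\mathbf{H}_1(n+1),\ldots,\mathbf{H}_{n+1}(n+1)$ (via Corollary \ref{Olshanski Capelli} and the definition of $\boldsymbol{\pi}_{n,n+1}$), and hence coincide on all of $\boldsymbol{\zeta}(n+1)$ by Capelli's Theorem. Your write-up is slightly more detailed in justifying why $\boldsymbol{\pi}_{n,n+1}$ is well defined and why both maps are algebra homomorphisms, but the argument is the same.
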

\begin{proof} The family 
$$
\big\{ \mathbf{H}_1(n+1), \mathbf{H}_2(n+1), \ldots, \mathbf{H}_n(n+1), \mathbf{H}_{n+1}(n+1) \big\}        
$$ 
is a system of algebraically independent generators of the algebra $\boldsymbol{\zeta}(n+1)$.
From Proposition \ref{Olshanski Capelli}, we obtain:
\begin{itemize}

\item [--] if $k \leq n$, then
$$
\boldsymbol{\mu}_{n,n+1}(\mathbf{H}_k(n+1)) = \mathbf{H}_k(n) = \boldsymbol{\pi}_{n,n+1}(\mathbf{H}_k(n+1));
$$
\item [--] 
$
\boldsymbol{\mu}_{n,n+1}(\mathbf{H}_{n+1}(n+1)) = 0 = \boldsymbol{\pi}_{n,n+1}(\mathbf{H}_{n+1}(n+1)).
$
\end{itemize}
\end{proof}

In the following, we refer to the projections
$$\boldsymbol{\mu}_{n,n+1} = \boldsymbol{\pi}_{n,n+1}$$
as the {\it{Capelli-Olshanski projections}}.

From Proposition \ref{inverso filtrato}, the algebra $\boldsymbol{\zeta}$ (direct limit) is the same as the
{\it {inverse limit in the category of filtered algebras}}
\begin{proposition}\label{projective limit} We have
$$
\boldsymbol{\zeta} = \underleftarrow{lim} \ \boldsymbol{\zeta}(n)
$$
with respect to the system of Capelli-Olshanski projections.

\end{proposition}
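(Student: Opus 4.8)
The plan is to reduce the statement to the level of the filtration, where Proposition \ref{inverso filtrato} supplies everything. First I would make precise what the inverse limit means \emph{in the category of filtered algebras}: it is formed degree by degree, so the underlying filtered object of $\underleftarrow{lim} \ \boldsymbol{\zeta}(n)$ has as its $m$-th filtration piece the ordinary vector-space inverse limit $\underleftarrow{lim}_n \ \boldsymbol{\zeta}(n)^{(m)}$ of the projective system $\{\boldsymbol{\zeta}(n)^{(m)},\ \boldsymbol{\pi}^{(m)}_{n,n+1}\}_n$, and the algebra itself is the union $\bigcup_m \underleftarrow{lim}_n \ \boldsymbol{\zeta}(n)^{(m)}$. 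Here one first checks the trivial point that each $\boldsymbol{\pi}_{n,n+1}$ is a morphism of filtered algebras: it sends $\mathbf{H}_k(n+1)$ to $\mathbf{H}_k(n)$ or to $0$, hence sends each spanning monomial $\mathbf{H}_{\lambda}(n+1)$ of $\boldsymbol{\zeta}(n+1)^{(m)}$ into $\boldsymbol{\zeta}(n)^{(m)}$. This filtered inverse limit is the object to be compared with $\boldsymbol{\zeta} = \bigcup_m \boldsymbol{\zeta}^{(m)}$, where $\boldsymbol{\zeta}^{(m)} = \underrightarrow{lim}_n \ \boldsymbol{\zeta}(n)^{(m)}$.

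Next, fixing $m$, I would invoke Proposition \ref{inverso filtrato}: for every $n \geq m$ the restricted maps $\boldsymbol{\pi}^{(m)}_{n,n+1} : \boldsymbol{\zeta}(n+1)^{(m)} \to \boldsymbol{\zeta}(n)^{(m)}$ and $\mathbf{i}^{(m)}_{n+1,n} : \boldsymbol{\zeta}(n)^{(m)} \to \boldsymbol{\zeta}(n+1)^{(m)}$ are mutually inverse isomorphisms. Consequently the tail of the projective system $\{\boldsymbol{\zeta}(n)^{(m)}\}_n$ consists entirely of isomorphisms, so for every $N \geq m$ the canonical projection $\underleftarrow{lim}_n \ \boldsymbol{\zeta}(n)^{(m)} \to \boldsymbol{\zeta}(N)^{(m)}$ is an isomorphism; dually, the tail of the inductive system $\{\boldsymbol{\zeta}(n)^{(m)}\}_n$ also consists of isomorphisms, so the canonical map $\boldsymbol{\zeta}(N)^{(m)} \to \underrightarrow{lim}_n \ \boldsymbol{\zeta}(n)^{(m)} = \boldsymbol{\zeta}^{(m)}$ is an isomorphism for every $N \geq m$. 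Composing these two identifications yields an isomorphism $\theta^{(m)} : \underleftarrow{lim}_n \ \boldsymbol{\zeta}(n)^{(m)} \xrightarrow{\ \sim\ } \boldsymbol{\zeta}^{(m)}$, and, using that $\mathbf{i}^{(m)}_{N+1,N} \circ \boldsymbol{\pi}^{(m)}_{N,N+1} = \mathrm{Id}$ on the stable range together with the defining factorizations of the limit and colimit structure maps, one checks that $\theta^{(m)}$ does not depend on the chosen $N \geq m$.

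Finally I would glue over $m$: the $\theta^{(m)}$ are compatible with the filtration inclusions (both sides embed into $\boldsymbol{\zeta}(N)^{(m)} \subseteq \boldsymbol{\zeta}(N)^{(m+1)}$ for $N$ large), hence assemble to a bijection $\theta : \underleftarrow{lim}_n \ \boldsymbol{\zeta}(n) \to \boldsymbol{\zeta}$; and $\theta$ is an algebra isomorphism because multiplication on each side is filtered, $\boldsymbol{\zeta}(N)^{(m)} \cdot \boldsymbol{\zeta}(N)^{(m')} \subseteq \boldsymbol{\zeta}(N)^{(m+m')}$, and all the structure maps $\boldsymbol{\pi}_{n,n+1}$, $\mathbf{i}_{n+1,n}$ entering its definition are algebra morphisms. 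The only genuinely delicate point, and the one I would flag explicitly, is the very first: one must take the inverse limit \emph{in the category of filtered algebras}, i.e.\ degree by degree and then union, rather than the naive inverse limit in the category of algebras, which is strictly larger (it contains unbounded-degree formal series such as $\sum_k \mathbf{H}_k$). Once the category is pinned down, the statement is a formal consequence of Proposition \ref{inverso filtrato}.
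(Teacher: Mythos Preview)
Your argument is correct and follows exactly the approach of the paper, which simply records that the statement follows from Proposition~\ref{inverso filtrato}; you have merely spelled out the filtered-limit bookkeeping that the paper leaves implicit. Your emphasis on taking the inverse limit in the category of filtered algebras (degree by degree, then union) matches the paper's own caveat, and nothing further is needed.
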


\subsection{Main results}\label{main}

From Theorem \ref{Capelli generators} and Proposition \ref{Olshanski Capelli},
we infer

\begin{proposition}\label{Olshanski altri}

\begin{enumerate} We have

\item $\mathbf{I}_k(n+1) = \mathbf{I}_k(n) \dotplus \mathbf{I}_k(n+1)^0,$
where
$$
\mathbf{I}_k(n+1)^0 = \mathbf{I}_k(n+1) - \mathbf{I}_k(n) \in \mathcal{I}_k(n+1)^0,
$$
and
$$
\mathbf{I}_k(n) \in \boldsymbol{\zeta}(n).
$$
Then
\begin{equation}
\boldsymbol{\pi}_{n,n+1}(\mathbf{I}_k(n+1)) = \mathbf{I}_k(n).
\end{equation}

\item $\mathbf{S}_{\lambda}(n+1) = \mathbf{S}_{\lambda}(n) \dotplus \mathbf{S}_{\lambda}(n+1)^0,$
where
$$
\mathbf{S}_{\lambda}(n+1)^0 = \mathbf{S}_{\lambda}(n+1) - \mathbf{S}_{\lambda}(n) \in \mathcal{I}_k(n+1)^0,
$$
and
$$
\mathbf{S}_{\lambda}(n) \in \boldsymbol{\zeta}(n).
$$
Then
\begin{equation}
\boldsymbol{\pi}_{n,n+1}(\mathbf{S}_{\lambda}(n+1)) = \mathbf{S}_{\lambda}(n).
\end{equation}

\end{enumerate}

\end{proposition}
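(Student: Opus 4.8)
The plan is to reduce everything to the single identity
$$
\boldsymbol{\pi}_{n,n+1}\big(\mathbf{S}_{\lambda}(n+1)\big) = \mathbf{S}_{\lambda}(n),
\qquad \widetilde{\lambda}_1 \leq n.
$$
Indeed, item 1) of the Proposition is the special case $\lambda = (k)$ once one recalls $\mathbf{I}_k(m) = \mathbf{S}_{(k)}(m)$ (note $\widetilde{(k)}_1 = 1 \leq n$, so all these are defined), and both Olshanski decompositions are then automatic: since $\boldsymbol{\pi}_{n,n+1} = \boldsymbol{\mu}_{n,n+1}$ is realized by the Olshanski projection (Proposition \ref{pi=mu}), every $\boldsymbol{\varrho} \in \boldsymbol{\zeta}(n+1)$ decomposes uniquely as $\boldsymbol{\varrho} = \boldsymbol{\mu}_{n,n+1}(\boldsymbol{\varrho}) \dotplus \boldsymbol{\varrho}^0$ with $\boldsymbol{\varrho}^0 \in \mathcal{I}(n+1)^0$; applying this to $\boldsymbol{\varrho} = \mathbf{S}_\lambda(n+1)$ and using the identity above forces $\boldsymbol{\varrho}^0 = \mathbf{S}_\lambda(n+1) - \mathbf{S}_\lambda(n) \in \mathcal{I}(n+1)^0$, and similarly for $\mathbf{I}_k$.

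To prove the identity I would argue by eigenvalues on highest weight vectors. Both sides lie in $\boldsymbol{\zeta}(n)^{(|\lambda|)}$: this is (\ref{filtration element}) for the right-hand side, and for the left-hand side it follows because $\mathbf{S}_\lambda(n+1) \in \boldsymbol{\zeta}(n+1)^{(|\lambda|)}$ and $\boldsymbol{\pi}_{n,n+1}$ --- sending Capelli generators to Capelli generators of the same index --- is a morphism of filtered algebras. By the triangularity argument underlying Theorems \ref{Schur action} and \ref{Schur basis} (equivalently, via $\chi_n$ and the Sahi-Okounkov characterization, Proposition \ref{SahiOkounkv}), an element of $\boldsymbol{\zeta}(n)^{(|\lambda|)}$ is determined by its eigenvalues on the canonical highest weight vectors $v_{\widetilde{\mu}}$ of the modules $Schur_\mu(n)$ with $\mu_1 \leq n$ and $|\mu| \leq |\lambda|$. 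Hence it suffices to show that $\boldsymbol{\pi}_{n,n+1}\big(\mathbf{S}_\lambda(n+1)\big)$ acts on $v_{\widetilde{\mu}}$ by $0$ when $|\mu| < |\lambda|$ and by $\delta_{\widetilde{\lambda},\mu}\, H(\lambda)$ when $|\mu| = |\lambda|$, i.e. with the same eigenvalues as $\mathbf{S}_\lambda(n)$ (Theorem \ref{Schur action}, read with the partition $\widetilde{\mu}$ in place of $\mu$).

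The key step is the transfer of these eigenvalues through the Olshanski projection. Fix $\mu$ with $\mu_1 \leq n$ and let $v_{\widetilde{\mu}} = (D_\mu \,|\, D^P_\mu)$ be the canonical highest weight vector of $Schur_\mu(n+1)$; its $gl(n+1)$-weight is $\widetilde{\mu} = (\widetilde{\mu}_1, \dots, \widetilde{\mu}_n, 0)$, the last coordinate vanishing precisely because $\mu_1 \leq n$. Consequently $v_{\widetilde{\mu}}$ is annihilated by the Olshanski left ideal $\mathcal{I}(n+1)$: for $i \leq n$ the generator $e_{i,n+1}$ is a $gl(n+1)$-raising operator and kills the highest weight vector, while $e_{n+1,n+1} \cdot v_{\widetilde{\mu}} = \widetilde{\mu}_{n+1}\, v_{\widetilde{\mu}} = 0$. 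Writing the Olshanski decomposition $\mathbf{S}_\lambda(n+1) = \boldsymbol{\pi}_{n,n+1}\big(\mathbf{S}_\lambda(n+1)\big) \dotplus \boldsymbol{\varrho}^0$ with $\boldsymbol{\varrho}^0 \in \mathcal{I}(n+1)^0 \subseteq \mathcal{I}(n+1)$, we get $\boldsymbol{\varrho}^0 \cdot v_{\widetilde{\mu}} = 0$, so
$$
\boldsymbol{\pi}_{n,n+1}\big(\mathbf{S}_\lambda(n+1)\big) \cdot v_{\widetilde{\mu}} = \mathbf{S}_\lambda(n+1) \cdot v_{\widetilde{\mu}},
$$
and by Theorem \ref{Schur action} in dimension $n+1$ the latter is $0$ for $|\mu| < |\lambda|$ and $\delta_{\widetilde{\lambda},\mu}\, H(\lambda)\, v_{\widetilde{\mu}}$ for $|\mu| = |\lambda|$. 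On the other hand $\boldsymbol{\pi}_{n,n+1}\big(\mathbf{S}_\lambda(n+1)\big) \in \boldsymbol{\zeta}(n) \subset \mathbf{U}(gl(n))$ is central in $\mathbf{U}(gl(n))$, and $v_{\widetilde{\mu}}$ is simultaneously a $gl(n)$-highest weight vector of dominant weight $(\widetilde{\mu}_1, \dots, \widetilde{\mu}_n)$; the $gl(n)$-submodule it generates inside $Schur_\mu(n+1)$ is therefore irreducible and isomorphic to $Schur_\mu(n)$ (its weight, padded by zeros, is $\widetilde{\mu}$, legitimate because $\mu_1 \leq n$), so the central element acts on it by the scalar equal to its eigenvalue on $v_{\widetilde{\mu}}$. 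Comparing the two computations yields exactly the eigenvalue prescription of $\mathbf{S}_\lambda(n)$, the identity follows, and with it the whole Proposition.

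The main obstacle is the bookkeeping concentrated in this key step: one has to be scrupulous that the hypothesis $\mu_1 \leq n$ is exactly what kills the $(n+1)$-st weight coordinate of $v_{\widetilde{\mu}}$ (hence makes $\mathcal{I}(n+1)$ annihilate it), and that the $gl(n)$-module generated by $v_{\widetilde{\mu}}$ in $Schur_\mu(n+1)$ is genuinely $Schur_\mu(n)$ --- a branching/dominance verification. The remaining ingredient, that the eigenvalues on the $v_{\widetilde{\mu}}$ with $|\mu| \leq |\lambda|$, $\mu_1 \leq n$, together with the filtration-degree bound, determine a central element, is the standard triangularity already in force behind Theorems \ref{Schur action}--\ref{Schur basis} and Proposition \ref{SahiOkounkv}, and only has to be quoted.
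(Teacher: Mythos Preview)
Your argument is correct. The reduction to a single identity for $\mathbf{S}_\lambda$ is legitimate (via $\mathbf{I}_k=\mathbf{S}_{(k)}$ and the uniqueness in the Olshanski decomposition, using Proposition~\ref{pi=mu}), and the eigenvalue comparison is sound: when $\mu_1\le n$ the vector $(D_\mu|D^P_\mu)$ is simultaneously the canonical highest weight vector of $Schur_\mu(n)$ and of $Schur_\mu(n+1)$, its $(n{+}1)$-st weight coordinate vanishes, so $\mathcal{I}(n+1)$ annihilates it and the Olshanski remainder contributes nothing. The uniqueness step---that an element of $\boldsymbol{\zeta}(n)^{(|\lambda|)}$ is determined by its eigenvalues on the $v_{\widetilde\mu}$ with $|\mu|\le|\lambda|$, $\mu_1\le n$---follows from the block-triangular pairing of Theorem~\ref{Schur action} together with Theorem~\ref{Schur basis}, exactly as you say.

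This is, however, a genuinely different route from the paper's. The paper offers essentially no argument beyond the pointer ``From Theorem~\ref{Capelli generators} and Proposition~\ref{Olshanski Capelli}, we infer''; the intended verification is presumably the direct one parallel to Proposition~5.3: in the explicit sum defining $\mathbf{I}_k(n+1)$ (resp.\ $\mathbf{S}_\lambda(n+1)$), the terms not involving the letter $n+1$ reproduce $\mathbf{I}_k(n)$ (resp.\ $\mathbf{S}_\lambda(n)$), while each remaining summand, once written out in $\mathbf{U}(gl(n+1))$ (e.g.\ via the column-permanent form for $\mathbf{I}_k$), visibly has a rightmost factor $e_{\,\cdot\,,\,n+1}$ and hence lies in $\mathcal{I}(n+1)$. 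Your approach trades that bookkeeping for a conceptual eigenvalue argument, at the cost of invoking the triangularity/characterization machinery of Theorems~\ref{Schur action}--\ref{Schur basis}; either is acceptable. The branching remark in your last paragraph is harmless but not strictly needed: since $\boldsymbol{\pi}_{n,n+1}(\mathbf{S}_\lambda(n+1))\in\mathbf{U}(gl(n))$ and $(D_\mu|D^P_\mu)\in\mathbb{C}[M_{n,d}]$, its action is literally the same whether one views the vector inside $Schur_\mu(n)$ or $Schur_\mu(n+1)$.
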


By combining the preceding Proposition with Proposition \ref{inverso filtrato}, we get

\begin{theorem}\label{germs}We have:
\begin{enumerate}

\item Given a positive integer $k$, if $n \geq k$
then
$$
\mathbf{i}_{n+1, n}(\mathbf{I}_k(n)) = \mathbf{I}_k(n+1);
$$

\item Given a partition $\lambda$, if $n \geq  |\lambda|$
then
$$
\mathbf{i}_{n+1, n}(\mathbf{S}_{\lambda}(n)) = \mathbf{S}_{\lambda}(n+1).
$$
\end{enumerate}
\end{theorem}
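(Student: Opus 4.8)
The plan is to deduce the statement formally from Proposition~\ref{Olshanski altri} together with Proposition~\ref{inverso filtrato}; no new representation-theoretic input is needed, since the substance has already been absorbed into Proposition~\ref{Olshanski altri}, whose proof rests on the Olshanski decomposition and on the combinatorial eigenvalue descriptions (Proposition~\ref{horizontal strip}, Theorem~\ref{vertical strip}). First I would fix $m = k$ in case $1)$ and $m = |\lambda|$ in case $2)$. By the filtration estimates recorded after the respective definitions --- namely $\mathbf{I}_k(n) \in \boldsymbol{\zeta}(n)^{(m)}$ for every $m \geq k$, and $\mathbf{S}_{\lambda}(n) \in \boldsymbol{\zeta}(n)^{(m)}$ for every $m \geq |\lambda|$, see eq.~(\ref{filtration element}) --- both $\mathbf{I}_k(n)$ and $\mathbf{I}_k(n+1)$ lie in $\boldsymbol{\zeta}(n)^{(k)}$ and $\boldsymbol{\zeta}(n+1)^{(k)}$ respectively, and likewise $\mathbf{S}_{\lambda}(n)$ and $\mathbf{S}_{\lambda}(n+1)$ lie in $\boldsymbol{\zeta}(n)^{(|\lambda|)}$ and $\boldsymbol{\zeta}(n+1)^{(|\lambda|)}$.

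Next, Proposition~\ref{Olshanski altri} gives the identities $\boldsymbol{\pi}_{n,n+1}(\mathbf{I}_k(n+1)) = \mathbf{I}_k(n)$ and $\boldsymbol{\pi}_{n,n+1}(\mathbf{S}_{\lambda}(n+1)) = \mathbf{S}_{\lambda}(n)$. Because the source element $\mathbf{I}_k(n+1)$ (resp.\ $\mathbf{S}_{\lambda}(n+1)$) lies in $\boldsymbol{\zeta}(n+1)^{(m)}$ with $m=k$ (resp.\ $m=|\lambda|$), these are in fact identities for the \emph{restricted} map $\boldsymbol{\pi}^{(m)}_{n,n+1} : \boldsymbol{\zeta}(n+1)^{(m)} \to \boldsymbol{\zeta}(n)^{(m)}$. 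Under the hypothesis $n \geq m$ --- which is precisely $n \geq k$ in case $1)$ and $n \geq |\lambda|$ in case $2)$ --- Proposition~\ref{inverso filtrato} asserts that $\boldsymbol{\pi}^{(m)}_{n,n+1}$ and $\mathbf{i}^{(m)}_{n+1,n}$ are mutually inverse bijections between $\boldsymbol{\zeta}(n)^{(m)}$ and $\boldsymbol{\zeta}(n+1)^{(m)}$. Applying $\mathbf{i}^{(m)}_{n+1,n}$ to both sides of $\boldsymbol{\pi}^{(m)}_{n,n+1}(\mathbf{I}_k(n+1)) = \mathbf{I}_k(n)$ therefore yields $\mathbf{i}^{(m)}_{n+1,n}(\mathbf{I}_k(n)) = \mathbf{I}_k(n+1)$; and since $\mathbf{i}^{(m)}_{n+1,n}$ is by definition the restriction of $\mathbf{i}_{n+1,n}$ to $\boldsymbol{\zeta}(n)^{(m)}$, this is exactly $\mathbf{i}_{n+1,n}(\mathbf{I}_k(n)) = \mathbf{I}_k(n+1)$, proving $1)$. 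The word-for-word identical argument with $m = |\lambda|$, applied to $\boldsymbol{\pi}^{(m)}_{n,n+1}(\mathbf{S}_{\lambda}(n+1)) = \mathbf{S}_{\lambda}(n)$, proves $2)$.

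The only point deserving a sentence of care --- and which I would spell out --- is the passage from the statement about the global projection $\boldsymbol{\pi}_{n,n+1}$ in Proposition~\ref{Olshanski altri} to the corresponding statement about the restriction $\boldsymbol{\pi}^{(m)}_{n,n+1}$: this is legitimate precisely because \emph{both} the element $\mathbf{I}_k(n+1)$ (resp.\ $\mathbf{S}_{\lambda}(n+1)$) and its image $\mathbf{I}_k(n)$ (resp.\ $\mathbf{S}_{\lambda}(n)$) already sit inside the $m$-th filtration level, so the equality takes place entirely within $\boldsymbol{\zeta}(n+1)^{(m)}$ and $\boldsymbol{\zeta}(n)^{(m)}$. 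Beyond this bookkeeping there is no genuine obstacle; the real content has been front-loaded into Proposition~\ref{Olshanski altri} and Proposition~\ref{inverso filtrato}, and the theorem is their formal consequence.
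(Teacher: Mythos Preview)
Your proof is correct and follows exactly the approach of the paper, which simply states that the theorem follows by combining Proposition~\ref{Olshanski altri} with Proposition~\ref{inverso filtrato}. You have merely spelled out in detail the bookkeeping (choosing $m=k$ and $m=|\lambda|$, checking the filtration levels, and inverting the restricted projection) that the paper leaves implicit.
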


Passing to the direct limit $\underrightarrow{lim} \ \boldsymbol{\zeta}(n) = \boldsymbol{\zeta},$ we  set:

\begin{enumerate}\label{series} 
\item $\mathbf{H}_k  \stackrel{def}{=} \underrightarrow{lim}  
\ \mathbf{H}_k(n) \in \boldsymbol{\zeta}$.

\item $\mathbf{I}_k \stackrel{def}{=} \underrightarrow{lim} \  \mathbf{I}_k(n)  \in \boldsymbol{\zeta}$.

\item $\mathbf{S}_{\lambda} \stackrel{def}{=} \underrightarrow{lim} \ 
\mathbf{S}_{\lambda}(n)  \in \boldsymbol{\zeta}$.

\end{enumerate}

From the definition of the monomorphisms $\mathbf{i}_{n+1, n}$ and Theorem \ref{germs}, 
the elements
$\mathbf{H}_{k}, \mathbf{I}_{k},  \mathbf{S}_{\tilde{\lambda}} \in \boldsymbol{\zeta}$
can be consistently written as {\emph{formal infinite sums}}.

\begin{proposition}\label{formal series} We have

\begin{enumerate} 
\item  
$$
\mathbf{H}_k =
 \sum_{i_1 < \cdots < i_k } \ [ i_k \cdots i_2 i_1 | i_1 i_2 \cdots i_k ],
$$
where the sum is extended to all increasing $k$-tuples $i_1 < i_2 < \cdots < i_k$ in $ \mathbb{Z}^+$.

\item 
$$
\mathbf{I}_k  =
 \sum_{j_1 < j_2 < \cdots < j_p } \ (i_{j_1}! \ i_{j_2}! \cdots i_{j_p}!)^{-1}
  \ [j_p^{i_{j_p}} \cdots j_2^{i_{j_2}} j_1^{i_{j_1}} | j_1^{i_{j_1}} j_2^{i_{j_2}}  \cdots j_p^{i_{j_p}} ]^{*},
$$
 where 
the sum is extended to all $p$-tuples $j_1 < j_2 < \cdots < j_p$ in $ \mathbb{Z}^+$,
and to all the $p$-tuples of exponents $(i_{j_1}, i_{j_2}, \cdots, i_{j_p})$ such that
$$i_{j_1}+i_{j_2}+ \cdots  + i_{j_p} = k.$$

\item 
$$
\mathbf{S}_{\lambda}(n) =
 \frac {1} {H(\tilde{\lambda})} \ \  \sum_S \ [\ \fbox{$S \ | \ S$}\ ], \label{Schur basis2}
$$
where the sum is extended to all  row-increasing tableaux $S$ of shape $\tilde{\lambda}$ on the 
alphabet $\mathbb{Z}^+$.

\end{enumerate}

\end{proposition}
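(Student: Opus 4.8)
The plan is to deduce the three expansions directly from the realisation of $\boldsymbol{\zeta}$ as an inverse limit. By Proposition \ref{projective limit}, $\boldsymbol{\zeta} = \underleftarrow{lim}\,\boldsymbol{\zeta}(n)$ with respect to the Capelli--Olshanski projections $\boldsymbol{\pi}_{n,n+1} = \boldsymbol{\mu}_{n,n+1}$, so an element of $\boldsymbol{\zeta}$ is precisely a compatible sequence $(\varrho_n)_{n}$ with $\boldsymbol{\pi}_{n,n+1}(\varrho_{n+1}) = \varrho_n$; and by Corollary \ref{Olshanski Capelli}, Proposition \ref{Olshanski altri} and Theorem \ref{germs} the elements $\mathbf{H}_k$, $\mathbf{I}_k$, $\mathbf{S}_{\lambda}$ are the families that for $n$ large are stably equal to $\mathbf{H}_k(n)$, $\mathbf{I}_k(n)$, $\mathbf{S}_{\lambda}(n)$ respectively. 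It thus suffices to identify, for every $n$, the $n$-th entry of each such family with the truncation of the claimed infinite sum to the proper alphabet $\{1,\dots,n\}$.

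First I would pin down the meaning of the infinite sums. The bitableaux $[\,i_k\cdots i_1\,|\,i_1\cdots i_k\,]$, the starred bitableaux $[\,\cdots\,]^{*}$ and the double Young--Capelli bitableaux $[\ \fbox{$S \ | \ S$}\ ]$ are built from balanced monomials in $\mathbf{U}(gl(m_0|m_1+n))$ that involve only the proper symbols actually occurring in them, and the Capelli epimorphism $\mathfrak{p}$ is compatible with enlarging the proper alphabet; hence each such bitableau is one and the same element of the naive union $\bigcup_n \mathbf{U}(gl(n))$ as soon as $n$ is at least the largest symbol it uses. Consequently the defining finite sums (\ref{Capelli elements virtual}), (\ref{words}), (\ref{Schur basis1}) for $\mathbf{X}(n)$ and for $\mathbf{X}(n+1)$ (with $\mathbf{X}$ one of $\mathbf{H}_k$, $\mathbf{I}_k$, $\mathbf{S}_{\lambda}$) can be compared term by term inside $\mathbf{U}(gl(n+1))$: the summands of $\mathbf{X}(n+1)$ all of whose indices are $\le n$ reproduce $\mathbf{X}(n)$, while the remaining summands, those using the letter $n+1$, are exactly the summand $\mathbf{X}(n+1)^{0} = \mathbf{X}(n+1) - \mathbf{X}(n)$ of the Olshanski decomposition of Section \ref{The Olshanski decomposition/projection} (the worked example there displays this for $\mathbf{H}_2$). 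Therefore $\mathbf{X}(N)$ is precisely the truncation to $\{1,\dots,N\}$ of the corresponding infinite sum, for every $N$.

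The proof then closes in one line: applying $\boldsymbol{\pi}_{N,N+1}$ to $\mathbf{X}(N+1)$ annihilates exactly the summands that use $N+1$ (Corollary \ref{Olshanski Capelli}, Proposition \ref{Olshanski altri}), so the truncations form a compatible family under the Capelli--Olshanski projections, and this family is $\mathbf{X} = \underrightarrow{lim}\,\mathbf{X}(N)$ by Theorem \ref{germs} and Proposition \ref{projective limit}; hence the displayed series is a legitimate notation for $\mathbf{X} \in \boldsymbol{\zeta}$. I expect the only real content to be the term-by-term comparison of the $\mathbf{X}(n)$ for varying $n$ --- i.e.\ verifying that the individual bitableaux are stable under the naive inclusions $\mathbf{U}(gl(n)) \hookrightarrow \mathbf{U}(gl(n+1))$, even though it is only their fully (skew-)symmetrised sums that are stable under the centre monomorphisms $\mathbf{i}_{n+1,n}$; the rest is bookkeeping already carried out in Sections \ref{The Olshanski decomposition/projection} and \ref{main}.
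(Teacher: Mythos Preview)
Your proposal is correct and follows essentially the same approach as the paper. The paper itself does not give a detailed proof: it simply remarks, just before the proposition, that ``from the definition of the monomorphisms $\mathbf{i}_{n+1,n}$ and Theorem \ref{germs}, the elements $\mathbf{H}_k, \mathbf{I}_k, \mathbf{S}_{\lambda} \in \boldsymbol{\zeta}$ can be consistently written as formal infinite sums,'' and then states the proposition. Your argument spells out exactly what this sentence means --- that the truncation of each infinite sum to $\{1,\dots,n\}$ reproduces $\mathbf{H}_k(n)$, $\mathbf{I}_k(n)$, $\mathbf{S}_\lambda(n)$ and that these truncations are compatible under the structure maps --- which is precisely the content the paper leaves implicit. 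The only cosmetic difference is that you phrase the compatibility via the inverse-limit/Olshanski-projection side (Proposition \ref{projective limit}, Proposition \ref{Olshanski altri}) whereas the paper's one-line justification invokes the direct-limit side ($\mathbf{i}_{n+1,n}$ and Theorem \ref{germs}); since Proposition \ref{inverso filtrato} identifies the two, this is not a genuine divergence.
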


From Proposition \ref{projective limit}, it follows

\begin{corollary} We have:

\begin{enumerate}
\item
$
\underleftarrow{lim}  \ \mathbf{H}_k(n) = \mathbf{H}_k \in \boldsymbol{\zeta},
$

\item
$
\underleftarrow{lim} \  \mathbf{I}_k(n) = \mathbf{I}_k \in \boldsymbol{\zeta},
$

\item
$
 \underleftarrow{lim} \ \mathbf{S}_{\lambda}(n) = \mathbf{S}_{\lambda} \in \boldsymbol{\zeta}.
$

\end{enumerate}

\end{corollary}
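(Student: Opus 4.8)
The plan is to deduce all three assertions from Proposition~\ref{projective limit}, which presents $\boldsymbol{\zeta}$ as the inverse limit $\underleftarrow{lim}\ \boldsymbol{\zeta}(n)$ taken along the Capelli--Olshanski projections $\boldsymbol{\mu}_{n,n+1}=\boldsymbol{\pi}_{n,n+1}$. Under this identification an element of $\boldsymbol{\zeta}$ is the same datum as a coherent sequence $(x_n)_{n\in\mathbb{Z}^+}$, $x_n\in\boldsymbol{\zeta}(n)$, with $\boldsymbol{\pi}_{n,n+1}(x_{n+1})=x_n$ for all $n$. So it suffices to check (i) that the three sequences $\big(\mathbf{H}_k(n)\big)_n$, $\big(\mathbf{I}_k(n)\big)_n$, $\big(\mathbf{S}_\lambda(n)\big)_n$ are coherent in this sense, hence define elements of $\underleftarrow{lim}\ \boldsymbol{\zeta}(n)$, and (ii) that the elements of the inverse limit so obtained coincide with the direct-limit elements $\mathbf{H}_k$, $\mathbf{I}_k$, $\mathbf{S}_\lambda$ introduced on p.~\pageref{series}.

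Point (i) is exactly Corollary~\ref{Olshanski Capelli} and Proposition~\ref{Olshanski altri}: for $k\le n$ one has $\boldsymbol{\pi}_{n,n+1}(\mathbf{H}_k(n+1))=\mathbf{H}_k(n)$, $\boldsymbol{\pi}_{n,n+1}(\mathbf{I}_k(n+1))=\mathbf{I}_k(n)$ and $\boldsymbol{\pi}_{n,n+1}(\mathbf{S}_\lambda(n+1))=\mathbf{S}_\lambda(n)$, while $\boldsymbol{\pi}_{n,n+1}(\mathbf{H}_{n+1}(n+1))=0$ takes care of the edge term (for $n<k$, resp. $n<|\lambda|$, the sequence is simply truncated, i.e. its terms are $0$). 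For point (ii) write $\boldsymbol{\rho}_n\colon\boldsymbol{\zeta}\twoheadrightarrow\boldsymbol{\zeta}(n)$ for the canonical projection attached to the inverse-limit presentation of Proposition~\ref{projective limit}, so that on the image of $\boldsymbol{\zeta}(n')$ in $\boldsymbol{\zeta}$, $n'\ge n$, one has $\boldsymbol{\rho}_n=\boldsymbol{\pi}_{n,n+1}\circ\cdots\circ\boldsymbol{\pi}_{n'-1,n'}$. By definition $\mathbf{H}_k=\underrightarrow{lim}\ \mathbf{H}_k(n)$ is the common image in $\boldsymbol{\zeta}$ of all $\mathbf{H}_k(n)$ with $n\ge k$ (recall $\mathbf{i}_{n+1,n}(\mathbf{H}_k(n))=\mathbf{H}_k(n+1)$ by definition of $\mathbf{i}_{n+1,n}$), and likewise $\mathbf{I}_k$ and $\mathbf{S}_\lambda$ are common images of the respective $n$-th terms for $n\ge k$, resp. $n\ge|\lambda|$, by Theorem~\ref{germs}. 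Applying $\boldsymbol{\rho}_n$ and iterating the coherence relations of point (i) gives $\boldsymbol{\rho}_n(\mathbf{H}_k)=\mathbf{H}_k(n)$ for $n\ge k$ (and $0$ for $n<k$, obtained by pushing past the top generator $\mathbf{H}_{k}(k)$), and similarly $\boldsymbol{\rho}_n(\mathbf{I}_k)=\mathbf{I}_k(n)$ and $\boldsymbol{\rho}_n(\mathbf{S}_\lambda)=\mathbf{S}_\lambda(n)$ for $n$ large. Hence, under $\boldsymbol{\zeta}\cong\underleftarrow{lim}\ \boldsymbol{\zeta}(n)$, the element $\mathbf{H}_k$ corresponds precisely to the coherent sequence $\big(\mathbf{H}_k(n)\big)_n=\underleftarrow{lim}\ \mathbf{H}_k(n)$, and the same for $\mathbf{I}_k$ and $\mathbf{S}_\lambda$, which is the claim.

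The only step needing genuine care — the \emph{hard part} — is the compatibility bookkeeping between the two presentations of $\boldsymbol{\zeta}$: one must verify that the projection $\boldsymbol{\rho}_n$ furnished by Proposition~\ref{projective limit} agrees, on each filtration piece $\boldsymbol{\zeta}^{(m)}$ with $n\ge m$, with the inverse of the iterated monomorphism $\mathbf{i}_{\,\cdot\,,\,\cdot\,}$ supplied by Proposition~\ref{inverso filtrato}, so that evaluating $\boldsymbol{\rho}_n$ on a direct-limit element really does return the $n$-th term of the corresponding sequence. Granted this, the rest is just the repeated application of Proposition~\ref{Olshanski altri}, and the argument is routine.
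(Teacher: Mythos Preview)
Your proposal is correct and follows the same approach as the paper, which simply states the corollary as an immediate consequence of Proposition~\ref{projective limit} without further elaboration. You have merely unpacked what the paper leaves implicit, supplying the coherence checks via Corollary~\ref{Olshanski Capelli}, Proposition~\ref{Olshanski altri}, and the filtered-inverse compatibility of Proposition~\ref{inverso filtrato}; nothing in your argument departs from the paper's intended route.
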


Due the fact that the algebra $\boldsymbol{\zeta}$ is defined as a direct limit, we infer:

\begin{theorem} $ $

\begin{enumerate}

\item
The set
$$
\Big\{  \mathbf{H}_k; \  k \in \mathbb{Z}^+ \Big\}
$$
is a system of free algebraic generators of $\boldsymbol{\zeta}$.

\item
The set
$$
\Big\{  \mathbf{I}_k; \  k \in \mathbb{Z}^+ \Big\}
$$
is a system of free algebraic generators of $\boldsymbol{\zeta}$.

\item
The set
$$
\Big\{ \mathbf{S}_{\lambda}; \  \lambda \ any \ partition \ \Big\}
$$
is a linear basis  of $\boldsymbol{\zeta}$.

\end{enumerate}

\end{theorem}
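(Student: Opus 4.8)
The plan is to obtain all three assertions by transporting their finite-dimensional counterparts across the direct limit, using Proposition \ref{inverso filtrato} as the decisive bridge. First I would record the two structural facts about $\boldsymbol{\zeta} = \underrightarrow{lim}\ \boldsymbol{\zeta}(n)$ that make the argument purely formal: every canonical map $\mathbf{i}_{\infty,n} : \boldsymbol{\zeta}(n) \hookrightarrow \boldsymbol{\zeta}$ into the direct limit is injective (a directed colimit of injective maps over a totally ordered index set), and the filtration is exhaustive, $\boldsymbol{\zeta} = \bigcup_{m} \boldsymbol{\zeta}^{(m)}$ with $\boldsymbol{\zeta}^{(m)} = \underrightarrow{lim}\ \boldsymbol{\zeta}(n)^{(m)}$. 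By Proposition \ref{inverso filtrato}, for $n \geq m$ the transition maps $\mathbf{i}^{(m)}_{n+1,n}$ are isomorphisms, hence for each $n \geq m$ the map $\mathbf{i}_{\infty,n}$ restricts to a $\mathbb{C}$-linear isomorphism $\boldsymbol{\zeta}(n)^{(m)} \cong \boldsymbol{\zeta}^{(m)}$.

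Next I would pin down that the ``infinite'' elements are literally the images of the finite ones. From the very definition of $\mathbf{i}_{n+1,n}$ on the $\mathbf{H}_k$ one has $\mathbf{i}_{\infty,n}(\mathbf{H}_{\lambda}(n)) = \mathbf{H}_{\lambda}$ as soon as $\lambda_1 \leq n$, where $\mathbf{H}_{\lambda}(n) = \mathbf{H}_{\lambda_1}(n) \cdots \mathbf{H}_{\lambda_p}(n)$; from Theorem \ref{germs} one gets $\mathbf{i}_{\infty,n}(\mathbf{I}_{\lambda}(n)) = \mathbf{I}_{\lambda}$ for $n \geq \lambda_1$ (with $\mathbf{I}_{\lambda}(n) = \mathbf{I}_{\lambda_1}(n)\cdots\mathbf{I}_{\lambda_p}(n)$) and $\mathbf{i}_{\infty,n}(\mathbf{S}_{\lambda}(n)) = \mathbf{S}_{\lambda}$ for $n \geq |\lambda|$. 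Then, fixing $m$ and any $n \geq m$, I would invoke the known finite bases of $\boldsymbol{\zeta}(n)^{(m)}$: the monomial basis $\{\mathbf{H}_{\lambda}(n) : |\lambda| \leq m\}$ furnished by Theorem \ref{Capelli generators}; the basis $\{\mathbf{S}_{\lambda}(n) : |\lambda| \leq m\}$ of Theorem \ref{Schur basis}; and the basis $\{\mathbf{I}_{\lambda}(n) : |\lambda| \leq m\}$, which is the image of the first one under the (filtration-preserving, involutive) automorphism $\mathcal{W}_n$ of Corollary \ref{Schur duality} sending $\mathbf{H}_k(n)$ to $\mathbf{I}_k(n)$. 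In all three the side conditions $\widetilde{\lambda}_1 \leq n$, $\lambda_1 \leq n$ are automatic, since $\lambda_1,\widetilde{\lambda}_1 \leq |\lambda| \leq m \leq n$.

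Transporting these three bases through the isomorphism $\boldsymbol{\zeta}(n)^{(m)} \cong \boldsymbol{\zeta}^{(m)}$ of the first step and using the identifications of the second, I conclude that $\{\mathbf{H}_{\lambda} : |\lambda| \leq m\}$, $\{\mathbf{I}_{\lambda} : |\lambda| \leq m\}$ and $\{\mathbf{S}_{\lambda} : |\lambda| \leq m\}$ are each a $\mathbb{C}$-linear basis of $\boldsymbol{\zeta}^{(m)}$. Letting $m \to \infty$ and taking unions, the three families $\{\mathbf{H}_{\lambda}\}$, $\{\mathbf{I}_{\lambda}\}$, $\{\mathbf{S}_{\lambda}\}$, with $\lambda$ ranging over all partitions, are $\mathbb{C}$-linear bases of $\boldsymbol{\zeta}$. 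The last of these is exactly item $3)$; since the monomials $\mathbf{H}_{\lambda}$ (resp.\ $\mathbf{I}_{\lambda}$) are precisely the monomials in the $\mathbf{H}_k$ (resp.\ the $\mathbf{I}_k$), a monomial basis of $\boldsymbol{\zeta}$ is the same thing as free algebraic generation, which gives items $1)$ and $2)$.

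I do not expect any genuinely hard step: once Proposition \ref{inverso filtrato}, Theorem \ref{germs} and the finite basis theorems are in hand, everything is formal homological algebra of filtered direct limits. The one place I would be careful in the written-out proof is the index bookkeeping — checking that for $n \geq m \geq |\lambda|$ the stabilization statements of Theorem \ref{germs}, the filtered-isomorphism statement of Proposition \ref{inverso filtrato}, and the ranges in the finite basis theorems line up so that every $\mathbf{S}_{\lambda}$ with $|\lambda| \leq m$ really is hit and nothing outside this list appears; this is the subtle point, but it is routine.
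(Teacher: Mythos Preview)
Your proposal is correct and is precisely the argument the paper has in mind: the paper gives no explicit proof, only the line ``Due the fact that the algebra $\boldsymbol{\zeta}$ is defined as a direct limit, we infer,'' and your write-up is a careful unpacking of that sentence, using exactly the ingredients already established (Proposition \ref{inverso filtrato}, Theorem \ref{germs}, Theorem \ref{Capelli generators} and its Corollary, Theorem \ref{Schur basis}, and the involution $\mathcal{W}_n$). The index bookkeeping you flag is indeed the only point requiring care, and your handling of it (via $\lambda_1,\widetilde{\lambda}_1 \le |\lambda| \le m \le n$) is correct.
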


\subsection{Duality in $\boldsymbol{\zeta}$}

Let
$$
\mathcal{W} : \boldsymbol{\zeta} \rightarrow \boldsymbol{\zeta}
$$
denote the automorphism such that
$$
\mathcal{W} \Big( \mathbf{H}_k \Big) = \mathbf{I}_k, \quad   for \ every  \ k \in \mathbb{Z}^+.
$$

Since $\underrightarrow{lim} \ \boldsymbol{\zeta}(n) = \boldsymbol{\zeta}$, 
Corollary \ref{Schur duality} implies

\begin{theorem}

\

\begin{enumerate}

\item
For every partition $\lambda$,
 $$
  \mathcal{W} \Big( \mathbf{S}_{\lambda} \Big) = \mathbf{S}_{\tilde{\lambda}}.
 $$

\item
In particular,
$$
  \mathcal{W} \Big( \mathbf{I}_k \Big) = \mathbf{H}_k, \quad for \ every \ k \in\mathbb{Z}^+;
$$
then, the automorphisms $\mathcal{W}$ is an involution.

\end{enumerate}
\end{theorem}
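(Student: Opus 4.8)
The plan is to deduce the statement from the finite-dimensional duality, Corollary \ref{Schur duality}, by passing to the direct limit $\underrightarrow{lim}\ \boldsymbol{\zeta}(n) = \boldsymbol{\zeta}$; the one thing that has to be checked first is that the automorphisms $\mathcal{W}_n$ are compatible with the direct system $\big(\boldsymbol{\zeta}(n), \mathbf{i}_{n+1,n}\big)$.

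First I would establish the commutativity of the square
\[
\mathbf{i}_{n+1,n} \circ \mathcal{W}_n = \mathcal{W}_{n+1} \circ \mathbf{i}_{n+1,n}, \qquad n \in \mathbb{Z}^+ .
\]
Since $\boldsymbol{\zeta}(n)$ is freely generated as an algebra by $\mathbf{H}_1(n), \ldots, \mathbf{H}_n(n)$ (Theorem \ref{Capelli generators}), it suffices to test both sides on these generators. Applied to $\mathbf{H}_k(n)$ with $k \leq n$, the left-hand side gives $\mathbf{i}_{n+1,n}\big(\mathbf{I}_k(n)\big) = \mathbf{I}_k(n+1)$ by Theorem \ref{germs}.1 (whose hypothesis $n \geq k$ is automatic for a generator of $\boldsymbol{\zeta}(n)$), while the right-hand side gives $\mathcal{W}_{n+1}\big(\mathbf{H}_k(n+1)\big) = \mathbf{I}_k(n+1)$ by the definition of $\mathcal{W}_{n+1}$. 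Hence the family $\{\mathcal{W}_n\}$ is a morphism of the direct system, and it induces an algebra automorphism $\mathcal{W}' = \underrightarrow{lim}\ \mathcal{W}_n$ of $\boldsymbol{\zeta}$.

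Next I would identify $\mathcal{W}'$ with $\mathcal{W}$: by construction $\mathcal{W}'\big(\mathbf{H}_k\big) = \mathcal{W}'\big(\underrightarrow{lim}\ \mathbf{H}_k(n)\big) = \underrightarrow{lim}\ \mathcal{W}_n\big(\mathbf{H}_k(n)\big) = \underrightarrow{lim}\ \mathbf{I}_k(n) = \mathbf{I}_k$, and since the $\mathbf{H}_k$ are free algebra generators of $\boldsymbol{\zeta}$ and $\mathcal{W}$ is the unique algebra automorphism with $\mathcal{W}(\mathbf{H}_k) = \mathbf{I}_k$, we conclude $\mathcal{W}' = \mathcal{W}$. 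Item $1$ then follows at once: for any partition $\lambda$,
\[
\mathcal{W}\big(\mathbf{S}_{\lambda}\big) = \mathcal{W}'\big(\underrightarrow{lim}\ \mathbf{S}_{\lambda}(n)\big) = \underrightarrow{lim}\ \mathcal{W}_n\big(\mathbf{S}_{\lambda}(n)\big) = \underrightarrow{lim}\ \mathbf{S}_{\widetilde{\lambda}}(n) = \mathbf{S}_{\widetilde{\lambda}},
\]
where the third equality is Corollary \ref{Schur duality} (used for all $n$ with $\lambda_1, \widetilde{\lambda}_1 \leq n$, i.e. eventually, which is enough in the limit) and the last is the definition of $\mathbf{S}_{\widetilde{\lambda}} \in \boldsymbol{\zeta}$.

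Finally, for item $2$ I would pass to the limit in the finite identities $\mathbf{I}_k(n) = \mathbf{S}_{(k)}(n)$ and $\mathbf{H}_k(n) = \mathbf{S}_{(1^k)}(n)$ to get $\mathbf{I}_k = \mathbf{S}_{(k)}$ and $\mathbf{H}_k = \mathbf{S}_{(1^k)}$ in $\boldsymbol{\zeta}$; since $\widetilde{(k)} = (1^k)$, item $1$ gives $\mathcal{W}\big(\mathbf{I}_k\big) = \mathcal{W}\big(\mathbf{S}_{(k)}\big) = \mathbf{S}_{(1^k)} = \mathbf{H}_k$. Therefore $\mathcal{W}^2\big(\mathbf{H}_k\big) = \mathcal{W}\big(\mathbf{I}_k\big) = \mathbf{H}_k$ for every $k$, and as the $\mathbf{H}_k$ generate $\boldsymbol{\zeta}$ as an algebra and $\mathcal{W}^2$ is an algebra endomorphism, $\mathcal{W}^2 = Id_{\boldsymbol{\zeta}}$. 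The only delicate point in the whole argument is the compatibility square, i.e. confirming that the stabilization bound in Theorem \ref{germs} causes no trouble; as noted, for generators $\mathbf{H}_k(n)$ of $\boldsymbol{\zeta}(n)$ one always has $k \leq n$, so no obstruction arises and the rest is a routine limit passage.
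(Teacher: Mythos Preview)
Your proof is correct and follows essentially the same approach as the paper: the paper's proof consists of the single remark that the result follows from Corollary \ref{Schur duality} by passing to the direct limit $\underrightarrow{lim}\ \boldsymbol{\zeta}(n) = \boldsymbol{\zeta}$, and you have carefully unpacked exactly what this entails, in particular verifying the compatibility $\mathbf{i}_{n+1,n} \circ \mathcal{W}_n = \mathcal{W}_{n+1} \circ \mathbf{i}_{n+1,n}$ on generators via Theorem \ref{germs}.1.
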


\section{The algebra $\Lambda^*(n)$ of shifted symmetric polynomials and the 
Harish-Chandra Isomorphism}\label{Lambda(n)}

In this subsection we follow  Okounkov and Olshanski \cite{OkOlsh-BR}.

The algebra
$\Lambda^*(n)$ of {\textit{shifted symmetric polynomials}} is the algebra
of polynomials $p(x_1, x_2, \ldots, x_n)$  that satisfy
 the {\textit{shifted symmetry}} condition:
$$
 p(x_1, \ldots , x_i, x_{i+1}, \ldots, x_n) = p(x_1, \ldots , x_{i+1} - 1, x_i + 1,
 \ldots, x_n),
$$
for $i = 1, 2, \ldots, n - 1.$

The {\textit{Harish-Chandra isomorphism}} $\chi_n$ is the algebra isomorphism
$$
\chi_n : \boldsymbol{\zeta}(n) \longrightarrow \Lambda^*(n), \qquad  \ A \mapsto \chi_n(A),
$$
where $\chi_n(A)$ is the shifted symmetric polynomial such that, for every highest weight module $V_{\mu}$,
the evaluation $\chi_n(A)(\mu_1, \mu_2, \ldots , \mu_n)$ equals the eigenvalue of
$A \in \boldsymbol{\zeta}(n)$ in  $V_{\mu}$ (see, e.g. \cite{OkOlsh-BR}).

From  Corollary \ref{Capelli eigenvalues}.$1$, it follows

\begin{proposition}\label{elementary}
\begin{align*}
\chi_n(\mathbf{H}_k(n)) =& \ \mathbf{e}_k^{*}(x_1, x_2, \ldots, x_n) 
\\
= & \sum_{1 \leq i_1 < i_2 < \cdots < i_r \leq n} \ (x_{i_1}  + k  - 1)
(x_{i_2}  + k - 2) \cdots (x_{i_k})
\end{align*}
for every $k =  1, 2, \ldots, n.$
\end{proposition}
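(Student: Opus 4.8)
The plan is to read the assertion off the eigenvalue computation of Proposition \ref{Capelli eigenvalues} together with the defining property of the Harish-Chandra isomorphism, and then to upgrade the resulting equality of functions on partitions to an equality of polynomials by a density argument. No new representation theory is needed; everything has already been done in the preceding sections, and what is left is essentially a translation.

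First I would recall the defining property of $\chi_n$ stated just above: for a central element $A \in \boldsymbol{\zeta}(n)$ and a highest weight $\nu$, the value $\chi_n(A)(\nu_1,\ldots,\nu_n)$ is the scalar by which $A$ acts on the irreducible module of highest weight $\nu$. I apply this with $A = \mathbf{H}_k(n)$ and $\nu = \widetilde{\mu}$, the highest weight of the covariant Schur module $Schur_\mu(n)$ (Subsection \ref{Schur modules}), for an arbitrary partition $\mu$ with $\mu_1 \leq n$. Proposition \ref{Capelli eigenvalues} gives $\mathbf{H}_k(n)\cdot v_{\widetilde{\mu}} = e_k^{*}(\widetilde{\mu})\, v_{\widetilde{\mu}}$, and formula (\ref{elementary shift}) identifies $e_k^{*}(\widetilde{\mu})$ with the evaluation $\mathbf{e}_k^{*}(\widetilde{\mu}_1,\ldots,\widetilde{\mu}_n)$ of the polynomial displayed in the statement. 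Hence $\chi_n(\mathbf{H}_k(n))$ and $\mathbf{e}_k^{*}$ are two polynomials in $x_1,\ldots,x_n$ taking the same value at every point $(\widetilde{\mu}_1,\ldots,\widetilde{\mu}_n)$ with $\mu$ a partition such that $\mu_1 \leq n$; as $\mu$ ranges over all such partitions, $\widetilde{\mu}$ ranges over all partitions with at most $n$ parts.

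It then remains to observe that the set $\{(\nu_1,\ldots,\nu_n): \nu_1\ge \nu_2\ge\cdots\ge\nu_n\ge 0,\ \nu_i\in\mathbb{Z}\}$ is Zariski dense in $\mathbb{C}^n$: via the invertible integral change of variables $\nu_i = a_i + a_{i+1} + \cdots + a_n$ it corresponds to $\mathbb{Z}_{\ge 0}^n$, on which a nonzero polynomial cannot vanish. Two polynomials agreeing on a Zariski-dense set are equal, so $\chi_n(\mathbf{H}_k(n)) = \mathbf{e}_k^{*}$. As a byproduct this re-proves $\mathbf{e}_k^{*} \in \Lambda^{*}(n)$, since the left-hand side lies in $\Lambda^{*}(n)$ by the Harish-Chandra isomorphism; thus no separate verification of the shifted symmetry of $\mathbf{e}_k^{*}$ is required. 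The only point that deserves a word of care is that the eigenvalues supplied by Proposition \ref{Capelli eigenvalues} are a priori known only on the covariant (polynomial) modules, i.e.\ at partition weights; this is enough precisely because both sides are polynomials and the density just noted determines a polynomial from its partition values. One could alternatively note $\mathbf{H}_k(n)\in\boldsymbol{\zeta}(n)^{(k)}$, hence $\deg\chi_n(\mathbf{H}_k(n))\le k=\deg\mathbf{e}_k^{*}$, but the density argument already renders this degree bookkeeping unnecessary.
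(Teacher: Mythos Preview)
Your argument is correct and follows the paper's own route: the paper's entire proof is the single line ``From Corollary \ref{Capelli eigenvalues}.$1$, it follows'', i.e.\ exactly your first paragraph. Your Zariski-density step makes explicit a point the paper leaves to the general theory of the Harish-Chandra isomorphism (which there is phrased for all highest weight modules, not just polynomial ones), but the approach is the same.
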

Clearly, $\chi_n(\mathbf{H}_0(n)) = \mathbf{e}_0^{*}(x_1, x_2, \ldots, x_n) = \mathbf{1}$.

The polynomials $\mathbf{e}_k^{*}(x_1, x_2, \ldots, x_n) \in \Lambda^*(n)$ are the
\emph{elementary} shifted symmetric polynomials.

From Theorem \ref{vertical strip}.$2$, it follows

\begin{proposition}\label{shifted complete-BR}
\begin{align*}
\chi_n(\mathbf{I}_k(n)) =& \ \mathbf{h}_k^{*}(x_1, x_2, \ldots, x_n) 
\\
= & \sum_{1 \leq i_1 \leq i_2 < \cdots \leq i_k \leq n} \ (x_{i_1}  - k  + 1)
(x_{i_2}  - k + 2) \cdots (x_{i_k}),
\end{align*}
for every $k =  1, 2, \ldots, n.$
\end{proposition}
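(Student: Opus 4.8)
The plan is to read off $\chi_n(\mathbf{I}_k(n))$ from its eigenvalues, in exactly the way Proposition~\ref{elementary} is obtained from Proposition~\ref{Capelli eigenvalues}. By definition of the Harish-Chandra isomorphism, for $A\in\boldsymbol{\zeta}(n)$ the polynomial $\chi_n(A)$ is the shifted symmetric polynomial whose evaluation at a dominant integral weight $\nu=(\nu_1\geq\cdots\geq\nu_n\geq 0)$ is the scalar by which $A$ acts on the irreducible module of highest weight $\nu$. So the first step is to invoke Theorem~\ref{vertical strip}, which gives $\mathbf{I}_k(n)\cdot v_{\widetilde{\mu}}=h^*_k(\widetilde{\mu})\,v_{\widetilde{\mu}}$ on $Schur_\mu(n)$ (a module of highest weight $\widetilde{\mu}$), together with Theorem~\ref{complete}, which identifies $h^*_k(\widetilde{\mu})$ with the explicit sum $\sum_{1\leq i_1\leq\cdots\leq i_k\leq n}(\widetilde{\mu}_{i_1}-k+1)\cdots\widetilde{\mu}_{i_k}$, i.e.\ with the value at $(\widetilde{\mu}_1,\dots,\widetilde{\mu}_n)$ of the polynomial $\mathbf{h}^*_k(x_1,\dots,x_n)$. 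As $\mu$ ranges over all partitions with $\mu_1\leq n$, the conjugate $\widetilde{\mu}$ ranges over all partitions with at most $n$ rows, that is, over all polynomial dominant weights of $gl(n)$; hence $\chi_n(\mathbf{I}_k(n))$ and $\mathbf{h}^*_k(x_1,\dots,x_n)$ agree at every such point.

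The second step is to upgrade this pointwise agreement to an identity of polynomials. Both $\chi_n(\mathbf{I}_k(n))$ and $\mathbf{h}^*_k(x_1,\dots,x_n)$ lie in $\Lambda^*(n)$ — the former because $\chi_n$ maps $\boldsymbol{\zeta}(n)$ isomorphically onto $\Lambda^*(n)$, the latter by the Okounkov--Olshanski description of the shifted complete symmetric polynomials — and both have degree at most $k$: for $\chi_n(\mathbf{I}_k(n))$ because $\mathbf{I}_k(n)\in\boldsymbol{\zeta}(n)^{(k)}$ and $\chi_n$ respects the natural filtrations, and for $\mathbf{h}^*_k$ directly from its defining sum. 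Their difference is therefore a shifted symmetric polynomial of degree $\leq k$ vanishing at every partition with at most $n$ rows. Expanding it in the shifted Schur polynomials $s^*_{\lambda|n}$ with $|\lambda|\leq k$, which form a basis of the degree-$\leq k$ part of $\Lambda^*(n)$, and using their triangular vanishing/interpolation property on partitions (of which Proposition~\ref{SahiOkounkv} records the needed instance, $s^*_{\lambda|n}(\mu)=\delta_{\lambda\mu}H(\lambda)$), one concludes that all coefficients are zero, so the difference vanishes identically.

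I expect the only delicate points to be bookkeeping rather than substance. First, one must track the conjugation carefully: Theorem~\ref{vertical strip} is phrased for the module $Schur_\mu(n)$, whose highest weight is $\widetilde{\mu}$, so the variables $x_i$ of $\mathbf{h}^*_k$ get matched with the parts $\widetilde{\mu}_i$ and not with $\mu_i$ — this is what makes the ``$-k+j$'' shifts in the formula come out with the right signs. Second, one must cite the standard fact, belonging to the Okounkov--Olshanski theory of $\Lambda^*(n)$ already used in Proposition~\ref{SahiOkounkv}, that evaluation on partitions with at most $n$ rows is injective on $\Lambda^*(n)$ (equivalently, in bounded degree, that the $s^*_{\lambda|n}$ with $|\lambda|\leq k$ are a basis with the stated interpolation behaviour). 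With these in hand the argument is immediate, mirroring the passage from Proposition~\ref{horizontal strip} to Proposition~\ref{elementary}.
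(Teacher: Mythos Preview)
Your proof is correct and follows the same route the paper takes: it simply cites the eigenvalue computation (Theorem~\ref{complete}, which the paper refers to via Theorem~\ref{vertical strip}) and reads off $\chi_n(\mathbf{I}_k(n))$ from the definition of the Harish-Chandra isomorphism, exactly as Proposition~\ref{elementary} is deduced from Proposition~\ref{Capelli eigenvalues}. Your second step, upgrading pointwise agreement to a polynomial identity via the $s^*_{\lambda|n}$ interpolation basis, is valid but more than is needed---since the paper's description of $\chi_n$ already characterizes $\chi_n(A)$ as \emph{the} shifted symmetric polynomial with the given values, uniqueness is built in (and in any case a polynomial in $n$ variables is determined by its values on the Zariski-dense set of partitions with at most $n$ rows).
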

Clearly, $\chi_n(\mathbf{I}_0(n)) = \mathbf{h}_0^{*}(x_1, x_2, \ldots, x_n) = \mathbf{1}$.

The polynomials $\mathbf{h}_k^{*}(x_1, x_2, \ldots, x_n) \in \Lambda^*(n)$ are the
\emph{complete} shifted symmetric polynomials.

Recall that, given a variable $z$ and a natural integer $p$, the symbol $(z)_p$ denotes
 the {\it{falling factorial polynomial}}:
$$
(z)_p = z(z - 1) \cdots (z - p + 1), \quad p \geq 1,
\quad \quad
(z)_0 = 1.
$$

Let $\mu$ be a partition, $\widetilde{\mu}_1 \leq n$.

Following \cite{OkOlsh-BR}, consider the polynomial
\begin{align}
\mathbf{s}^*_\lambda(x_1, \ldots, x_n) =&
\frac { det \Big[ (x_i + n -i)_{\lambda_i + n - j } \Big]} { det \Big[ (x_i + n -i)_{n - j} \Big]}
\\
=& \sum_ {T \in RSSYT(\lambda)} \ (x_{T(s)} - c(s)),
\end{align}
where $RSSYT(\mu)$ denotes the set of all \emph{reverse semistandard}  
\footnote{A  Young tableau whose entries belong to $\{1, . . ., n \}$ 
and weakly decrease from left to right along
each row and strictly decrease down each column.} Young tableaux $T$
of shape $\lambda$ over the set $\{1, 2, \ldots,n \}$, $T(s)$ denotes the symbol of 
in the cell $s$ of the Ferrers diagram of $\mu$ and $c(s) = j - i$ is the content
of the cell $s$ in position $(i, j)$.

The polynomials $\mathbf{s}^*_\mu(x_1, \ldots, x_n) \in \Lambda^*(n)$ are the
{\it{shifted  Schur}} polynomials.

From the Characterization Theorem for the Schur elements $\mathbf{S}_\lambda(n) \in \boldsymbol{\zeta}(n)$
(see subsection \ref{Characterization  Theorems})
and the  Characterization Theorem for the shifted Schur polynomials \cite{OkOlsh-BR}, we have:

\begin{theorem}
For every  $\lambda$, 
$\widetilde{\lambda}_1 \leq n$,
$$  \chi_n( \mathbf{S}_\lambda(n) ) = \mathbf{s}^*_\lambda(x_1, \ldots, x_n).$$
\end{theorem}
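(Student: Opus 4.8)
The plan is to identify the shifted Schur polynomials $\mathbf{s}^*_\lambda$ with the Schur shifted symmetric polynomials $s^*_{\lambda|n}$ appearing in the Sahi--Okounkov characterization (Proposition \ref{SahiOkounkv}), and then invoke the Corollary already derived in subsection \ref{Characterization Theorems}. More precisely, the combinatorial/determinantal formula displayed just above for $\mathbf{s}^*_\lambda(x_1,\ldots,x_n)$ is Okounkov--Olshanski's original definition; their characterization theorem (\cite{OkOlsh-BR}, quoted here as Proposition \ref{SahiOkounkv}) states that this polynomial is the \emph{unique} element of $\Lambda^*(n)$ of degree $\leq |\lambda|$ satisfying $\mathbf{s}^*_\lambda(\mu) = \delta_{\lambda\mu}H(\lambda)$ for all $\mu$ with $|\mu|\leq|\lambda|$, $\mu_1\leq n$. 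So $\mathbf{s}^*_\lambda = s^*_{\lambda|n}$ as elements of $\Lambda^*(n)$.

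First I would recall that $\chi_n(\mathbf{S}_\lambda(n)) = s^*_{\lambda|n}$ is exactly the content of the Corollary following Propositions \ref{Char Schur shift} and \ref{SahiOkounkv}: Proposition \ref{Char Schur shift} (a rewording via $\chi_n$ of the triangularity/orthogonality Theorem \ref{Schur action}) gives $\chi_n(\mathbf{S}_\lambda(n))(\mu) = 0$ for $|\mu|<|\lambda|$ and $\chi_n(\mathbf{S}_\lambda(n))(\mu) = \delta_{\lambda\mu}H(\lambda)$ for $|\mu|=|\lambda|$, while the filtration bound \eqref{filtration element} together with the compatibility of $\chi_n$ with filtrations guarantees $\deg \chi_n(\mathbf{S}_\lambda(n)) \leq |\lambda|$. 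These are precisely the hypotheses of the uniqueness statement in Proposition \ref{SahiOkounkv}, so $\chi_n(\mathbf{S}_\lambda(n)) = s^*_{\lambda|n}$.

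Then I would simply combine the two identifications: $\chi_n(\mathbf{S}_\lambda(n)) = s^*_{\lambda|n} = \mathbf{s}^*_\lambda(x_1,\ldots,x_n)$, which is the claimed identity. The only genuine content beyond bookkeeping is making sure the normalization conventions match --- in particular that the $\mathbf{s}^*_\lambda$ of \cite{OkOlsh-BR} as written above (with the determinantal quotient and the product over reverse semistandard tableaux) is normalized so that $\mathbf{s}^*_\lambda(\lambda) = H(\lambda)$ rather than, say, $\mathbf{s}^*_\lambda(\lambda)=1$; this is a standard fact from \cite{OkOlsh-BR} (the top-degree part of $\mathbf{s}^*_\lambda$ is the ordinary Schur polynomial $s_\lambda$, and the value $\mathbf{s}^*_\lambda(\lambda)$ equals the product of hook lengths), and I would cite it rather than reprove it.

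The main (and really the only) obstacle is this matching of conventions between the two characterization theorems --- i.e. checking that ``$s^*_{\lambda|n}$'' in Sahi's formulation and ``$\mathbf{s}^*_\lambda$'' in Okounkov--Olshanski's formulation are literally the same polynomial with the same normalization $\delta_{\lambda\mu}H(\lambda)$ at integer points, and that the degree bound $\deg \leq |\lambda|$ holds for the determinantal expression. Once that is settled, the proof is a one-line appeal to uniqueness; there is no representation-theoretic work left to do, since all of it was already carried out in the proof of Theorem \ref{Schur action} via Theorem \ref{Double YC action}.
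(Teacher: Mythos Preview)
Your proposal is correct and follows essentially the same route as the paper: the paper's proof is a one-line appeal to the Characterization Theorem for the Schur elements (subsection \ref{Characterization Theorems}, i.e.\ the Corollary $\chi_n(\mathbf{S}_\lambda(n))=s^*_{\lambda|n}$) together with the Okounkov--Olshanski characterization of $\mathbf{s}^*_\lambda$, exactly as you outline. Your additional remarks about matching normalizations are sound and make the argument slightly more explicit than the paper's terse statement.
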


 From Theorem \ref{Capelli generators} and Proposition \ref{elementary}, it follows

\begin{proposition}
\begin{enumerate}
\
\item The set
$$
\Big\{  \mathbf{e}_k^{*}(x_1, x_2, \ldots, x_n); \ k = 1, 2, \ldots, n   \Big\}
$$
is a set of free algebra generators of the polynomial algebra $\Lambda^*(n)$.

\item The set
$$
\Big\{  \mathbf{h}_k^{*}(x_1, x_2, \ldots, x_n); \ k = 1, 2, \ldots, n   \Big\}
$$
is a set of free algebra generators of the polynomial algebra $\Lambda^*(n)$.

\item The set
$$
\Big\{  \mathbf{s}^*_\lambda(x_1, \ldots, x_n); \ \widetilde{\lambda}_1 \leq n   \Big\}
$$
is a linear basis of the polynomial algebra $\Lambda^*(n)$.

\end{enumerate}
\end{proposition}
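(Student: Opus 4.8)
The plan is to transport all three assertions along the Harish-Chandra isomorphism $\chi_n : \boldsymbol{\zeta}(n) \to \Lambda^*(n)$, using the structural results already established for the center $\boldsymbol{\zeta}(n)$. The only general fact needed is the elementary one that an \emph{algebra isomorphism} carries a system of free polynomial generators to a system of free polynomial generators, and a linear basis to a linear basis; granting this, each item becomes the image under $\chi_n$ of a statement that is already proved inside $\boldsymbol{\zeta}(n)$.

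For item $(1)$: by Theorem \ref{Capelli generators} the Capelli elements $\mathbf{H}_1(n), \ldots, \mathbf{H}_n(n)$ are a set of algebraically independent generators of $\boldsymbol{\zeta}(n)$, hence so are their images under the isomorphism $\chi_n$; by Proposition \ref{elementary} these images are exactly the elementary shifted symmetric polynomials $\mathbf{e}_k^{*}(x_1, \ldots, x_n)$, $k = 1, \ldots, n$. So $\Lambda^*(n) = \mathbb{C}[\mathbf{e}_1^{*}, \ldots, \mathbf{e}_n^{*}]$ with the $\mathbf{e}_k^{*}$ algebraically independent, which is the claim.

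For item $(2)$: I would first record that $\mathbf{I}_1(n), \ldots, \mathbf{I}_n(n)$ is again a set of free generators of $\boldsymbol{\zeta}(n)$. This is immediate from the fact that the algebra automorphism $\mathcal{W}_n$ of $\boldsymbol{\zeta}(n)$ sends $\mathbf{H}_k(n)$ to $\mathbf{I}_k(n)$ for $k = 1, \ldots, n$, so it carries the free generating set of item $(1)$ onto $\{\mathbf{I}_k(n)\}_{k=1}^{n}$ (alternatively one may argue directly via the classical triangular Newton-type relations expressing the complete shifted symmetric polynomials in terms of the elementary ones modulo lower-degree terms). Applying $\chi_n$ and invoking Proposition \ref{shifted complete-BR}, which identifies $\chi_n(\mathbf{I}_k(n)) = \mathbf{h}_k^{*}(x_1, \ldots, x_n)$, yields the assertion. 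For item $(3)$: by Theorem \ref{Schur basis} the Schur elements $\mathbf{S}_{\lambda}(n)$, $\widetilde{\lambda}_1 \leq n$, form a linear basis of $\boldsymbol{\zeta}(n)$; applying the isomorphism $\chi_n$ together with the preceding theorem, which gives $\chi_n(\mathbf{S}_{\lambda}(n)) = \mathbf{s}^*_{\lambda}(x_1, \ldots, x_n)$, shows that the $\mathbf{s}^*_{\lambda}$ form a linear basis of $\Lambda^*(n)$.

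I do not expect a genuine obstacle: the entire content has already been proven on the $\boldsymbol{\zeta}(n)$ side, and the isomorphism property of $\chi_n$ does all the work. The single point deserving a line of care is the claim in item $(2)$ that the $\mathbf{I}_k(n)$, $k = 1, \ldots, n$, are free generators of $\boldsymbol{\zeta}(n)$ — but this follows at once from the invertibility of $\mathcal{W}_n$, so no further argument is required.
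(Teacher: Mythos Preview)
Your proposal is correct and follows essentially the same approach as the paper: the proposition is stated there without proof, preceded only by the sentence ``From Theorem \ref{Capelli generators} and Proposition \ref{elementary}, it follows,'' i.e.\ exactly the transport along the Harish-Chandra isomorphism that you spell out. Your treatment is simply more explicit, handling items $(2)$ and $(3)$ via $\mathcal{W}_n$ together with Proposition \ref{shifted complete-BR}, and via Theorem \ref{Schur basis} together with the identification $\chi_n(\mathbf{S}_\lambda(n)) = \mathbf{s}^*_\lambda$, respectively.
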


\section{The algebra $\Lambda^*$  of shifted symmetric functions}\label{Lambda}

Let

$$
\mathbf{i}^*_{n+1,n} : \Lambda^*(n) \hookrightarrow \Lambda^*(n+1)
$$
be the algebra monomorphism such that
$$
\mathbf{i}^*_{n+1,n}\big( \mathbf{e}_k^{*}(x_1, x_2, \ldots, x_n) \big) = 
\mathbf{e}_k^{*}(x_1, x_2, \ldots, x_n, x_{n+1}),
$$
for $k = 1, 2, \ldots, n.$

Given $m \in \mathbb{Z}^+$, let ${\Lambda^*(n)}^{(m)}$ denote the $m$-th filtration element
of $\Lambda^*(n)$ (with respect to the filtration induced  by the standard filtration of the algebra of polynomials
in the variables $x_1, x_2, \ldots, x_n$).

Clearly, the  monomorphisms $\mathbf{i}^*_{n+1,n}$ are {\it{morphisms in  the category of filtered algebras}}, 
that is
$$
\mathbf{i}^*_{n+1,n}\Big[ \Lambda^*(n)^{(m)} \Big] \subseteq \Lambda^*(n+1)^{(m)}.
$$

The algebra of \emph{shifted symmetric functions} $\Lambda^*$ 
is the {\it{direct limit}} (in the category of filtered algebras):
\begin{equation}\label{direct limit}
\Lambda^* = \underrightarrow{lim} \ \Lambda^*(n).
\end{equation}

The algebra $\Lambda^*$ inherits a structure of filtered algebra, where
$$
{\Lambda^*}^{(m)} = \underrightarrow{lim} \ \Lambda^*(n)^{(m)}.
$$

Let
$$
\boldsymbol{\pi}^*_{n,n+1} : \Lambda^*(n+1) \twoheadrightarrow \Lambda^*(n)
$$
be the algebra epimorphism such that
$$
\boldsymbol{\pi}^*_{n,n+1}\big( \mathbf{e}_k^{*}(x_1, x_2, \ldots, x_n, x_{n+1}) \big) = \mathbf{e}_k^{*}(x_1, x_2, \ldots, x_n),
$$
for $k = 1, 2, \ldots, n,$ and
$$
\boldsymbol{\pi}^*_{n,n+1}\big( \mathbf{e}_{n+1}^{*}(x_1, x_2, \ldots, x_n, x_{n+1}) \big) = 0.
$$
Clearly
$$
\boldsymbol{\pi}^*_{n,n+1}\big( \mathbf{f}^{*}(x_1, x_2, \ldots, x_n, x_{n+1}) \big) = \mathbf{f}^{*}(x_1, x_2, \ldots, x_n, 0),
$$
for every $\mathbf{f}^{*}(x_1, x_2, \ldots, x_n, x_{n+1}) \in \Lambda^*(n+1).$

As for the centers $\boldsymbol{\zeta}(n+1)$ and $\boldsymbol{\zeta}(n)$, 
the following Remarks and Proposition on $\Lambda^*(n+1)$ and
$\Lambda^*(n)$ are obvious from the definitions.

\begin{proposition}\label{filtrato shift}
We have:
\begin{enumerate}

\item
$Ker \big( \boldsymbol{\pi}^*_{n,n+1} \big)$
is the bilateral ideal
$$
\Big(  \mathbf{e}_{n+1}^{*}(x_1, x_2, \ldots, x_n, x_{n+1}) \Big)
$$
of $\Lambda^*(n+1)$
generated by the element $\mathbf{e}_{n+1}^{*}(x_1, x_2, \ldots, x_n, x_{n+1})$.

\item
The  projection $\boldsymbol{\pi}^*_{n,n+1}$ is the left inverse  of the  monomorphism $\mathbf{i}^*_{n+1,n}.$
In symbols,
$$
 \boldsymbol{\pi}^*_{n,n+1} \circ \mathbf{i}^*_{n+1,n} = Id_{\Lambda^*(n)}.
$$

\item
If $m \leq n  $, then the restriction  ${\boldsymbol{\pi}^*_{n,n+1}}^{(m)}$ of $\boldsymbol{\pi}^*_{n,n+1}$
to  ${\Lambda^*(n+1)}^{(m)}$
and the restriction  $\mathbf{i}^*_{n+1,n}$ of  $\mathbf{i}^*_{n+1,n}$ to  ${\Lambda^*(n)}^{(m)}$
 are  the  inverse  of  each  other.

\end{enumerate}

\end{proposition}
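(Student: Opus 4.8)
The plan is to deduce all three items from the single structural fact recalled just above, namely that $\Lambda^*(n)$ is the \emph{free} commutative algebra on the elementary shifted symmetric polynomials $\mathbf{e}_1^{*}(x_1,x_2,\ldots,x_n),\ldots,\mathbf{e}_n^{*}(x_1,x_2,\ldots,x_n)$; this makes the argument a direct analogue of the one carried out for the centers $\boldsymbol{\zeta}(n)$ (cf. Proposition~\ref{inverso filtrato}). For item~(2), I would note that since $\mathbf{i}^*_{n+1,n}$ and $\boldsymbol{\pi}^*_{n,n+1}$ are algebra homomorphisms it is enough to evaluate the composite on the free generators of $\Lambda^*(n)$: by the defining formulas $\mathbf{e}_k^{*}(x_1,\ldots,x_n)\mapsto\mathbf{e}_k^{*}(x_1,\ldots,x_n,x_{n+1})\mapsto\mathbf{e}_k^{*}(x_1,\ldots,x_n)$ for $k=1,\ldots,n$, so the composite fixes every generator and is therefore $Id_{\Lambda^*(n)}$; in particular $\mathbf{i}^*_{n+1,n}$ is a monomorphism.

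For item~(1), I would read $\boldsymbol{\pi}^*_{n,n+1}$ in the coordinate system $\mathbf{e}_1^{*}(x_1,\ldots,x_{n+1}),\ldots,\mathbf{e}_{n+1}^{*}(x_1,\ldots,x_{n+1})$ of the polynomial algebra $\Lambda^*(n+1)$: there it is simply the coordinate projection that kills the last generator $\mathbf{e}_{n+1}^{*}(x_1,\ldots,x_{n+1})$ and relabels the others, so $\ker\boldsymbol{\pi}^*_{n,n+1}$ is exactly the principal ideal generated by $\mathbf{e}_{n+1}^{*}(x_1,\ldots,x_{n+1})$, which is bilateral since $\Lambda^*(n+1)$ is commutative. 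As a cross-check one may use the description $\boldsymbol{\pi}^*_{n,n+1}(\mathbf{f}^{*})=\mathbf{f}^{*}|_{x_{n+1}=0}$ already recorded in the text: from the explicit sum defining $\mathbf{e}_k^{*}$, every summand whose largest index is $n+1$ carries the factor $x_{n+1}$, so putting $x_{n+1}=0$ gives $\mathbf{e}_k^{*}(x_1,\ldots,x_n,0)=\mathbf{e}_k^{*}(x_1,\ldots,x_n)$ for $k\le n$ and $\mathbf{e}_{n+1}^{*}(x_1,\ldots,x_n,0)=0$, which is precisely the prescribed action of $\boldsymbol{\pi}^*_{n,n+1}$ on generators.

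For item~(3), the key observation I would make is that $\deg\mathbf{e}_k^{*}=k$, the top homogeneous component of $\mathbf{e}_k^{*}$ being the ordinary elementary symmetric polynomial $\mathbf{e}_k$. Hence $\Lambda^*(n)^{(m)}$ is spanned by the monomials $\prod_k\mathbf{e}_k^{*}(x_1,\ldots,x_n)^{a_k}$ with $\sum_k k\,a_k\le m$, and when $m\le n$ only $\mathbf{e}_1^{*},\ldots,\mathbf{e}_m^{*}$ can occur; the identical description (with the extra variable $x_{n+1}$, and with $a_{n+1}=0$ automatically) holds for $\Lambda^*(n+1)^{(m)}$. Therefore $\mathbf{i}^*_{n+1,n}$ carries the spanning monomials of $\Lambda^*(n)^{(m)}$ bijectively to those of $\Lambda^*(n+1)^{(m)}$, and by item~(2) its restriction is inverted by the restriction of $\boldsymbol{\pi}^*_{n,n+1}$; so the two restricted maps are mutually inverse isomorphisms of filtration pieces.

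I do not expect a genuine obstacle: as the surrounding text says, everything is immediate from the definitions once the free generation of $\Lambda^*(n)$ by the $\mathbf{e}_k^{*}$ is invoked. The only point that needs a line of care is keeping the degree filtration (in the variables $x_i$) aligned with the ``weight'' grading counted by $\sum_k k\,a_k$ on monomials in the $\mathbf{e}_k^{*}$ — which is exactly the content of $\deg\mathbf{e}_k^{*}=k$ used in item~(3).
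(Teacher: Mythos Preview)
Your proposal is correct and follows the same approach the paper intends: the text simply declares the proposition ``obvious from the definitions'' by direct analogy with the corresponding statements for $\boldsymbol{\zeta}(n)$, and your argument unpacks exactly that analogy via the free generation of $\Lambda^*(n)$ by the $\mathbf{e}_k^{*}$.
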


From Proposition \ref{filtrato shift}, the algebra $\Lambda^*$ (direct limit) is the same as the
inverse limit in the category of filtered algebras
$$
\Lambda^* = \underleftarrow{lim} \ \Lambda^*(n)
$$
with respect to the system of the projections $\boldsymbol{\pi}^*_{n,n+1}$, and therefore, the algebra $\Lambda^*$
is the algebra of shifted symmetric functions of \cite{OkOlsh-BR}.

Consider the  commutative diagram:

\begin{equation}\label{diagramma commutativo}
\begin{tikzpicture}[description/.style={fill=white,inner sep=2pt}]
\fontsize{15.0}{20.0}
\matrix (m) [matrix of math nodes, row sep=5em,
column sep=9em, text height=3.0ex, text depth=1.0ex]
{
\textbf{$\boldsymbol{\zeta}^{(m)}(n)$}
&
\textbf{$\boldsymbol{\zeta}^{(m)}(n+1)$}
\\
\textbf{$\Lambda ^{*(m)}(n)$}
&
\textbf{$\Lambda ^{*(m)}(n+1)$}
\\
};

\fontsize{12}{14}

\draw [line width=0.02cm,<<-] (-2.6,-1.7) -- (1.7,-1.7);
\node  [above] at (0,-1.7) {${\boldsymbol{\pi}^*_{n,n+1}}$};
\draw [line width=0.02cm,right hook->] (-2.6,-2.0) -- (1.7,-2.0);
\node  [below] at (0,-2.0) {${\boldsymbol{i}^*_{n+1,n}}$};

\draw [line width=0.02cm, <<-] (-2.6,1.9) -- (1.7,1.9);
\node  [above] at (0,1.9) {${\boldsymbol{\pi}_{n,n+1}}$};
\draw [line width=0.02cm,right hook->] (-2.6,1.6) -- (1.7,1.6);
\node  [below] at (0,1.6) {${\boldsymbol{i}_{n+1,n}}$};

\draw [line width=0.02cm, ->] (-3.7,1.4) -- (-3.7,-1.4);
\node   at (-3.3,0.0) {{$\chi_n$}};
\draw [line width=0.02cm, ->] (3.2,1.4) -- (3.2,-1.4);
\node   at (3.8,0.0) {{$\chi_{n+1}$}};

\end{tikzpicture}
\end{equation}

\begin{theorem}\label{isomorfismo HC infinito}
If $m \leq n  $, the pairs of horizontal arrows in the commutative diagram (\ref{diagramma commutativo}) denote
mutually inverse \emph{isomorphisms}.
\end{theorem}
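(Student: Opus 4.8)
The plan is to assemble the statement from the filtered--inverse results already in hand together with a single commutativity check performed on algebra generators. Throughout, fix $m\le n$.

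\emph{Step 1: the horizontal arrows are mutually inverse isomorphisms in degree $m$.} For the top row this is exactly Proposition \ref{inverso filtrato}, and for the bottom row it is Proposition \ref{filtrato shift}.3. The underlying reason in both cases is that, when $m\le n$, every monomial of filtration degree $\le m$ in the Capelli generators (respectively in the elementary shifted polynomials) involves only $\mathbf{H}_k$ with $k\le m\le n$ (respectively only $\mathbf{e}_k^{*}$ with $k\le m\le n$), i.e. only generators lying in the image of the inclusion; hence $\boldsymbol{\pi}_{n,n+1}$ restricted to $\boldsymbol{\zeta}(n+1)^{(m)}$ is a bijection onto $\boldsymbol{\zeta}(n)^{(m)}$ with inverse $\mathbf{i}_{n+1,n}$, and likewise downstairs for $\boldsymbol{\pi}^*_{n,n+1}$ and $\mathbf{i}^*_{n+1,n}$.

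\emph{Step 2: the vertical arrows are filtered isomorphisms.} The Harish--Chandra isomorphism $\chi_n$ satisfies $\chi_n(\mathbf{H}_k(n)) = \mathbf{e}_k^{*}(x_1,\dots,x_n)$ by Proposition \ref{elementary}, and $\mathbf{e}_k^{*}$ has degree exactly $k$; since $\{\mathbf{H}_k(n)\}$ freely generates $\boldsymbol{\zeta}(n)$ (Theorem \ref{Capelli generators}) and $\{\mathbf{e}_k^{*}\}$ freely generates $\Lambda^*(n)$, it follows that $\chi_n$ carries $\boldsymbol{\zeta}(n)^{(m)}$ isomorphically onto $\Lambda^*(n)^{(m)}$; the same holds for $\chi_{n+1}$. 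Thus all four arrows of the square, restricted to degree $m\le n$, are isomorphisms, and only commutativity remains to be verified.

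\emph{Step 3: commutativity.} Since $\boldsymbol{\zeta}(n)$ is freely generated by $\mathbf{H}_1(n),\dots,\mathbf{H}_n(n)$, it suffices to compare the algebra homomorphisms $\chi_{n+1}\circ\mathbf{i}_{n+1,n}$ and $\mathbf{i}^*_{n+1,n}\circ\chi_n$ on these generators. By definition $\mathbf{i}_{n+1,n}(\mathbf{H}_k(n))=\mathbf{H}_k(n+1)$, by Proposition \ref{elementary} $\chi_{n+1}(\mathbf{H}_k(n+1))=\mathbf{e}_k^{*}(x_1,\dots,x_{n+1})$ and $\chi_n(\mathbf{H}_k(n))=\mathbf{e}_k^{*}(x_1,\dots,x_n)$, and by definition $\mathbf{i}^*_{n+1,n}(\mathbf{e}_k^{*}(x_1,\dots,x_n))=\mathbf{e}_k^{*}(x_1,\dots,x_{n+1})$; hence the two composites agree on generators, so they are equal. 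This gives commutativity of the two triangles built from the inclusions. Commutativity of the triangles built from the projections then follows formally: on $\boldsymbol{\zeta}(n+1)^{(m)}$ the maps $\boldsymbol{\pi}_{n,n+1}$ and $\boldsymbol{\pi}^*_{n,n+1}$ are the inverses of $\mathbf{i}_{n+1,n}$ and $\mathbf{i}^*_{n+1,n}$ by Step 1, while the vertical maps are isomorphisms, so $\chi_n\circ\boldsymbol{\pi}_{n,n+1}=\boldsymbol{\pi}^*_{n,n+1}\circ\chi_{n+1}$ on $\boldsymbol{\zeta}(n+1)^{(m)}$; alternatively one checks this directly on $\mathbf{H}_1(n+1),\dots,\mathbf{H}_{n+1}(n+1)$ using Proposition \ref{pi=mu} and the fact that $\boldsymbol{\pi}^*_{n,n+1}$ sends $\mathbf{e}_k^{*}(x_1,\dots,x_{n+1})$ to $\mathbf{e}_k^{*}(x_1,\dots,x_n)$ for $k\le n$ and to $0$ for $k=n+1$. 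Combining Steps 1--3 gives the theorem.

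I do not expect a serious obstacle; the only delicate point is the bookkeeping of filtration degrees in Step 1 --- namely that the hypothesis $m\le n$ is precisely what forces the projections, restricted to degree $m$, to land in the subalgebra generated by the ``old'' generators, which is what makes the horizontal arrows invertible there. Everything else reduces to a computation on generators via Proposition \ref{elementary} and the definitions of the four structure maps.
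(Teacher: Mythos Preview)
Your proposal is correct and follows essentially the same path the paper implicitly takes: the theorem is stated without a separate proof in the paper because the horizontal-arrow claim is nothing more than Proposition~\ref{inverso filtrato} (top row) together with Proposition~\ref{filtrato shift}.3 (bottom row), while commutativity is checked on the Capelli generators via Proposition~\ref{elementary} and the very definitions of $\mathbf{i}_{n+1,n}$, $\mathbf{i}^*_{n+1,n}$, $\boldsymbol{\pi}_{n,n+1}$, $\boldsymbol{\pi}^*_{n,n+1}$. Your Step~2 on the filtered nature of $\chi_n$ is a helpful amplification the paper leaves tacit, but it is not a different argument.
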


Passing to the direct limit, we get the isomorphism of filtered algebras:
$$
    \chi : \boldsymbol{\zeta} \rightarrow  \Lambda^*.
$$

Given $\boldsymbol{\varrho} = \underrightarrow{lim} \ \boldsymbol{\varrho}(n) \in \boldsymbol{\zeta}^{*(m)}$
and every partion $\mu$,
if
$$
n \geq  max\{ m,  \ l(\widetilde{\mu}) = \mu_1 \},
$$
then
$$
\chi_n(\boldsymbol{\varrho}(n))(\widetilde{\mu}) =  \chi_{n+1} \big( {\mathbf{i}^*_{n+1,n}}^{(m)} (\boldsymbol{\varrho}(n)) \big)  (\widetilde{\mu}) = \chi_{n+1}(\boldsymbol{\varrho}(n+1))(\widetilde{\mu}).
$$

Therefore, the sequence
$$
\Big( \ \chi_{n} \big( (\boldsymbol{\varrho}(n)) \big)  (\widetilde{\mu}) \ \Big)_{n \in \mathbb{N}^+}
$$
is \emph{definitively constant} and
the eigenvalue
\begin{equation}\label{universal HarishChandra}
\chi(\boldsymbol{\varrho})(\widetilde{\mu}) = \chi_n(\boldsymbol{\varrho}(n))(\widetilde{\mu}), 
\end{equation}
$n$ sufficiently  large, is well-defined.

\begin{corollary}
\begin{enumerate}
\
\item
For every $k \in \mathbb{Z}^+$,
$$
\chi \big( \mathbf{H}_k \big) = \mathbf{e}^*_k \in \Lambda^*,
$$
where
$$
\mathbf{e}^*_k = \sum_{ i_1 < i_2 < \cdots < i_k } \ (x_{i_1}  + k  - 1)
(x_{i_2}  + k - 2) \cdots (x_{i_k}), \quad i_s \in \mathbb{Z}^+,
$$
$\mathbf{e}^*_k$ the $k$-th elementary shifted symmetric function;

\item
For every $k \in \mathbb{Z}^+$,
$$
\chi \big( \mathbf{I}_k \big) = \mathbf{h}^*_k \in \Lambda^*,
$$
where
$$
\mathbf{h}_k^{*}
= \sum_{i_1 \leq i_2 < \cdots \leq i_k } \ (x_{i_1}  - k  + 1)
(x_{i_2}  - k + 2) \cdots (x_{i_k}), \quad i_s \in \mathbb{Z}^+,
$$
$\mathbf{h}^*_k$ the $k$-th complete shifted symmetric function.

\end{enumerate}

\end{corollary}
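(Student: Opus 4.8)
The plan is to obtain the statement as an immediate passage to the direct limit of the finite-dimensional identifications $\chi_n(\mathbf{H}_k(n)) = \mathbf{e}_k^{*}(x_1,\dots,x_n)$ and $\chi_n(\mathbf{I}_k(n)) = \mathbf{h}_k^{*}(x_1,\dots,x_n)$ already recorded in Propositions \ref{elementary} and \ref{shifted complete-BR}. First I would note that, by the very definition of the Capelli monomorphism $\mathbf{i}_{n+1,n}$, one has $\mathbf{i}_{n+1,n}(\mathbf{H}_k(n)) = \mathbf{H}_k(n+1)$ for all $n \geq k$, so the family $(\mathbf{H}_k(n))_{n \geq k}$ is a coherent system defining $\mathbf{H}_k = \underrightarrow{lim}\ \mathbf{H}_k(n) \in \boldsymbol{\zeta}$; likewise $(\mathbf{I}_k(n))_{n \geq k}$ is coherent by Theorem \ref{germs}.1, defining $\mathbf{I}_k$. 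On the target side, $\mathbf{i}^*_{n+1,n}(\mathbf{e}_k^{*}(x_1,\dots,x_n)) = \mathbf{e}_k^{*}(x_1,\dots,x_{n+1})$ holds by the definition of $\mathbf{i}^*_{n+1,n}$, and the analogous coherence for the complete shifted polynomials, $\mathbf{i}^*_{n+1,n}(\mathbf{h}_k^{*}(x_1,\dots,x_n)) = \mathbf{h}_k^{*}(x_1,\dots,x_{n+1})$ for $n \geq k$, follows from the coherence of $(\mathbf{I}_k(n))$ together with the commutativity of diagram (\ref{diagramma commutativo}).

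Then I would apply Theorem \ref{isomorfismo HC infinito}: for $m \geq k$ and $n \geq m$ the squares in (\ref{diagramma commutativo}) commute and the horizontal arrows are mutually inverse isomorphisms, so the isomorphisms $(\chi_n)_{n \geq k}$ constitute a morphism of direct systems carrying $(\mathbf{H}_k(n))_n$ to $(\mathbf{e}_k^{*}(x_1,\dots,x_n))_n$ and $(\mathbf{I}_k(n))_n$ to $(\mathbf{h}_k^{*}(x_1,\dots,x_n))_n$. Passing to $\underrightarrow{lim}$ gives at once $\chi(\mathbf{H}_k) = \mathbf{e}_k^{*}$ and $\chi(\mathbf{I}_k) = \mathbf{h}_k^{*}$ in $\Lambda^* = \underrightarrow{lim}\ \Lambda^*(n)$. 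Finally, the explicit formal-series expressions for $\mathbf{e}_k^{*}$ and $\mathbf{h}_k^{*}$ over the index set $\mathbb{Z}^+$ are just the defining finite sums for $\mathbf{e}_k^{*}(x_1,\dots,x_n)$ and $\mathbf{h}_k^{*}(x_1,\dots,x_n)$ with $n$ let to infinity; by Propositions \ref{projective limit} and \ref{filtrato shift} the algebras $\boldsymbol{\zeta}$ and $\Lambda^*$ are simultaneously inverse limits under $\boldsymbol{\pi}_{n,n+1}$ and $\boldsymbol{\pi}^*_{n,n+1}$, and truncating the infinite sum to the first $n$ indices recovers exactly the $n$-th component, so each series is well defined and represents the correct limit element.

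The proof involves no genuine difficulty once the finite-level data are in place: the single point to check carefully is that $(\chi_n)$ is compatible with the transition maps on both sides, i.e. $\chi_{n+1}\circ \mathbf{i}_{n+1,n} = \mathbf{i}^*_{n+1,n}\circ\chi_n$ on the pertinent filtration pieces, and this is precisely Theorem \ref{isomorfismo HC infinito}. Thus the argument reduces to quoting Propositions \ref{elementary} and \ref{shifted complete-BR} at finite level and invoking the commutative diagram; the main (mild) obstacle, if any, is purely notational, namely keeping track of the ranges $n \geq k$ and $n \geq m$ for which the relevant restrictions of $\mathbf{i}_{n+1,n}$ and $\mathbf{i}^*_{n+1,n}$ become isomorphisms.
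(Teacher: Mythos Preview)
Your proposal is correct and matches the paper's approach exactly: the Corollary is stated without proof in the paper precisely because it is meant to follow immediately from Propositions~\ref{elementary} and~\ref{shifted complete-BR} at finite level together with the passage to the direct limit via the commutative diagram~(\ref{diagramma commutativo}) and Theorem~\ref{isomorfismo HC infinito}. Your write-up simply makes explicit the coherence checks that the paper leaves implicit.
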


Since $\chi_n \big(\mathbf{S}_{\lambda}(n)\big) = s^*_{\lambda}(n)$, from Proposition 
\ref{Olshanski altri}, item $2)$ and Theorem \ref{germs}, item  $2)$ we have

\begin{corollary}
\begin{enumerate}

\

\item (stability property  \cite{OkOlsh-BR})
$\boldsymbol{\pi}^*_{n,n+1}(s^*_{\lambda}(n+1)) = s^*_{\lambda}(n) , \quad n \in \mathbb{Z}^+.$

\item If $n \geq |\lambda|$, then
$$\boldsymbol{i}^*_{n+1,n}(s^*_{\lambda}(n)) = s^*_{\lambda}(n+1).$$
\end{enumerate}
\end{corollary}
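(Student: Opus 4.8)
The plan is to deduce both statements by transporting, across the Harish-Chandra isomorphisms $\chi_n$, two facts about the Schur elements that are already available in the center: Proposition \ref{Olshanski altri}, item $2)$, which gives $\boldsymbol{\pi}_{n,n+1}\big(\mathbf{S}_{\lambda}(n+1)\big) = \mathbf{S}_{\lambda}(n)$, and Theorem \ref{germs}, item $2)$, which gives $\mathbf{i}_{n+1,n}\big(\mathbf{S}_{\lambda}(n)\big) = \mathbf{S}_{\lambda}(n+1)$ for $n \geq |\lambda|$. The only thing I would record explicitly before the diagram chase is the commutativity of diagram (\ref{diagramma commutativo}) at the level of the four maps, namely
$$
\chi_n \circ \boldsymbol{\pi}_{n,n+1} = \boldsymbol{\pi}^*_{n,n+1} \circ \chi_{n+1},
\qquad
\chi_{n+1} \circ \mathbf{i}_{n+1,n} = \mathbf{i}^*_{n+1,n} \circ \chi_n .
$$
Both identities are checked on algebra generators: $\chi_n$ sends $\mathbf{H}_k(n)$ to $\mathbf{e}^*_k(x_1,\ldots,x_n)$ (Proposition \ref{elementary}); $\boldsymbol{\pi}_{n,n+1}$ and $\boldsymbol{\pi}^*_{n,n+1}$ kill the $(n+1)$-st generator and fix the first $n$; and $\mathbf{i}_{n+1,n}$, $\mathbf{i}^*_{n+1,n}$ adjoin the new generator (resp. the new variable $x_{n+1}$). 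Evaluating the two sides on $\mathbf{H}_1(n+1),\ldots,\mathbf{H}_{n+1}(n+1)$ (resp. on $\mathbf{H}_1(n),\ldots,\mathbf{H}_n(n)$) gives equality on generators, hence everywhere since all four maps are algebra homomorphisms.

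Granting this, item $2)$ is immediate: for $n \geq |\lambda|$ apply $\chi_{n+1}$ to Theorem \ref{germs}, item $2)$, and use the second commutativity relation together with $\chi_n\big(\mathbf{S}_{\lambda}(n)\big) = s^*_{\lambda}(n)$ and $\chi_{n+1}\big(\mathbf{S}_{\lambda}(n+1)\big) = s^*_{\lambda}(n+1)$ (the Theorem stating $\chi_n(\mathbf{S}_\lambda(n)) = \mathbf{s}^*_\lambda(x_1,\ldots,x_n)$), obtaining
$$
s^*_{\lambda}(n+1) = \chi_{n+1}\big(\mathbf{i}_{n+1,n}(\mathbf{S}_{\lambda}(n))\big) = \mathbf{i}^*_{n+1,n}\big(\chi_n(\mathbf{S}_{\lambda}(n))\big) = \mathbf{i}^*_{n+1,n}\big(s^*_{\lambda}(n)\big).
$$
Likewise item $1)$ follows by applying $\chi_n$ to Proposition \ref{Olshanski altri}, item $2)$, and using the first commutativity relation:
$$
\boldsymbol{\pi}^*_{n,n+1}\big(s^*_{\lambda}(n+1)\big) = \boldsymbol{\pi}^*_{n,n+1}\big(\chi_{n+1}(\mathbf{S}_{\lambda}(n+1))\big) = \chi_n\big(\boldsymbol{\pi}_{n,n+1}(\mathbf{S}_{\lambda}(n+1))\big) = \chi_n\big(\mathbf{S}_{\lambda}(n)\big) = s^*_{\lambda}(n),
$$
which holds for all $n$ (no lower bound on $n$ is needed here, in contrast with item $2)$, because the Capelli--Olshanski projection always respects the Olshanski decomposition of $\mathbf{S}_{\lambda}(n+1)$, whereas $\mathbf{i}_{n+1,n}$ sends $\mathbf{S}_{\lambda}(n)$ to $\mathbf{S}_{\lambda}(n+1)$ only once $n$ is large enough).

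There is no real obstacle here: the corollary is a two-line diagram chase once the commutativity of (\ref{diagramma commutativo}) is in hand, and that commutativity is itself a generator-level verification. The only points requiring attention are bookkeeping ones --- quoting the \emph{conjugate-free} normalization $\chi_n(\mathbf{S}_\lambda(n)) = s^*_\lambda(n)$ consistently, and carrying along the hypothesis $n \geq |\lambda|$ in item $2)$ (which in particular forces $\widetilde{\lambda}_1,\lambda_1 \leq n$, so that all the symbols $\mathbf{S}_\lambda(n)$, $\mathbf{S}_\lambda(n+1)$, $s^*_\lambda(n)$, $s^*_\lambda(n+1)$ are defined). I would also remark that item $1)$ recovers the Okounkov--Olshanski stability principle \cite{OkOlsh-BR}, and that it can alternatively be read off directly from $\boldsymbol{\pi}^*_{n,n+1}(f) = f(x_1,\ldots,x_n,0)$ applied to the tableau formula for $\mathbf{s}^*_\lambda$, but the representation-theoretic route above is shorter and reuses what has already been established.
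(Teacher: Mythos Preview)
Your proposal is correct and follows essentially the same route as the paper: the paper simply records that the corollary follows from $\chi_n(\mathbf{S}_\lambda(n)) = s^*_\lambda(n)$ together with Proposition \ref{Olshanski altri}, item $2)$ and Theorem \ref{germs}, item $2)$, and you have spelled out the implicit diagram chase through (\ref{diagramma commutativo}). Your explicit verification of the commutativity relations $\chi_n \circ \boldsymbol{\pi}_{n,n+1} = \boldsymbol{\pi}^*_{n,n+1} \circ \chi_{n+1}$ and $\chi_{n+1} \circ \mathbf{i}_{n+1,n} = \mathbf{i}^*_{n+1,n} \circ \chi_n$ on the Capelli generators is a useful addition that the paper leaves tacit.
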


The shifted symmetric Schur \emph{function} $s^*_{\lambda}$ is the (direct/inverse)  
limit
$$
s^*_{\lambda} = \underrightarrow{lim} (s^*_{\lambda}(n)) = \underleftarrow{lim} (s^*_{\lambda}(n)).
$$

Then
\begin{corollary} For every $\lambda$, we have
\begin{enumerate}

\item
$$
\chi \big(\mathbf{S}_{\lambda}\big) =  s^*_{\lambda}.
$$

\item
$$
\mathbf{s}^*_\lambda =
\sum_ {T \in RSSYT(\lambda)} \ (x_{T(s)} - c(s)),
$$
where $RSSYT(\lambda)$ is the set of all reverse semistandard Young tableaux $T$
of shape $\lambda$ over the set $\mathbb{Z}^+$.
\end{enumerate}
\end{corollary}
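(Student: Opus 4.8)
The plan is to read both statements off the limit structures already in place, using the identity $\chi_n(\mathbf{S}_\lambda(n)) = \mathbf{s}^*_\lambda(x_1,\ldots,x_n) = s^*_\lambda(n)$ (Section~\ref{Lambda(n)}) together with the stability of the $\mathbf{S}_\lambda(n)$'s and of the $s^*_\lambda(n)$'s. For item~$1$, recall from Theorem~\ref{germs}, item~$2$, that $\mathbf{i}_{n+1,n}(\mathbf{S}_\lambda(n)) = \mathbf{S}_\lambda(n+1)$ once $n \geq |\lambda|$, so $\mathbf{S}_\lambda = \underrightarrow{lim}\,\mathbf{S}_\lambda(n)$ is a bona fide element of $\boldsymbol{\zeta}$; dually, Proposition~\ref{Olshanski altri}, item~$2$, gives $\boldsymbol{\pi}_{n,n+1}(\mathbf{S}_\lambda(n+1)) = \mathbf{S}_\lambda(n)$, and the preceding Corollary records the parallel identities $\mathbf{i}^*_{n+1,n}(s^*_\lambda(n)) = s^*_\lambda(n+1)$ and $\boldsymbol{\pi}^*_{n,n+1}(s^*_\lambda(n+1)) = s^*_\lambda(n)$. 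Since the squares in diagram~(\ref{diagramma commutativo}) commute and, by Theorem~\ref{isomorfismo HC infinito}, restrict in each fixed filtration degree to mutually inverse isomorphisms, passing to the (direct, equivalently inverse) limit yields $\chi(\mathbf{S}_\lambda) = \underrightarrow{lim}\,\chi_n(\mathbf{S}_\lambda(n)) = \underrightarrow{lim}\,s^*_\lambda(n) = s^*_\lambda$. Equivalently, one feeds this into the universal eigenvalue formula~(\ref{universal HarishChandra}): $\chi(\mathbf{S}_\lambda)(\widetilde{\mu})$ is the eventually constant value $\chi_n(\mathbf{S}_\lambda(n))(\widetilde{\mu}) = s^*_\lambda(n)(\widetilde{\mu})$, which is by definition $s^*_\lambda(\widetilde{\mu})$.

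For item~$2$, I would expand $s^*_\lambda(n) = \mathbf{s}^*_\lambda(x_1,\ldots,x_n)$ by the tableau formula recalled in Section~\ref{Lambda(n)}, namely $\sum_T \prod_s (x_{T(s)} - c(s))$ over the finite set of reverse semistandard tableaux $T$ of shape $\lambda$ with entries in $\{1,\ldots,n\}$, and then check that these finite sums are compatible with the projections $\boldsymbol{\pi}^*_{n,n+1}$. If a tableau $T$ uses the value $n+1$, then — since in a reverse semistandard tableau the largest entry occurs in the first row and, being maximal, must already appear in the corner cell $(1,1)$, which has content $0$ — the monomial of $T$ carries the factor $x_{n+1}-0 = x_{n+1}$ and so vanishes at $x_{n+1}=0$, while the monomials of the remaining $T$'s (entries in $\{1,\ldots,n\}$) are untouched by the substitution; this is just the combinatorial face of the already-established stability $\boldsymbol{\pi}^*_{n,n+1}(s^*_\lambda(n+1)) = s^*_\lambda(n)$. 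Consequently the expression $\sum_{T \in RSSYT(\lambda)}\prod_s (x_{T(s)} - c(s))$, with $T$ now ranging over all reverse semistandard tableaux of shape $\lambda$ over $\mathbb{Z}^+$ and read — exactly as the formal series for $\mathbf{H}_k$, $\mathbf{I}_k$, $\mathbf{S}_\lambda$ in Proposition~\ref{formal series} — via its finite truncations under the projections onto the $\Lambda^*(n)$, is a well-defined element of $\Lambda^*$ of filtration degree $|\lambda|$ whose restriction to $\Lambda^*(n)$ is $s^*_\lambda(n)$ for every $n$; hence it equals $s^*_\lambda = \underleftarrow{lim}\,s^*_\lambda(n)$. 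Combined with item~$1$ this gives $\chi(\mathbf{S}_\lambda) = \mathbf{s}^*_\lambda = \sum_{T \in RSSYT(\lambda)}\prod_s(x_{T(s)} - c(s))$.

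The main obstacle is purely organizational: checking that the infinite sum in item~$2$ genuinely names an element of $\Lambda^*$, i.e. that its partial evaluations $x_{n+1}=x_{n+2}=\cdots=0$ form a coherent, degree-bounded family. The reverse-semistandard constraint settles this at once — the largest value present always sits in the content-$0$ corner and therefore contributes its own variable as a linear factor — and once this compatibility with the $\boldsymbol{\pi}^*_{n,n+1}$ is recorded, the remaining steps are the same formal passage to the limit already performed for the Capelli and Nazarov--Umeda generators in Section~\ref{main}.
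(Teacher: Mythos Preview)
Your argument is correct and follows the same line the paper intends: the corollary is stated without proof, as an immediate consequence of $\chi_n(\mathbf{S}_\lambda(n)) = s^*_\lambda(n)$ together with the stability of both families and the limit isomorphism in Theorem~\ref{isomorfismo HC infinito}. Your explicit combinatorial check that any reverse semistandard tableau using $n{+}1$ must place it in the corner cell of content $0$ (hence contributes a factor $x_{n+1}$ killed by $\boldsymbol{\pi}^*_{n,n+1}$) is a nice concrete verification of the compatibility the paper takes for granted.
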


Let
$$
\mathcal{W} : \boldsymbol{\zeta} \rightarrow \boldsymbol{\zeta}
$$
denote the automorphism such that
$$
\mathcal{W} \Big( \mathbf{H}_k \Big) = \mathbf{I}_k, \quad   for \ every  \ k \in \mathbb{Z}^+,
$$

and let

$$
w : \Lambda^* \rightarrow \Lambda^*
$$
denote the automorphism such that
$$
w \Big( \mathbf{e}^*_k \Big) = \mathbf{h}^*_k, \quad   for \ every  \ k \in \mathbb{Z}^+.
$$

Clearly, 
$$
 \chi \circ \mathcal{W} = w \circ \chi.
$$

\begin{corollary}

\

\begin{enumerate}

\item
For every partition $\lambda$,
 $$
  w \Big( s^*_{\lambda}\Big) = \mathbf{s^*}_{\widetilde{\lambda}}.
 $$

\item
In particular,
$$
  w \Big( \mathbf{h}^*_k \Big) = \mathbf{e}^*_k, \quad for \ every \ k \in\mathbb{Z}^+;
$$
then, the automorphism $w$ is an \emph{involution}.

\end{enumerate}
\end{corollary}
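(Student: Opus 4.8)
The plan is to transport the duality identity $\mathcal{W}(\mathbf{S}_\lambda) = \mathbf{S}_{\widetilde{\lambda}}$, established in the preceding theorem, across the Harish-Chandra isomorphism $\chi : \boldsymbol{\zeta} \to \Lambda^*$. All the ingredients are already in place: $\chi(\mathbf{S}_\lambda) = s^*_\lambda$ for every partition $\lambda$ (the corollary just above), the duality $\mathcal{W}(\mathbf{S}_\lambda) = \mathbf{S}_{\widetilde{\lambda}}$, and the intertwining relation $\chi \circ \mathcal{W} = w \circ \chi$ noted immediately before. From these I would compute
$$
w(s^*_\lambda) = w\big(\chi(\mathbf{S}_\lambda)\big) = \chi\big(\mathcal{W}(\mathbf{S}_\lambda)\big) = \chi(\mathbf{S}_{\widetilde{\lambda}}) = s^*_{\widetilde{\lambda}},
$$
which is item $1$.

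For item $2$ I would specialize to the one-row shape $\lambda = (k)$, whose conjugate is the column shape $(1^k)$. Since $\mathbf{I}_k = \mathbf{S}_{(k)}$ and $\mathbf{H}_k = \mathbf{S}_{(1^k)}$, applying $\chi$ gives $\mathbf{h}^*_k = \chi(\mathbf{I}_k) = s^*_{(k)}$ and $\mathbf{e}^*_k = \chi(\mathbf{H}_k) = s^*_{(1^k)}$. Item $1$ with $\lambda = (k)$ then yields $w(\mathbf{h}^*_k) = w(s^*_{(k)}) = s^*_{(1^k)} = \mathbf{e}^*_k$. Together with the defining relation $w(\mathbf{e}^*_k) = \mathbf{h}^*_k$, this shows that $w^2$ fixes every generator $\mathbf{e}^*_k$; since $\{ \mathbf{e}^*_k ; \ k \in \mathbb{Z}^+ \}$ is a system of free algebra generators of $\Lambda^*$, it follows that $w^2 = Id$, so $w$ is an involution.

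I expect no serious obstacle here: the statement is a purely formal consequence of results proved earlier in this section together with the compatibility of $\chi$ with the two duality automorphisms. The only point deserving a line of justification is the relation $\chi \circ \mathcal{W} = w \circ \chi$, which holds because $\chi$ is an algebra isomorphism sending the free generators $\mathbf{H}_k$ of $\boldsymbol{\zeta}$ to the free generators $\mathbf{e}^*_k$ of $\Lambda^*$, while $\mathcal{W}$ and $w$ are defined by the analogous substitutions $\mathbf{H}_k \mapsto \mathbf{I}_k$ and $\mathbf{e}^*_k \mapsto \mathbf{h}^*_k$; hence the two composites agree on a generating set of $\boldsymbol{\zeta}$ and therefore on all of $\boldsymbol{\zeta}$. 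Everything else reduces to these substitutions.
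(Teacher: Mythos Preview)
Your proof is correct and is exactly the argument the paper intends: the corollary is stated without proof because it follows immediately from the intertwining relation $\chi \circ \mathcal{W} = w \circ \chi$ together with $\chi(\mathbf{S}_\lambda) = s^*_\lambda$ and $\mathcal{W}(\mathbf{S}_\lambda) = \mathbf{S}_{\widetilde{\lambda}}$, which is precisely what you wrote out.
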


\end{document}